\renewcommand{\theta}{\uptheta}
\renewcommand{\alpha}{\upalpha}
\renewcommand{\beta}{\upbeta}
\renewcommand{\gamma}{\upgamma}
\renewcommand{\delta}{\updelta}
\renewcommand{\zeta}{\upzeta}
\renewcommand{\pi}{\uppi\hspace{0.05em}}
\renewcommand{\rho}{\uprho}
\renewcommand{\xi}{\upxi}
\renewcommand{\chi}{\upchi}
\renewcommand{\sigma}{\upsigma}
\renewcommand{\Lambda}{\Uplambda}
\renewcommand{\Gamma}{\Upgamma}
\renewcommand{\phi}{\upphi}
\renewcommand{\psi}{\uppsi}
\renewcommand{\nu}{\upnu}
\renewcommand{\tau}{\uptau}
\renewcommand{\mu}{\upmu}
\renewcommand{\eta}{\upeta}
\newtheorem{theorem}{Theorem}[section]
\newtheorem{thmx}{Theorem}
\newtheorem{proposition}[theorem]{Proposition}
\newtheorem{defproposition}[theorem]{Definition/Proposition}
\newtheorem{lemma}[theorem]{Lemma}
\newtheorem{conjecture}[theorem]{Conjecture}
\newtheorem{corollary}[theorem]{Corollary}
\theoremstyle{remark}
\newtheorem{remark}[theorem]{Remark}
\theoremstyle{definition}
\newtheorem{definition}[theorem]{Definition}
\newtheorem{example}[theorem]{Example}
\newcommand{\BA}{{\mathbb{A}}}
\newcommand{\BC}{{\mathbb{C}}}
\DeclareMathOperator{\Def}{Def}
\DeclareMathOperator{\dDef}{\mathbf{Def}}
\newcommand{\BD}{{\mathbb{D}}}
\newcommand{\BG}{{\mathbb{G}}}
\DeclareMathOperator{\HO}{\mathbf{H}}
\DeclareMathOperator{\ho}{\mathbf{h}}
\DeclareMathOperator{\BM}{BM}
\DeclareMathOperator{\Hit}{\mathtt{h}}
\newcommand{\HOBM}{\HO^{\BM}}
\DeclareMathOperator{\Isom}{\mathcal{I}som}
\newcommand{\BN}{{\mathbb{N}}}
\newcommand{\BP}{{\mathbb{P}}}
\newcommand{\BQ}{{\mathbb{Q}}}
\newcommand{\BR}{{\mathbb{R}}}
\newcommand{\BS}{{\mathbb{S}}}
\newcommand{\BZ}{{\mathbb{Z}}}
\newcommand{\CC}{{\mathcal C}}
\newcommand{\CE}{{\mathcal E}}
\newcommand{\CF}{{\mathcal F}}
\newcommand{\CG}{{\mathcal G}}
\newcommand{\CH}{{\mathcal H}}
\newcommand{\CI}{{\mathcal I}}
\newcommand{\CK}{{\mathcal K}}
\newcommand{\CL}{{\mathcal L}}
\newcommand{\CM}{{\mathcal M}}
\newcommand{\CN}{{\mathcal N}}
\newcommand{\CO}{{\mathcal O}}
\newcommand{\CP}{{\mathcal P}}
\newcommand{\CQ}{{\mathcal Q}}
\newcommand{\CS}{{\mathcal S}}
\newcommand{\CX}{{\mathcal X}}
\newcommand{\CY}{{\mathcal Y}}
\newcommand{\Fcu}{\mathfrak{scu}}
\newcommand{\Fg}{{\mathfrak{g}}}
\newcommand{\Fiso}{{\mathfrak{iso}}}
\newcommand{\Fl}{{\mathfrak{l}}}
\newcommand{\Fm}{{\mathfrak{m}}}
\newcommand{\Fp}{{\mathfrak{p}}}
\newcommand{\FL}{{\mathfrak{L}}}
\newcommand{\FM}{{\mathfrak{M}}}
\newcommand{\FN}{{\mathfrak{N}}}
\newcommand{\FP}{{\mathfrak{P}}}
\newcommand{\FS}{{\mathfrak{S}}}
\newcommand{\FT}{{\mathfrak{T}}}
\newcommand{\FU}{{\mathfrak{U}}}
\newcommand{\FX}{{\mathfrak{X}}}
\newcommand{\FY}{{\mathfrak{Y}}}
\let\ol=\overline
\let\ul=\underline
\DeclareMathOperator{\opp}{op}
\newcommand{\udim}{\underline{\dim}}
\DeclareMathOperator{\JH}{\mathtt{JH}}
\newcommand{\unit}{\mathds{1}}
\newcommand{\ICS}{\mathcal{IC}}
\newcommand{\ICSn}{\underline{\mathcal{IC}}}
\DeclareMathOperator{\pvrs}{{}^\mathfrak{p}\!}
\DeclareMathOperator{\sstab}{-sst}
\DeclareMathOperator{\st}{-st}
\DeclareMathOperator{\stab}{st}
\DeclareMathOperator{\coker}{coker}
\DeclareMathOperator{\ConB}{\mathbf{B}}
\DeclareMathOperator{\DL}{L}
\DeclareMathOperator{\Hom}{Hom}
\DeclareMathOperator{\chara}{char}
\DeclareMathOperator{\End}{End}
\DeclareMathOperator{\Ext}{Ext}
\DeclareMathOperator{\MHS}{MHS}
\DeclareMathOperator{\Gr}{Gr}
\DeclareMathOperator{\Vect}{\mathbf{Vect}}
\DeclareMathOperator{\SRHom}{\mathcal{RH}om}
\DeclareMathOperator{\Id}{Id}
\DeclareMathOperator{\rat}{rat}
\DeclareMathOperator{\cdotb}{\!\cdot\!}
\DeclareMathOperator{\dd}{\mathbf{d}}
\DeclareMathOperator{\Perf}{Perf}
\DeclareMathOperator{\Ex}{\mathfrak{E}x}
\DeclareMathOperator{\PGl}{PGl}
\DeclareMathOperator{\Perv}{Perv}
\DeclareMathOperator{\twist}{tw}
\DeclareMathOperator{\MHM}{MHM}
\DeclareMathOperator{\IC}{\mathcal{IC}}
\DeclareMathOperator{\ICA}{IH^*}
\DeclareMathOperator{\Sym}{Sym}
\DeclareMathOperator{\nil}{nil}
\DeclareMathOperator{\op}{op}
\DeclareMathOperator{\Spec}{Spec}
\DeclareMathOperator{\Spf}{Spf}
\DeclareMathOperator{\Gl}{GL}
\DeclareMathOperator{\id}{id}
\DeclareMathOperator{\fdmod}{\mathbf{mod}}
\DeclareMathOperator{\FreeT}{\widehat{T}}
\DeclareMathOperator{\Jac}{Jac}
\DeclareMathOperator{\Mod}{\mathbf{Mod}}
\DeclareMathOperator{\Tr}{Tr}
\DeclareMathOperator{\pt}{pt}
\DeclareMathOperator{\Tot}{Tot}
\DeclareMathOperator{\colim}{colim}
\DeclareMathOperator{\KK}{K}
\DeclareMathOperator{\topo}{top}
\DeclareMathOperator{\dR}{dR}
\DeclareMathOperator{\cone}{\mathbf{cone}}
\DeclareMathOperator{\vir}{vir}
\DeclareMathOperator{\Ob}{Ob}
\DeclareMathOperator{\cyc}{cyc}
\DeclareMathOperator{\BPS}{BPS}
\DeclareMathOperator{\LLL}{\mathscr{T}}
\DeclareMathOperator{\Ho}{\mathcal{H}}
\DeclareMathOperator{\HH}{HH}
\DeclareMathOperator{\B}{B\!}
\DeclareMathOperator{\MC}{MC}
\DeclareMathOperator{\dMC}{\mathbf{MC}}
\DeclareMathOperator{\Det}{Det}
\DeclareMathOperator{\Dol}{\scriptscriptstyle{Dol}}
\DeclareMathOperator{\stacky}{St}
\DeclareMathOperator{\Coha}{\mathcal{A}}
\DeclareMathOperator{\RCoha}{\widetilde{\mathcal{A}}}
\newcommand{\Dulf}{\mathcal{D}^{\geq, \mathrm{lf}}}
\newcommand{\Dblf}{\mathcal{D}^{\leq, \mathrm{lf}}}
\newcommand{\Db}{\mathcal{D}^{\bdd}}
\newcommand{\Dub}{\mathcal{D}}
\newcommand{\Ddg}{\mathcal{D}_{\dg}}
\DeclareMathOperator{\Coh}{\mathbf{Coh}}
\DeclareMathOperator{\QCoh}{QCoh}
\DeclareMathOperator{\NS}{NS}
\DeclareMathOperator{\bdd}{b}
\newcommand{\into}{\hookrightarrow}
\newcommand{\onto}{\twoheadrightarrow}
\newcommand{\Ab}{\mathbf{b}}
\newcommand{\Am}{\mathbf{m}}
\DeclareMathOperator{\Hoch}{Hoch}
\DeclareMathOperator{\CHiggs}{{\CH}iggs}
\DeclareMathOperator{\Calg}{\mathbf{Calg}}
\DeclareMathOperator{\dgCat}{\mathbf{dgCat}}
\DeclareMathOperator{\Cocoal}{\mathbf{Coalg}}
\DeclareMathOperator{\Cart}{\mathbf{CArt}}
\DeclareMathOperator{\Sset}{\mathbf{SSet}}
\DeclareMathOperator{\Betti}{\scriptscriptstyle{Betti}}
\let \ol=\overline
\let \ul=\underline
\newcommand{\sssct}{\subsubsection{}}
\DeclareMathOperator{\gr}{gr}
\DeclareMathOperator{\Mat}{Mat}
\DeclareMathOperator{\CatHiggs}{\mathbf{Higgs}}
\DeclareMathOperator{\HC}{HC}
\DeclareMathOperator{\Map}{Map}
\DeclareMathOperator{\RHom}{\mathbf{R}Hom}
\DeclareMathOperator{\REnd}{\mathbf{R}End}
\DeclareMathOperator{\CHC}{CC}
\DeclareMathOperator{\Lmod}{-Mod}
\DeclareMathOperator{\dg}{dg}
\newcommand{\Msp}{\mathcal{M}}
\newcommand{\Mst}{\mathfrak{M}}
\DeclareMathOperator{\Quot}{Quot}
\DeclareMathOperator{\supp}{supp}
\DeclareMathOperator{\GL}{GL}
\DeclareMathOperator{\Ku}{Ku}
\newcommand{\loc}{\textrm{loc}}
\title{Purity and 2-Calabi--Yau categories}
\author{Ben Davison}
\begin{document}

\setcounter{tocdepth}{2}% to get subsubsections in toc

\let\oldtocsection=\tocsection

\let\oldtocsubsection=\tocsubsection

\let\oldtocsubsubsection=\tocsubsubsection

\renewcommand{\tocsection}[2]{\hspace{0em}\oldtocsection{#1}{#2}}
\renewcommand{\tocsubsection}[2]{\hspace{3em}\oldtocsubsection{#1}{#2}}
\renewcommand{\tocsubsubsection}[2]{\hspace{2em}\oldtocsubsubsection{#1}{#2}}

\makeatletter
\patchcmd{\@tocline}
  {\hfil}
  {\leaders\hbox{\,.\,}\hfil}
  {}{}
\makeatother

\maketitle

\vspace{-0.2in}
\begin{center} 
{\it In memory of Tom Nevins}
\end{center}

\begin{abstract}
    For various 2-Calabi--Yau categories $\mathscr{C}$ for which the classical stack of objects $\mathfrak{M}$ has a good moduli space $p\colon\mathfrak{M}\rightarrow \mathcal{M}$, we establish purity of the mixed Hodge module complex $p_{!}\ul{\BQ}_{\FM}$.  We do this by using formality in 2CY categories, along with \'etale neighbourhood theorems for stacks, to prove that the morphism $p$ is modelled \'{e}tale-locally by the semisimplification morphism from the stack of modules of a preprojective algebra.   Via the integrality theorem in cohomological Donaldson--Thomas theory we then prove purity of $p_{!}\ul{\BQ}_{\FM}$.  It follows that the Beilinson--Bernstein--Deligne--Gabber decomposition theorem for the constant sheaf holds for the morphism $p$, despite the possibly singular and stacky nature of $\FM$, and the fact that $p$ is not proper.  We use this to define cuspidal cohomology for $\FM$, which conjecturally provides a complete space of generators for the BPS algebra associated to $\mathscr{C}$.

    We prove purity of the Borel--Moore homology of the moduli stack $\mathfrak{M}$, provided its good moduli space $\mathcal{M}$ is projective, or admits a suitable contracting $\BC^*$-action.  In particular, when $\mathfrak{M}$ is the moduli stack of Gieseker semistable sheaves on a K3 surface, this proves a conjecture of Halpern-Leistner.  We use these results to moreover prove purity for several stacks of coherent sheaves that do not admit a good moduli space.  
    
    Without the usual assumption that $r$ and $d$ are coprime, we prove that the Borel--Moore homology of the stack of semistable degree $d$ rank $r$ Higgs sheaves is pure and carries a perverse filtration with respect to the Hitchin base, generalising the usual perverse filtration for the Hitchin system to the case of singular stacks of Higgs sheaves.
    
\end{abstract}

\tableofcontents

\section{Introduction}
\label{sec:introduction}
\subsection{2-Calabi--Yau categories}
\label{glob_intro}
2-Calabi--Yau (2CY) categories are central objects of study in several areas of mathematics, including nonabelian Hodge theory, geometric representation theory, and the algebraic geometry of K3 and Abelian surfaces.  In this paper we study moduli stacks of objects in 2CY categories, and in particular the Hodge theory of such stacks, and perverse filtrations of their Borel--Moore homology arising from cohomological Donaldson--Thomas theory.
\smallbreak
Using work of Deligne \cite{DelII,DelIII} the Borel--Moore homology of each of the stacks $\FM$ that we will consider in this introduction carries a mixed Hodge structure, generalising the classical construction of the Hodge structure of weight $n$ on the $n$th singular cohomology of a smooth projective variety.  This Hodge structure can indeed be mixed for some of the stacks that we will consider: this is no surprise, since these stacks will generally be highly singular, stacky, with good moduli spaces that furthermore may not be projective.  Nonetheless, we prove that in many cases of interest this mixed Hodge structure is in fact pure.  For example, we prove in this way Halpern--Leistner's conjecture on the purity of the compactly supported cohomology of the stack of semistable coherent sheaves on a K3 surface.
\smallbreak
We tackle the subject through Saito's theory of mixed Hodge modules.  In this introduction we only
 consider stacks that come with a good moduli space $p\colon \FM\rightarrow \CM$, i.e. a morphism to a scheme enjoying good properties, that is universal (see Definition~\ref{def:good_moduli_space}, and see \S \ref{gen_sheaves} for more general results in the absence of a good moduli space).  We prove \textit{global} results on mixed Hodge structures, which we can loosely think of as the study of the derived direct image of the constant complex\footnote{The looseness here is that we never define this complex, and only define its direct images.} of mixed Hodge modules $\ul{\BQ}_{\FM}$ along the structure morphism, by instead studying the \textit{local} theory of $p$.  In other words, we study\footnote{Throughout the paper, all direct and inverse image functors are assumed to be derived.} $(\FM\rightarrow \CM)_!\ul{\BQ}_{\FM}$ as a route to understanding $(\FM\rightarrow \pt)_!\ul{\BQ}_{\FM}\cong (\CM\rightarrow \pt)_!(\FM\rightarrow \CM)_!\ul{\BQ}_{\FM}$.  While the purity result for Borel--Moore homology applies to some 2CY categories, and not to others, we show that for \textit{all} 2CY categories for which there is a good moduli space $p$ of objects in $\mathscr{C}$, the mixed Hodge module $\Ho^i\!p_!\ul{\BQ}_{\FM}$ is pure of weight $i$ for all $i$.  We take cohomology objects with respect to the natural t-structure on the derived category of mixed Hodge modules, which is a lift of the \emph{perverse} t-structure on the underlying constructible complex, and we deduce (from a result of Deligne \cite{Del87}) that the underlying perverse cohomology sheaves ${}^{\mathfrak{p}}\!\Ho^i\!p_!\BQ_{\FM}$ are semisimple.
\smallbreak
As a means of reducing to the case of stacks of representations of preprojective algebras, we first show that objects in left 2CY categories with semisimple endomorphism algebras have formal Yoneda algebras.  This generalises a catalogue of formality results in the literature (see \S \ref{prev_work_sec} for a summary).  In particular, this reproves the Kaledin--Lehn conjecture \cite{KaLe07,MR3942159} for coherent sheaves on K3 surfaces, as well as its generalisation to Kuznetsov components in the derived category of cubic fourfolds or Gushel--Mukai varieties.  We combine formality and derived deformation theory with fundamental results of Alper, Hall and Rydh on the \'etale local structure of stacks to deduce that the morphism from the moduli stack to the coarse moduli space of objects in a 2CY category is \'{e}tale locally isomorphic to the analogous morphism from the moduli stack of representations of a preprojective algebra to its affinization.  Local purity then follows from results in cohomological Donaldson--Thomas theory, i.e. dimensional reduction combined with the cohomological integrality theorem.  Putting all of this together, we establish that $\Ho^i\!p_!\ul{\BQ}_{\FM}$ is pure of weight $i$.  

\sssct
\label{exam_2CY}
Before stating the main results a little more exactly, we introduce our motivating set of examples.

\begin{enumerate}
    \item 
    \label{preproj_class}
    Let $\ol{Q}$ be the double of a quiver $Q$, and let $\Pi_Q \coloneqq \BC \ol{Q}/\langle \sum_{a\in Q_1}[a,a^*]\rangle$ be the preprojective algebra associated to $Q$.  We denote by $\FM^{\zeta\sstab}_{\dd}(\Pi_Q)$ the moduli stack of $\dd$-dimensional $\zeta$-semistable $\Pi_Q$-modules, where $\zeta\in\BQ^{Q_0}$ is a King stability condition, and by $\CM_{\dd}^{\zeta\sstab}(\Pi_Q)$ its coarse moduli space constructed via GIT by King~\cite{MR1315461}.
        There is a semisimplification morphism
        \begin{equation}
            \JH^{\zeta}_{\dd\colon}\FM_{\dd}^{\zeta\sstab}(\Pi_Q) \to \CM^{\zeta\sstab}_{\dd}(\Pi_Q)
        \end{equation}
        taking a semistable module to the associated polystable module.  Restricting to the special case in which $\zeta$ is degenerate, i.e. decreeing that all $\Pi_Q$-modules are semistable and dropping $\zeta\sstab$ from the notation, this is the example that we show governs the local situation in more general 2CY categories.
    \item 
    \label{K3_class}
    Let $S$ be a smooth complex projective Abelian or K3 surface, let $H$ be an ample class, let $\nu$ be a Mukai vector, and let $\FM^{H}_{\nu}(S)$ be the moduli stack of $H$-Gieseker semistable coherent sheaves of Mukai vector $\nu$.
        This stack admits a projective good moduli space:
        \begin{equation}\label{eq:2CY_category_1}
             \pi^{H}_{\nu} \colon \FM^{H}_{\nu}(S) \to \CM^{H}_{\nu}(S).
        \end{equation}
        More generally, we may assume merely that $S$ is a smooth quasiprojective surface satisfying $\mathcal{O}_S\cong \omega_S$, with $H$ an ample class on a compactification $\overline{S}$ of $S$, and consider moduli spaces of compactly supported $H$-Gieseker semistable coherent sheaves on $S$.
    \item 
    \label{Higgs_class}
    Let $C$ be a smooth genus $g$ complex projective curve and let $\FM^{\Dol}_{r,d}(C)$ denote the complex Artin stack of semistable rank $r$ and degree $d$ Higgs sheaves on $C$, i.e. semistable pairs $(\mathcal{F},\phi\colon\CF\rightarrow \CF\otimes\omega_C)$ of a rank $r$ degree $d$ coherent sheaf $\CF$ on $C$ and a Higgs field $\phi$.  This stack admits a universal morphism $p$ to its good moduli space, and by postcomposing with the Hitchin map $\Hit$, a morphism to the Hitchin base:
        \begin{equation}\label{eq:2CY_category_2}
        \xymatrix{     \FM^{\Dol}_{r,d}(C) \ar[r]^p \ar[dr]&\CM^{\Dol}_{r,d}(C)\ar[d]^{\Hit}\\
        &\Lambda_r\coloneqq\bigoplus_{m= 1}^r \HO^0(C,\omega_C^{\otimes m}).}
        \end{equation}
Since we do not assume that $d$ and $r$ are coprime, both $\FM^{\Dol}_{r,d}(C)$ and $\CM^{\Dol}_{r,d}(C)$ may be singular, and stabiliser groups of complex points in $\FM^{\Dol}_{r,d}(C)$ can be more complicated than $\mathbb{C}^*$.
    \item 
    \label{Betti_class}
    Let $\Sigma_{g}$ be a genus $g$ orientable closed Riemann surface without boundary and let $\FM^{\Betti}_{g,r}$ denote the moduli stack of $r$-dimensional $\pi_1(\Sigma_g)$-modules. 
        This stack admits a semisimplification morphism to its coarse moduli space,
        \begin{equation}\label{eq:2CY_category_3}
            \JH \colon \FM^{\Betti}_{g,r} \to \CM^{\Betti}_{g,r}.
        \end{equation}
        The coarse moduli space of this and the previous example form two sides of the nonabelian Hodge correspondence.  In contrast with the case of Higgs bundles, it is already known (just as for the smooth character varieties in \S \ref{smooth_case}) that $\HO^{\BM}(\FM^{\Betti}_{g,r},\BQ)$ is \textit{not} pure if $g\geq 1$.
    \item
    \label{mpreproj_class}
    Let $Q$ be a quiver, let $q\in(\mathbb{C}^*)^{Q_0}$, and let $\Lambda^q(Q)$ be the multiplicative preprojective algebra (see \S\ref{sec:mult_pp} for definitions and background).  Fix a stability condition $\zeta\in\mathbb{Q}^{Q_0}$.  Then as in (1) and (4) there is a semisimplification morphism
    \begin{equation}\label{eq:2CY_category_4}
            \JH^{\zeta}_{\dd}\colon\FM_{\dd}^{\zeta\sstab}(\Lambda^q(Q)) \to \CM^{\zeta\sstab}_{\dd}(\Lambda^q(Q)).
        \end{equation}
        Again, the Borel--Moore homology of the moduli stack $\FM_{\dd}^{\zeta\sstab}(\Lambda^q(Q))$ can be shown to be impure in basic examples.
\end{enumerate}

\subsection{Hodge theoretic results}

The following theorem is our first main result.
\begin{thmx}\label{thm:purity_absolute}(Theorems \ref{1a}, \ref{HL_con}, \ref{Higgs_pure})
    Let $\FM$ be one of the moduli stacks of objects in the 2CY categories of type (\ref{preproj_class}), (\ref{K3_class}) or (\ref{Higgs_class}) described above, where in case (\ref{K3_class}) we assume that $S$ is projective.
    The natural mixed Hodge structure on the Borel--Moore homology $\HOBM(\FM,\BQ)$ is \textbf{pure}.
\end{thmx}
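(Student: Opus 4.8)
The plan is to reduce the assertion to the \emph{local} purity of the good moduli space morphism $p\colon\FM\to\CM$, and then to control the pushforward to the point using the geometry particular to each of the three families. Borel--Moore homology is computed by the dualizing complex, so that $\HOBM(\FM,\BQ)\cong\HO^{\bullet}\big((\FM\to\pt)_*\omega_{\FM}\big)$, and factoring through $\CM$ gives $(\FM\to\pt)_*\omega_{\FM}\cong(\CM\to\pt)_*\,p_*\omega_{\FM}$ with $p_*\omega_{\FM}\cong\BD_{\CM}\big(p_!\ul{\BQ}_{\FM}\big)$. Since Verdier duality and shifts preserve purity, it is equivalent to show that $(\CM\to\pt)_*$ sends the complex $\CG\coloneqq p_!\ul{\BQ}_{\FM}$ to a pure complex on the point. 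By the local purity established earlier --- $\Ho^i\CG$ is pure of weight $i$ for all $i$, obtained from formality of Yoneda algebras in $2$CY categories, the Alper--Hall--Rydh étale-local model by the semisimplification morphism of a preprojective algebra, and dimensional reduction together with the cohomological integrality theorem --- $\CG$ is a pure complex of mixed Hodge modules on $\CM$. The task is therefore to show that pushing the pure, and (in the non-projective cases) canonically $\BC^*$-equivariant, complex $\CG$ to the point preserves purity.

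I would then split into the three cases according to the geometry of $\CM$. In case (\ref{K3_class}), where $S$ is projective, $\CM=\CM^{H}_{\nu}(S)$ is projective, $(\CM\to\pt)$ is proper, and Saito's theorem that proper pushforward preserves weights gives purity at once; this is the cleanest case and is exactly Halpern--Leistner's conjecture. In case (\ref{preproj_class}), $\CM=\CM_{\dd}(\Pi_Q)$ is a cone: the dilation scaling every arrow of $\ol{Q}$ contracts $\CM$ onto the single point $[0]$ (the zero representation), the projectivised complement is proper, and $\CG$ is $\BC^*$-equivariant. Here I would invoke the contraction principle: the attracting-set isomorphism identifies $(\CM\to\pt)_*\CG$ with the stalk $i_{0}^{*}\CG$, and the localization triangle relating $i_{0}^{!}\CG$, $i_{0}^{*}\CG$ and the cohomology of the punctured cone --- whose projectivised quotient is proper, hence pure --- pins the stalk down to be pure by weight comparison. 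In case (\ref{Higgs_class}), $\CM=\CM^{\Dol}_{r,d}(C)$ is neither projective nor affine, but the Hitchin map $\Hit\colon\CM\to\Lambda_r$ is proper and $\Lambda_r=\bigoplus_{m}\HO^0(C,\omega_C^{\otimes m})$ is a weighted affine space. I would first push $\CG$ forward along the proper map $\Hit$, preserving purity, and then apply the contraction principle of case (\ref{preproj_class}) to the resulting pure $\BC^*$-equivariant complex on the cone $\Lambda_r$, using the contracting action induced by scaling the Higgs field $\phi\mapsto t\phi$.

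The deepest input is the local purity of $\CG$, which I am treating as established. Within the present argument the main obstacle is the contraction principle underlying cases (\ref{preproj_class}) and (\ref{Higgs_class}): a naive stalk computation only yields the one-sided bound that $i_0^*\CG$ has weights $\le w$, and upgrading this to genuine purity genuinely uses both the $\BC^*$-equivariance of $\CG$ and the properness of the projectivised quotient of the cone (and, for Higgs sheaves, the nontrivial properness of the Hitchin map). Verifying that the scaling actions lift compatibly to $\FM$, descend to $\CM$, and make $\CG$ equivariant is the remaining bookkeeping that makes the contraction principle applicable.
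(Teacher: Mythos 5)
Your overall strategy is the paper's: establish purity of $p_!\ul{\BQ}_{\FM}$ locally, then in case (\ref{K3_class}) conclude by Saito's theorem for the projective $\CM$ (this is exactly how Theorem \ref{HL_con} is deduced from Theorem \ref{glob_pur_thm}), and in the non-projective cases use a contracting $\BC^*$-action together with a weight comparison. The paper's ``sandwich'' is slightly simpler than your contraction principle: for an equivariant pure $\CG$ on a $\CM$ contracted onto a \emph{projective core} $Z$, the restriction $\HO(\CM,\CG)\to\HO(Z,\CG\lvert_Z)$ is an isomorphism, and since $(\CM\to\pt)_*$ increases weights, the closed restriction decreases them, and $(Z\to\pt)_*$ preserves them, purity follows directly --- no localization triangle or hyperbolic-localization input is needed. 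In case (\ref{Higgs_class}) you route through the Hitchin map and then contract $\Lambda_r$ to the origin; the paper instead contracts $\CM^{\Dol}_{r,d}(C)$ directly onto the projective locus $Z$ of sheaves set-theoretically supported on the zero section (via the BNR embedding into $\overline{S}$), reserving your two-step route for the global nilpotent cone (Proposition \ref{GNC_prop}). Your variant is valid: $(\Hit\,p)_!\ul{\BQ}$ is pure by properness of $\Hit$, the pushforward is equivariant for the weighted scaling on $\Lambda_r$, and the sandwich applies with core $\{0\}$; it trades the BNR contraction for properness of the Hitchin map, at essentially equal cost.

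One concrete inaccuracy to fix in case (\ref{preproj_class}): the statement covers arbitrary King stability conditions $\zeta$, and for non-degenerate $\zeta$ the scaling action does \emph{not} contract $\CM^{\zeta\sstab}_{\dd}(\Pi_Q)$ to a single point. The attracting core is the fibre over $0_{\dd}$ of the projective GIT morphism $\CM^{\zeta\sstab}_{\dd}(\Pi_Q)\to\CM_{\dd}(\Pi_Q)$, i.e.\ the locus $N$ of nilpotent $\zeta$-semistable representations, which is projective but in general positive-dimensional. Your stalk-at-$i_0$ mechanism therefore does not apply as written outside the degenerate case; replacing $i_0^*$ by restriction to $N$ and running the weight sandwich (as in the paper's Theorem \ref{1a}) repairs the argument with no further changes.
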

The purity in case (\ref{preproj_class}) is one of the main results of \cite{2016arXiv160202110D}.
The purity of the Borel--Moore homology of the moduli stack of Gieseker semistable sheaves on a smooth projective K3 surface, part of case (\ref{K3_class}), was conjectured by Halpern--Leistner~\cite[Conj.~4.4]{HL16}, and is new.  In the special case of coprime $(r,d)$, the purity in case (\ref{Higgs_class}) is a result of \cite{HT03}, and can also be deduced from a standard ``sandwich'' argument: if a smooth quasiprojective scheme is contracted by a $\mathbb{C}^*$-action onto a projective scheme, the mixed Hodge structure on its cohomology is pure.  In the non-coprime case the stack $\FM^{\Dol}_{r,d}(C)$ is highly singular, but we prove that the mixed Hodge structure on the Borel--Moore homology is nonetheless pure.  We do this by melding the standard sandwich argument with our second main result, on local purity, which we come to next.
\sssct
Let $p \colon \FM \to \CM$ be the coarse moduli space of objects for one of the 2CY categories in \S \ref{exam_2CY}.  Recall that a complex of mixed Hodge modules $\CF$ is called pure if for every $i$ the $i$th cohomology mixed Hodge module $\CH^{i}(\CF)$ is pure of weight $i$.  We refer the reader to \S \ref{sec:algebraic_MHMs} for details and references regarding mixed Hodge modules.  To deduce Theorem~\ref{thm:purity_absolute} we first establish purity (of weight $i$) of the complex of mixed Hodge modules $\Ho^i\!p_{!}\ul\BQ_{\FM}$ on $\CM$, the derived direct images with compact support of the locally constant mixed Hodge module complex $\ul\BQ_{\FM}$.
\smallbreak
The following theorem is our second main result, which is most cleanly expressed in the language of derived moduli stacks.
\begin{thmx}\label{thm:purity_relative}(Theorem \ref{loc_pur_thm})
Let $\bm{\FM}\subset \bm{\FM}_{\mathscr{A}}$ be an open substack of the derived moduli stack of objects in an Abelian category $\HO^0(\mathscr{A})$, with $\mathscr{A}$ a full dg subcategory of a left 2CY dg category $\mathscr{C}$.  Let $\FM=t_0(\bm{\FM})$ be the classical truncation, and assume that closed points of $\FM$ represent semisimple objects of $\mathscr{A}$.  Let $p\colon \FM\rightarrow \CM$ be a good moduli space.  Then the mixed Hodge module $\Ho^i\!p_!\underline{\BQ}_{\FM}$ is pure of weight $i$, and 
\[
\left(\Ho^i\!p_!\underline{\BQ}_{\FM}\right)\lvert_{p(x)}=0
\]
for $x$ representing an object $\mathcal{F}$ of $\mathscr{C}$ satisfying $i>\chi_{\mathscr{C}}(\CF,\CF)$, or for $i$ odd.
\end{thmx}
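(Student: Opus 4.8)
The plan is to reduce the global statement (Theorem~\ref{thm:purity_relative}) to the already-understood local model, namely the semisimplification morphism $\JH_{\dd}$ from the stack of $\Pi_Q$-modules, and then invoke purity results from cohomological Donaldson--Thomas theory. The strategy proceeds in three stages: establishing formality, building the \'{e}tale-local model, and transporting purity.

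First I would establish formality of the Yoneda algebra. Fix a closed point $x\in\FM$ representing a semisimple object $\CF=\bigoplus_i S_i^{\oplus n_i}$ of $\mathscr{A}$, so that $\End(\CF)$ is a semisimple algebra. The hypothesis that $\mathscr{C}$ is left 2CY should give, via the general formality result promised in the introduction (objects with semisimple endomorphism algebra in a left 2CY category have formal Yoneda algebra $\REnd_{\mathscr{C}}(\CF)$), a quasi-isomorphism between the dg-algebra $\REnd_{\mathscr{C}}(\CF)$ and its cohomology $\Ext^{\bullet}_{\mathscr{C}}(\CF,\CF)$. The 2CY property moreover forces this graded algebra to be concentrated in degrees $0,1,2$ with a symplectic pairing between degree $1$ and degree $1$, and degree $2$ dual to degree $0$; this is precisely the shape of the Ext-algebra of a semisimple module over a preprojective algebra $\Pi_{Q'}$, where the quiver $Q'$ is read off from the dimensions $\dim\Ext^1_{\mathscr{C}}(S_i,S_j)$.

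Next, I would use derived deformation theory together with the \'{e}tale slice theorems to produce the local model. The derived deformation theory of the point $x$ in $\bm{\FM}$ is governed by the $L_\infty$-algebra (or dg-algebra) $\REnd_{\mathscr{C}}(\CF)$; formality means this deformation problem is equivalent to the one controlled by $\Ext^\bullet_{\mathscr{C}}(\CF,\CF)$, which is exactly the deformation theory of the corresponding semisimple $\Pi_{Q'}$-module at the point of $\FM_{\dd'}(\Pi_{Q'})$. Invoking the Alper--Hall--Rydh \'{e}tale neighbourhood theorem for stacks with good moduli spaces, the map $p$ is then \'{e}tale-locally at $p(x)$ isomorphic to the semisimplification map $\JH_{\dd'}\colon\FM_{\dd'}(\Pi_{Q'})\to\CM_{\dd'}(\Pi_{Q'})$ (with degenerate stability, so all modules are semistable). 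Since purity and the vanishing assertion are \'{e}tale-local statements on the base $\CM$ (stalks $\left(\Ho^i\!p_!\ul{\BQ}_{\FM}\right)|_{p(x)}$ are computed \'{e}tale-locally, and weights are preserved under smooth/\'{e}tale pullback), it suffices to prove the theorem for the preprojective local model.

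Finally, for the preprojective model I would invoke the machinery of cohomological DT theory: dimensional reduction identifies $\Ho^i\!\left(\JH_{\dd}\right)_!\ul{\BQ}$ with (a shift of) the pushforward appearing in the cohomological integrality/BPS decomposition for the tripled quiver with potential, and the cohomological integrality theorem together with the analysis of BPS sheaves (as alluded to in the abstract) yields that each cohomology sheaf is pure of the correct weight. The vanishing $\left(\Ho^i\!p_!\ul{\BQ}_{\FM}\right)|_{p(x)}=0$ for $i>\chi_{\mathscr{C}}(\CF,\CF)$ follows because the fibre dimension of $\JH$ over the point representing $\CF$ is controlled by $\dim\Ext^1-\dim\End=-\chi_{\mathscr{C}}(\CF,\CF)$ (the dimension of the stacky fibre), bounding the cohomological amplitude of the stalk. \emph{The main obstacle} I anticipate is the compatibility of formality with the \emph{family} of deformations rather than at a single point: one must ensure that the equivalence of deformation functors induced by formality is compatible with the symmetric monoidal/Hodge structure so that weights are genuinely preserved under the resulting \'{e}tale isomorphism, and that the local model has the same potential (the preprojective relation) rather than merely the same underlying quiver --- this is where the precise 2CY hypothesis, as opposed to mere formality, does the essential work.
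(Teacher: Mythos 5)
Your proposal follows essentially the same three-stage route as the paper's proof of Theorem \ref{loc_pur_thm}: formality for the $\Sigma$-collection of simple summands at a closed point (Corollary \ref{form_cor}), the Alper--Hall--Rydh machinery (Theorems \ref{thm:etale_local_AHR} and \ref{StAA}, Proposition \ref{Luna_lemma}) combined with the Maurer--Cartan description of coherent completions (Proposition \ref{app_prop}) to build the \'etale-local model \eqref{eq:neighbourhood_diagram}, and then base change plus the purity theorem for $\JH_{\dd,!}\ul{\BQ}_{\FM_{\dd}(\Pi_Q)}$ imported from the preprojective setting (Theorem \ref{pp3_thm}), with purity checked \'etale-locally via Lemma \ref{loc_pur_lem}. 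You also correctly isolated where the 2CY structure, rather than bare formality, does the work: the paper pins down the local model by observing that the multiplication $\Am_2$ in a right 2CY category is determined by the pairing and cyclic invariance, so matching Ext-quivers forces a quasi-isomorphism with the subcategory of simple $\Pi_Q$-modules --- exactly your ``same potential, not merely same quiver'' concern.

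Two concrete gaps remain. First, you tacitly assume that $\Ho^i\!p_!\ul{\BQ}_{\FM}$ exists as a mixed Hodge module. Since Saito's theory is not formulated for stacks, the paper treats this as part of the content of the theorem: one needs $p$ to have affine diagonal (Lemma \ref{techy} plus the cited result of Alper et al.), and one then glues the mixed Hodge modules $\Ho^i\!p_{j,!}\ul{\BQ}_{Y_j/G_j}$, defined by approximating the Borel construction, over the AHR \'etale cover of $\CM$ (Definition/Proposition \ref{dfp}); without this step there is no object about which to assert purity. Second, your justification of the vanishing bound contains a slip: in a 2CY category one has $\chi_{\mathscr{C}}(\CF,\CF)=2\dim\End(\CF)-\dim\Ext^1(\CF,\CF)$, not $\dim\End(\CF)-\dim\Ext^1(\CF,\CF)$, and in any case the paper does not argue via a fibre-dimension count. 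Instead it transports the perverse-amplitude statement \eqref{perv_bound} of Theorem \ref{pp3_thm} (a nontrivial input from cohomological DT theory, not a dimension estimate) through the \'etale local model, using the identity $2\chi_Q(\mathbf{d},\mathbf{d})=\chi_{\mathscr{C}}(\CF,\CF)$, which is itself proved via the category equivalence furnished by formality. Both gaps are repairable within your framework, but as written the vanishing claim rests on an incorrect formula and the purity claim on an undefined object.
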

The condition on the closed points of $\FM$ holds, for example, if $\HO^0(\mathscr{A})$ is a finite length Abelian category.  This explains the restriction to categories of semistable objects in our examples concerning coherent sheaves and Higgs bundles.
\sssct
Since Saito's theory of mixed Hodge modules is not yet fully formulated to cover algebraic stacks, one of the difficulties that we will encounter is actually defining the derived direct image $p_!\underline{\BQ}_{\FM}$, as a mixed Hodge module complex, at the level of generality demanded by Theorem \ref{thm:purity_relative}.  Indeed, part of the content of the theorem is that at least $\Ho^i\!p_!\underline{\BQ}_{\FM}$ can be defined as a mixed Hodge module (see Definition/Proposition \ref{dfp}).  This is why the theorem is expressed at the level of the $i$th derived functors, as opposed to objects in the derived category.  Once we pass to concrete applications, it becomes easier to define the complex $p_!\underline{\BQ}_{\FM}$, so that we can use Theorem \ref{thm:purity_relative}, as per the strategy of \S \ref{glob_intro}, to study $(\FM\rightarrow \pt)_!\underline{\BQ}_{\FM}$ via an understanding of $(\FM\rightarrow \CM)_!\underline{\BQ}_{\FM}$.  In particular, we deduce the following from Theorem \ref{thm:purity_relative}.
\begin{thmx}
\label{ex_pur_thm}
    Let $p \colon \FM \to \CM$ be the morphism to the coarse moduli space in any of the examples of categories (1)-(5) above.
    The complex of mixed Hodge modules $p_{!}\ul\BQ_{\FM}$ is \textbf{pure}.  
    
    The Borel--Moore homology $\HOBM(\FM,\BQ_{\vir})\coloneqq \HO_c(\FM,\BD(\BQ[\chi_{\mathscr{C}}(\cdot,\cdot)]))$ carries a natural associated perverse filtration by mixed Hodge structures, beginning in perverse degree zero, with associated graded object
    \[
    \bigoplus_{n\geq 0}\HO\!\left(\CM,\BD \Ho^{-n}\!\left(p_!\underline{\BQ}_{\FM}[\chi_{\mathscr{C}}(\cdot,\cdot)]\right)\right)[-n]
    \]
    where $\BD$ denotes the Verdier duality functor.
\end{thmx}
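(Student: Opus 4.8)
The plan is to deduce Theorem~\ref{ex_pur_thm} from the local purity result, Theorem~\ref{thm:purity_relative}, applied globally to each of the examples \ref{preproj_class}--\ref{mpreproj_class}. First I would verify that every one of these examples falls under the hypotheses of Theorem~\ref{thm:purity_relative}: in each case $\FM$ is the classical truncation of an open substack of the derived stack of objects in the heart of a left 2CY dg category $\mathscr{C}$ — modules over the (multiplicative) preprojective algebra in \ref{preproj_class} and \ref{mpreproj_class}, Gieseker-semistable sheaves of fixed Mukai vector inside $D^b(\mathrm{Coh}(S))$ with $\omega_S\cong\CO_S$ in \ref{K3_class}, semistable Higgs sheaves in \ref{Higgs_class}, and finite-dimensional representations of the surface group in \ref{Betti_class}. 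The semistability/finite-length conditions ensure that closed points of $\FM$ represent semisimple objects, so that $p$ is a good moduli space. Granting these identifications (established in the body of the paper), Theorem~\ref{thm:purity_relative} shows that each cohomology mixed Hodge module $\Ho^i\!p_!\ul\BQ_{\FM}$ is pure of weight $i$, and that its stalk at $p(x)$ vanishes once $i>\chi_{\mathscr{C}}(\CF,\CF)$.

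The next step is to upgrade this pointwise information to a statement about the complex $p_!\ul\BQ_{\FM}$ itself. Here purity does the work: because the cohomology objects are pure of the correct weight, semisimplicity of pure Hodge modules (Saito's decomposition theorem for the constant sheaf, as advertised in the abstract) forces a non-canonical splitting
\[
p_!\ul\BQ_{\FM}\;\cong\;\bigoplus_i \Ho^i\!p_!\ul\BQ_{\FM}[-i],
\]
the vanishing bound guaranteeing that the right-hand side is a legitimate object of the derived category of mixed Hodge modules on the scheme $\CM$. This both \emph{defines} the complex $p_!\ul\BQ_{\FM}$ (circumventing the fact that Saito's formalism is not set up over the stack $\FM$) and exhibits it as pure, which is the first assertion. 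One must of course check that the hypercohomology of this synthetic complex computes the independently-defined Borel--Moore homology $\HOBM(\FM,\BQ)$; I would verify this compatibility on the smooth atlas, equivalently by comparing with the étale-local preprojective models supplied by Theorem~\ref{thm:purity_relative}.

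For the perverse filtration I would pass to Verdier duals. Since $\BD$ preserves purity, $\BD p_!\ul\BQ_{\FM}$ is again pure, so the decomposition theorem splits it as $\bigoplus_n \Ho^n(\BD p_!\ul\BQ_{\FM})[-n]=\bigoplus_n\BD\Ho^{-n}\!p_!\ul\BQ_{\FM}[-n]$. Applying $\HO(\CM,-)$ and using the identification $\HOBM(\FM,\BQ)=\HO(\CM,\BD p_!\ul\BQ_{\FM})$ coming from $p_*\omega_{\FM}=\BD p_!\ul\BQ_{\FM}$, the perverse truncation filtration on $\BD p_!\ul\BQ_{\FM}$ induces the desired filtration on $\HOBM(\FM,\BQ[-\chi_{\mathscr{C}}(\cdot,\cdot)])$, with associated graded
\[
\bigoplus_{n\geq 0}\HO\!\left(\CM,\BD \Ho^{-n}\!p_!\ul\BQ_{\FM}[-\chi_{\mathscr{C}}(\cdot,\cdot)]\right)[-n].
\]
The normalisation by $[-\chi_{\mathscr{C}}(\cdot,\cdot)]$ is what places the extremal perverse piece in degree zero, and the one-sided vanishing of Theorem~\ref{thm:purity_relative}, transported through Verdier duality, is exactly what guarantees that the filtration begins in perverse degree zero, i.e.\ that only $n\geq 0$ occur; the precise weight and degree bookkeeping here is a routine (if delicate) computation that I would carry out rather than reproduce. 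Note that the graded pieces may themselves be impure mixed Hodge structures — as they must be in examples \ref{Betti_class} and \ref{mpreproj_class}, where $\HOBM(\FM,\BQ)$ is known to be impure — because $\CM\to\pt$ need not be proper; this causes no difficulty, since the filtration arises entirely from the split structure of $\BD p_!\ul\BQ_{\FM}$ on $\CM$, and impurity only enters at the final step of taking global sections.

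The main obstacle is precisely the definition issue flagged in the text: making sense of $p_!\ul\BQ_{\FM}$ as an object of the derived category of mixed Hodge modules, and matching its hypercohomology with the Borel--Moore homology of the stack, in the absence of a full six-functor mixed Hodge module formalism over stacks. The splitting forced by purity is what renders this tractable — it lets one bypass a direct construction of the complex — but the comparison between the synthetic direct sum and the genuinely-defined Borel--Moore homology, together with the verification that the two perverse filtrations agree, is the point that must be handled with care, leaning in each example on the étale-local preprojective models underlying Theorem~\ref{thm:purity_relative}.
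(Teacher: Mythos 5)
Your plan to \emph{define} $p_!\ul{\BQ}_{\FM}$ as the non-canonical sum $\bigoplus_i\Ho^i\!p_!\ul{\BQ}_{\FM}[-i]$ has two problems, one fixable and one not fixable by the means you propose. First, the vanishing bound of Theorem \ref{thm:purity_relative} is only one-sided: $\Ho^i\!p_!\ul{\BQ}_{\FM}$ vanishes for $i>\chi_{\mathscr{C}}(\CF,\CF)$ but is nonzero for infinitely many negative $i$ (already for $\pt/\BC^*$ the compactly supported cohomology is unbounded below), so your direct sum is \emph{not} an object of $\Db(\MHM(\CM))$, contrary to your claim that the vanishing bound makes it "legitimate"; one needs precisely the limit category $\Dblf(\MHM(\CM))$ of \S\ref{stack_di}, which you never invoke. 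Second, and more seriously, the synthetic definition transfers the entire content of the theorem into the comparison you defer to the end — matching the hypercohomology of the formal direct sum with the independently defined $\HOBM(\FM,\BQ)$ and its filtration — and the verification you sketch ("on the smooth atlas", or via the \'etale-local preprojective models) cannot close it: the local models of Theorem \ref{loc_str_thm} control $\Ho^i\!p_!\ul{\BQ}_{\FM}$ \'etale-locally on $\CM$ but say nothing about global hypercohomology, and Saito's theory has no descent along smooth atlases of stacks, so there is no independent object over $\FM$ to compare against.

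The paper avoids the synthetic step altogether: in each of the examples (1)--(5) the stack $\FM$ is a global quotient (the GIT presentations of \S\ref{preproj_sec} and \S\ref{MSCON}, the BNR/Nitsure construction for Higgs sheaves, and matrix presentations in the Betti and multiplicative cases), so $p_!\ul{\BQ}_{\FM}$ is honestly defined in $\Dblf(\MHM(\CM))$ by the finite-dimensional approximations $B_N\to\CM$ of \S\ref{heart_di}, and its hypercohomology computes $\HO_c(\FM,\BQ)$ \emph{by construction} — no comparison is needed. Purity of the complex is then literally termwise purity of the $\Ho^i$, which is Theorem \ref{loc_pur_thm} together with the compatibility (2) of Definition/Proposition \ref{dfp}. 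The perverse filtration is Proposition \ref{int_filt}: one truncates the approximation complexes by $\bm{\tau}^{\leq s}$ inside the bounded window $[n-d,n+d]$ of Remark \ref{CArem}, and purity supplies the splitting, hence the left inverses that make the maps $L^n_s\to\HO^n_c(\FM,\BQ)$ injective; the associated graded is \eqref{callback}, and the degree-zero start follows from the vanishing statement after dualising, exactly as you say. So your truncation-and-duality mechanism for the filtration agrees with the paper's, but the definitional step you treat as a deferred check is where the actual work lives, and the route you propose for it does not go through.
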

\sssct
Forgetting the extra structure that goes into a mixed Hodge module complex (i.e. remembering only the underlying complexes of constructible sheaves), Theorem \ref{ex_pur_thm} states that the Beilinson--Bernstein--Deligne--Gabber decomposition theorem \cite{MR751966} holds for the pair $(p\colon \FM\rightarrow \CM,\BQ_{\FM})$, despite the messy topology of $\FM$ and $p$.  In other words, there is a decomposition in the bounded above derived category of constructible sheaves on $\CM$,
\[
p_!\BQ_{\FM}\cong \bigoplus_{n\in\BZ}\bigoplus_{j\in J_{n}}\IC_{\ol{Z_j}}(\CL_j)[-n],
\]
where $Z_j$ are irreducible locally closed subvarieties of $\CM$ indexed by $J$, and $\CL_j$ are local systems on the varieties $Z_j$.  The theorem moreover states that this decomposition can be upgraded to a decomposition in the derived category of mixed Hodge modules, for which the summands appearing in the decomposition are simple.
\sssct
\label{phold}
We prove Theorem \ref{thm:purity_relative} by showing that the morphism to the good moduli space $p \colon \FM \to \CM$ is \'etale-locally modelled by the morphism $\JH_{\dd} \colon \FM_{\dd}(\Pi_{Q'}) \to \CM_{\dd}(\Pi_{Q'})$ for some quiver $Q'$ and dimension vector $\dd$.  
Since purity may be checked locally (see Lemma \ref{loc_pur_lem}), Theorem~\ref{thm:purity_relative} follows from purity in case (\ref{preproj_class}) of \S \ref{exam_2CY}:
\begin{theorem}[{\cite[Thm.~A]{preproj3}}]\label{preprojy}
    Let $Q$ be a quiver, and let $\mathbf{d}$ be a dimension vector.
    The complex of mixed Hodge modules,
    \begin{equation}
        \JH_{\dd,!} \ul\BQ_{\FM_{\dd}(\Pi_{Q})}
    \end{equation}
    is \textbf{pure}, i.e. $\Ho^i\!\JH_{\dd,!} \ul\BQ_{\FM_{\dd}(\Pi_{Q})}$ is pure of weight $i$.
\end{theorem}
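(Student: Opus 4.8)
The plan is to pass through cohomological Donaldson--Thomas theory, using dimensional reduction to trade the preprojective algebra for a quiver with potential, and then the cohomological integrality theorem to reduce purity of $\JH_{\dd,!}\ul\BQ_{\FM_\dd(\Pi_Q)}$ to purity of the associated BPS sheaves. First I would present $\Pi_Q$ as the Jacobi algebra $\Jac(\tilde Q,W)$ of the tripled quiver $\tilde Q$, obtained from the double $\ol Q$ by adjoining a loop $\omega_i$ at each vertex $i\in Q_0$, with potential $W=\sum_{i\in Q_0}\omega_i\big(\sum_{a\in Q_1}[a,a^*]\big)$. As $W$ is linear in the adjoined loops, the dimensional reduction isomorphism (in its mixed Hodge module refinement) identifies the vanishing cycle complex $\phi_{W}\ul\BQ$ on the stack $\FM_\dd(\tilde Q)$ of representations of $\tilde Q$ with a Tate twist and shift of $\ul\BQ_{\FM_\dd(\Pi_Q)}$, compatibly with the maps to the coarse moduli space of semisimple representations. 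This recasts the statement inside the DT theory of $(\tilde Q,W)$.

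Next I would invoke the cohomological integrality theorem of Davison--Meinhardt in its relative form over the coarse moduli space $\CM(\Pi_Q)=\bigsqcup_{\dd}\CM_\dd(\Pi_Q)$, yielding an isomorphism
\[
\bigoplus_{\dd}\JH_{\dd,!}\ul\BQ_{\FM_\dd(\Pi_Q)}\;\cong\;\Sym\!\left(\bigoplus_{\dd\neq 0}\DTS_\dd\otimes\HO^{*}(\BCC)\right),
\]
where $\DTS_\dd$ is the BPS sheaf supported on $\CM_\dd(\Pi_Q)$, the tensor factor $\HO^{*}(\BCC)$ records the $\BC^*$-automorphisms of polystable modules, and the symmetric algebra is formed with respect to the direct-sum morphism $\oplus\colon\CM\times\CM\to\CM$. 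Purity is preserved by external tensor product, by tensoring with the pure factor $\HO^{*}(\BCC)$, by pushforward along the \emph{finite} (hence weight-exact) morphism $\oplus$, and by the formation of symmetric powers; so the entire left-hand side is pure the moment each $\DTS_\dd$ is pure.

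The crux is therefore to establish purity of the BPS sheaves $\DTS_\dd$, and this is the step I expect to be the main obstacle. The goal is to show that $\DTS_\dd$ is a pure Hodge module, in fact a shifted semisimple complex built from intersection complexes on $\CM_\dd(\Pi_Q)$, which is then automatically pure by Saito's theory. Over the stable locus $\JH$ is a $\BCC$-gerbe over a smooth symplectic variety, so there $\DTS_\dd$ restricts to a shifted constant sheaf and purity is clear; the difficulty is the behaviour over the deeper strata where modules acquire many summands. To control this I would choose a generic King stability condition $\zeta$ and exploit the projective (partial) resolution $\CM^{\zeta\sstab}_\dd(\Pi_Q)\to\CM_\dd(\Pi_Q)$ together with the associated smooth stacks $\FM^{\zeta\sstab}_\dd(\Pi_Q)$ and Nakajima quiver varieties, whose intersection cohomology is pure. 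Since the pushforward of a pure complex along a proper map is pure by the Beilinson--Bernstein--Deligne--Gabber/Saito decomposition theorem, it remains to identify $\DTS_\dd$ as a direct summand of such a pushforward.

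Pinning down this identification uniformly in $\dd$ is the genuinely hard part: for imaginary roots the fibres of $\JH$ are positive-dimensional and the resolution need not be semismall, so the BPS sheaf is not visible as an obvious summand. I would handle this through the support lemma for BPS sheaves, which confines $\DTS_\dd$ to a single perverse degree, combined with an induction on $\dd$ internal to the integrality formalism: the recursion expressing $\JH_{\dd,!}\ul\BQ$ through the $\DTS_{\dd'}$ with $\dd'$ strictly smaller, together with purity already known over the open locus of $\CM_\dd$ parametrising representations with strictly fewer summands, forces purity of the top BPS generator. Once purity of every $\DTS_\dd$ is secured, the symmetric algebra structure propagates it back to $\JH_{\dd,!}\ul\BQ_{\FM_\dd(\Pi_Q)}$, completing the proof.
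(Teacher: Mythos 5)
Your skeleton---dimensional reduction via the tripled quiver, the relative cohomological integrality theorem over the coarse moduli space, and a support lemma for BPS sheaves on the $\Pi_Q[x]$ side---is precisely the route the paper attributes to \cite{preproj3} (the theorem is cited there, not reproved here), and the formal reductions you draw from it are sound: finiteness of $\oplus$, purity of $\HO(\BCC)$, and weight-exactness of $\Sym$ do reduce everything to purity of the BPS sheaves $\DTS_{\dd}$. The gap is in the step you yourself flag as the crux. Your induction rests on the claim that purity of $\DTS_{\dd}$ over the open locus of representations with strictly fewer summands ``forces'' purity of the whole object; but purity of a mixed Hodge module does not propagate from a dense open subvariety, since the weight filtration can have a lower-weight constituent supported on the closed complement. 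The paper's own Example \ref{qe2} exhibits exactly this: $\ul\BQ_Z[1]$ for the coordinate cross is perverse and pure away from the origin, yet has $\ul{\BQ}_0$ as a weight-$0$ subobject. Knowing that $\DTS_{\dd}$ sits in a single perverse degree does not exclude such an extension---purity is essentially equivalent to the semisimplicity you would need, so the induction is circular at the decisive point. Your fallback, realising $\DTS_{\dd}$ as a direct summand of a proper pushforward from $\CM^{\zeta\sstab}_{\dd}(\Pi_Q)$ or a Nakajima quiver variety, is likewise not available for imaginary, non-generic $\dd$: as you note the relevant maps fail to be semismall, no such summand identification is known, and the containment actually established in this paper (Theorem \ref{ICin}) runs in the opposite direction, with intersection complexes appearing \emph{inside} the zeroth perverse piece rather than the BPS sheaf appearing inside a resolution pushforward.

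The cited proof closes this step differently, as the paper sketches after Theorem \ref{pp3_thm}: one plays the two avatars of the same complex against each other. As the compactly supported pushforward $\JH_{\dd,!}\ul\BQ_{\FM_{\dd}(\Pi_Q)}$ of a constant sheaf, the complex satisfies a weight bound in one direction; via dimensional reduction it is also identified with $\widetilde{p}_!\phi_{\Tr(\widetilde{W})}$ on the stack of $\Pi_Q[x]$-modules, whose vanishing-cycle origin (self-duality up to twist) yields the opposite bound; comparing the two constraints forces purity, with the support lemma of \cite{2016arXiv160202110D} pinning down the BPS sheaves for $\fdmod^{\Pi_Q[x]}$ so that the integrality decomposition can be exploited. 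No generic stability condition, resolution, or open--closed induction enters. To repair your proposal, replace the Nakajima/induction paragraph with this two-sided weight comparison; the rest of your argument then goes through as in \cite{preproj3}.
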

The statement regarding nonvanishing of cohomological degrees of Theorem \ref{thm:purity_relative} is likewise local, and is proved by comparison with the category of representations of preprojective algebras.
\smallbreak
The direct image of a pure complex of mixed Hodge modules along a projective morphism is pure, by Saito's version of the decomposition theorem.  So if the moduli space of objects $\CM$ is projective, and we can define $p_!\underline{\BQ}_{\FM}$ in the derived category of mixed Hodge modules, then Theorem~\ref{thm:purity_relative} implies purity of $\HO_c(\FM,\BQ)$.  This concludes case (\ref{K3_class}) of Theorem \ref{thm:purity_absolute} in the case of projective $S$.
For case (\ref{Higgs_class}), we employ the natural $\BC^*$-action scaling the Higgs field to complete the argument.
\subsection{Formality}
The statement in \S \ref{phold} regarding \'etale neighbourhoods requires the following formality statement.  Firstly, given an object $\mathcal{F}$ in a category $\mathscr{C}$, we call $\mathcal{F}$ a $\Sigma$\textit{-object} if 
\[
\dim(\Ext^i(\mathcal{F},\mathcal{F}))=\begin{cases} 1& \textrm{if }i=0,2\\
2g& \textrm{for some }g\in\mathbb{Z}_{\geq 0}\textrm{ if }i=1
\\
0&\textrm{otherwise.}\end{cases}
\]
We call a collection $\mathcal{F}_1,\ldots,\mathcal{F}_r$ a $\Sigma$\textit{-collection} if each $\mathcal{F}_t$ is a $\Sigma$-object, and $\Hom(\mathcal{F}_m,\mathcal{F}_n)=0$ for $m\neq n$.  The terminology is inspired by \cite{ST04,HT06}: i.e. if we modify our conditions on the dimensions of the Yoneda algebras of the $\mathcal{F}_i$ we would instead have the definition of a spherical/$\mathbb{P}^n$-collection, but because of the degree 1 extensions the Yoneda algebra of a single $\Sigma$-object resembles the cohomology ring of a Riemann surface $\Sigma_g$, possibly with $g>0$, as opposed to that of $\mathbb{P}^n$ or a sphere.
\begin{theorem}
\label{formality_thm}
Let $\mathcal{F}_1,\ldots,\mathcal{F}_r$ be a $\Sigma$-collection in a left 2CY category $\mathscr{C}$.  Then the full dg-subcategory of $\mathscr{C}$ containing $\mathcal{F}_1,\ldots,\mathcal{F}_r$ is formal.
\end{theorem}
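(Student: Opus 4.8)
The plan is to establish formality of the dg-endomorphism algebra $A := \REnd_{\mathscr{C}}(\bigoplus_i \mathcal{F}_i)$ by exploiting the rich structure that the left 2CY property imposes on its cohomology, namely a cyclic (Calabi–Yau) pairing of degree $2$, and then invoking a Koszulity/formality criterion for such algebras. The cohomology $H^*(A) = \Ext^*_{\mathscr{C}}(\bigoplus_i \mathcal{F}_i, \bigoplus_i \mathcal{F}_i)$ is, by the $\Sigma$-collection hypothesis, concentrated in degrees $0,1,2$, with $H^0$ semisimple (a product of copies of $\mathbb{C}$, one per $\mathcal{F}_i$, since $\Hom(\mathcal{F}_m,\mathcal{F}_n)=0$ for $m\neq n$), $H^1$ of total dimension $2g$ on each diagonal block, and $H^2$ one-dimensional on each diagonal block. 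The left 2CY structure furnishes a nondegenerate symmetric pairing $H^i \otimes H^{2-i} \to \mathbb{C}$, which in particular identifies $H^2 \cong (H^0)^\vee$ and puts a symplectic form on $H^1$. This is exactly the shape of the cohomology ring of a disjoint union of genus-$g$ surfaces, and my first step is to pin down this cyclic $A_\infty$-structure on $H^*(A)$ transported via homological perturbation.

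**Next I would** argue that all higher $A_\infty$-products $m_n$ ($n\geq 3$) on $H^*(A)$ must vanish, which is the heart of formality. The decisive input is degree/weight bookkeeping together with the cyclic symmetry. Each $m_n \colon (H^*)^{\otimes n}\to H^*$ has cohomological degree $2-n$, so for it to land in the narrow window $\{0,1,2\}$ of $H^*$ severely constrains the possible inputs; since the total cohomology lives only in degrees $0,1,2$ and $H^0$ acts as the identity (being the semisimple base, products with $H^0$ factors reduce by the unit axiom), the only potentially nonzero higher products pair together elements of $H^1$ and $H^2$. A careful count shows that any such $m_n$ with $n\geq 3$ would have to produce, via the cyclic pairing, a nonzero $(n+1)$-ary invariant $\langle m_n(-,\dots,-),-\rangle$ on $(H^{\geq 1})^{\otimes(n+1)}$ of total degree exceeding what the one-dimensional top piece $H^2$ can support. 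I would make this precise by checking the weight grading: under the mixed Hodge / weight structure available in the 2CY setting, $H^1$ carries weight $1$ and $H^2$ weight $2$ (so that the pairing is weight-homogeneous of weight $2$), and a nonzero higher product would violate weight-additivity, forcing $m_n = 0$ for $n\geq 3$.

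**The main obstacle** I anticipate is \emph{not} the vanishing count itself but justifying that the transported $A_\infty$-structure can be taken cyclic (compatible with the 2CY pairing) so that the cyclic-invariance argument is even available — transferring a cyclic/Calabi–Yau structure along a homotopy retraction requires a cyclic version of the homotopy transfer theorem (as in Kontsevich–Soibelman or Kajiura), and one must verify that the left 2CY hypothesis on $\mathscr{C}$ indeed endows $A$ with a cyclic structure of the right degree. Once cyclicity is in hand, the degree and weight constraints close the argument cleanly; without it, one could in principle have a nonzero $m_3$ of degree $-1$ sending $H^1\otimes H^1\otimes H^1 \to H^2$, and ruling this out by hand is exactly where cyclic symmetry (forcing the associated $4$-point invariant on $(H^1)^{\otimes 4}$ to be graded-symmetric and of the wrong weight) does the work. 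I would therefore structure the proof as: (i) record the cyclic $A_\infty$-minimal model of $A$ via cyclic homotopy transfer; (ii) use the $\Sigma$-collection dimensions plus the weight grading to enumerate the few candidate higher products; (iii) invoke cyclic invariance and weight-additivity to kill each of them, concluding $A \simeq H^*(A)$ as $A_\infty$-algebras, hence that the subcategory is formal.
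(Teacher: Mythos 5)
There is a genuine gap, and it sits exactly where you located the difficulty. Your steps (i) and the degree bookkeeping in (ii) are sound and match the paper's strategy: the paper obtains the cyclic structure not by cyclic homotopy transfer but by first using Proposition~\ref{BDthm} (Brav--Dyckerhoff) to induce a \emph{right} 2CY structure on the locally proper subcategory, passing to a Kadei\v{s}hvili minimal model, and then invoking Lemma~\ref{ChoLemma} (Cho's noncommutative Darboux-type lemma) to make the pairing strict, i.e.\ genuinely cyclic. But your mechanism for killing the residual higher products fails on two counts. First, the weight argument is unavailable: the theorem is stated for an arbitrary left 2CY category over a field of characteristic zero, where $\Ext^{\bullet}(\CF,\CF)$ carries no mixed Hodge or weight structure (think of $\Pi_Q$-modules, multiplicative preprojective algebras, or the Betti case $k[\pi_1(\Sigma_g)]$, where the paper stresses that the relevant cohomology is \emph{not} pure); and even in geometric instances, weight-additivity of $m_n$ would require the entire $A_\infty$-structure to consist of morphisms of mixed Hodge structures, an unjustified extra claim. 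Second, your degree count is slightly off in the decisive case: for all inputs of degree $1$, the output of $m_n$ has degree $n+(2-n)=2$, which is \emph{exactly} supported by the one-dimensional $H^2$ --- so neither degree reasons nor cyclic invariance alone (a nonzero cyclically invariant tensor on $(H^1)^{\otimes(n+1)}$ is perfectly possible on a symplectic vector space) rules out these products, for any $n\geq 3$.

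The missing idea is \emph{strict unitality in the cyclic setting}, and it is the whole point of the paper's Proposition~\ref{StrictProp} (strictification of cyclic units, proved via noncommutative differential calculus and Hamiltonian automorphisms $e^{\{S,-\}}$ preserving the symplectic form; the paper notes this has no prior treatment in the literature). Once one has a model that is simultaneously minimal, cyclic, and strictly unital, Lemma~\ref{rigLem} closes the argument with a one-line trick you never perform: since $m_n(a_n,\ldots,a_1)\in H^2(i,i)\cong k$ is a scalar multiple of the pairing-dual of $1_i$, one computes
\[
\langle m_n(a_n,\ldots,a_1),\,1_i\rangle \;=\; \pm\,\langle m_n(a_{n-1},\ldots,a_1,1_i),\,a_n\rangle \;=\;0,
\]
where the first equality is cyclic invariance (rotating the unit into an input slot of $m_n$) and the second is strict unitality ($m_n$ vanishes on any unit input for $n\geq 3$). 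Your proposal uses the unit axiom only to restrict inputs to $H^{\geq 1}$, which is insufficient; the unit must also be rotated through the \emph{output} via the pairing, and for that one needs strict units compatible with the cyclic pairing --- precisely the nontrivial strictification step your outline omits and the weight argument cannot replace.
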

We refer the reader to \S \ref{LeftRight} for the exact definition of the ``left'' Calabi--Yau condition, and for now just remark that all of the examples introduced in \S \ref{glob_intro} are left 2CY categories.
\smallbreak
Theorem \ref{formality_thm} has many antecedents (at least if $\CF_1,\ldots,\CF_r$ belong to the natural heart of a given 2CY category) going back more than forty years in the case of nonabelian Hodge theory (e.g. see \cite{Del75,Go88}).  For each of the areas of mathematics that 2CY categories appear in, it seems that there is a parallel literature devoted to this problem.  We relate Theorem \ref{formality_thm} to previous work in \S \ref{prev_work_sec}.  The above formality theorem shows that all of these formality results are special cases of a very general statement in 2CY categories, and in each instance can be extended to arbitrary $\Sigma$-collections of complexes in these categories.

\subsection{New perverse filtrations in nonabelian Hodge theory}
\label{perv_intro}
One of our motivations for extending the BBDG decomposition theorem to 2CY categories comes from its prominent role in the study of moduli spaces of Higgs bundles, and in particular the construction there of perverse filtrations.  For the purposes of this discussion, let us assume that $g\geq 2$, where $g$ is the genus of the smooth projective connected complex curve $C$.  We will also work in the derived category of constructible sheaves, as opposed to mixed Hodge modules, for simplicity of exposition.  
\sssct
\label{smooth_case}
We assume throughout \S \ref{perv_intro} that $r\geq 1$, and for a warmup we assume in \S \ref{smooth_case} that $(r,d)=1$.  Recall that there is a projective morphism defined by Hitchin \cite{Hit87}:
\[
\Hit\colon \CM^{\Dol}_{r,d}(C)\rightarrow \Lambda_r\coloneqq \bigoplus_{m= 1}^r \HO^0(C,\omega_C^{\otimes m})
\]
which records the eigenvalues of $\phi$ for a semistable Higgs bundle $(\CF,\phi)$ of rank $r$ and degree $d$.  Since the morphism $\Hit$ is projective and $\CM^{\Dol}_{r,d}(C)$ is smooth, the BBDG decomposition theorem implies that there exist isomorphisms
\begin{align}
    \label{half_decomp}\Hit_*\!\BQ_{\CM^{\Dol}_{r,d}(C)}\cong&\bigoplus_{n\in\BZ}\pvrs\Ho^n\!\left(\Hit_*\!\BQ_{\CM^{\Dol}_{r,d}(C)}\right)[-n]\\
    \pvrs\Ho^n\!\left(\Hit_*\!\BQ_{\CM^{\Dol}_{r,d}(C)}\right)\cong &\bigoplus_{j\in J_{r,d,n}}\IC_{\ol{Z_j}}(\CL_j)\label{full_decomp}
\end{align}
for $Z_j$ a collection of locally closed irreducible subvarieties of $\CM^{\Dol}_{r,d}(C)$, and $\CL_j$ a collection of simple local systems on the $Z_j$ (see \cite{CHM17} for a more explicit description of the decompositions in the coprime case).  While the first decomposition \eqref{half_decomp} is not prima facie canonical, the objects
\[
\bigoplus_{n\leq j}\pvrs\Ho^n\!\left(\Hit_*\!\BQ_{\CM^{\Dol}_{r,d}(C)}\right)[-n]
\]
are, and so taking hypercohomology, we obtain the \textit{perverse filtration} of $\HO(\CM^{\Dol}_{r,d}(C),\BQ)$ by subspaces
\begin{equation}
\label{ent_per}
P_j\coloneqq\HO\!\left(\Lambda_r,\bigoplus_{n\leq j}\pvrs\Ho^n\!\left(\Hit_*\!\BQ_{\CM^{\Dol}_{r,d}(C)}\right)[-n]\right).
\end{equation}
This perverse filtration has been the object of intense study for some time, in particular as a result of the P=W conjecture of de Cataldo, Hausel and Migliorini \cite{dCHM12} (see e.g. \cite{deC19, deC20, CHS20} for recent work on this conjecture, and see \cite{MaSh22,HMMS22} for more recent independent proofs of the conjecture, as well as \cite{MSY23b}).  This theorem relates the perverse filtration on $\HO(\CM^{\Dol}_{r,d}(C),\BQ)$ with the weight filtration on $\HO(\CM^{\Betti}_{g,r},\BQ)$ under the isomorphism in cohomology induced by the diffeomorphism
\[
\Phi\colon \CM^{\Betti,\twist}_{g,r}\rightarrow \CM^{\Dol}_{r,d}(C).
\]

Here, $\CM^{\Betti,\twist}_{g,r}$ is the twisted character variety, the moduli space of representations of the fundamental group of the punctured curve $C'=C\setminus \{x\}$ with monodromy around the puncture $x$ given by multiplication by $e^{2\pi \sqrt{-1}d/r}$.  The diffeomorphism $\Phi$ is the morphism constructed by Hitchin \cite{Hit87b}, Donaldson \cite{Don87} and Corlette \cite{Cor88}, underlying the nonabelian Hodge correspondence.
\sssct
We will be most interested in the case for which $r$ and $d$ are \textit{not} coprime.  Then the moduli space $\CM^{\Dol}_{r,d}(C)$ is no longer smooth, and the decomposition theorem no longer applies to $\Hit_*\!\BQ_{\CM^{\Dol}_{r,d}(C)}$.  One standard remedy for this (pursued in e.g. \cite{Fel18,FeMa20,MM21}) is to replace $\BQ_{\CM^{\Dol}_{r,d}(C)}$ by the intersection complex $\IC_{\CM^{\Dol}_{r,d}(C)}(\BQ)$.  Then by the decomposition theorem for intersection complexes, we have decompositions analogous to \eqref{half_decomp} and \eqref{full_decomp}
\begin{align*}
    \Hit_*\!\IC_{\CM^{\Dol}_{r,d}(C)}(\BQ)\cong&\bigoplus_{n\in\BZ}\pvrs\Ho^n\!\left(\Hit_*\!\IC_{\CM^{\Dol}_{r,d}(C)}(\BQ)\right)[-n]\\
    \pvrs\Ho^n\!\left(\Hit_*\!\IC_{\CM^{\Dol}_{r,d}(C)}(\BQ)\right)\cong &\bigoplus_{j\in J'_{r,d,n}}\IC_{\ol{Z_j}}(\CL_j)
\end{align*}
and a perverse filtration analogous to \eqref{ent_per}
\[
P^{\IC}_j=\HO\!\left(\Lambda_r,\bigoplus_{n\leq j}\pvrs\Ho^n\!\left(\Hit_*\!\IC_{\CM^{\Dol}_{r,d}(C)}(\BQ)\right)[-n]\right)
\]
of the intersection cohomology $\ICA(\CM^{\Dol}_{r,d}(C),\BQ)$.
\smallbreak
A second new feature of the non-coprime case is that the stack $\FM^{\Dol}_{r,d}(C)$ is no longer a $\BC^*$-gerbe over the coarse moduli space, and has quite different topology.  In addition, it is also highly singular, and so there is a difference between the Borel--Moore homology (i.e. the derived global sections of the dualizing complex $\BD\BQ_{\FM^{\Dol}_{r,d}(C)}$, or equivalently the dual of the compactly supported cohomology) and the singular cohomology of the stack.  It turns out that the Borel--Moore homology is a more well-behaved invariant, since it is more sensitive to the singularities, and is motivic.  
\smallbreak
A consequence of Theorem~\ref{thm:purity_relative} is that there are decompositions analogous to \eqref{half_decomp} and \eqref{full_decomp}
\begin{align*}
    (\Hit \!p)_*\BD\BQ_{\FM^{\Dol}_{r,d}(C)}\cong&\bigoplus_n\pvrs\Ho^n\!\left((\Hit\! p)_*\BD\BQ_{\FM^{\Dol}_{r,d}(C)}\right)[-n]\\
    \pvrs\Ho^n\!\left((\Hit \!p)_*\BD\BQ_{\FM^{\Dol}_{r,d}(C)}\right)\cong &\bigoplus_{j\in J''_{r,d,n}}\IC_{\ol{Z_j}}(\CL_j)
\end{align*}
so that we can define a perverse filtration of $\HO^{\BM}(\FM^{\Dol}_{r,d}(C),\BQ)$ by subspaces
\[
P^{\stacky}_j\coloneqq \HO\!\left(\Lambda_r,\bigoplus_{n\leq j}\Ho^n((\Hit \!p)_*\BD\BQ_{\FM^{\Dol}_{r,d}(C)})[-n]\right).
\]
This filtration generalises the perverse filtration on the cohomology of Higgs moduli spaces from the case of coprime $r,d$.  
\sssct
In general, and in contrast with the non-stacky/coprime case, there is no construction via the classical nonabelian Hodge homeomorphism of an isomorphism between the Borel--Moore homology of the Betti moduli stack and the Dolbeault moduli stack.  However it is suggested by \cite{PS19} that there should be an isomorphism after passing to the associated graded object of filtrations on at least one side of the nonabelian Hodge correspondence.  It is certainly tempting to speculate that this filtration is provided by the perverse filtration (with respect to the morphism $p$); see \S \ref{Betti_sec} for further comments, along with a conjecture making this speculation concrete, and see \cite{DHSM22} for a proof of this conjecture, using the results of the present paper.

\sssct
\label{2per}
We can relate the perverse filtration for $\HO^{\BM}(\FM^{\Dol}_{r,d}(C),\BQ)$ to the above standard remedy (involving intersection complexes) via another application of Theorem \ref{thm:purity_relative}, and the study of \textit{supports}.  In a little more detail, Theorem \ref{thm:purity_relative} yields a decomposition
\begin{equation}
\label{PDdec}
p_*\BD\BQ_{\FM^{\Dol}_{r,d}(C)}\cong\bigoplus_{n\in\BZ}\bigoplus_{j\in K_{r,d,n}}\IC_{\ol{Z_j}}(\CL_j)[-n]
\end{equation}
where $K_{r,d,n}$ are indexing sets, $Z_j$ are locally closed subspaces of $\CM^{\Dol}_{r,d}(C)$ and $\CL_j$ are simple local systems on them.  One may show, using Theorem \ref{ICin}, that a shift of the summand $\IC_{\CM^{\Dol}_{r,d}(C)}(\BQ)$ appears in this decomposition, so that there is an inclusion (up to overall shifts) of pure Hodge structures
\[
\ICA(\CM^{\Dol}_{r,d}(C),\BQ)\subset \HO^{\BM}(\FM^{\Dol}_{r,d}(C),\BQ).
\]
Moreover it then follows that there is a precise relation between the perverse filtrations:
\[
P_n^{\stacky}\cap \ICA(\CM^{\Dol}_{r,d}(C),\BQ)=P^{\IC}_n.
\]
See \S \ref{Higgs_sec} for the details, incorporating all of the correct shifts and the upgrade to mixed Hodge structures.

\subsection{Cuspidal cohomology and the source of purity}

The appearance of the summand $\IC_{\CM^{\Dol}_{r,d}(C)}(\BQ)$ in the decomposition \eqref{PDdec} is part of a general result for 2-Calabi--Yau categories $\mathscr{C}$ (Theorem \ref{ICin}).  This result is best viewed from the point of view of geometric representation theory.  It implies not just that the intersection cohomology of components of the coarse moduli space containing stable objects appear naturally as summands of the BPS algebra for $\mathscr{C}$, but moreover they appear as a canonical subspace of generators for this algebra.  We call this subspace \textit{cuspidal cohomology}, reflecting the close connection (via the Langlands correspondence) with geometrically cuspidal functions on $\mathrm{Bun}_{r,d}(C)$ for $C$ a smooth projective curve over $\mathbb{F}_q$. See \cite{Dr81,Ko09,De15,BoSc19,Yu23} for background and recent developments on cuspidal functions, and see \S \ref{cu_sec} for detailed discussion and results on cuspidal cohomology and BPS algebras.
\smallbreak
This connection between nonabelian Hodge theory and geometric representation theory offers a partial answer to the question: Where does all this purity come from?  For $(r,d)>1$ the stack $\FM^{\Dol}_{r,d}(C)$ is not amenable to either of the two standard arguments for proving purity, i.e. there is no known stratification into pieces that are manifestly pure, nor does it submit to the standard ``sandwich'' argument for proving purity of the Borel--Moore homology in the $(r,d)=1$ case.  Purity of the stack $\FM_{\mathbf{d}}(\Pi_Q)$ is unexpected for precisely the same reasons, but follows from a new type of argument.  Namely, by the cohomological integrality/PBW theorem \cite{BenSven} in Donaldson--Thomas theory, the Borel--Moore homology of $\FM_{\mathbf{d}}(\Pi_Q)$ is built up out of summands of tensor powers of ``BPS cohomology'' for $\mathbf{d}'\lvert \mathbf{d}$ and tautological classes in $\HO(\pt/\BC^*,\BQ)$, and it is possible to show that this BPS cohomology is pure \cite{Da18}.  
\smallbreak
The ultimate reason for purity in the case of, say, $\FM^{\Dol}_{r,d}(C)$ appears to be similar.  Via the cohomological integrality theorem in Donaldson--Thomas theory, the Borel--Moore homology of this stack should be thought of as a Yangian algebra associated to a BPS Lie algebra, a subspace of the zeroth piece of the perverse filtration that we construct in this paper.  This Yangian is built out of tensor powers of the BPS Lie algebra and tautological classes.  The BPS Lie algebra in turn is the positive half of a generalised Kac--Moody Lie algebra generated by cuspidal cohomology, which we show is isomorphic to the intersection cohomology of moduli spaces $\CM^{\Dol}_{r/n,d/n}(C)$, with $n=\mathrm{gcd}(r,d)$; this generation result is a special case of a result proved recently for very general 2CY categories in \cite{DHSM22,DHSM23}, building on the present paper.  This intersection cohomology \textit{is} pure by the sandwich argument (see \S \ref{Higgs_sec} and \S \ref{cu_sec}), and this purity transmits all the way to $\HO^{\BM}(\FM^{\Dol}_{r,d}(C),\BQ)$.  The same narrative applies to the purity of $\FM^H_{\nu}(S)$ for $S$ a K3 or Abelian surface.
\subsection{Structure of the paper}
In \S\ref{sec:mhm} we collect some facts from the theory of mixed Hodge structures and mixed Hodge modules, as well as describing the extension of the theory to a certain class of stacks including all of the stacks mentioned in \S \ref{glob_intro}.  In \S\ref{sec:quivers} we fix some quiver notation, and discuss Theorem \ref{preprojy}.  \S\ref{sec:2CY_local} has the dual purpose of collecting all of the category theory that we will need in the paper, and establishing formality in general 2CY categories.
\smallbreak
In \S\ref{local_struc} we combine recent results on the local structure of Artin stacks with formality to give an \'etale local description of good moduli spaces $p\colon \FM\rightarrow \CM$ of objects in 2CY categories.  \S\ref{sec:purity_perverse} contains the main general results of the paper, where we combine all of the material from the previous sections to prove results on mixed Hodge structures and perverse filtrations, as well as a support theorem that will be the key to defining cuspidal cohomology.  Along the way we give a more general definition of $\Ho^n\!p_!\underline{\BQ}_{\FM}$, with fewer restrictions on the stack $\FM$ than in \S\ref{sec:mhm}.  
\smallbreak
In the longest section, \S \ref{sec:applications}, we develop applications of our main results to each of the 2CY categories introduced in \S \ref{glob_intro}.  Additionally, we extend the results to some 2CY categories without good moduli spaces \S \ref{gen_sheaves}, give the definition and main theorem regarding cuspidal cohomology \S \ref{cu_sec}, explain connections with nonabelian Hodge theory for stacks \S \ref{Betti_sec} and intersection cohomology \S \ref{cpf}, and finally discuss more exotic 2CY categories coming from the study of Kuznetsov components \S \ref{sec:Kuznetsov_components}.  The paper finishes with the appendix, concerning the determination of coherent completions (in the sense of Alper, Hall and Rydh) in terms of Maurer--Cartan stacks.
\smallbreak
The paper is arranged in order of logical flow.  As a result of this, it may well be that the most reasonable way to read it for the working mathematician who cares first and foremost about, say Higgs bundles, or cohomological Hall algebras, or K3 surfaces, is to first read the introduction, and then skip straight to \S \ref{sec:purity_perverse}, or even the relevant part of \S \ref{sec:applications}, before reading the sections in-between if desired.

\subsection{Notations and conventions}
\begin{itemize}
\item
If an algebraic group $G$ acts on a scheme $X$, we will denote by $X/G$ the stack-theoretic quotient, and by $X/\!\!/G$ the categorical quotient.  If $x\in X$ is a geometric point, we will abuse notation by denoting by $x$ also the induced points of $X/G$ and $X/\!\!/G$.  For $\chi$ a linearisation of the $G$ action, we denote by $X/\!\!/_{\chi} G$ the GIT quotient.
\item
If $\mathcal{F}$ is an object of a triangulated category with an agreed-upon t-structure, we set $\Ho^i(\mathcal{F})=(\bm{\tau}^{\leq i}\bm{\tau}^{\geq i}\mathcal{F})[i]$.
\item
All direct and inverse image functors are considered as functors between derived categories or their dg enhancements unless explicitly stated otherwise.  
\item
Where a triangulated category is denoted by the symbols $\Dub^{?}(\ldots)$, we denote its dg enhancement by the symbols $\Dub_{\dg}^?(\ldots)$.
\item
For $X$ a finite type scheme or stack, we denote by $\BQ_X$ the constant constructible sheaf on $X$ with stalks $\BQ$, and by $\ul{\BQ}_X\in\Ob(\Db(\MHM(X)))$ its upgrade to a complex of mixed Hodge modules.
\item
Given algebras or categories $A$ and $B$, we denote by $\Mod_{A}$ the category of right $A$-modules, $\Mod^{A}$ the category of left $A$-modules, and $\Mod_{B}^{A}$ the category of $(A,B)$-bimodules\footnote{Here is a mnemonic for remembering the roles of the subscript/superscript.  Algebras act \textit{on} modules, so one should perhaps have both $A$ and $B$ \textit{above} $\Mod$.  This can be achieved by rotating $\Mod^A_B$ through ninety degrees, at which point $A$ is on the left, and $B$ is on the right.}.  We denote by $\fdmod_A,\fdmod^A,\fdmod_A^B$ the full subcategories containing modules for which the underlying vector space is finite-dimensional.
\item

If $\mathcal{F}$ is an object of a $k$-linear triangulated category $\mathscr{C}$, we denote by
\[
\chi_{\mathscr{C}}(\mathcal{F},\mathcal{F})=\sum_{n\in\mathbb{Z}}(-1)^n\dim_k (\Hom(\mathcal{F},\mathcal{F}[n]))
\]
the Euler form of $\CF$ with itself.
\item
If $\mathcal{F}$ is a cohomologically graded object in a tensor category $\mathscr{A}$, whenever we take the free commutative algebra $A=\Sym(\CF)$ generated by $\mathcal{F}$ we impose the Koszul sign rule.  So for example if $\mathcal{F}$ is a cohomologically graded vector space, and $a,b\in \mathcal{F}$, then in $A$
\[
a\cdot b=(-1)^{\lvert a\lvert \lvert b\lvert}b\cdot a.
\]
\item
We switch between cohomological and homological gradings by putting $\mathcal{F}^{i}=\mathcal{F}_{-i}$.
\item
By ``variety'' we mean a finite type reduced separated scheme $X$.  In particular we do not assume that $X$ is irreducible.  
\item
For $X$ a complex variety or stack, we write
\[
\HO\!\left(X,\BQ\right)\coloneqq \HO\!\left(X_{\mathrm{an}},\BQ\right);\quad \quad \HO_c\!\left(X,\BQ\right)\coloneqq \HO_c\!\left(X_{\mathrm{an}},\BQ\right);\quad\quad \HO_i^{\BM}\!\left(X,\BQ\right)\coloneqq \HO^i_c\!\left(X,\BQ\right)^{\vee}.
\]
\end{itemize}
\subsection{Acknowledgements}
This research was supported by the grant “Categorified Donaldson–Thomas theory” No. 759967 of the European Research Council, and by a Royal Society university research fellowship.  I would like to thank Lino Amorim, Victor Ginzburg, Daniel Halpern-Leistner, Maxim Kontsevich, Francesco Sala, Olivier Schiffmann, Travis Schedler, Sebastian Schlegel--Mejia, Nick Sheridan, and Yan Soibelman for helpful conversations and suggestions.  Special thanks go to Jon Pridham for patiently explaining DAG, and Carlos Simpson for pointing out an illuminating subtlety regarding the nonabelian Hodge correspondence for stacks.  In addition I would like to thank Sjoerd Beentjes for useful discussions and suggestions regarding an earlier draft, and for suggesting the application to Kuznetsov components.  Finally, special thanks go to the anonymous referee for a very careful reading, along with insightful suggestions that led to a strengthening of the results in \S \ref{CSsec} and \S \ref{gen_sheaves}.

\section{Hodge theory}
\label{sec:mhm}
\subsection{Mixed Hodge structures}
\label{sec:mixed_hodge_structures}
A pure $\BQ$-Hodge structure of weight $n$ on a finite-dimensional $\BQ$-vector space $V$ is a descending filtration of $V_{\BC}=V\otimes\BC$ by $\BC$-subspaces $F^pV_{\BC}$ for $p\in\BZ$ such that for all $r\in\mathbb{Z}$ we have $V_{\BC}=\overline{F^rV_{\BC}}\oplus F^{n+1-r}V_{\BC}$.  A mixed Hodge structure on a finite-dimensional $\BQ$-vector space $V$ is given by an ascending filtration $W_{\bullet}V$ of $V$, again indexed by the integers, along with a descending filtration $F^{\bullet}V_{\BC}$ of the complexification, such that for each $n\in\BZ$, the filtration induced by $F^{\bullet}$ on $(\Gr^W_{\bullet}\!V)_{\mathbb{C}}$ is a pure mixed Hodge structure of weight $n$.  By classical Hodge theory, if $X$ is a smooth projective complex variety, the singular cohomology groups $\HO^n(X,\BQ)$ carry pure Hodge structures of weight $n$.  
\subsubsection{}
Given a finite type complex scheme $X$, Deligne \cite{DelII, DelIII} defined a mixed Hodge structure on the singular cohomology $\HO(X,\BQ)$ and the compactly supported cohomology $\HO_c(X,\BQ)$.  This is all achieved by reducing to the case of smooth projective schemes, via the study of mixed Hodge structures on simplicial schemes satisfying cohomological descent (see \cite{DelIII} or \cite[Sec.5]{PS08}).  Considering a finite type stack $\FX$ as a simplicial scheme via the nerve construction applied to a smooth atlas of $\FX_{\mathrm{red}}$, Deligne's construction thus endows the singular cohomology of a complex finite type stack with a mixed Hodge structure.  This is exploited in \cite{DelIII} to define and study the mixed Hodge structure on $\HO(\pt/G,\BQ)$ for certain affine groups $G$.   There does not seem to be a straightforward adaptation in the literature of Deligne's theory to deal with compactly supported cohomology of simplicial schemes or stacks.  In the next section we will address this via a kind of generalised Borel construction\footnote{Though it should be noted that we are still following Deligne in a sense, since we are extending the observation of \cite{DelIII} that the Hodge structure on the cohomology of $\HO(\pt/\BC^*,\BQ)$ can be approximated by the Hodge structure on $\HO(\BP^N,\BQ)$ for $N\gg 0$.}, which is well-adapted to the more general theory of mixed Hodge modules.
\smallbreak
Denoting by $p\colon X\rightarrow \pt$ the structure morphism of the scheme $X$, and bearing in mind that the category of constructible sheaves on a point is isomorphic to the category of vector spaces, we may express Deligne's theorem by saying that 
\begin{align}
\label{tende}
\Ho^n((X\xrightarrow{p}\pt)_*\BQ_X) \in\Ob(\Vect),&&
\Ho^n((X\xrightarrow{p}\pt)_!\BQ_X) \in \Ob(\Vect)
\end{align}
carry mixed Hodge structures, which are the usual pure weight $n$ mixed Hodge structure on these vector spaces if $X$ is smooth and projective.
\subsubsection{}
We fix 
\[
\LLL\coloneqq \HO_c(\BA^1,\BQ).
\]
This is a one-dimensional complex of pure Hodge structures of weight $2$, concentrated in cohomological degree $2$.  Similarly, we define $\LLL^{-1}\coloneqq \HO_c(\BA^1,\BQ)^{\vee}$ to be the dual.  Explicitly, this is a one-dimensional complex of pure Hodge structures of weight $-2$, concentrated in cohomological degree $-2$.  We define $\LLL^r$ for arbitrary $r\in\BZ$ by taking tensor powers of these complexes.

\subsection{Mixed Hodge modules}
\label{sec:algebraic_MHMs}
In this paper we will make extensive use of the theory of algebraic mixed Hodge modules, as developed in \cite{Sai88, Sai90}.  This is an extremely powerful generalisation of the theory of mixed Hodge structures, where in \eqref{tende} we may relax the condition that the target of $p$ is a point, and allow for more interesting coefficient sheaves than the constant sheaf $\BQ_X$.  For each variety $X$, Saito constructs a category of mixed Hodge modules on $X$, with functors between these categories lifting the usual six functors of the (derived) category of analytically constructible sheaves on these varieties.  For the reader that would like to gain more familiarity with this theory than the bare bones presented here, but without necessarily working through the references above, we recommend \cite{Sai89,Sai16,Schnell} as excellent introductions.

\subsubsection{}
Given a variety $X$, we denote by $\MHM(X)$ the category of mixed Hodge modules on $X$, and by $\Db(\MHM(X))$ the bounded derived category.  There are faithful functors
\[
    \rat\colon\MHM(X)\rightarrow \Perv(X);\quad\quad
    \rat_W\colon\MHM(X)\rightarrow \Perv_{\mathrm{Filt}}(X)
\]
taking a mixed Hodge module to its underlying perverse sheaf, or perverse sheaf equipped with the weight filtration, respectively.  In particular, the natural heart of the category $\Db(\MHM(X))$ corresponds to the heart of the perverse t-structure on the bounded derived category of analytically constructible sheaves on $X$.  One may identify $\MHM(\pt)$ with the full subcategory of the category of $\BQ$-Hodge structures that are graded-polarisable\footnote{See \cite[Sec.3.3]{PS08} for the definition of polarisation, that we will not use.}.

\smallbreak
If $f\colon X\rightarrow Y$ is a morphism of separated finite type schemes, we denote by 
\begin{align*} f_!,f_*\colon& \Db(\MHM(X))\rightarrow \Db(\MHM(Y))\\
f^!,f^*\colon& \Db(\MHM(Y))\rightarrow \Db(\MHM(X))
\end{align*}
the (exceptional, respectively usual) direct and inverse image functors at the level of derived categories.  We denote by
\begin{align*}
\Ho^i\!f_!,\Ho^i\!f_*\colon &\Db(\MHM(X))\rightarrow \MHM(Y)\\
\Ho^i\!f^!,\Ho^i\!f^*\colon &\Db(\MHM(Y))\rightarrow \MHM(X)
\end{align*}
the cohomology of the derived functors.

\subsubsection{}

A mixed Hodge module is called pure of weight $r$ if $\Gr_i^W\!(\rat_W(\mathcal{F}))=0$ for $i\neq r$.  In other words, a mixed Hodge module is pure of weight $r$ if the only jump in its weight filtration is in the $r$th place.  An object $\mathcal{F}\in\Ob(\Db(\MHM(X)))$ is called \textit{pure} if each $\Ho^r(\CF)$ is pure of weight $r$.  For example, if $X$ is a smooth projective variety, $\HO(X,\BQ)$ is pure.  Note that a complex $\mathcal{F}\in\Ob(\Db(\MHM(X)))$ is pure if and only if $\mathcal{F}\otimes\LLL^a$ is pure for every $a\in\BZ$.

\smallbreak

\subsubsection{}
If $X$ is a variety, the constant constructible sheaf $\BQ_X$ lifts to a complex $\ul{\BQ}_X\coloneqq a^*\ul{\BQ}_{\pt}\in\Db(\MHM(X))$.  Here we have denoted by $a\colon X\rightarrow \pt$ the structure morphism, and by $\ul{\BQ}_{\pt}$ the pure weight zero Hodge structure on $\BQ$ with $\Gr_F^{\bullet}(\ul{\BQ}_{\pt}\otimes_{\BQ}\BC)$ concentrated in degree zero.  Since $X$ is finite type, the constant sheaf $\BQ_X$ has bounded amplitude in the derived category of perverse sheaves.  Recall that $\ul{\BQ}_X$ generally has cohomology in multiple degrees if $X$ is not smooth (since the constant sheaf $\BQ_X$ is not itself a shift of a perverse sheaf if $X$ is sufficiently singular).  Similarly, if $X$ is smooth, $\ul{\BQ}_X$ is pure, but if $X$ is singular it may fail to be.  
\smallbreak
Connecting with \S \ref{sec:mixed_hodge_structures}, it was shown in \cite{Sai00} that the mixed Hodge structures $a_*a^*\underline{\BQ}_{\pt}$ and $a_!a^*\underline{\BQ}_{\pt}$ are naturally isomorphic to the mixed Hodge structures defined by Deligne on $\HO(X,\BQ)$ and $\HO_c(X,\BQ)$ respectively.
\smallbreak

If $\chi$ is a locally constant function on a scheme $X$, we denote by $\underline{\BQ}_X\otimes\LLL^{\chi}$ the complex of mixed Hodge modules on $X$ which on a connected component $Z\in\pi_0(X)$ is equal to $\underline{\BQ}_{Z}\otimes\LLL^{\chi(Z)}$.

\subsubsection{}
If $\CL$ is a simple pure polarisable variation of Hodge structure on a locally closed smooth connected subvariety $Z\subset X$, up to isomorphism there is a unique simple mixed Hodge module $\ICS_{\overline{Z}}(\CL)$ on $X$, satisfying $\ICS_{\overline{Z}}(\CL)\lvert_Z\cong \CL$.  The mixed Hodge module $\ICS_{\overline{Z}}(\CL)$ is supported on $\overline{Z}$, the closure of $Z$.  Assuming that $Z$ is connected and even-dimensional, we abbreviate
\[
\ICSn_{\overline{Z}}\coloneqq \ICS_{\overline{Z}}\left(\underline{\BQ}_{Z}\otimes\LLL^{-\dim(Z)/2}\right).
\]
This is a simple pure weight zero mixed Hodge module.  With the shifting conventions above, the intersection cohomology complex $\ICSn_{\overline{Z}}$ is Verdier self-dual.  We define the normalised intersection cohomology
\[
\ICA(\overline{Z})\coloneqq\HO(\overline{Z},\ICSn_{\overline{Z}}).
\]

\subsubsection{The decomposition theorem}

The following is a fundamental result of Saito, and is a huge generalisation of the fact that singular cohomology groups of smooth complex varieties carry pure Hodge structures:
\begin{theorem}\cite{Sai90}\label{thm:Saito_theorem}
Let $p\colon X\rightarrow Y$ be a projective morphism of varieties.  Then if $\CF\in\Ob(\Db(\MHM(X)))$ is pure, so is $p_!\CF$.  Moreover, if $\CF$ is pure, there are isomorphisms in $\Db(\MHM(X))$ and $\MHM(X)$ respectively
\[
\CF\cong \bigoplus_{i\in \BZ}\Ho^i(\CF)[-i];\quad\quad
\Ho^n(\CF)\cong \bigoplus_{j\in J_n}\ICS_{Z_j}(\CL_j)
\]
where $Z_j\subset X$ are locally closed subvarieties indexed by sets $J_n$ depending on $p$ and $\CF$, and $\CL_j$ are simple pure weight $n$ polarisable variations of Hodge structure on them.
\end{theorem}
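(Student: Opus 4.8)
The plan is to split the statement into a formal part, governing the structure of an arbitrary pure object, and the hard part, the preservation of purity under projective direct image, and to reduce everything to three structural inputs of Saito's theory: the semisimplicity of the category of polarizable pure Hodge modules of a fixed weight, the weight estimates for the functors $f_!$ and $f_*$, and the relative Hard Lefschetz theorem. Granting these, both displayed decompositions and the purity of $p_!\CF$ follow by essentially formal manipulations inside the six-functor formalism already recalled.

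I would treat the structure of a pure object first. The second isomorphism is immediate from Saito's structure theorem: for each weight $w$ the polarizable pure Hodge modules of weight $w$ on $X$ form a semisimple abelian category whose simple objects are the intermediate extensions $\ICS_{\overline Z}(\CL)$ of simple polarizable variations of Hodge structure on locally closed smooth connected $Z\subset X$. Since $\Ho^i(\CF)$ is pure of weight $i$ by hypothesis, it is a finite direct sum of such simple objects, giving the decomposition indexed by $J_n$. For the first isomorphism I would peel off the top nonzero cohomology using the truncation triangle $\tau^{<m}\CF\to\CF\to\Ho^m(\CF)[-m]\xrightarrow{+1}$; the obstruction to splitting it lies in $\Hom_{\Db(\MHM(X))}(\Ho^m(\CF)[-m],(\tau^{<m}\CF)[1])$, a group built from extension groups $\Ext^{s+1}_{\MHM(X)}(\Ho^m(\CF),\Ho^{m-s}(\CF))$ with $s\ge 1$ between pure objects, all of which vanish by the weight estimates. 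Induction on the cohomological amplitude then yields $\CF\cong\bigoplus_i\Ho^i(\CF)[-i]$.

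For the purity of $p_!\CF$, note that $p$ projective is proper, so $p_!\cong p_*$, and by the previous step it suffices to treat $\CF=\ICS_{\overline Z}(\CL)$ pure of a single weight. Here the weight estimates give that $\Ho^i\!p_*$ raises weights by at most $i$ while $\Ho^i\!p_!$ lowers them by at least $i$; for proper $p$ the two bounds coincide and force $\Ho^i\!p_*\CF$ to be pure of weight shifted by $i$. The engine securing this is the relative Hard Lefschetz theorem: for a $p$-relatively ample class $\eta$, cup product induces isomorphisms $\eta^i\colon\Ho^{-i}\!p_*\CF\xrightarrow{\sim}\Ho^{i}\!p_*\CF$ up to Tate twist, which supplies the polarization making each $\Ho^i\!p_*\CF$ pure and, via Deligne's Lefschetz-decomposition lemma, splits $p_*\CF$ into the sum of shifts of its cohomology objects, each of which then decomposes into $\ICS$ summands by the structure theorem.

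The main obstacle is the hard part itself: the relative Hard Lefschetz theorem and the stability of purity under $p_*$, resting on the very construction of the semisimple category of polarizable pure Hodge modules that the formal part uses. These are the analytic core of Saito's two papers and cannot be bypassed; they are proved by a simultaneous induction on $\dim\supp$, in which the nearby- and vanishing-cycle functors reduce assertions about a Hodge module to assertions about Hodge modules on smaller strata, and the compatibility of polarizations with the relative monodromy filtration both controls the induction and produces the polarization on the direct image. Once these three inputs are in hand, nothing beyond the reductions above is required to conclude.
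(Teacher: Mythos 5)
This theorem is not proved in the paper at all: it is quoted verbatim from Saito \cite{Sai90} as an imported foundational input, so there is no in-paper argument to compare yours against. That said, your sketch is a faithful outline of the standard proof architecture. The formal steps check out: the second isomorphism is indeed immediate from semisimplicity of the category of polarizable pure Hodge modules of fixed weight, and your obstruction computation for the first is correct --- with the paper's convention that $\CF$ pure means $\Ho^i(\CF)$ pure of weight $i$, the obstruction to splitting off the top truncation lives in groups $\Ext^{s+1}_{\MHM(X)}(\Ho^m(\CF),\Ho^{m-s}(\CF))$ with source of weight $m$, target of weight $m-s$, and Ext degree $s+1$, which sits exactly at the boundary of the weight-vanishing range $\Hom_{\Db(\MHM)}(\text{weights}\leq w,\ \text{weights}\geq w+1)=0$ and hence vanishes. (Note this truncation argument works directly in $\Db(\MHM(X))$, unlike in the \'etale setting of BBD where extra devices are needed; this is one place where Saito's category is genuinely more convenient.) For purity of $p_!\CF$ the essential mechanism is the squeeze: $p_!$ preserves weights $\leq w$, $p_*$ preserves weights $\geq w$, and $p_!\cong p_*$ for proper $p$, which already forces purity; one small conflation in your write-up is that relative Hard Lefschetz is not what makes $\Ho^i\!p_*\CF$ pure (the squeeze does that), but rather what supplies the polarization keeping you inside the polarizable category --- where semisimplicity holds --- and, via Deligne's lemma, one route to the splitting, though once purity is known your own formal step gives the splitting without it. Finally, you are right that the weight estimates, relative Hard Lefschetz, and the structure theorem are proved by Saito in a simultaneous induction through nearby and vanishing cycles and cannot be bypassed; listing them as granted inputs is the honest and correct way to frame a proof of this statement, and it is effectively what the paper does by citation.
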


\subsubsection{Purity is local}
We will use the \'{e}tale case of the following lemma, which follows more or less from definitions:
\begin{lemma}
\label{loc_pur_lem}
Purity is local in the Zariski, analytic, smooth and \'etale topologies.  I.e. an object $\CF\in\Ob(\Db(\MHM(X)))$ is pure if and only if $\Ho^i\!q^*(\mathcal{F})$ is pure of weight $i$ for every $i\in\BZ$, for $q\colon U\rightarrow X$ a cover in one of these topologies.
\end{lemma}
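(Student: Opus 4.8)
The plan is to reduce the statement, which concerns an object $\CF$ of the derived category, to a statement about each individual cohomology mixed Hodge module, and then to the single structural fact that smooth pullback shifts the weight filtration by the relative dimension. By the definition of purity recalled above, $\CF$ is pure if and only if the mixed Hodge module $N_i\coloneqq\Ho^i(\CF)$ is pure of weight $i$ for every $i$, so it suffices to treat each $N_i$ separately. Since the claimed criterion already pulls back the cohomology modules one at a time, the lemma will follow once I prove a local-to-global comparison for a single mixed Hodge module.

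\emph{Key comparison.} Let $q\colon U\to X$ be smooth and surjective of relative dimension $d$ (the \'etale case being $d=0$, and the Zariski case a special case of the \'etale one). For a mixed Hodge module $N$ on $X$, I claim that $N$ is pure of weight $w$ if and only if $q^{*}N[d]$ is pure of weight $w+d$. Granting this, I would finish as follows. The functor $q^{*}[d]$ is t-exact for the perverse t-structure, so $q^{*}N_i$ is concentrated in perverse degree $d$ and the shifted complex $q^{*}N_i[-i]$ has a single nonzero cohomology module $q^{*}N_i[d]$, sitting in degree $i+d$. Hence $q^{*}\Ho^i(\CF)[-i]$ is pure (in the sense of the paper) precisely when $q^{*}N_i[d]$ is pure of weight $i+d$, which by the key comparison is equivalent to $N_i$ being pure of weight $i$. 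Taking the conjunction over all $i$ gives the asserted equivalence.

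To prove the key comparison I would combine three inputs. First, $q^{*}[d]$ carries $\MHM(X)$ to $\MHM(U)$ and is exact, which is the standard t-exactness of smooth pullback. Second, and this is the genuinely non-formal ingredient drawn from Saito's theory, $q^{*}[d]$ is strictly compatible with the weight filtration up to the shift by $d$, giving natural isomorphisms $\Gr^W_{k}\!\left(q^{*}N[d]\right)\cong q^{*}\!\left(\Gr^W_{k-d}N\right)[d]$. Third, because $q$ is surjective (and flat) the functor $q^{*}$ is conservative on mixed Hodge modules: a nonzero module $M$ has nonempty support, hence $q^{-1}(\supp M)$ is nonempty and $q^{*}M\neq 0$. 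Combining the last two, $\Gr^W_{k}(q^{*}N[d])=0$ if and only if $\Gr^W_{k-d}N=0$, so the weight filtration of $q^{*}N[d]$ jumps exactly where that of $N$ does, shifted by $d$; in particular $N$ is concentrated in weight $w$ if and only if $q^{*}N[d]$ is concentrated in weight $w+d$.

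The main obstacle is really just the second input above: everything else (the reduction to single cohomology modules, the t-exactness of $q^{*}[d]$, and conservativity for a surjective $q$) is formal, whereas the compatibility of the weight filtration with smooth pullback is a property of the category of mixed Hodge modules that must be cited from Saito. For the analytic topology the same argument applies in Saito's analytic mixed Hodge module formalism, using that the weight filtration is a filtration by sub-mixed-Hodge-modules and so restricts compatibly along the open maps of an analytic cover; since the applications in this paper only invoke the \'etale case, I would keep this remark brief.
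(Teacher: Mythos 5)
Your argument is correct, and it reaches the lemma by a genuinely different route than the paper. The paper splits the proof by topology: for the Zariski, \'etale and analytic sites it invokes the fact that perverse sheaves form a stack for these sites, so that purity --- being the vanishing of the perverse sheaves $\Gr^W_i(\rat_W(\Ho^j(\CF)))$ for $i\neq j$ --- is visibly local; for the smooth topology it then writes $\Ho^d\!q^*\Ho^i(\CF)$ analytically locally as $\Ho^i(\CF)\boxtimes\ul{\BQ}_{F'}[d]$, with $F'$ an analytic open in a fibre, uses purity of the analytic mixed Hodge module $\ul{\BQ}_{F'}[d]$, and concludes by reducing to the already-settled analytic case. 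You instead treat all the algebraic covers uniformly as smooth surjections (\'etale and Zariski being the case $d=0$), and your key comparison --- t-exactness of $q^*[d]$, Saito's weight-shift compatibility $\Gr^W_k(q^*N[d])\cong q^*(\Gr^W_{k-d}N)[d]$, and conservativity of $q^*$ for surjective $q$ via the stalk/support argument --- does the work of both the stack property and the external-product decomposition. What your route buys: the smooth case never leaves the algebraic theory, since the paper's detour through analytic mixed Hodge modules is precisely its mechanism for controlling weights under smooth pullback, which you obtain directly from the weight estimates for $q^*[d]$ together with exactness (the weight filtration being the unique filtration with pure graded pieces of the prescribed weights, exactness transports it along the pullback); conservativity then gives the ``only if'' direction cleanly, where the paper argues pointwise via the local product structure. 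Two small patches you should make explicit: a smooth cover need not have globally constant relative dimension, but $d$ is locally constant, so run the comparison on each connected component of $U$ and use joint conservativity of the covering family; and in the analytic case the cover consists of open maps with $d=0$, where restriction is weight-exact and jointly conservative, so your closing remark suffices --- this is the same fragment of the analytic theory, \cite[Sec.2.18 \& Prop.2.19]{Sai90}, that the paper itself quotes.
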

In order to make sense of the lemma, we have to say something about the theory of analytic mixed Hodge modules.  If $X$ is an analytic variety, Saito constructs the category $\MHM(X)$ of analytic mixed Hodge modules on $X$ in \cite{Sai90}.  The six functor formalism works the same way, but is not defined at the level of derived categories.  The part of the theory that we will use is that if $q^*\colon U\rightarrow X$ is a smooth morphism of analytic varieties of relative dimension $d$, there is a pullback morphism $\Ho^{d}\!q^*\colon \MHM(X)\rightarrow \MHM(U)$, see \cite[Sec.2.18 \& Prop.2.19]{Sai90} for details.
\begin{proof}[Proof of Lemma \ref{loc_pur_lem}]
Since perverse sheaves form a stack for the \'etale, analytic and Zariski sites of $X$, and purity is the statement that for $i\neq j$ the perverse sheaves $\Gr^W_i\!(\rat_W(\Ho^j(\CF)))$ vanish, for these three topologies the statement is obvious.  For the smooth topology, we use that if $q$ is a smooth morphism of relative dimension $d$, analytically locally we can write $\Ho^d\!q^*\Ho^i(\CF)=\Ho^i(\CF)\boxtimes \ul{\BQ}_{F'}[d]$ where $F'\subset F$ is an analytic open subscheme of the fibre of $q(x)$.  Now $\ul{\BQ}_{F'}[d]$ is an analytic mixed Hodge module, pure of weight $d$, from which it follows that $\Ho^{d+i}(q^*\CF)$ is analytically locally pure of weight $d+i$ at $x$ if and only if $\Ho^i(\CF)$ is analytically locally pure of weight $i$ at $q(x)$.  Then we conclude using the statement for the analytic topology.
\end{proof}

\subsection{Direct images from stacks}
\label{stack_di}
Since the compactly supported cohomology of a stack is generally unbounded below, it will be useful to consider unbounded (below) complexes of mixed Hodge modules, without significantly tinkering with the foundations of \cite{Sai90}.  This we do as follows.  Let $\Dub^{\mathrm{b},\geq n}(\MHM(X))$ denote the full subcategory of $\Db(\MHM(X))$ containing those mixed Hodge modules $\mathcal{F}$ on $X$ satisfying the condition that $\Ho^i(\mathcal{F})=0$ for $i<n$.  For $n\geq m$ there is a truncation functor 
\[
\tau_{m\rightarrow n}\coloneqq \bm{\tau}^{\geq n}\colon \Dub^{\mathrm{b},\geq m}(\MHM(X))\rightarrow \Dub^{\mathrm{b},\geq n}(\MHM(X))
\]
and by definition there is an identity $\tau_{m\rightarrow n}\circ \tau_{l\rightarrow m}=\tau_{l\rightarrow n}$.  We define $\Dblf(\MHM(X))$ to be the limit category.  Precisely, objects of $\Dblf(\MHM(X))$ are given by $\mathbb{Z}$-tuples of objects $\mathcal{F}_n\in\Ob(\Db(\MHM(X)))$ satisfying $\bm{\tau}^{\leq n-1}(\mathcal{F})=0$, along with isomorphisms for $n\geq m$
\[
\eta_{m,n}\colon \tau_{m\rightarrow n}\mathcal{F}_m\rightarrow \mathcal{F}_n
\]
such that the diagram
\[
\xymatrix{
\tau_{l\rightarrow n}\mathcal{F}_l\ar[d]^=\ar[rr]^{\eta_{l,n}}&&\mathcal{F}_n\\
\tau_{m\rightarrow n}\tau_{l\rightarrow m}\mathcal{F}_l\ar[rr]^{\tau_{m\rightarrow n}\eta_{l,m}}&&\tau_{m\rightarrow n}\mathcal{F}_m\ar[u]^{\eta_{m,n}}
}
\]
commutes.  This is the same thing as providing the objects $\mathcal{F}_n$ for $n\in\BZ$, along with a set of isomorphisms $\eta_{n,n+1}$.  Given an object $\CF\in\Ob(\Dblf(\MHM(X)))$ we define $\Ho^n(\CF)=\Ho^n(\CF_m)$ for any $m<n$.  It follows from the definitions that $\Ho^n(\CF)$ is independent of $m$.  
\smallbreak
We say that an object $\mathcal{F}=\{\mathcal{F}_n,\eta_{n,n+1}\}_{n\in\mathbb{Z}}\in\Ob(\Dblf(\MHM(X)))$ is \textit{pure} if each $\mathcal{F}_n$ is.  Equivalently, we ask that the cohomology mixed Hodge module $\Ho^n(\mathcal{F})$ be pure of weight $n$ for every $n$.  We define $\Dulf(\MHM(X))$ to be the colimit category of the dual construction using the truncation functors $\bm{\tau}^{\leq n}$, so that for $\CF\in\Ob(\Dblf(\MHM(X)))$, the Verdier dual $\BD\CF$ is an object of $\Ob(\Dulf(\MHM(X)))$ and vice versa.

\subsubsection{}
There does not seem to be a general treatment of the six functor formalism for derived categories of mixed Hodge modules on general stacks.  We thus restrict our attention to a certain class of stacks for which a theory of equivariant mixed Hodge modules (as developed by Achar \cite{EMHM}, following \cite{BL06}) is sufficient for our purposes:
\begin{definition}
\label{eqs_def}
Let $\FX$ be an algebraic stack.  We say that $\FX$ is \textit{exhausted by global quotient stacks}, if there is an infinite sequence of open substacks 
\[
\FX_1\subset \FX_2\subset\ldots \FX
\]
such that
\begin{enumerate}
    \item Each $\FX_n$ is a stack-theoretic quotient of a finite type separated scheme by an affine algebraic group.
    \item $\FX$ is the union of the $\FX_n$.
    \item 
    For every $n$, and every smooth morphism $Y\rightarrow \FX$ from a finite type variety, the codimension of $Y\setminus U_n$ inside $Y$ is greater than $n$, where we define $U_n=\FX_n\times_{\FX} Y$.
\end{enumerate}
\end{definition}
\smallbreak

In particular, any stack which is exhausted by global quotient stacks is locally finite type.  A special case that will suffice for most purposes occurs when $\FX$ is itself a global quotient of a quasi-projective variety.  Then we may set $\FX_n=\FX$ for all $n$ in the definition above.

\subsubsection{}
\label{heart_di}
Let $p\colon\FX\rightarrow \Msp$ be a morphism to a finite type separated complex scheme, and assume that $\FX$ is exhausted by global quotient stacks $\FX_N$.  We define the object $p_!\ul{\BQ}_{\FX}\in \Dblf(\MHM(\Msp))$, as in \cite[Sec. 2.2]{BenSven}; we briefly recall the construction.  For each $n\in\mathbb{Z}$ we define an element $\mathcal{F}_n\in\Ob(\Dub^{b,\geq n}(\MHM(\Msp)))$ as follows.  First pick $N\gg 0$, and consider the stack $\FX_N$.  By supposition, we can write $\FX_N=Y/G$ for $Y$ a separated scheme and $G$ an affine algebraic group.  We embed $G\subset \Gl_r(\BC)$ for some $r$, pick $M\gg 0$, and set $A=\Hom(\BC^M,\BC^r)$.  We consider $A$ as a $G$-equivariant variety via the natural $\Gl_r(\BC)$-action.  There is an open subvariety $A'\subset A$ containing those $f\in A$ that are surjective.  Set $B_N=(Y\times A')/G$.  This is a scheme by \cite[Prop.~23]{EG98}.  There is a natural map
\begin{equation}
\label{ApproxMap}
q\colon B_N\rightarrow \Msp
\end{equation}
defined by composing $p$ with the composition of morphisms of stacks
\[
B_N\hookrightarrow (Y\times A)/G\xrightarrow{\pi}\FX_N
\]
where $\pi$ is the natural projection.  Then we define
\[
\CF_n=\bm{\tau}^{\geq n}(q_!\ul{\BQ}_{B_N}\otimes\LLL^{-rM}).
\]
\smallbreak
It is straightforward to show that although we have made a number of choices in the definition, if $\mathcal{F}'_n$ is defined via a different set of choices, there is a natural isomorphism $\CF_n\cong \CF'_n$, see \cite[Sec.~2.2]{BenSven}.  In the same way, one shows that there are natural isomorphisms $\eta_{n,n+1}\colon \tau_{n\rightarrow n+1}\CF_{n}\rightarrow \CF_{n+1}$.
\smallbreak 
Applying the above construction to the structure morphism of the stack $\FX$, we define
\[
\HO^n_c(\FX,\mathbb{Q})=\HO_c^n(B_N,\ul{\BQ}_{B_N}\otimes \LLL^{-rM})
\]
for $r,M,N\gg 0$ depending on $n$ as above.  We define 
\[
\HO^{\BM}_{n}(\FX,\mathbb{Q})=(\HO_c^n(\FX,\mathbb{Q}))^{\vee}
\]
where the dual is taken in the category of mixed Hodge structures.
\subsubsection{}The next proposition underpins the strategy outlined in \S \ref{glob_intro} for proving (global) purity of moduli stacks.
\begin{proposition}
\label{int_pur}
Let $p\colon\FX\rightarrow \CM$ be a morphism from a stack to a projective scheme, and assume that $\FX$ is exhausted by global quotient stacks.  If $p_!\ul{\BQ}_{\FX}$ is pure, then $\HO_c(\FX,\mathbb{Q})$ and $\HO^{\BM}(\FX,\mathbb{Q})$ are pure.
\end{proposition}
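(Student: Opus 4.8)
The plan is to compute $\HO_c(\FX,\BQ)$ by factoring the structure morphism of $\FX$ through $\CM$ and invoking Saito's decomposition theorem for the projective morphism $a\colon\CM\to\pt$. Writing $a$ for this structure morphism, the structure morphism of $\FX$ is $a\circ p$, so at the level of cohomology one expects $\HO^n_c(\FX,\BQ)=\Ho^n(a_!p_!\ul{\BQ}_{\FX})$. The first task is to make this precise, since $p_!\ul{\BQ}_{\FX}$ lives in the limit category $\Dblf(\MHM(\CM))$ of \S\ref{heart_di} rather than in a bounded derived category. For a fixed degree $n$ I would represent $p_!\ul{\BQ}_{\FX}$ by its bounded truncation $\CF_m=\bm{\tau}^{\geq m}G$, where $G=q_!\ul{\BQ}_{B_N}\otimes\LLL^{-rM}$ is as in \S\ref{heart_di}, and reduce the claim to the identification $\HO^n_c(\FX,\BQ)\cong\Ho^n(a_!\CF_m)$ for $m$ sufficiently small.

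The one point requiring care is the bookkeeping of truncation degrees. Since $a$ is proper and $\CM$ has some fixed dimension $d=\dim\CM$, the pushforward $a_!$ has bounded perverse amplitude, carrying objects with perverse cohomology in degrees $\leq k$ to objects with perverse cohomology in degrees $\leq k+d$. Applying $a_!$ to the truncation triangle $\bm{\tau}^{<m}G\to G\to\CF_m\to$ and passing to the long exact sequence, the term $a_!\bm{\tau}^{<m}G$ has no cohomology in degrees $n$ or $n+1$ as soon as $m\leq n-d$; for such $m$ one obtains $\Ho^n(a_!\CF_m)\cong\Ho^n(a_!G)$, which by the defining formulas of \S\ref{heart_di} equals $\HO^n_c(B_N,\ul{\BQ}_{B_N}\otimes\LLL^{-rM})=\HO^n_c(\FX,\BQ)$. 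Taking $m\leq n-d$ is harmless, since the $\CF_m$ form a compatible system representing the single object $p_!\ul{\BQ}_{\FX}$.

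With this in place the conclusion is formal. By hypothesis $p_!\ul{\BQ}_{\FX}$ is pure, so each bounded truncation $\CF_m$ is a pure object of $\Db(\MHM(\CM))$. As $\CM$ is projective, $a$ is a projective morphism, and Saito's decomposition theorem (Theorem~\ref{thm:Saito_theorem}) shows that $a_!\CF_m$ is pure; in particular $\Ho^n(a_!\CF_m)$ is pure of weight $n$. Hence $\HO^n_c(\FX,\BQ)$ is pure of weight $n$ for every $n$, which is exactly the purity of $\HO_c(\FX,\BQ)$. Finally, $\HO^{\BM}_n(\FX,\BQ)=(\HO^n_c(\FX,\BQ))^{\vee}$ as mixed Hodge structures, and the dual of a pure Hodge structure of weight $n$ is pure of weight $-n$; thus purity of compactly supported cohomology immediately yields purity of Borel--Moore homology.

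I expect the only genuine obstacle to be the argument of the second paragraph: one must check that the truncation $\bm{\tau}^{\geq m}$ built into the limit-category description of $p_!\ul{\BQ}_{\FX}$ does not corrupt the degree-$n$ computation, and this is controlled precisely by the amplitude bound for the projective pushforward $a_!$. Everything else reduces to Saito's theorem together with the compatibility of the approximating schemes $B_N$ recorded in \cite{BenSven}.
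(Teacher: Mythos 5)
Your proposal is correct and follows essentially the same route as the paper: represent $p_!\ul{\BQ}_{\FX}$ in a fixed degree $n$ by the truncation $\bm{\tau}^{\geq n-d}(q_!\ul{\BQ}_{B_N}\otimes\LLL^{-rM})$, justify that this does not alter degree-$n$ compactly supported cohomology via the amplitude bound $[-d,d]$ for $a_!$ (the paper's Remark~\ref{CArem}, which your truncation-triangle argument simply makes explicit), then apply Saito's theorem (Theorem~\ref{thm:Saito_theorem}) to the projective $\CM\to\pt$ and dualize for Borel--Moore homology. No gaps.
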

\begin{proof}
We give the proof for the case of compactly supported cohomology.  Purity for Borel--Moore homology follows by duality.
\smallbreak
Set $d=\dim(\CM)$.  Fix a number $n\in\mathbb{Z}$, and $r,M,N\gg 0$ as above depending on $n$.  Then 
\begin{align*}
\HO_c^n(\FX,\BQ )=&\HO_c^{n}( B_{N},\ul{\BQ}_{B_{N}}\otimes\LLL^{-rM})\\
\cong  & \HO_c^n(\CM,q_!\ul{\BQ}_{B_{N}}\otimes\LLL^{-rM})\\
\cong & \HO_c^n(\CM,\bm{\tau}^{\geq n-d}(q_!\ul{\BQ}_{B_{N}}\otimes\LLL^{-rM})).
\end{align*}
The final isomorphism uses Remark \ref{CArem} below.  The right hand side only depends on the choice of sufficiently large $r,M,N$ up to canonical isomorphism.  By supposition, the complex $\bm{\tau}^{\geq n-d}(q_!\ul{\BQ}_{B_{N}}\otimes\LLL^{-rM})$ is pure, and $\CM$ is projective.  So by Saito's theorem, recalled as Theorem~\ref{thm:Saito_theorem}, $\HO_c^n(\FX,\BQ)$ is pure of weight $n$.
\end{proof}
\begin{remark}
\label{CArem}
Set $d=\dim(\CM)$, and denote by $a\colon\CM\rightarrow \pt$ the structure morphism.  The functor $a_!$ sends mixed Hodge modules to complexes with cohomological amplitude $[-d,d]$.  See e.g. \cite[Prop.10.3.3]{KS13} for the upper bound.  The lower bound follows more or less from definitions: perverse sheaves lie in cohomological amplitude $[-d,0]$ with respect to the constructible t-structure, and $a_!$ is left exact with respect to this t-structure.
\end{remark}
Via the same proof as Proposition \ref{int_pur}, we have the more general result:
\begin{proposition}
\label{intr_pur}
Let $p\colon\FX\rightarrow \CM$ be a morphism from a stack to a finite type separated scheme, and let $\pi\colon \CM\to\CN$ be a projective morphism of schemes.  Assume that $\FX$ is exhausted by global quotient stacks, so that the complexes of mixed Hodge modules $p_!\underline{\BQ}_{\FX}$ and $(\pi p)_!\underline{\BQ}_{\FX}$ are defined in the derived category.  If $p_!\ul{\BQ}_{\FX}$ is pure, then $(\pi p)_!\ul{\BQ}_{\FX}$ is also pure.
\end{proposition}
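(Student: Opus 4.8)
The plan is to run the argument of Proposition~\ref{int_pur} relatively over $\CN$, replacing the structure morphism $a\colon\CM\to\pt$ by the projective morphism $\pi$, and replacing the use of Saito's theorem over a point by Saito's theorem (Theorem~\ref{thm:Saito_theorem}) applied to $\pi$. Since purity of an object of $\Dblf(\MHM(\CN))$ is a condition imposed separately on each cohomology mixed Hodge module $\Ho^n$, it suffices to prove that $\Ho^n(\pi p)_!\ul{\BQ}_{\FX}$ is pure of weight $n$ for every fixed $n$.

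First I would fix $n$ and choose a bound $d$ on the fibre dimensions of $\pi$ (for instance $d=\dim(\CM)$). The key input is the analogue of Remark~\ref{CArem} for the proper morphism $\pi$: the functor $\pi_!$ has bounded cohomological amplitude, so that it carries $\Dub^{b,\geq m}(\MHM(\CM))$ into $\Dub^{b,\geq m-d}(\MHM(\CN))$. Consequently $\pi_!(\bm{\tau}^{\leq n-d-1}\CG)$ has no cohomology in degree $n$, and therefore $\Ho^n\pi_!\CG\cong\Ho^n\pi_!(\bm{\tau}^{\geq n-d}\CG)$ for any $\CG$; that is, a single cohomological degree of the pushforward only sees a bounded truncation of the input.

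Next I would unwind the definition of $(\pi p)_!\ul{\BQ}_{\FX}$ from \S\ref{heart_di}. Choosing $r,M,N\gg 0$ (depending on $n$) together with the associated approximation $\pi q\colon B_N\to\CN$, where $q\colon B_N\to\CM$, we have $(\pi q)_!=\pi_! q_!$, and for these parameters $\bm{\tau}^{\geq n-d}(q_!\ul{\BQ}_{B_N}\otimes\LLL^{-rM})$ is precisely the defining truncation $\CF_{n-d}$ representing $\bm{\tau}^{\geq n-d}p_!\ul{\BQ}_{\FX}$. Combining this with the amplitude reduction of the previous step yields
\[
\Ho^n(\pi p)_!\ul{\BQ}_{\FX}\cong \Ho^n\pi_!\,\bm{\tau}^{\geq n-d}p_!\ul{\BQ}_{\FX}.
\]
By hypothesis $p_!\ul{\BQ}_{\FX}$ is pure, and since purity is checked degreewise its bounded truncation $\bm{\tau}^{\geq n-d}p_!\ul{\BQ}_{\FX}\in\Ob(\Db(\MHM(\CM)))$ is again pure. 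As $\pi$ is projective, Theorem~\ref{thm:Saito_theorem} then shows $\pi_!\bm{\tau}^{\geq n-d}p_!\ul{\BQ}_{\FX}$ is pure, so its $n$th cohomology is pure of weight $n$. Hence $\Ho^n(\pi p)_!\ul{\BQ}_{\FX}$ is pure of weight $n$, and letting $n$ vary gives the claim.

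The one point requiring care, and the step I expect to be the main obstacle, is the passage through the limit category $\Dblf(\MHM(\CN))$: one must check that fixing a single output degree $n$ genuinely reduces the computation to a bounded complex on $\CM$, so that Saito's decomposition theorem (formulated in the bounded derived category) applies. This is exactly what the amplitude bound for the projective pushforward $\pi_!$ secures, and it is the reason the statement is phrased degreewise rather than as an isomorphism of (possibly unbounded) complexes.
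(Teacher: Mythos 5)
Your proposal is correct and is essentially the paper's own argument: the paper proves Proposition~\ref{intr_pur} with the single remark ``via the same proof as Proposition~\ref{int_pur},'' i.e.\ precisely your relativisation, replacing $a_!$ by $\pi_!$ and applying Theorem~\ref{thm:Saito_theorem} to the projective morphism $\pi$. Your explicit handling of the limit category via the amplitude bound for $\pi_!$ (the relative analogue of Remark~\ref{CArem}) is exactly the detail the paper leaves implicit, and it is carried out correctly.
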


\subsection{Perverse filtrations}
We will use local purity and the following proposition to construct perverse filtrations on Borel--Moore homology of stacks of objects in 2CY categories.

\begin{proposition}
\label{int_filt}
Let $p\colon \FX\rightarrow \CM$ be a morphism from a stack to a finite type separated scheme, and assume that $\FX$ is exhausted by global quotient stacks.  If $p_!\ul{\BQ}_{\FX}$ is pure, then for every $n\in\BZ$ the morphism $p$ induces an ascending perverse filtration on $\HO^{\BM}_{n}(\FX,\BQ)$ by mixed Hodge structures.
\end{proposition}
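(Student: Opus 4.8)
The plan is to construct the filtration on compactly supported cohomology and then dualise. Recall from \S\ref{heart_di} and the proof of Proposition~\ref{int_pur} that $\HO^{\BM}_n(\FX,\BQ)=\HO_c^n(\FX,\BQ)^{\vee}$, and that for $N,M,r\gg 0$ (depending on $n$) one has a canonical isomorphism $\HO_c^n(\FX,\BQ)\cong \HO_c^n(\CM,\CG)$, where $d=\dim(\CM)$, $a\colon\CM\to\pt$ is the structure morphism, and
\[
\CG\coloneqq\bm{\tau}^{\geq n-d}\bigl(q_!\ul{\BQ}_{B_N}\otimes\LLL^{-rM}\bigr)
\]
is a genuine bounded complex of mixed Hodge modules on $\CM$, namely a finite truncation of the limit-category object $p_!\ul{\BQ}_{\FX}\in\Dblf(\MHM(\CM))$. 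Thus it suffices to produce a canonical, finite, ascending filtration by mixed Hodge structures on each $\HO_c^n(\CM,\CG)$, and then transport it to $\HO^{\BM}_n(\FX,\BQ)$ by duality.

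First I would define the filtration using the perverse t-structure on $\Db(\MHM(\CM))$. For each $j\in\BZ$ the triangle
\[
\pvrs\bm{\tau}^{\leq j}\CG\longrightarrow \CG\longrightarrow \pvrs\bm{\tau}^{>j}\CG\xrightarrow{+1}
\]
is a triangle of complexes of mixed Hodge modules, so applying $\HO_c^n(\CM,-)=\Ho^n(a_!(-))$ yields a morphism of mixed Hodge structures
\[
\HO_c^n(\CM,\pvrs\bm{\tau}^{\leq j}\CG)\longrightarrow \HO_c^n(\CM,\CG)=\HO_c^n(\FX,\BQ),
\]
and I set $P_j\HO_c^n(\FX,\BQ)$ to be its image. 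Because the perverse truncation functors and $a_!$ act at the level of (complexes of) mixed Hodge modules and the category of mixed Hodge structures is abelian, each $P_j$ is a sub-mixed Hodge structure, and $P_j\subseteq P_{j+1}$ by functoriality of the maps $\pvrs\bm{\tau}^{\leq j}\to\pvrs\bm{\tau}^{\leq j+1}$. Finiteness follows from Remark~\ref{CArem}: since $\CG$ is bounded in the perverse t-structure one has $\pvrs\bm{\tau}^{\leq j}\CG=\CG$ for $j\gg 0$, so the $P_j$ exhaust $\HO_c^n$; and since $a_!$ has cohomological amplitude $[-d,d]$, a complex concentrated in perverse degrees $\leq j$ contributes to $\HO_c^n$ only when $j\geq n-d$, so $P_j=0$ for $j\ll 0$.

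The step I expect to be the main obstacle is to justify that this filtration really is \emph{perverse}, i.e. intrinsic to $p_!\ul{\BQ}_{\FX}$ rather than an artefact of the truncation level built into $\CG$. The morally correct definition applies $\pvrs\bm{\tau}^{\leq j}$ to the genuine object $p_!\ul{\BQ}_{\FX}\in\Dblf(\MHM(\CM))$ (perverse truncation of a bounded-above complex is again bounded above, hence lies in $\Dblf$) and takes the image of $\HO_c^n(\CM,\pvrs\bm{\tau}^{\leq j}p_!\ul{\BQ}_{\FX})\to\HO_c^n(\FX,\BQ)$. The difficulty is that the ordinary truncation $\bm{\tau}^{\geq n-d}$ used to form $\CG$ does \emph{not} commute with $\pvrs\bm{\tau}^{\leq j}$, so a priori the image could depend on where one truncates. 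Here is exactly where I would use the hypothesis of purity: since $p_!\ul{\BQ}_{\FX}$ is pure, each $\Ho^k p_!\ul{\BQ}_{\FX}$ is pure of weight $k$, and this rigidity forces the discrepancy between $\pvrs\bm{\tau}^{\leq j}(\bm{\tau}^{\geq m}p_!\ul{\BQ}_{\FX})$ for different truncation levels $m\leq n-d$ to be supported in perverse degrees that, by the amplitude bound of Remark~\ref{CArem}, do not contribute to $\HO_c^n$. Concretely I would compare the two perverse truncations via the octahedral axiom together with this amplitude estimate and conclude that the induced maps on $\HO_c^n$ have equal image; for a non-pure complex on the (possibly) non-projective scheme $\CM$ one loses control of the interaction of the two truncations, so this is the point at which purity is genuinely used.

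Finally I would dualise. The filtration $P_\bullet$ is a finite, exhaustive, separated ascending filtration of the finite-dimensional mixed Hodge structure $\HO_c^n(\FX,\BQ)$ by sub-mixed Hodge structures; passing to annihilators under the perfect duality pairing and reindexing produces a finite ascending filtration of $\HO^{\BM}_n(\FX,\BQ)=\HO_c^n(\FX,\BQ)^{\vee}$ by sub-mixed Hodge structures, which is the asserted perverse filtration. The sharper statements needed in the applications, namely the identification of the associated graded with the hypercohomology of the perverse cohomology sheaves and the fact that the filtration begins in perverse degree zero, are not required for the present existence statement and are established separately in \S\ref{sec:purity_perverse}.
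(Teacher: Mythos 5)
Your overall construction is the paper's: you filter $\HO^n_c(\FX,\BQ)$ by hypercohomology of perverse truncations of the finite approximation $\CG=\bm{\tau}^{\geq n-d}(q_!\ul{\BQ}_{B_N}\otimes\LLL^{-rM})$, bound the filtration using Remark \ref{CArem}, and dualise exactly as the paper does (there the inclusion $P^n_s\hookrightarrow\HO^{\BM}_n(\FX,\BQ)$ is the dual of the cokernel of $L^n_{-s}\hookrightarrow\HO^n_c(\FX,\BQ)$). However, the step you single out as the main obstacle rests on a misreading of the conventions. In this paper every truncation $\bm{\tau}$ on $\Db(\MHM(\CM))$ -- including the $\bm{\tau}^{\geq n-d}$ used to form $\CG$ and to define the limit category $\Dblf(\MHM(\CM))$ -- is taken in the natural t-structure of $\Db(\MHM(\CM))$, whose heart corresponds under $\rat$ to the \emph{perverse} t-structure on constructible complexes (see \S\ref{sec:algebraic_MHMs}); there is no ``ordinary'' (constructible) truncation anywhere in the construction. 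Consequently $\bm{\tau}^{\geq m}$ and your $\pvrs\bm{\tau}^{\leq j}$ are truncations for one and the same t-structure, they commute on the nose, and the ``discrepancy between truncation levels $m\leq n-d$'' that you propose to control with purity and the octahedral axiom is vacuous: for any $m\leq n-d$ the map $\HO^n_c(\CM,X)\to\HO^n_c(\CM,\bm{\tau}^{\geq m}X)$ is already an isomorphism by the amplitude bound of Remark \ref{CArem} alone, with no purity hypothesis. So the only place you claim to use purity is aimed at a non-problem, and the argument there is in any case left as an unexecuted sketch.

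Where purity genuinely enters the paper's proof is elsewhere, and your image-based definition quietly sidesteps it at a cost. The paper defines $L^n_s=\HO^n_c\!\left(\CM,\bm{\tau}^{\leq s}(q_!\ul{\BQ}_{B_N}\otimes\LLL^{-rM})\right)$ and uses purity to \emph{split} the truncation morphism: purity gives a decomposition of $\bm{\tau}^{\geq n-d}\bm{\tau}^{\leq n+d}(q_!\ul{\BQ}_{B_N}\otimes\LLL^{-rM})$ into shifted cohomology objects, so the natural transformation $\beta$ of \eqref{bee_def} admits a left inverse $\alpha$, whence $\Ho^n(a_!\beta)\colon L^n_s\to\HO^n_c(\FX,\BQ)$ is an \emph{inclusion} of mixed Hodge structures. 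The filtration is thus by the spaces $L^n_s$ themselves, not merely by their images, and this splitting is what later underwrites the associated graded description \eqref{callback} and Theorem \ref{ex_pur_thm}. Defining $P_j$ as an image, as you do, does produce an ascending filtration by sub-mixed Hodge structures (images exist since $\MHS$ is abelian), but then purity is never used at all in your argument once the spurious obstacle is removed -- which should have been a warning sign that the hypothesis was being deployed in the wrong place. To match the proposition as used in the rest of the paper, you should replace your octahedral-axiom paragraph with the splitting argument via $\alpha$ and $\beta$, which is the actual content of the purity assumption here.
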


\begin{proof}
Since $\HO^{\BM}(\FX,\BQ)$ is defined to be the graded dual of $\HO_c(\FX,\BQ)$ in the category of mixed Hodge structures, the statement is equivalent to the claim that for each $n\in\mathbb{Z}$ there is an ascending filtration $0\subset \ldots \subset L^n_{s-1}\subset L^n_{s}\subset\ldots $ of mixed Hodge structures stabilising to $\HO_c^n(\FX,\BQ)$.  Precisely, the required inclusion $P_{s}^n\hookrightarrow \HO_{n}^{\BM}(\FX,\BQ)$ is defined to be the dual of the cokernel of the inclusion $L^n_{-s}\hookrightarrow \HO_c^n(\FX,\BQ)$.
\smallbreak
We continue to denote $d=\dim(\CM)$, and we set
\begin{align}
L^n_s\coloneqq &\HO_c^n\!\left(\CM,\bm{\tau}^{\leq s}(q_!\ul{\BQ}_{B_N}\otimes \LLL^{-rM})\right)
\\\cong &\HO_c^n\!\left(\CM,\bm{\tau}^{\geq n-d}\bm{\tau}^{\leq s}(q_!\ul{\BQ}_{B_N}\otimes \LLL^{-rM})\right).\label{altL}
\end{align}
Isomorphism \eqref{altL} again follows by Remark \ref{CArem}.  As in the proof of Proposition \ref{int_pur}, since we have fixed $n$, the mixed Hodge structure $L^n_s$ only depends on the choice of sufficiently large $M,N,r$ up to canonical isomorphism, so that $L^n_s$ is at least well defined.  By \eqref{altL} we deduce that $L^n_{s}=0$ for $s<n-d$.  By Remark \ref{CArem} again, if $s\geq n+d$ then, writing $\CG=q_!\ul{\BQ}_{B_N}\otimes \LLL^{-rM}$, each of the natural morphisms
\[
\HO_c^n\!\left(\CM,\bm{\tau}^{\geq n-d}\bm{\tau}^{\leq s}\CG\right)\rightarrow \HO_c^n\!\left(\CM,\bm{\tau}^{\geq n-d}\CG\right)\leftarrow \HO_c^n\!\left(\CM,\CG\right)
\]
is an isomorphism.  So for $s\geq n+d$ we have natural isomorphisms
\begin{align*}
L^n_s\cong&\HO_c^n\!\left(\CM,q_!\ul{\BQ}_{B_N}\otimes \LLL^{-rM}\right)
\cong \HO^n_c\!\left(B_N,\BQ\otimes \LLL^{-rM}\right)
\cong \HO^n_c\!\left(\FX,\BQ\right) 
\end{align*}
as required.  For sufficiently large $M,N,r$ we have a decomposition
\[
\bm{\tau}^{\geq n-d}\bm{\tau}^{\leq n+d}(q_!\ul{\BQ}_{B_N}\otimes \LLL^{-rM})\cong \bigoplus_{n-d\leq i\leq n+d}\Ho^{i}\!\left(q_!\ul{\BQ}_{B_N}\otimes \LLL^{-rM}\right)[-i]
\]
by purity of $p_!\ul{\BQ}_{\FX}$.  It therefore follows that for $s\leq n+d$ the natural transformation 
\begin{equation}
\label{bee_def}
\beta\colon\bm{\tau}^{\geq n-d}\bm{\tau}^{\leq s}(q_!\ul{\BQ}_{B_N}\otimes \LLL^{-rM})\rightarrow \bm{\tau}^{\geq n-d}\bm{\tau}^{\leq n+d}(q_!\ul{\BQ}_{B_N}\otimes \LLL^{-rM})
\end{equation}
has a left inverse, which we will denote $\alpha$.  Denote by $a\colon \Msp\rightarrow \pt$ the structure morphism.  Then $\Ho^n(a_!\alpha)$ is a left inverse to the morphism 
\begin{equation}
\label{PIdef}
\Ho^n(a_!\beta)\colon L^n_s\rightarrow \HO^n_c(\FX,\BQ).
\end{equation}
In particular we deduce that \eqref{PIdef} is an \textit{inclusion} of mixed Hodge structures, factoring through the similarly defined inclusions $L^n_{s'}\rightarrow \HO^n_c(\FX,\BQ)$ for $s'>s$.
\end{proof}
\subsubsection{}
\label{FPR}
Let $p\colon \FX\rightarrow \CM$ be as in the Proposition \ref{int_filt}.  Let $r\colon\CM\rightarrow \CN$ be a finite morphism of schemes, so that $r$ is in particular projective.  Purity of $(rp)_!\underline{\BQ}_{\FX}$ then follows from purity of $p_!\underline{\BQ}_{\FX}$ and Proposition \ref{intr_pur}.  Moreover, $r$ is exact with respect to the perverse t-structure on complexes of constructible sheaves, and the natural t-structure on complexes of mixed Hodge modules.  It follows that the perverse filtrations on $\HO^{\BM}_{n}(\FX,\BQ)$ with respect to $p$ and $rp$ coincide.
\subsubsection{}
The filtration in Proposition \ref{int_filt} comes from the (non-canonical) decomposition of 
\[
\bm{\tau}^{\geq n-d}\bm{\tau}^{\leq s}(q_!\ul{\BQ}_{B_N}\otimes\LLL^{-rM})
\]
arising from Saito's version of the decomposition theorem of Beilinson--Bernstein--Deligne--Gabber.  I.e. the first part of Theorem \ref{thm:Saito_theorem} tells us that there is an isomorphism in $\Db(\MHM(\CM))$
\[
\bm{\tau}^{\geq n-d}\bm{\tau}^{\leq s}(q_!\ul{\BQ}_{B_N}\otimes\LLL^{-rM})\cong \bigoplus_{n-d\leq i\leq s} \Ho^i\!\left(q_!\ul{\BQ}_{B_N}\otimes\LLL^{-rM}\right)[-i].
\]
Although the decomposition is not canonically defined, the filtration by perverse cohomological degree is, so we can write
\[
L^n_s=\bigoplus_{n-d\leq i\leq s}\HO_c^n\!\left(\CM,\Ho^i(q_!\ul{\BQ}_{B_N}\otimes \LLL^{-rM})[-i]\right),
\]
and taking $M,N,r\gg 0$
\begin{align}
\nonumber \HO_c^n(\FX,\BQ)\cong &\bigoplus_{n-d\leq i\leq n+d}\HO_c^n\!\left(\CM,\Ho^i(q_!\ul{\BQ}_{B_N}\otimes \LLL^{-rM})[-i]\right)\\
\cong& \bigoplus_{n-d\leq i\leq n+d}\HO_c^n\!\left(\CM,\Ho^i(p_!\ul{\BQ}_{\FX})[-i]\right).\label{callback}
\end{align}

\subsubsection{Serre subcategories}
\label{Serre_sec}
In applications, we are often interested in Serre subcategories of 2CY categories.  The geometric situation will be: we have a morphism $p\colon \FX\rightarrow \CM$ from a stack to a variety as in the setup of Proposition \ref{int_filt}, and a locally closed subscheme $\iota\colon\CS\hookrightarrow \CM$.  Setting $\FS\coloneqq \CS\times_{\CM}\FX$, we would like to understand $\HO^{\BM}(\FS,\BQ)$, or equivalently $\HO_c(\FS,\BQ)$.
\smallbreak
Note that even if $p_!\ul{\BQ}_{\FX}$ is pure, there is no guarantee that $\iota^*p_!\ul{\BQ}_{\FX}$ will be.  So, for instance, we cannot infer purity of $\HO_c(\FS,\BQ)$ from purity of $p_!\ul{\BQ}_{\FX}$ and projectivity of $\CS$.  This becomes a point of consideration when proving e.g. purity of the (semistable) global nilpotent cone -- see Proposition \ref{GNC_prop}.  Nevertheless, we do have the following generalisation of Proposition \ref{int_filt}:
\begin{proposition}
\label{PIS_prop}
Let the Cartesian diagram
\[
\xymatrix{
\FS\ar[d]^{r}\ar[r]&\FX\ar[d]^p\\
\CS\ar[r]^{\iota}&\CM
}
\]
be as in the preceding discussion, i.e. $p$ is a morphism from a locally finite type stack which is exhausted by global quotient stacks, to a finite type separated scheme, and $\iota$ is an inclusion of a locally closed subscheme.  Assume that $p_!\ul{\BQ}_{\FX}$ is a pure complex of mixed Hodge modules.  Then for every $n\in\mathbb{Z}$ the morphism $p$ induces an ascending filtration on $\HO_n^{\BM}(\FS,\BQ)$ by mixed Hodge structures.
\end{proposition}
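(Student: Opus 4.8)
The plan is to follow the proof of Proposition \ref{int_filt} closely, but to work throughout with the canonical perverse degree filtration on the base $\CM$, where purity is available, rather than on $\CS$, where it may fail. Fix $n\in\BZ$ and choose $r,M,N\gg 0$ as in \S\ref{heart_di}, with approximating map $q\colon B_N\to\CM$. First I would record the base-change identity: writing $B_N^{\CS}\coloneqq q^{-1}(\CS)$ and $q^{\CS}\colon B_N^{\CS}\to\CS$ for the restriction, base change along the Cartesian square relating $\iota$ and $q$ gives a canonical isomorphism $\iota^*(q_!\ul\BQ_{B_N}\otimes\LLL^{-rM})\cong q^{\CS}_!\ul\BQ_{B_N^{\CS}}\otimes\LLL^{-rM}$, so that by definition $\HO^n_c(\FS,\BQ)\cong\HO^n_c(\CS,\iota^*(q_!\ul\BQ_{B_N}\otimes\LLL^{-rM}))$. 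With $d=\dim\CM\geq\dim\CS$, right t-exactness of $\iota^*$ for the perverse t-structure together with Remark \ref{CArem} shows that $\HO^n_c(\CS,\iota^*(-))$ depends only on $\bm{\tau}^{\geq n-d}(q_!\ul\BQ_{B_N}\otimes\LLL^{-rM})$; combined with the stabilization already established for $\FX$ in \S\ref{heart_di}, this makes $\HO^n_c(\FS,\BQ)$ well defined and independent of the auxiliary choices.

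Next I would use purity. By hypothesis $p_!\ul\BQ_{\FX}$ is pure, so the truncated complex $\bm{\tau}^{\geq n-d}\bm{\tau}^{\leq s}(q_!\ul\BQ_{B_N}\otimes\LLL^{-rM})$ on $\CM$ is pure, and hence, by Saito's decomposition theorem (Theorem \ref{thm:Saito_theorem}), splits into the direct sum of its shifted perverse cohomology sheaves. In particular the canonical truncation maps $\bm{\tau}^{\leq s}(\cdots)\to\bm{\tau}^{\leq s+1}(\cdots)\to(\cdots)$ are split monomorphisms of complexes on $\CM$. Applying the additive functor $\iota^*$ preserves split monomorphisms, yielding split monomorphisms $\iota^*\bm{\tau}^{\leq s}(q_!\ul\BQ_{B_N}\otimes\LLL^{-rM})\hookrightarrow\iota^*(q_!\ul\BQ_{B_N}\otimes\LLL^{-rM})$ in $\Db(\MHM(\CS))$. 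This is the step that replaces the construction of the left inverse $\alpha$ in the proof of Proposition \ref{int_filt}; crucially, I do \emph{not} claim that $\iota^*p_!\ul\BQ_{\FX}$ is pure (it need not be, see \S\ref{Serre_sec}), only that the pulled-back degree filtration on $\CM$ remains split.

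I would then set $L^n_s\coloneqq\HO^n_c(\CS,\iota^*\bm{\tau}^{\leq s}(q_!\ul\BQ_{B_N}\otimes\LLL^{-rM}))$. Because the maps of complexes above are split monomorphisms and $\HO^n_c(\CS,-)$ is additive, the induced maps $L^n_s\to L^n_{s+1}\to\cdots\to\HO^n_c(\FS,\BQ)$ are injections of mixed Hodge structures; they are canonical because the truncation maps on $\CM$ are canonical and $\iota^*$ is a functor, with the non-canonical splitting used only to deduce injectivity. As in Proposition \ref{int_filt}, $L^n_s=0$ for $s\ll 0$ and $L^n_s\cong\HO^n_c(\FS,\BQ)$ for $s\gg 0$, so the $L^n_s$ form an exhaustive ascending filtration of $\HO^n_c(\FS,\BQ)$ by mixed Hodge substructures. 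Dualizing in the category of mixed Hodge structures, exactly as in Proposition \ref{int_filt}, produces the desired ascending filtration on $\HO^{\BM}_n(\FS,\BQ)$.

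The main obstacle, and the essential difference from Proposition \ref{int_filt}, is precisely that $\iota^*$ does not commute with the perverse truncation functors: one cannot define the filtration by truncating on $\CS$, and the resulting filtration is not a perverse filtration for $r\colon\FS\to\CS$ (consistently, $r_!\ul\BQ_{\FS}\cong\iota^*p_!\ul\BQ_{\FX}$ is generally impure). The resolution is to index the filtration by perverse degree on $\CM$, where purity furnishes the splitting, and to transport the canonical degree filtration across $\iota^*$; the splitting is exactly what guarantees that this transported filtration remains a genuine filtration after passing to compactly supported cohomology. Independence of the choices $r,M,N$ then reduces, via the base-change identification of the first step, to the corresponding independence for $\FX$ already established in \S\ref{heart_di}.
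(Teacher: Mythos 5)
Your proposal is correct and takes essentially the same approach as the paper: both define the filtration by applying $\iota^*$ to the canonical truncations of $q_!\ul{\BQ}_{B_N}\otimes\LLL^{-rM}$ on $\CM$, use purity of $p_!\ul{\BQ}_{\FX}$ (via Saito's decomposition theorem) to split the truncation maps there, and transport that splitting through $\iota^*$ to deduce injectivity of $L^n_s\hookrightarrow \HO^n_c(\FS,\BQ)$ — the paper phrases this via the explicit left inverse $\alpha$ to $\beta$ inherited from the proof of Proposition \ref{int_filt}. The only cosmetic difference is that the paper carries the lower truncation $\bm{\tau}^{\geq n-d}$ explicitly in the definition of $L^{\CS,n}_s$, whereas you justify discarding it via right perverse t-exactness of $\iota^*$ and Remark \ref{CArem}.
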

\begin{proof}
As in the proof of Proposition \ref{int_filt} it is sufficient to construct an ascending filtration
\[
0\subset \ldots \subset L^{\CS,n}_{s-1}\subset L^{\CS,n}_{s}\subset\ldots 
\]
of mixed Hodge structures stabilising to $\HO_c^n(\FS,\BQ)$.  The cohomological amplitude of $(\CS\rightarrow \pt)_!\iota^*\CF$, for $\CF$ a mixed Hodge module, is still contained in $[-d,d]$, where $d=\dim(\CM)$ as in Proposition \ref{int_filt}.  We thus define
\[
L^{\CS,n}_{s}=\HO_c^n\!\left(\CS,\iota^*\bm{\tau}^{\geq n-d}\bm{\tau}^{\leq s}(q_!\ul{\BQ}_{B_N}\otimes \LLL^{-rM})\right).
\]
We define $\beta$ as in \eqref{bee_def}, and continue to denote by $\alpha$ a left inverse to this natural morphism.  Then $\Ho^n((\CS\rightarrow \pt)_!\iota^*\alpha)$ is a left inverse to the morphism
\begin{equation}
\label{PIS_def}
\Ho^n(\CS\rightarrow\pt)_!\iota^*\beta\colon L^{\CS,n}_s\rightarrow \HO^n_c(\FS,\BQ)
\end{equation}
so that as in the proof of Proposition \ref{int_filt} the morphism \eqref{PIS_def} is an inclusion of mixed Hodge structures factoring through the similarly defined inclusion $L^{\CS,n}_{s'}\hookrightarrow \HO^n_c(\FS,\BQ)$ for $s'>s$.
\end{proof}
\begin{definition}
\label{ambient_per}
We call the ascending filtrations constructed in Propositions \ref{int_filt} and \ref{PIS_prop} the \textit{perverse filtration with respect to }$p$.
\end{definition}
We repeat here the standard warning, with notation as in Proposition \ref{PIS_prop}: if it so happens that the complexes $p_!\ul{\BQ}_{\FX}$ and $r_!\ul{\BQ}_{\FS}$ are both pure, the perverse filtrations on $\HO^{\BM}(\FS,\BQ)$ with respect to $r$ and with respect to $p$ can be quite different.  An easy example of how they can differ is provided by contemplating the morphisms $\iota\colon \{0\}\hookrightarrow \BA^1$ and $p=\id_{\BA^1}$.

\bigbreak
\section{Quivers and preprojective algebras}\label{sec:quivers}
\subsection{Quiver representations}\label{sec:quiver_reps}
Until Theorem \ref{pp3_thm}, $k$ is an arbitrary fixed field throughout \S \ref{sec:quivers}.  
\smallbreak
A quiver is a directed graph, i.e. a pair of sets $Q_1$ and $Q_0$ denoting the arrows and the vertices, respectively, along with a pair of maps  $s,t\colon Q_1\rightarrow Q_0$ taking an arrow to its source and target, respectively.  We will always assume that $Q_1$ and $Q_0$ are finite.  We denote by $k Q$ the free path algebra of $Q$ over $k$, with $k$-basis given by paths in $Q$, including ``lazy paths'' of length zero at each vertex, labelled $e_i$ for $i\in Q_0$.  Multiplication is defined by concatenation of paths.  A left $k Q$-module $\rho$ has a dimension vector $\udim(\rho)\in\BN^{Q_0}$ defined by 
\[
\udim(\rho)_i=\dim(e_i\cdot \rho).
\]

We define by 
\[
\chi_Q(\mathbf{d},\mathbf{e})=\sum_{i\in Q_0}\mathbf{d}_i\mathbf{e}_i-\sum_{a\in Q_1}\mathbf{d}_{s(a)}\mathbf{e}_{t(a)}
\]
the Euler pairing on dimension vectors for $Q$.
\subsubsection{}

We denote by $\FM_{\dd}(Q)$ the moduli stack of $\dd$-dimensional left $k Q$-modules.  There is a natural equivalence of stacks $\FM_{\dd}(Q)\cong \BA_{\dd}(Q)/\Gl_{\dd}$, where
\[
\BA_{\dd}(Q)\coloneqq\prod_{a\in Q_1}\Hom(k^{\dd_{s(a)}},k^{\dd_{t(a)}})
\]
and $\Gl_{\dd}\coloneqq\prod_{i\in Q_0}\Gl_{\dd_i}(k)$ acts by change of basis.

\subsection{Preprojective algebras}
\label{preproj_sec}
Given a quiver $Q$ we denote by $\ol{Q}$ the double of $Q$, i.e. the quiver having the same vertices as $Q$, and with $\ol{Q}_1\coloneqq Q_1\coprod Q_1^{\opp}$.  Here $Q_1^{\opp}$ contains an arrow $a^*$ for every arrow $a\in Q_1$, with the orientation reversed.  We define the \textit{preprojective algebra} 
\[
\Pi_Q\coloneqq k \ol{Q}/\langle \sum_{a\in Q_1}[a,a^*]\rangle.
\]
We define $\mathfrak{gl}_{\mathbf{d}}\coloneqq \prod_{i\in Q_0}\mathfrak{gl}_{\mathbf{d}_i}(k)$, the Lie algebra of the Lie group $\Gl_{\dd}$, and we define the (co)moment map
\begin{equation}
\label{comoment}
\mu_{\mathbf{d}}\colon\BA_{\mathbf{d}}(\overline{Q})\rightarrow \mathfrak{gl}_{\mathbf{d}};\quad\quad
(\rho(a),\rho(a^*))_{a\in Q_1}\mapsto \sum_{a\in Q_1}[\rho(a),\rho(a^*)].
\end{equation}
There is an equivalence of stacks $\FM_{\mathbf{d}}(\Pi_Q)\cong \mu_{\mathbf{d}}^{-1}(0)/\Gl_{\mathbf{d}}$, where on the left hand side we have the moduli stack of $\mathbf{d}$-dimensional $\Pi_Q$-modules.
\subsubsection{Derived preprojective algebras}
\label{dppa}
We define $R=\bigoplus_{i\in Q_0}k$.  Let $E$ be an $R$-bimodule, for which we identify arrows $a\in Q_1$ from $i$ to $j$ with a basis of $e_j\cdotb E\cdotb e_i$.  We denote by $E^{\vee}=\Hom_{\Mod^R_R}(E,R)$ the bimodule dual.  The diagonal $k Q$-bimodule $kQ$ has a 2-term resolution by projective bimodules
\[
kQ\otimes_R E\otimes_R kQ\xrightarrow{\phi} kQ\otimes_R kQ
\]
so that the complex
\[
kQ^!\coloneqq \left(kQ\otimes_R kQ\xrightarrow{\Hom(-,kQ\otimes kQ)(\phi)} kQ\otimes_R E^{\vee}\otimes_R kQ\right)
\]
calculates $\RHom_{\Mod^{kQ}_{kQ}}(kQ,kQ\otimes kQ)$.  
\smallbreak
The \textit{2-Calabi--Yau completion} \cite{Ke11} $\mathscr{G}_2(kQ)$ is defined to be the tensor algebra of the bimodule $kQ^![1]$ over $kQ$, which we may describe explicitly.  First take the doubled quiver $\overline{Q}$.  Next, for every vertex $i$ we add a loop $u_i$ based at $i$, to form a quiver $Q'$.  We set the degrees of the arrows $a$, $a^*$ and $u_i$ to be $0,0,-1$ respectively.  Then 
\begin{equation}
    \label{dppa_def}
\mathscr{G}_2(kQ)=(kQ',d)
\end{equation}
where $d$ is the unique $k$-linear differential satisfying the Leibniz rule and sending $u_i$ to $e_i\sum_{a\in Q_1}[a,a^*]e_i$.  It is clear that $\HO^0(\mathscr{G}_2)$ is isomorphic to the preprojective algebra of $Q$.  If $Q$ is not of finite (ADE) type, the natural morphism
\[
\mathscr{G}_2(kQ)\rightarrow \HO^0(\mathscr{G}_2(kQ))
\]
is a quasi-isomorphism (see \cite{Kel08}).  Moreover, since $\mathscr{G}_2(kQ)$ is concentrated in nonpositive degrees, there is an inclusion of categories $\fdmod^{\Pi_Q}\rightarrow \Db(\fdmod^{\mathscr{G}_2(kQ)})$, induced by the morphism
\begin{equation}
    \label{der_pp}
\mathscr{G}_2(kQ)\rightarrow \Pi_Q.
\end{equation}
This is the inclusion of the heart of the natural t-structure on the target.  We call $\mathscr{G}_2(kQ)$ the \textit{derived preprojective algebra} for $Q$.

\subsubsection{}We denote by $\CM_{\dd}(\Pi_Q)$ the coarse moduli space of $\dd$-dimensional $\Pi_Q$-modules.  Explicitly,
\[
\CM_{\dd}(\Pi_Q)=\Spec(\Gamma(\CO_{\mu_{\mathbf{d}}^{-1}(0)})^{\Gl_{\mathbf{d}}})
\]
and so $\CM_{\dd}(\Pi_Q)$ is the affinization of $\FM_{\dd}(\Pi_Q)$.  The $k$-points of this scheme are in bijection with isomorphism classes of semisimple $\Pi_Q$-modules.  We denote by $0_{\dd}\in\CM_{\dd}(\Pi_Q)$ the $k$-point representing the unique semisimple nilpotent $\dd$-dimensional $\Pi_Q$-module.  Explicitly, this is the $\mathbf{d}$-dimensional module for which all of the arrows of $\ol{Q}$ act via zero morphisms.  
\smallbreak
We denote by
\[
\JH_{\dd}\colon \FM_{\dd}(\Pi_Q)\rightarrow \CM_{\dd}(\Pi_Q)
\]
the affinization map.  At the level of geometric points over $K\supset k$, it is the morphism taking a $\Pi_Q\otimes_k K$-module to its semisimplification.
\subsubsection{}

Given a \textit{King stability condition} $\zeta\in\BQ^{Q_0}$, we define the slope of a nonzero $\Pi_Q$-module $\rho$ by 
\[
\psi^{\zeta}(\rho)\coloneqq\frac{\udim(\rho)\cdot \zeta}{\dim(\rho)}.
\]
A $\Pi_Q$-module $\rho$ is $\zeta$-semistable if for all proper nonzero submodules $\rho'\subset \rho$ there is an inequality $\psi^{\zeta}(\rho')\leq \psi^{\zeta}(\rho)$.  The $\Pi_Q$-module $\rho$ is $\zeta$-stable if the inequality is strict.  We denote by $\FM^{\zeta\sstab}_{\dd}(\Pi_Q)$ the moduli stack of $\zeta$-semistable $\Pi_Q$-modules, and by $\CM_{\dd}^{\zeta\sstab}(\Pi_Q)$ the coarse moduli space as defined via GIT by King~\cite{MR1315461}.  By the universal property of the coarse moduli space there is a morphism
\[
\JH^{\zeta}_{\dd\colon}\FM_{\dd}^{\zeta\sstab}(\Pi_Q)\rightarrow \CM^{\zeta\sstab}_{\dd}(\Pi_Q)
\]
which at the level of points, takes a $\zeta$-semistable $\Pi_Q$-module $\rho$ to the direct sum $\bigoplus_{i\in I}\rho_i^{\oplus m_i}$.  Here, $I$ is a set indexing the distinct isomorphism classes of $\zeta$-stable $\Pi_Q$-modules appearing as subquotients in the Jordan--H\"older filtration of $\rho$ (in the category of $\zeta$-semistable $\Pi_Q$-modules), and $m_i$ is the number of times that $\rho_i$ appears as a subquotient in such a filtration.

\sssct
In simple examples we can already begin to see the tricky nature of $\BQ_{\FM_{\mathbf{d}}(\Pi_Q)}$.

\begin{example}\label{qex}
We show that $\BQ_{\FM_{\dd}(\Pi_Q)}$ may fail to be a shifted perverse sheaf.  Let $Q$ be the quiver with two vertices, and one arrow between them.  We fix $\mathbf{d}=(1,2)$.  Since $r\colon \mu_{\mathbf{d}}^{-1}(0)\rightarrow \FM_{\mathbf{d}}(\Pi_Q)$ is smooth, it is sufficient to show that $\BQ_{\mu^{-1}_{\mathbf{d}}(0)}$ is not a shifted perverse sheaf.  We write
\[
\mu_{\mathbf{d}}^{-1}(0)=\{(a,b),(c,d)\in \BC^2\times \BC^2\;\lvert\; ac=ad=bc=bd=0\}.
\]
So $X=\mu_{\mathbf{d}}^{-1}(0)$ is a union of two planes identified at the origin $0$.  Write $p\colon \BA^2\coprod\BA^2\eqqcolon\tilde{X}\rightarrow X$ for the normalisation morphism.  Note that $p_*\BQ_{\tilde{X}}$ is a cohomologically shifted perverse sheaf, since $\BQ_{\tilde{X}}[2]$ is perverse and $p_*$ is perverse exact, since it is finite.  There is a short exact sequence of constructible sheaves
\[
0\rightarrow \BQ_X\rightarrow p_*\BQ_{\tilde{X}}\rightarrow \BQ_{0}\rightarrow 0
\]
From the long exact sequence in perverse cohomology we find 
\[
{}^{\mathfrak{p}}\!\Ho^2(\BQ_X)\cong p_*\BQ_{\tilde{X}}[2];\quad\quad
{}^{\mathfrak{p}}\!\Ho^1(\BQ_X)\cong \BQ_0.
\]
\end{example}
\begin{example}
\label{qe2}
Next we consider an example that shows that even when some shift of $\BQ_{\FM_{\dd}(\Pi_Q)}$ is a perverse sheaf, its natural lift\footnote{I.e. its lift to the category of mixed Hodge modules with descent data on a smooth atlas.} to a mixed Hodge module may not be pure.  Again, we can demonstrate this by considering $\mu^{-1}_{\mathbf{d}}(0)$, since $r$ is smooth.  
\smallbreak
Let $Q$ be the same quiver as in Example \ref{qex}, and set $\mathbf{d}=(1,1)$.  Then $Z=\mu^{-1}_{\mathbf{d}}(0)\subset \BA^2$ is the vanishing locus of the function $xy$.  Let $q\colon \tilde{Z}\rightarrow Z$ be the normalisation morphism from $\tilde{Z}=\BA^1\coprod \BA^1$.  Then there is a short exact sequence of mixed Hodge modules
\begin{equation}
\label{MHMses}
0\rightarrow\underline{\BQ}_0\rightarrow \underline{\BQ}_Z[1]\rightarrow q_*\underline{\BQ}_{\tilde{Z}}[1]\rightarrow 0
\end{equation}
so in particular $\BQ_Z[1]$ is indeed perverse, as it is the extension of two perverse sheaves.  We can read off the weight filtration from \eqref{MHMses}:
\[
W_{n}\underline{\BQ}_Z[1]=\begin{cases}0& \textrm{for }n<0\\
\underline{\BQ}_0 &\textrm{for }n=0\\
\underline{\BQ}_Z[1]&\textrm{for }n>0.\end{cases}
\]
\end{example}
\subsubsection{}

We state here a fundamental property of the morphism $\JH_{\dd}$ that we will use throughout the paper.
\begin{theorem}\cite{preproj3}
\label{pp3_thm}
Set $k=\BC$.  Let $Q$ be a finite quiver, and let $\dd\in\BN^{Q_0}$ be a dimension vector.  Then the complex of mixed Hodge modules
\[
\JH_!\ul{\BQ}_{\Mst_{\dd}(\Pi_Q)}\otimes \LLL^{\chi_Q(\mathbf{d},\mathbf{d})}\in\Ob(\Dblf(\MHM(\Msp_{\dd}(\Pi_Q))))
\]
is pure, and
\begin{equation}
\label{perv_bound}
\bm{\tau}^{\geq 1}\left(\JH_{\dd,!}\ul{\BQ}_{\FM_{\dd}(\Pi_Q)}\otimes \LLL^{\chi_Q(\mathbf{d},\mathbf{d})}\right)=0
\end{equation}
so that the induced perverse filtration on $\HO^{\BM}(\Mst_{\mathbf{d}}(\Pi_Q),\BQ\otimes \LLL^{\chi_Q(\mathbf{d},\mathbf{d})})$ begins in degree zero.
\end{theorem}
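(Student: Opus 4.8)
The plan is to read off both assertions from the cohomological integrality theorem of \cite{BenSven}, applied to the preprojective cohomological Hall algebra, with the purity of BPS cohomology from \cite{Da18} as the essential geometric input. Write $\CohaS_{\dd}\coloneqq\JH_{\dd,!}\ul{\BQ}_{\FM_{\dd}(\Pi_Q)}\otimes\LLL^{\chi_Q(\dd,\dd)}$. Since $2\chi_Q(\dd,\dd)$ is minus the virtual dimension of $\FM_{\dd}(\Pi_Q)$, this twist is the self-dual (intersection-complex) normalisation, and it is precisely the normalisation in which one can hope both for purity and for an upper bound on the perverse amplitude. The direct-sum maps $\CM_{\dd}(\Pi_Q)\times\CM_{\ee}(\Pi_Q)\to\CM_{\dd+\ee}(\Pi_Q)$ make $\bigoplus_{\dd}\CohaS_{\dd}$ the underlying complex of a CoHA, and the entire argument is carried out inside this algebra.

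First I would place $\bigoplus_{\dd}\CohaS_{\dd}$ in the setting of critical CoHAs via dimensional reduction. Adjoining a loop $\omega_i$ at each vertex of $\ol{Q}$ and forming the potential $W=\sum_{i\in Q_0}\omega_i\,e_i\big(\sum_{a\in Q_1}[a,a^*]\big)e_i$ produces a symmetric quiver with potential $(\tilde{Q},W)$; since $W$ is linear in the loop variables, dimensional reduction identifies the pushforward to the coarse space of the monodromic vanishing-cycle complex of $\Tr(W)$ with $\CohaS_{\dd}$, up to the shift recorded by $\chi_Q(\dd,\dd)$. This places us squarely in the framework of \cite{BenSven}, whose cohomological integrality theorem supplies a canonical isomorphism
\[
\bigoplus_{\dd}\CohaS_{\dd}\;\cong\;\Sym\!\Big(\bigoplus_{\dd\neq 0}\DTS_{\dd}\otimes\HO_c(\pt/\BC^*)\Big),
\]
where $\Sym$ is formed for the direct-sum convolution product (with the Koszul sign rule) and $\DTS_{\dd}$ is the BPS mixed Hodge module supported on $\CM_{\dd}(\Pi_Q)$.

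Both assertions then reduce to two facts about the right-hand side: each $\DTS_{\dd}$ is (i) pure and (ii) perverse, i.e. concentrated in perverse degree $0$. The tautological factor $\HO_c(\pt/\BC^*)$ is pure of Tate type and, because $\JH_{\dd,!}$ is built from compactly supported cohomology, is concentrated in nonpositive perverse degrees; moreover the convolution maps are finite, so that $\oplus_{!}$ is $t$-exact for the perverse $t$-structure and preserves purity. Granting (i) and (ii), the free symmetric algebra on a pure complex concentrated in perverse degrees $\leq 0$ is again pure and concentrated in perverse degrees $\leq 0$. Extracting the $\dd$-graded summand yields purity of $\CohaS_{\dd}$ together with the vanishing $\bm{\tau}^{\geq 1}\CohaS_{\dd}=0$ of \eqref{perv_bound}; the final clause, that the perverse filtration on $\HOBM(\FM_{\dd}(\Pi_Q),\BQ\otimes\LLL^{\chi_Q(\dd,\dd)})$ begins in degree zero, is then immediate from Proposition \ref{int_filt}.

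The main obstacle is property (i): purity of the BPS sheaves is not formal, since the integrality isomorphism transports purity rather than creating it. This is exactly the purity theorem of \cite{Da18}, which I would either invoke directly or reprove along the following lines. By the support lemma $\DTS_{\dd}$ is set-theoretically supported on $\CM_{\dd}(\Pi_Q)$, which is a conical symplectic singularity, so its intersection cohomology complex $\ICSn_{\CM_{\dd}(\Pi_Q)}$ is pure; a rank comparison on the stable locus exhibits $\ICSn_{\CM_{\dd}(\Pi_Q)}$ as a summand of $\DTS_{\dd}$, and the remaining summands are controlled, through the stratification of $\CM_{\dd}(\Pi_Q)$ by representation type, by BPS sheaves of strictly smaller dimension vector. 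An induction on $\dd$, feeding the purity of the smaller BPS sheaves back through the integrality isomorphism, should then upgrade this to purity of the full $\DTS_{\dd}$. I expect establishing this inductive purity of the BPS sheaves to be the real technical content of the theorem.
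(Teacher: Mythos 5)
Your toolbox is the right one: the proof in \cite{preproj3}, which this paper imports rather than reproves, does indeed run through dimensional reduction from the 3CY category of $\Pi_Q[x]$-modules \cite[App.~A]{MR3667216} (see also \cite{KR17,YZ20}), the cohomological integrality theorem of \cite{BenSven}, and the support lemma for BPS sheaves of $\Pi_Q[x]$-modules from \cite{2016arXiv160202110D}. In particular your derivation of the bound \eqref{perv_bound} --- perversity of the BPS summands, the tautological factor $\HO_c(\pt/\BC^*,\BQ)$ sitting in nonpositive degrees, and $t$-exactness of pushforward along the finite map $\oplus$ --- is essentially the ``less perverse filtration'' argument of \cite{preproj3} that this paper alludes to in \S\ref{cu_sec}.

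The genuine gap is in where you source purity. You take purity of the BPS sheaves $\DTS_{\dd}$, \emph{as mixed Hodge modules on} $\CM_{\dd}(\Pi_Q)$, as the input and propagate it through $\Sym$. But \cite{Da18}, as invoked in this paper, supplies purity of BPS \emph{cohomology} --- an absolute statement, after taking hypercohomology --- not the local, sheaf-level purity your argument requires; since $\CM_{\dd}(\Pi_Q)\rightarrow \pt$ is not proper, purity of hypercohomology does not imply purity of the complex (the contracting $\BC^*$-action gives the implication in the other direction only, as in the proof of Theorem \ref{1a}). In \cite{preproj3} the logical order is the reverse of yours: purity of $\JH_{\dd,!}\ul{\BQ}_{\FM_{\dd}(\Pi_Q)}$ is proved \emph{first}, by playing off against each other the two weight constraints that dimensional reduction imposes --- as a shriek pushforward of the constant sheaf the complex has weights bounded on one side, while its vanishing-cycle avatar $\widetilde{p}_!\phi_{\Tr(\widetilde{W})}$ bounds them on the other (see the proof of \cite[Thm.~4.7]{preproj3}, and the discussion following Theorem \ref{loc_pur_thm} above) --- and purity of the BPS sheaves is then a \emph{corollary}, since by integrality they are direct summands. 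Your fallback induction does not close this circle as stated: by the Ext-quiver local structure, the restriction of $\DTS_{\dd}$ to a deeper stratum of $\CM_{\dd}(\Pi_Q)$ is governed by BPS sheaves of a \emph{different} quiver $Q'$ (the Ext-quiver of the polystable point), not by $\DTS_{\dd'}$ for $\dd'<\dd$ of the same quiver, so ``induction on $\dd$'' has no well-founded scheme without substantial further input. As written, the proposal is therefore circular, or at best open, exactly at its load-bearing step.
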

In Examples \ref{qex} and \ref{qe2} it is possible to prove this by picking a sensible stratification of $\Mst_{\mathbf{d}}(\Pi_Q)$ and using the resulting long exact sequences.  Outside of tame type this strategy breaks down, and the proof instead uses the cohomological integrality theorem in Donaldson--Thomas theory \cite{BenSven},  and a lemma on the supports of BPS sheaves for the category of $\Pi_Q[z]$-modules from \cite{2016arXiv160202110D}.   Using these results, along with dimensional reduction from the 3CY setting to the 2CY category of $\Pi_Q$-modules \cite{KR17, YZ20}, we can reduce the purity statement to the statement that the Borel--Moore homology of the stack of pairs $(\rho,f)$ of a $\dd$-dimensional $Q$ representation $\rho$ along with a nilpotent endomorphism of $\rho$ is pure.  This \textit{is} provable by stratifying the stack and checking purity of the Borel--Moore homology of each stratum by hand.  Full details can be found in~\cite{preproj3}.
%, we provide a sketch proof in order to make the paper somewhat self-contained, and to give a feel for what BPS cohomology is (as it will make a brief reappearance in \S \ref{}).
\begin{remark}
In fact the theorem in ibid. is proved more generally, for the morphism $\JH^{\zeta}$.  We can consider $\JH$ as the special case of $\JH^{\zeta}$ for the degenerate stability condition $\zeta=(1,\ldots,1)$.  We will show in \S \ref{proproj_redux} how to deduce the general case from the special case, as it is illustrative of the general methods of this paper.
\end{remark}
%\begin{proof}[Sketch proof of Theorem \ref{pp3_thm}]
%Consider the quiver $\tilde{Q}$ defined the same way as $Q'$, but with ungraded arrows.  On this quiver we consider the element
%\[
%W=(\sum_{i\in Q_0} u_i)(\sum_{a\in Q_1}[a,a^*]
%\]
%We consider the commutative diagram
%\[
%\xymatrix{
%\FM(\BC \tilde{Q})\ar[d]^{\tilde{\JH}}\ar[r]^{\pi}& \FM(\BC\overline{Q})\ar[d]^{\JH}\\
%\CM(\BC\tilde{Q})\ar[r]^q&\CM(\BC\overline{Q}).
%}
%\]
%By the dimensional reduction theorem, there is an isomorphism
%\[
%\JH_!\pi_!\phi_{\Tr(W)}\underline{\BQ}
%\]
%\end{proof}
\sssct

Note that $2\chi_Q(\mathbf{d},\mathbf{d})$ is equal to the Euler form for $\fdmod^{\mathscr{G}_2(kQ)}$ applied to any module of dimension vector $\mathbf{d}$.  Since the truncation functors commute with restriction along inverse image along \'etale morphisms, we will be able to use the bound \eqref{perv_bound} to bound (above) the cohomological amplitude of $p_!\ul{\BQ}_{\Mst}$ for $p\colon\Mst\rightarrow\Msp$ the morphism from the moduli stack to the moduli space of objects of each of the 2CY categories introduced in \S \ref{glob_intro}, via our local neighbourhood theorem (Theorem \ref{loc_str_thm}).

\section{2CY categories and formality}
\label{sec:2CY_local}
\subsection{dg categories}
 Throughout \S \ref{sec:2CY_local} we work over an arbitrary field $k$ of characteristic zero.  We briefly recall the definitions and salient features of ($k$-linear) dg categories, referring the reader to \cite{Kel06} for more details.  
\subsubsection{}
 A (small) dg category $\mathscr{C}$ is given by a set of objects $\Ob(\mathscr{C})$, as well as a cochain complex of morphisms $\mathscr{C}(i,j)\in\Ob(\Vect_{\dg})$, for each pair of objects $i,j\in\Ob(\mathscr{C})$.  I.e. for each pair of objects we are given a $\BZ$-graded $k$-vector space
 \[
 \mathscr{C}(i,j)\coloneqq \Hom_{\mathscr{C}}(i,j)=\bigoplus_{p\in\BZ}\Hom^p_{\mathscr{C}}(i,j)
 \]
along with a degree one morphism $d\colon\mathscr{C}(i,j)\rightarrow \mathscr{C}(i,j)$ satisfying $d^2=0$.  Composition is given by morphisms
\[
m_2\colon \Hom_{\mathscr{C}}(j,k)\otimes \Hom_{\mathscr{C}}(i,j)\rightarrow \Hom_{\mathscr{C}}(i,k)
\]
in the category of $k$-linear cochain complexes, and these compositions are required to be associative.  Succinctly, a dg category is a category enriched over $k$-linear cochain complexes.  A dg functor is a 1-morphism in the 2-category of categories enriched over $k$-linear cochain complexes.  As part of this definition, if $\mathscr{C}$ is a dg category, then each $i\in\Ob(\mathscr{C})$ carries an identity morphism, respected by all dg functors.  I.e. for each $i\in\Ob(\mathscr{C})$ there is a degree zero morphism $\eta_i\colon k \rightarrow \Hom_{\mathscr{C}}(i,i)$ satisfying $m_2\circ (\unit_{i,j}\otimes \eta_i)=\unit_{i,j}$ and $m_2\circ (\eta_i\otimes \unit_{j,i})=\unit_{j,i}$ where $\unit_{i,j}$ denotes the identity morphism on the cochain complex $\mathscr{C}(i,j)$.
\smallbreak
If $A$ is a dg algebra, we will frequently identify $A$ with the dg category $\mathscr{C}$ with one object $i$ satisfying $\Hom_{\mathscr{C}}(i,i)=A$ and with composition given by the multiplication map for $A$.
\subsubsection{}
Given a dg category $\mathscr{C}$ we form the homotopy category $\HO^0(\mathscr{C})$ as follows.  We set $\Ob(\HO^0(\mathscr{C}))=\Ob(\mathscr{C})$ and $\HO^0(\mathscr{C})(i,j)=\HO^0(\mathscr{C}(i,j),d)$.
\subsubsection{}
The category of cochain complexes itself can be enriched to a dg category $\Dub_{\dg}(\Mod^k)$ in a natural way:
\[
\Hom^p_{\Dub_{\dg}(\Mod^k)}((V,d_V),(W,d_W))=\prod_{r\in\mathbb{Z}}\Hom(V^r,W^{r+p})
\]
with differential
\[
d(f)^p=d_W\circ  f^{p-1}-(-1)^p f^{p-1}\circ d_V.
\]
\smallbreak
If $\mathscr{C}$ is a dg category, a (left) $\mathscr{C}$-module is a dg-functor $\mathscr{C}\rightarrow \Dub_{\dg}(\Mod^k)$.  Via the dg structure of $\Dub_{\dg}(\Mod^k)$, the category of left dg-modules over $\mathscr{C}$ itself is enriched to a dg category, which we denote $\dg\Mod^{\mathscr{C}}$.  We define right $\mathscr{C}$-modules to be dg-functors $\mathscr{C}^{\op}\rightarrow \Dub_{\dg}(\Mod^k)$, yielding the dg category $\dg\Mod_{\mathscr{C}}$.  A morphism $f$ of $\mathscr{C}$-modules is called a quasi-isomorphism if $f(i)$ is a quasi-isomorphism for every $i\in\Ob(\mathscr{C})$.  Formally inverting the quasi-isomorphisms inside the category $\HO^0(\dg\Mod^{\mathscr{C}})$, we arrive at the derived category $\Dub(\Mod^{\mathscr{C}})$.  This is a triangulated category.  Via the dg Yoneda embedding we obtain a functor of dg categories $G\colon\mathscr{C}\rightarrow \dg\Mod_{\mathscr{C}}$, and we call $\mathscr{C}$ pre-triangulated if the full subcategory of $\Dub(\Mod_{\mathscr{C}})$ containing those objects that are equivalent to $G(i)$ for $i\in \mathscr{C}$ is a triangulated subcategory of $\Dub(\Mod_{\mathscr{C}})$.
\subsubsection{}
Given a triangulated category $\mathscr{D}$, a \textit{dg enhancement} of $\mathscr{D}$ is a dg category $\mathscr{E}$ along with an equivalence of triangulated categories $\HO^0(\mathscr{E})\rightarrow\mathscr{D}$.  We will generally use the notation $\Dub_{\dg}^{?}(\mathscr{A})$ for a dg enhancement of a triangulated category $\Dub^{?}\!(\mathscr{A})$.  There is a natural dg enhancement $\Dub_{\dg}(\Mod^{\mathscr{C}})$ of $\Dub(\Mod^{\mathscr{C}})$, defined to be full subcategory of $\dg\Mod^{\mathscr{C}}$ containing the modules with a certain property (P) \cite[Sec.3.1]{Kel94} (see also \cite[Sec.3.2]{Kel06}).  In the case of the bounded derived category of an ordinary algebra, this amounts to considering the dg category of bounded above complexes of projective modules with bounded cohomology.
\smallbreak
For $\mathscr{A}$ a dg category, we denote by $\Perf(\mathscr{A})$ the full subcategory of $\Dub(\Mod_{\mathscr{A}})$ containing the compact objects, and by $\Perf_{\dg}(\mathscr{A})$ the full subcategory of $\Ddg(\Mod_{\mathscr{A}})$ containing the same objects.  A dg category $\mathscr{A}$ is called \textit{smooth} if the diagonal bimodule $\mathscr{A}$ is a compact object in $\Dub(\Mod_{\mathscr{A}^e})$, where $\mathscr{A}^e\coloneqq\mathscr{A}\otimes\mathscr{A}^{\opp}$.  Given a $\mathscr{A}\otimes\mathscr{A}^{\opp}$-module $M$ we denote by $M^!$ the derived dual, i.e. the chain complex
\[
\Hom_{\dg\Mod_{\mathscr{A}^e}}(QM,\mathscr{A}\otimes\mathscr{A}^{\opp})
\]
where the target is given the outer $\mathscr{A}^e$-action, and $QM$ is a replacement of $M$ by a quasi-isomorphic module satisfying property (P).  Via the inner $\mathscr{A}^e$-action on $\mathscr{A}\otimes\mathscr{A}^{\opp}$, the derived dual is again an $\mathscr{A}^e$-module (see \cite[Sec.3]{Ke11} for details).  

\subsubsection{}
We denote by $\CHC_{\bullet}(\mathscr{A})$ the Hochschild/cyclic complex of $\mathscr{A}$.  There is a natural quasi-isomorphism
\[
\CHC_{\bullet}(\mathscr{A})\cong\mathscr{A}\otimes^{\DL}_{\mathscr{A}^e}\mathscr{A}.
\]
Explicitly, following \cite{Lo13} we write 
\[
\CHC_{n}(\mathscr{A})=\begin{cases}\bigoplus_{i_0,\ldots i_{n}\in\Ob(\mathscr{A})}\big(\mathscr{A}(i_{n},i_0)[1]\otimes\mathscr{A}(i_{n-1},i_{n})[1]\otimes\ldots\otimes\mathscr{A}(i_{0},i_1)[1]\big)[-1]&n\geq 1\\
\bigoplus_{i\in\Ob(\mathscr{A})}\mathscr{A}(i,i) &n=0\end{cases}
\]
equipped with the differential $\Ab\coloneqq \Ab_1+d_{\Hoch}$, where \[
d^n_{\Hoch}=\sum_{r=0}^{n-2}\underbrace{\unit\otimes\ldots\otimes\unit}_{r\textrm{ times}}\otimes\Ab_2\otimes\underbrace{\unit\otimes\ldots\otimes\unit}_{n-2-r\textrm{ times}}+(\Ab_2\otimes \underbrace{\unit\otimes\ldots\otimes\unit}_{n-2 \textrm{ times}})\circ F_n
\]
and $F_n$ is the cyclic permutation of tensor factors sending the last to the first position.  Note that since $\Ab_2$ has degree 1, signs appear when actually evaluating this differential on elements.  Here and throughout the paper, if $\Am_2,\Am_1=d$ and later $\Am_3,\Am_4,\ldots$ are the operations $\mathscr{A}(-,-)\otimes\ldots\otimes\mathscr{A}(-,-)\rightarrow \mathscr{A}(-,-)$ defining the structure of a category, dg category, or $A_{\infty}$-category, following \cite{GJ90} we denote by $\Ab_1,\Ab_2,\ldots$ the morphisms
\[
\Ab_n\coloneqq S\circ \Am_n\circ (S^{-1})^{\otimes n}\colon \mathscr{A}(-,-)[1]\otimes\ldots\otimes\mathscr{A}(-,-)[1]\rightarrow \mathscr{A}(-,-)[1]
\]
induced by them, where we define
\begin{align*}
S\colon &\mathscr{A}(-,-)\rightarrow \mathscr{A}(-,-)\otimes k[1]\cong \mathscr{A}(-,-)[1]\\
&a\mapsto a\otimes 1.
\end{align*}
\smallbreak
For $V$ a graded vector space, following \cite{GJ90} it is standard to write $\Ab_n[a_1,\ldots,a_n]$ for $\Ab_n$ applied to $(a_1\otimes 1)\otimes \ldots (a_n\otimes 1)\in SV^{n}$.  So, for instance, it follows from the Koszul sign rule that $\Ab_1[a]=(-\Am_1(a)\otimes 1)$; see \cite[Lem.1.3]{GJ90} for this and the signs that appear in the relation between $\Ab_n$ and $\Am_n$ in general.  The cyclic complex carries the Connes operator $\ConB$, defined via
\begin{align}
\ConB_n\colon &\CHC_{n}(\mathscr{A})\rightarrow \CHC_{n+1}(\mathscr{A})
\\
& \ConB_n\coloneqq \sum_{i=1}^n(\eta \otimes F_n^i+ F_{n+1}(\eta \otimes F_n^i)).
\end{align}
Note that, as usual, signs appear when actually applying $\ConB$ to elements in $\CHC_{\bullet}(\mathscr{A})$, and taking care of them one finds that $\ConB^2 =\ConB\Ab+\Ab\ConB=\Ab^2=0$: see e.g. chapters one and two of \cite{Lo13}.
\smallbreak
Assuming that $\mathscr{A}$ is smooth, there is a quasi-equivalence
\begin{align*}
\bullet^{\flat}\colon&\CHC_{\bullet}(\mathscr{A})\rightarrow\Hom_{\Ddg(\Mod_{\mathscr{A}^e})}(\mathscr{A}^!,\mathscr{A})
\end{align*}
realised via adjunction and the natural quasi-isomorphism $\mathscr{A}\rightarrow \mathscr{A}^{!!}$.  There is a fully faithful morphism of dg categories $\mathscr{A}\rightarrow \Perf_{\dg}(\mathscr{A})$ via the Yoneda embedding, and the induced morphism
\begin{equation}
\label{AtoPerf}
\CHC_{\bullet}(\mathscr{A})\rightarrow \CHC_{\bullet}(\Perf_{\dg}(\mathscr{A}))
\end{equation}
is a quasi-isomorphism, since $\CHC_{\bullet}(-)$ sends Morita quasi-equivalences to quasi-isomorphisms.
\subsection{Calabi--Yau structures}
\label{LeftRight}
We next recall the two notions of CY structure on dg categories that will be relevant for this paper. Our notation is a mixture of the notation from \cite{BCS} and \cite{BD19}.
\subsubsection{Left nCY structures}
\label{lccy}
A \textit{left pre-nCY structure} on $\mathscr{A}$ is a cycle 
\[
\eta\colon k[n]\rightarrow \CHC_{\bullet}(\mathscr{A})^{S^1}, 
\]
where the target denotes the negative cyclic complex of $\CHC_{\bullet}(\mathscr{A})$, defined below.  As shown in \cite{hoyois}, the operator $\ConB$ on the Hochschild complex matches the action of the nondegenerate 1-simplex of the $S^1$-action on the Hochschild complex; so one may think of negative cyclic homology as the homotopy fixed points of an $S^1$-action on the Hochschild complex $\CHC_{\bullet}(\mathscr{A})$.  In more algebraic terms, we have
\[
\CHC_{\bullet}(\mathscr{A})^{S^1}\simeq \HC^-(\mathscr{A}) \coloneqq \RHom_{k[\delta]}(k,\CHC_{\bullet}(\mathscr{A})).
\]
Here $k[\delta]$ is the free (super)commutative $k$-algebra on the generator $\delta$ placed in homological degree $-1$, i.e. $k[\delta]=k\oplus k\cdot\delta$, by the Koszul sign rule, and $\delta$ acts on $\CHC_{\bullet}(\mathscr{A})$ via the operator $\ConB$.  An explicit model for $\CHC_{\bullet}(\mathscr{A})^{S^1}$ is defined by replacing $k$ by the quasi-isomorphic complex of $k[\delta]$ modules $P=(k[\delta][u],d)$ where $u$ has homological degree -2, and $du=\delta$, yielding what is usually called the \textit{negative cyclic complex}:
\[
\CHC_{\bullet}(\mathscr{A})^{S^1}\coloneqq \Hom_{\dg\Mod^{k[\delta]}}(P,\CHC_{\bullet}(\mathscr{A})).
\]
\smallbreak
The truncation map $k[\delta]\rightarrow k$ sending $\delta$ to zero is a $k[\delta]$-module morphism, defining a morphism
\[
\bullet^{\natural}\colon \HC^-(\mathscr{A})\rightarrow \RHom_{k[\delta]}(k[\delta],\CHC_{\bullet}(\mathscr{A}))\simeq \CHC_{\bullet}(\mathscr{A})
\]
which in topological terms is the inclusion map from the homotopy fixed points of the $S^1$-action.  Finally, a left pre-nCY structure $\eta$ is called a \textit{left nCY structure} if 
\begin{align}
    \label{nui_def}
(\eta(1)^{\natural})^{\flat}\colon \mathscr{A}^!\rightarrow\mathscr{A}[n]
\end{align}
is a quasi-equivalence.
\sssct
The morphism \eqref{AtoPerf} establishes a bijection between left nCY structures (considered as cohomology classes) on the categories $\mathscr{A}$ and $\Perf_{\dg}(\mathscr{A})$.  If $B$ is a dg algebra we may consider it as a dg category with one object.  An algebra equipped with a left nCY structure in the above sense (possibly with the extra condition that $g$ lifts to a class in negative cyclic homology) is what is often called a Calabi--Yau algebra in the literature (see \cite{Gi06} for an excellent account of these algebras).

\subsubsection{Right nCY structures}
A dg category $\mathscr{A}$ is called \textit{locally proper} if all morphism complexes have finite-dimensional total cohomology, i.e. are perfect objects in $\Db(\Mod^k)$.  Our principal geometric example will be the full dg subcategory of quasi-coherent sheaves $\mathcal{F}$ on a finite type scheme $S$, containing those $\CF$ for which the total cohomology of $\CF$ is a coherent sheaf on $S$ with compact support contained in the regular locus of $S$.

\smallbreak

Let $\sigma\colon \CHC_{\bullet}(\mathscr{A})\rightarrow k[-n]$ be a Hochschild cocycle.  By the adjunction
\[
\RHom_{\mathscr{A}^e}(\mathscr{A},\RHom(\mathscr{A},k))\cong \RHom(\mathscr{A}\otimes^{\DL}_{\mathscr{A}^e}\mathscr{A},k)
\]
we obtain a morphism of $\mathscr{A}$-bimodules $\mathscr{A}[n]\rightarrow \mathscr{A}^{\vee}$ where $\mathscr{A}^{\vee}$ is the linear dual of $\mathscr{A}$:
\[
\mathscr{A}^{\vee}(i,j)=\mathscr{A}(j,i)^{\vee}.
\]
\smallbreak
We define $\HC(\mathscr{A})=(\mathscr{A}\otimes^{\DL}_{\mathscr{A}^e}\mathscr{A})\otimes^{\DL}_{k[\delta]} k$, the cyclic homology of $\mathscr{A}$.  As in the case of negative cyclic homology considered in \S \ref{lccy}, there is an explicit chain complex for cyclic homology given by 
\[
\CHC_{\bullet}(\mathscr{A})_{S^1}\coloneqq\CHC_{\bullet}(\mathscr{A})\otimes_{k[\delta]}P.
\]
\smallbreak
We denote by $\CC_{n}(\mathscr{A})_{\cyc}$ the cokernel of the morphism 
\[
(1-F_n)\colon \CHC_{n}(\mathscr{A})\rightarrow \CHC_n(\mathscr{A}).
\]
The differential $\Ab$ induces a differential on $\CC_{\bullet}(\mathscr{A})_{\cyc}$ making it into a chain complex.  There is a natural morphism $\CHC_{\bullet}(\mathscr{A})_{S^1}\rightarrow \CC_{\bullet}(\mathscr{A})_{\cyc}$ which is moreover a quasi-isomorphism \cite[Thm.2.1.5]{Lo13}.  As in \cite{BD19} we define a cocycle
\[
\tilde{\sigma}\colon \CHC_{\bullet}(\mathscr{A})_{S^1}\rightarrow k[-n]
\]
to be a \textit{right nCY structure} if the induced morphism $\mathscr{A}[n]\rightarrow \mathscr{A}^{\vee}$ is a quasi-equivalence.  
\subsubsection{}The following theorem of Brav and Dyckerhoff relates left and right nCY structures:

\begin{proposition}[{\cite[Prop.~3.1]{BD19}}]\label{BDthm}
Let $\mathscr{A}$ be a dg category equipped with a left nCY structure $\eta$, and let $\mathscr{P}\subset \mathscr{A}$ be a full locally proper subcategory.  The left nCY structure on $\mathscr{A}$ induces a canonical right nCY structure $\tilde{\sigma}_{\mathscr{P}}(\eta)$ on $\mathscr{P}$.
\end{proposition}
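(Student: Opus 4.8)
The plan is to transport the nondegenerate pairing encoded by the left $n$CY structure on $\mathscr{A}$ across the inclusion $i\colon\mathscr{P}\hookrightarrow\mathscr{A}$, converting the inverse dualizing bimodule $\mathscr{A}^!$ into the linear-dual bimodule $\mathscr{P}^\vee$ by means of local properness of $\mathscr{P}$. Throughout I work with the $\mathscr{A}^e$- and $\mathscr{P}^e$-module models of the relevant complexes, writing $i^*$ for restriction of bimodules along $i^e\colon\mathscr{P}^e\to\mathscr{A}^e$. First I would unwind the hypothesis: applying $\bullet^{\natural}$ to $\eta\colon k[n]\to\HC^-(\mathscr{A})$ gives a Hochschild cycle $\eta(1)^{\natural}\in\CHC_n(\mathscr{A})$, and then $\bullet^{\flat}$ gives the $\mathscr{A}^e$-module morphism $(\eta(1)^{\natural})^{\flat}\colon\mathscr{A}^!\to\mathscr{A}[n]$, which is a quasi-isomorphism by the definition of a left $n$CY structure. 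Since $\mathscr{P}\subset\mathscr{A}$ is \emph{full}, restriction of the diagonal bimodule gives a canonical identification $i^*\mathscr{A}\simeq\mathscr{P}$ of $\mathscr{P}^e$-modules, and as $i^*$ is exact it carries the above quasi-isomorphism to a quasi-isomorphism $i^*\mathscr{A}^!\xrightarrow{\sim}\mathscr{P}[n]$. The extra datum I must retain beyond this morphism is that $\eta$ lifts it to a \emph{negative cyclic} class, i.e. that the underlying Hochschild cocycle is $S^1$-invariant.

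The crux is a canonical quasi-isomorphism of $\mathscr{P}^e$-modules $\kappa\colon i^*\mathscr{A}^!\xrightarrow{\sim}\mathscr{P}^\vee$. Evaluated at a pair of objects $p,p'$ of $\mathscr{P}$ this is the object-wise statement $\mathscr{A}^!(p,p')\simeq\mathscr{A}(p',p)^\vee=\mathscr{P}^\vee(p,p')$, namely Serre duality for the locally proper category $\mathscr{P}$, with Serre kernel induced by the inverse dualizing bimodule of the \emph{smooth} ambient category $\mathscr{A}$. The one point where local properness is used is precisely the formation of the $k$-linear dual $\mathscr{A}(p',p)^\vee$: this requires $\mathscr{A}(p',p)=\mathscr{P}(p',p)$ to be perfect over $k$, which is exactly the defining condition on $\mathscr{P}$. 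Composing $\kappa$ with the restricted morphism of the first paragraph yields a nondegenerate bimodule morphism $\Psi\colon\mathscr{P}[n]\xrightarrow{\sim}\mathscr{P}^\vee$.

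It remains to package $\Psi$ as an actual right $n$CY \emph{cocycle}. By the adjunction recalled before the statement, $\RHom_{\mathscr{P}^e}(\mathscr{P},\RHom(\mathscr{P},k))\simeq\RHom(\mathscr{P}\otimes^{\DL}_{\mathscr{P}^e}\mathscr{P},k)=\RHom(\CHC_\bullet(\mathscr{P}),k)$, the morphism $\Psi$ corresponds to a Hochschild cochain $\sigma_{\mathscr{P}}\colon\CHC_\bullet(\mathscr{P})\to k[-n]$ whose induced bimodule map is $\Psi$; as $\Psi$ is a quasi-isomorphism this is already a nondegenerate (pre-)right $n$CY structure. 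I would then lift $\sigma_{\mathscr{P}}$ along $\CHC_\bullet(\mathscr{P})\to\CHC_\bullet(\mathscr{P})_{S^1}$ (equivalently $\CC_\bullet(\mathscr{P})_{\cyc}$) to the desired cyclic cochain $\tilde{\sigma}_{\mathscr{P}}(\eta)$. This is where the negative-cyclic refinement of $\eta$ is used: the maps $\bullet^{\flat}$, $i^*$ and $\kappa$ are all compatible with the Connes operator $\ConB$, so the $S^1$-invariance of $\eta$ (it lies in the homotopy fixed points $\CHC_\bullet(\mathscr{A})^{S^1}$) dualizes, under the $k$-linear duality exchanging homotopy fixed points and homotopy orbits for the \emph{perfect} complex $\CHC_\bullet(\mathscr{P})$, to a factorisation of $\sigma_{\mathscr{P}}$ through the homotopy orbits $\CHC_\bullet(\mathscr{P})_{S^1}$.

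The main obstacle is the construction of $\kappa$ in the second paragraph together with the $S^1$-compatibility in the third. One must promote the object-wise Serre duality to a genuinely canonical morphism $\kappa\colon i^*\mathscr{A}^!\to\mathscr{P}^\vee$ of $\mathscr{P}^e$-modules, natural enough that it interacts correctly with the cyclic structures, and then verify that the total assignment $\eta\mapsto\tilde{\sigma}_{\mathscr{P}}(\eta)$ intertwines the homotopy-fixed-point structure on the source with the homotopy-orbit structure on the target. Here the perfectness of $\CHC_\bullet(\mathscr{P})$ furnished by local properness is exactly what legitimises the Tate-type dualization; the nondegeneracy of $\tilde{\sigma}_{\mathscr{P}}(\eta)$, by contrast, is automatic from the left-nondegeneracy of $\eta$ once $\kappa$ is in hand.
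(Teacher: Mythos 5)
Your argument has a genuine gap at its crux, namely the claimed canonical quasi-isomorphism $\kappa\colon i^*\mathscr{A}^!\xrightarrow{\sim}\mathscr{P}^\vee$. This conflates the inverse dualizing bimodule with the Serre bimodule: they are convolution-\emph{inverse} to one another, not equivalent. Test it on $\mathscr{A}=\Perf_{\dg}(X)$ for $X$ smooth and proper of dimension $d$, with $\mathscr{P}=\mathscr{A}$ (which is locally proper): the bimodule $\mathscr{A}^!$ is given by the kernel $\Delta_*\omega_X^{-1}[-d]$, so $\mathscr{A}^!(p,p')\simeq\RHom(p,p'\otimes\omega_X^{-1}[-d])$, whereas Serre duality gives $\mathscr{A}(p',p)^\vee\simeq\RHom(p,p'\otimes\omega_X[d])$. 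These differ by the twist $\omega_X^{\otimes 2}[2d]$, so your object-wise identity $\mathscr{A}^!(p,p')\simeq\mathscr{A}(p',p)^\vee$ fails whenever $\omega_X$ is not suitably trivial — and when it \emph{is} trivial, the identification uses the trivialization, i.e.\ the CY structure itself. This contradicts your assertion that $\kappa$ is canonical and uses only local properness, and it undermines the claim that nondegeneracy enters only through the first-paragraph morphism. The correct canonical statement of smooth Serre duality is $\RHom(N,M)^\vee\simeq\RHom(M\otimes_{\mathscr{A}}\mathscr{A}^!,N)$ for pseudo-perfect $M,N$; i.e.\ $\mathscr{P}^\vee$ is computed by the restriction of the \emph{inverse} of $\mathscr{A}^!$, and a single application of $\eta$ then does yield the bimodule quasi-isomorphism $\mathscr{P}[n]\simeq\mathscr{P}^\vee$. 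So the skeleton of your second paragraph is salvageable, but only after this correction. A second, smaller error: $\CHC_\bullet(\mathscr{P})$ is \emph{not} perfect over $k$ for a general locally proper category (local properness constrains individual Hom complexes, not the Hochschild complex — e.g.\ compactly supported complexes on a non-proper surface have infinite-dimensional Hochschild homology), so your Tate-type dualization exchanging homotopy fixed points and homotopy orbits is not available as stated.

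For the comparison you should be aware that the paper does not prove this proposition: it is imported from \cite{BD19}, and the paper's own gloss on the construction appears in the proof of the converse, Lemma \ref{lem:left_right_CY}, whose construction is described as the adjoint of the one in \cite{BD19}. That route sidesteps both of your problematic steps: rather than building a bimodule-level $\kappa$ and then dualizing by hand, one uses the evaluation functor $\mathscr{P}^{\opp}\otimes\mathscr{A}\to\Perf_{\dg}(k)$, $(p,a)\mapsto\mathscr{A}(p,a)$ — well-defined precisely because $\mathscr{P}$ is locally proper — and applies the functoriality of $\CHC_\bullet$ to obtain an $S^1$-equivariant pairing $\CHC_\bullet(\mathscr{P}^{\opp})\otimes\CHC_\bullet(\mathscr{A})\to\CHC_\bullet(\Perf_{\dg}(k))\simeq k$. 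Capping with the homotopy-fixed class $\eta\in\CHC_\bullet(\mathscr{A})^{S^1}$ then produces a functional on the homotopy orbits $\CHC_\bullet(\mathscr{P})_{S^1}$ directly, with no perfectness hypothesis on the Hochschild complex needed; nondegeneracy of the resulting cocycle follows from the nondegeneracy of $\eta$ together with smooth Serre duality in its correct form above. If you want to keep your more hands-on bimodule approach, replace $\kappa$ by the canonical equivalence between $\mathscr{P}^\vee$ and the restriction of the inverse of $\mathscr{A}^!$, and obtain cyclic invariance from the $S^1$-equivariance of the evaluation pairing rather than from a duality on $\CHC_\bullet(\mathscr{P})$.
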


We also have the converse
\begin{lemma}\label{lem:left_right_CY}
Let $\mathscr{A}$ be a smooth, locally proper dg category equipped with a right nCY structure 
\[
\sigma\colon\CHC_{\bullet}(\mathscr{A})_{S^1}\rightarrow k[-n].
\]
Then $\sigma$ induces a left nCY structure $\tilde{\eta}(\sigma)$ on $\mathscr{A}$, and moreover the assignment $\sigma\mapsto \tilde{\eta}(\sigma)$ is an inverse to the assignment of Proposition~\ref{BDthm}.
\end{lemma}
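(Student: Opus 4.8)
The plan is to realise $\tilde\eta$ as the inverse, under a single linear duality, of the construction of Proposition~\ref{BDthm} applied to the full subcategory $\mathscr{P}=\mathscr{A}$ (a legitimate choice, since $\mathscr{A}$ is locally proper). The two standing hypotheses enter as follows. Local properness guarantees that the Hochschild complex $\CHC_\bullet(\mathscr{A})$ is a perfect object of $\Db(\Mod^k)$, so that $k$-linear duality is an involution on it; smoothness guarantees that $\mathscr{A}^!$ is perfect and that the map $\bullet^\flat$ is available. Together they say that $\mathscr{A}$ is smooth and proper, so that both $\mathscr{A}^!$ and $\mathscr{A}^\vee$ are invertible $\mathscr{A}$-bimodules, mutually inverse up to shift; this is the Serre duality that will convert the right nondegeneracy condition into the left one.

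First I would construct $\tilde\eta(\sigma)$ at the level of the $S^1$-action. Writing $\HC(\mathscr{A})=\CHC_\bullet(\mathscr{A})\otimes^{\DL}_{k[\delta]}k$ and $\HC^-(\mathscr{A})=\RHom_{k[\delta]}(k,\CHC_\bullet(\mathscr{A}))$ as the homotopy orbits and homotopy fixed points of the mixed complex $\CHC_\bullet(\mathscr{A})$, $k$-linear duality together with the $k[\delta]$-adjunction gives $\RHom_k(\HC(\mathscr{A}),k)\simeq \RHom_{k[\delta]}(k,\RHom_k(\CHC_\bullet(\mathscr{A}),k))$: the dual of cyclic homology is the negative cyclic homology of the dual mixed complex. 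The smooth and proper hypotheses identify this dual mixed complex with $\CHC_\bullet(\mathscr{A})$ itself. Indeed, by the adjunction recalled before Proposition~\ref{BDthm} one has $\RHom_k(\CHC_\bullet(\mathscr{A}),k)\simeq \RHom_{\mathscr{A}^e}(\mathscr{A},\mathscr{A}^\vee)$, and smoothness rewrites this as the Hochschild complex of $\mathscr{A}$ with coefficients in the bimodule $\mathscr{A}^!\otimes^{\DL}_{\mathscr{A}}\mathscr{A}^\vee\simeq\mathscr{A}$; since this coefficient bimodule is the unit, the resulting quasi-isomorphism $\RHom_k(\CHC_\bullet(\mathscr{A}),k)\simeq \CHC_\bullet(\mathscr{A})$ is one of mixed complexes, the Connes operator $\ConB$ being self-adjoint up to sign for the induced pairing. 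Transporting $\sigma$ through the resulting quasi-isomorphism $\RHom_k(\HC(\mathscr{A}),k)\simeq\HC^-(\mathscr{A})$ then defines the negative cyclic class $\tilde\eta(\sigma)$, that is, a left pre-$n$CY structure; the self-adjointness of $\ConB$ makes this duality exchange the inclusion of homotopy fixed points $\bullet^\natural$ with the cyclic projection, so that the Hochschild class underlying $\tilde\eta(\sigma)$ matches the Hochschild cochain underlying $\sigma$.

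It then remains to check nondegeneracy and that the two assignments are mutually inverse. Under the identification $\CHC_\bullet(\mathscr{A})\simeq \RHom_k(\CHC_\bullet(\mathscr{A}),k)$, the map $\bullet^\flat$ (producing $\mathscr{A}^!\to\mathscr{A}[n]$ from a Hochschild cycle) and the bimodule adjunction (producing $\mathscr{A}[n]\to\mathscr{A}^\vee$ from a Hochschild cochain) are interchanged; hence the left nondegeneracy of $\tilde\eta(\sigma)$ corresponds, after tensoring with the mutually inverse invertible bimodules $\mathscr{A}^!$ and $\mathscr{A}^\vee$, to the right nondegeneracy of $\sigma$, and since tensoring with an invertible bimodule reflects quasi-isomorphisms the two conditions are equivalent. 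Finally, because $k$-linear duality is involutive on the perfect complex $\CHC_\bullet(\mathscr{A})$ and the self-duality $\CHC_\bullet(\mathscr{A})\simeq\RHom_k(\CHC_\bullet(\mathscr{A}),k)$ is symmetric, $\sigma\mapsto\tilde\eta(\sigma)$ is a two-sided inverse to $\eta\mapsto\tilde\sigma_{\mathscr{A}}(\eta)$, the latter being nothing but the same duality run in the opposite direction. The hard part will be the middle step: verifying that the Hochschild self-duality coming from smooth-properness is genuinely $S^1$-equivariant, i.e. the compatibility of the Mukai/Serre pairing with the Connes differential $\ConB$, and checking that it reproduces Brav--Dyckerhoff's construction on the nose, rather than merely furnishing an abstract bijection between the two spaces of structures.
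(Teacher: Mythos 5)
You have identified the right mechanism---smooth-properness makes $\mathscr{A}$ dualizable, so the $k$-linear dual of $\CHC_{\bullet}(\mathscr{A})$ is $\CHC_{\bullet}(\mathscr{A})$ again, and a functional on cyclic homology should transport across this duality to a negative cyclic class---but, as you yourself flag at the end, the argument is not complete: everything hinges on the self-duality $\RHom_k(\CHC_{\bullet}(\mathscr{A}),k)\simeq\CHC_{\bullet}(\mathscr{A})$ being an equivalence of \emph{mixed} complexes, i.e.\ $S^1$-equivariant, with the Connes operator $\ConB$ intertwined with its dual up to sign. You assert this self-adjointness but do not prove it, and it is genuinely nontrivial (it is essentially the compatibility of the Mukai pairing with the cyclic structure, known in the literature through work of Shklyarov and others). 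Without it, the transported class does not lie in $\HC^-(\mathscr{A})$ at all, so $\tilde{\eta}(\sigma)$ is not even defined; and your proof that the two assignments are mutually inverse additionally invokes \emph{symmetry} of the self-duality pairing, which is yet another unverified compatibility.

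The paper's proof is arranged precisely so that neither verification ever arises. Rather than dualizing the Hochschild complex, it dualizes at the level of categories: local properness gives the adjunction between $-\otimes^{\DL}_{\mathscr{A}^e}\mathscr{A}_k$ and $-\otimes_k({}_k\mathscr{A}^{\vee})$ relating $\Perf_{\dg}(\mathscr{A}^e)$ and $\Perf_{\dg}(k)$, and applying $\CHC_{\bullet}$ to the dg functor $-\otimes_k({}_k\mathscr{A}^{\vee})$ produces the coevaluation $k\to\CHC_{\bullet}(\mathscr{A})\otimes\CHC_{\bullet}(\mathscr{A}^{\opp})$, which is \emph{automatically} a morphism of mixed complexes because $\CHC_{\bullet}$ of any dg functor is. Capping the first factor with $\sigma$ (itself $S^1$-equivariant, being defined on $\CHC_{\bullet}(\mathscr{A})_{S^1}$) then lands in negative cyclic homology with no further check---this coevaluation is exactly the unit exhibiting the self-duality you postulate, but packaged so that equivariance is built in. Similarly, where you appeal to involutivity and symmetry of the pairing, the paper deduces mutual inverseness from the triangle identity of the adjunction: the composite $\Perf_{\dg}(k\otimes\mathscr{A})\to\Perf_{\dg}(\mathscr{A}\otimes\mathscr{A}^{\opp}\otimes\mathscr{A})\to\Perf_{\dg}(\mathscr{A}\otimes k)$ is a quasi-equivalence, and applying $\CHC_{\bullet}$ yields $\tilde{\eta}(\tilde{\sigma}_{\mathscr{A}}(\eta))=\eta$ directly. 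So your gaps are fillable, but the cleanest repair is to re-express your Hochschild self-duality through functoriality of $\CHC_{\bullet}$ applied to these adjoint functors---which is exactly the paper's proof.
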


\begin{proof}
The construction of $\tilde{\eta}(\sigma)$ is the adjoint of the construction of \cite{BD19}, and the statement that these procedures are inverses to each other follows from standard properties of adjunction.
\smallbreak
We give a few more details.  Since the category $\mathscr{A}$ is locally proper, there is an adjunction
\[
-\otimes_{\mathscr{A}^e}^{\DL} \mathscr{A}_{k}\colon\Perf_{\dg}(\mathscr{A}^e)\leftrightarrow \Perf_{\dg}(k)\colon -\otimes_{k}  ({}_{k}\mathscr{A}^{\vee})
\]
where $\mathscr{A}$ is the diagonal bimodule, which we indicate is to be treated e.g. as a $(\mathscr{A}^e,k)$-bimodule via the notation $\mathscr{A}_{k}$, and similarly for its vector dual $\mathscr{A}^{\vee}$.  Applying $\CHC_{\bullet}$ to the functor \\$-\otimes_{k}\!({}_{k}\mathscr{A}^{\vee})$ we obtain a morphism
\[
k\simeq \CHC_{\bullet}(\Perf_{\dg}(k))\rightarrow \CHC_{\bullet}(\Perf_{\dg}(\mathscr{A}^e))\simeq \CHC_{\bullet}(\mathscr{A})\otimes \CHC_{\bullet}(\mathscr{A}^{\opp}).  
\]
Applying $\sigma$ to the $\CHC_{\bullet}(\mathscr{A})$ factor, via the isomorphism $\CHC_{\bullet}(\mathscr{A}^{\opp})\cong \CHC_{\bullet}(\mathscr{A})$ we obtain the desired class $\tilde{\eta}(\sigma)$.  This is the adjoint of the construction in \cite[Prop.~3.1]{BD19}, and the proof that the class $\tilde{\eta}(\sigma)$ is cyclically invariant and nondegenerate is the same as theirs.  
\smallbreak
By the standard adjunction properties of duals, the composition
\[
\Perf_{\dg}(k\otimes \mathscr{A})\xrightarrow{-\otimes^{\DL}_{k\otimes \mathscr{A}}({}_{k}\mathscr{A}^{\vee}\otimes \mathscr{A})}\Perf_{\dg}(\mathscr{A}\otimes\mathscr{A}^{\opp}\otimes \mathscr{A})\xrightarrow{-\otimes^{\DL}_{\mathscr{A}\otimes\mathscr{A}^{\opp}\otimes \mathscr{A}}(\mathscr{A}\otimes \mathscr{A}_{k})} \Perf_{\dg}(\mathscr{A}\otimes k)
\]
is a quasi-equivalence.  Applying $\CHC_{\bullet}$ to the above quasi-equivalence, and precomposing with $\eta$, yields $\tilde{\eta}(\tilde{\sigma}_{\mathscr{A}}(\eta))=\eta$.  The argument that $\tilde{\sigma}_{\mathscr{A}}(\tilde{\eta}(\sigma))=\sigma$ is the same.
\end{proof}

\subsection{$A_{\infty}$-categories}
\label{sec:A_infinity_algebras}
We recall some basic features of the theory of $A_{\infty}$-algebras and $A_{\infty}$-categories, which is the language in which the deformation theory of objects in $2$-Calabi--Yau categories will be described.  We refer to \cite{Kel01,Kel06b,KLH,Se08} for more details regarding $A_{\infty}$-categories and their (bi)modules, and to \cite{KoSo02,KSnotes} for more details on the point of view we adopt below.
\subsubsection{}
A (small) $A_{\infty}$-category $\mathscr{A}$ is given by a set of objects $\Ob(\mathscr{A})$, and for each pair of objects $i,j\in\Ob(\mathscr{A})$ a $\BZ$-graded $k$-vector space
\begin{equation*}
    \mathscr{A}(i,j)\coloneqq\Hom_{\mathscr{A}}(i,j) = \bigoplus_{p \in \BZ} \Hom_{\mathscr{A}}^p(i,j).
\end{equation*}
For each $n\geq 1$ and each tuple $i_1,\ldots,i_{n+1}\in\Ob(\mathscr{A})$ we are given also the data of a graded linear map of degree 1,
\begin{equation*}
    \Ab_{n}\colon \mathscr{A}(i_n,i_{n+1})[1]\otimes\mathscr{A}(i_{n-1},i_{n})[1]\otimes\ldots\otimes \mathscr{A}(i_1,i_2)[1] \to \mathscr{A}(i_1,i_{n+1})[1],
\end{equation*}
satisfying the infinite list of relations, one for each $n \geq 1$, 
\begin{equation}\label{eq:A_infinity_relation_m}
    \sum_{\substack{r,t\geq 0, s\geq 1\\r+s+t=n}} \Ab_{r+1+t}(\unit_{*,*}^{\otimes r} \otimes \Ab_{s} \otimes \unit_{*,*}^{\otimes t}) = 0.
\end{equation}
Here $\unit_{*,*}$ denotes the identity map on whichever Hom space of $\mathscr{A}$ it is applied to.
For each $n$ one may express these compatibility relations in terms of the operators $\Am_n$.  For instance, the relation for $n = 1$ reads $\Ab_1\Ab_1 = 0$, so $(A,\Am_1)$ is a cochain complex.  The relation for $n = 2$ is equivalent to
\begin{equation}\label{eq:A_infinity_relation_mm}
    \Am_1\Am_2 = \Am_2(\Am_1 \otimes \unit_{*,*} + \unit_{*,*} \otimes \Am_1).
\end{equation}
Hence $\Am_1$ is a graded derivation with respect to the multiplication $\Am_2$.  In passing from \eqref{eq:A_infinity_relation_m} to \eqref{eq:A_infinity_relation_mm} one of the terms has passed to the left hand side: in general, thanks to the Koszul sign rule, the $\Am$ versions of the identities defining $A_{\infty}$-structures involve increasingly complicated sign rules compared to the $\Ab$ versions.

\subsubsection{}
Given an $A_{\infty}$-category $\mathscr{A}$, we may attempt to form a graded category $\HO^{\bullet}(\mathscr{A})$, by setting the objects of $\HO^{\bullet}(\mathscr{A})$ to be the objects of $\mathscr{A}$, and defining the morphism spaces to be $\HO^{\bullet}(\mathscr{A})(i,j)\coloneqq\HO^{\bullet}(\Hom_{\mathscr{A}}(i,j),\Am_1)$.  The composition of morphisms is induced from the $\Am_2$ operations in $\mathscr{A}$.  Associativity holds automatically, although from the definitions given above it is not guaranteed that $\HO^{\bullet}(\mathscr{A})$ contains identity morphisms.  We always assume that it does, i.e. that $\mathscr{A}$ is \textit{weakly unital}, and more precisely that for each $i\in\Ob(\mathscr{A})$ there is an endomorphism $1_i\in\mathscr{A}(i,i)$ that becomes the identity in the category $\HO^{\bullet}(\mathscr{A})$.  We will be particularly interested in a stronger notion of unitality:
\begin{definition}
We call $\mathscr{A}$ \textit{unital}\footnote{Some authors use ``strongly unital'' for this notion.} if for each $i\in\Ob(\mathscr{A})$ there is a graded linear map $\eta_i \colon k \to \Hom_{\mathscr{A}}(i,i)$ of degree zero, so a distinguished element $1_i \coloneqq \eta_i(1)$, such that
\begin{enumerate}
    \item $\Am_1\eta_i = 0$, i.e. $\Am_1(1_i) = 0$,
    \item $\Am_2(\unit_{i,j} \otimes \eta_i) = \unit_{i,j} = \Am_2(\eta_j \otimes \unit_{j,i})$, where $\unit_{i,j}$ is the identity morphism on $\mathscr{A}(i,j)$, i.e. $\eta$ is a unit with respect to $\Am_2$, and
    \item $\Am_{n}(\unit_{\ast,\ast}^{\otimes r-1} \otimes \eta_{\ast} \otimes \unit_{\ast,\ast}^{\otimes n-r}) = 0$ for all $n \geq 3$ and $1\leq r\leq n$.
\end{enumerate}
\end{definition}

\subsubsection{}
We fix $R$ to be the category with objects the same as $\mathscr{A}$, and homomorphisms given by scalar multiples of the identity morphisms in $\mathscr{A}$.  We use the notational shorthand
\begin{equation}
\label{Rdef}
\underbrace{\mathscr{A}\otimes_{R}\ldots\otimes_{R}\mathscr{A}}_{n\textrm{ times}}\coloneqq\bigoplus_{i_0,\ldots,i_n\in\Ob(\mathscr{A})}\mathscr{A}(i_{n-1},i_n)\otimes_k\ldots\otimes_k\mathscr{A}(i_0,i_1).
\end{equation}
A functor between $A_{\infty}$-categories $f \colon \mathscr{A} \to \mathscr{B}$ is given by a morphism of sets $f\colon\Ob(\mathscr{A})\rightarrow \Ob(\mathscr{B})$, and for all $n\geq 1$ a degree zero morphism
\begin{equation}
    f_{n} \colon \mathscr{A}[1]\otimes_R\ldots\otimes_R\mathscr{A}[1]\rightarrow \mathscr{B}[1]
\end{equation}
satisfying the infinite list of relations
\begin{equation}
        \sum_{r+s+t=n} f_{r+1+t}(\unit_{\ast,\ast}^{\otimes r} \otimes \Ab_{s} \otimes \unit_{\ast,\ast}^{\otimes t}) = \sum_{i_1+\ldots+i_r=n}  \Ab_l(f_{i_1} \otimes f_{i_2} \otimes \ldots \otimes f_{i_r})
    \end{equation}
for $n\geq 1$.  For instance for $n=1$, the relation says that each $f_1$ is a morphism of cochain complexes.  A functor $f$ is called \textit{strict} if $f_n=0$ for all $n\geq 2$.  The functor $f$ is a \textit{quasi-equivalence} if $f_1$ is a quasi-isomorphism when evaluated on any pair of objects in $\mathscr{A}$, and $\HO^0(f)$ is an equivalence of categories.  The functor $f$ is a \textit{quasi-isomorphism} if $\HO^0(f)$ is moreover an isomorphism of categories.  We always presume that $f$ becomes an ordinary functor after passing to the homotopy category, i.e. it respects homotopy units.  The composition of two $A_{\infty}$-functors $f \colon \mathscr{B} \to \mathscr{C}$ and $g \colon \mathscr{A} \to \mathscr{B}$ is defined by
\begin{equation*}
    (f \circ g)_{n} = \sum_{i_1+\ldots+i_r=n} f_r(g_{i_1} \otimes g_{i_2} \otimes \ldots \otimes g_{i_r}).
\end{equation*}

\subsubsection{}
An $A_{\infty}$-category is called \textit{minimal} if $\Ab_1\colon \Hom_{\mathscr{A}}(i,j)\rightarrow \Hom_{\mathscr{A}}(i,j)$ is the zero map for every $i,j\in\Ob(\mathscr{A})$.  An $A_{\infty}$-category is called \textit{formal} if it is quasi-equivalent to a minimal $A_{\infty}$-category for which the higher multiplications $\Ab_i$ vanish for $i\geq 3$.
\smallbreak
Every $A_{\infty}$-category is quasi-isomorphic to a minimal one.  This is the homological perturbation lemma due to Kadei\v{s}hvili~\cite{MR580645}; see also~\cite[Thm.3.4]{MR1672242}.  In particular, if $\mathscr{A}$ is a dg category, we may find an $A_{\infty}$-quasi-isomorphism $f\colon \mathscr{B}\rightarrow \mathscr{A}$ from a minimal $A_{\infty}$-category $\mathscr{B}$.  In addition, every weakly unital $A_{\infty}$-category is quasi-isomorphic to a unital one, by \cite[Sec.3.2.1]{KLH}.  This result goes by the name of ``strictification of units''.  One of the main technical results we will use is the strictification of units in the cyclic setting, which we describe next.

\subsection{Cyclic $A_{\infty}$-categories}
\label{cycAsec}
We recall some facts about cyclic $A_{\infty}$-categories, referring the reader to \cite{Co07,Kaj07,KSnotes,Se08} for more details. For simplicity, in this section and \S \ref{ncdc_sec}-\ref{new_form} we work with $A_{\infty}$-categories containing finitely many objects, and assume that the homomorphism spaces $\mathscr{A}^{\bullet}(i,j)$ are finite-dimensional, although these conditions may be partially relaxed with some care, see e.g. \cite{Cho08,ChoLee11} for details.  
\subsubsection{}
A \textit{cyclic pairing} of dimension $d$ on an $A_{\infty}$-category $\mathscr{A}$ is a degree zero nondegenerate graded skew-symmetric\footnote{The question of whether one should consider symmetric or skew-symmetric forms here involves a slightly delicate analysis of signs, for which we refer to \cite[Sec. 2]{Cho08} for details.} pairing $\langle -,- \rangle \colon \mathscr{A}(i,j)[1] \otimes \mathscr{A}(j,i)[1] \to k[2-d]$ for all $i,j\in\Ob(\mathscr{A})$ such that for $n\geq 1$
\[
\langle \bullet,\bullet\rangle\circ(\Ab_n\otimes\unit_{\ast,\ast})=\langle \bullet,\bullet\rangle\circ(\Ab_n\otimes \unit_{\ast,\ast})\circ F_{n+1},
\]
i.e.
\begin{equation}\label{eq:cyclic_pairing}
    \langle \Ab_n[a_0,\ldots,a_{n-1}],a_n\otimes 1 \rangle = (-1)^{(|a_0|+1)\sum_{l=1}^n (|a_l|+1)} \langle \Ab_n[a_1,\ldots,a_n],a_0\otimes 1 \rangle
\end{equation}
for all $a_0,a_1,\ldots,a_n $ graded homogeneous homomorphisms in $\mathscr{A}$.
\smallbreak
A functor $f\colon (\mathscr{A},\langle -,-\rangle_{\mathscr{A}})\rightarrow (\mathscr{B},\langle -,-\rangle_{\mathscr{B}})$ between cyclic $A_{\infty}$-categories is a usual $A_{\infty}$-functor that satisfies the additional two conditions:
\begin{itemize}
\item There is an equality of pairings $\langle -,-\rangle_{\mathscr{B}}\circ (f_1\otimes f_1)=\langle -,-\rangle_{\mathscr{A}}$.
\item For all $n\geq 3$
\[
\sum_{1\leq r\leq n}\langle -,-\rangle_{\mathscr{B}}\circ (f_r\otimes f_{n-r})=0.
\]
\end{itemize}
The second condition, in particular, becomes easier to motivate within the context of noncommutative differential geometry; see~\S \ref{ncdc_sec}.
\subsubsection{}
Let $\mathscr{A}$ and $\mathscr{B}$ be two $A_{\infty}$-categories.  We define $R$ as in \eqref{Rdef} and define $R'$ as the analogous category for $\mathscr{B}$.  An $(\mathscr{A},\mathscr{B})$-bimodule is given by a $(R,R')$-bimodule $M$, along with a set of morphisms for all $i,j\geq 0$, with $i+j\geq 1$
\[
\Am_{i,j}\colon \mathscr{A}^{\otimes_{R} i}\otimes_R M\otimes_{R'} \mathscr{B}^{\otimes_{R'} j}\rightarrow M
\]
of degree $1-i-j$ satisfying higher compatibility relations analogous to those for an $A_{\infty}$-category.  A morphism $f$ between $A_{\infty}$-bimodules is given by morphisms 
\[
f_{i,j}\colon \mathscr{A}^{\otimes_{R} i}\otimes_R M\otimes_{R'} \mathscr{B}^{\otimes_{R'} j}\rightarrow N
\]
for $i,j\geq 0$ satisfying certain higher compatibility relations analogous to those for an $A_{\infty}$-functor.  Such a morphism is called strict if $f_{i,j}=0$ for $i+j\geq 1$.  A cyclic pairing determines and is determined by a strict isomorphism of $(\mathscr{A},\mathscr{A})$-bimodules
\[
    f\colon \mathscr{A}[1]\rightarrow \mathscr{A}^{\vee}[1-d];\quad\quad
    f_1 \colon a\mapsto \langle a,\bullet\rangle
\]
satisfying $f_1=-f_1^{\vee}[d]$.
\begin{lemma}[\cite{Cho08}]\label{ChoLemma}
A homogeneous degree $-d$ element $\psi\in \CHC_{\bullet}(\mathscr{A})^{\vee}$ determines a morphism of $\mathscr{A}$-bimodules $f\colon\mathscr{A}[1]\rightarrow \mathscr{A}^{\vee}[1-d]$.  If, furthermore, $\psi\in \CHC_{\bullet}(\mathscr{A})^{\vee}_{\cyc}$ and is nondegenerate, meaning that $f$ is a quasi-isomorphism, then there is an $A_{\infty}$-isomorphism of categories $\mathscr{A}\rightarrow \mathscr{B}$ such that the induced morphism of $\mathscr{B}$-bimodules $\mathscr{B}[1]\rightarrow \mathscr{B}^{\vee}[1-d]$ is strict, i.e. it comes from a cyclic pairing.
\end{lemma}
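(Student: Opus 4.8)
The plan is to treat the two assertions separately: the first is a duality adjunction, and the second is an inductive gauge-fixing argument of the same flavour as strictification of units, carried out so as to respect the cyclic symmetry.

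For the first assertion I would invoke the adjunction recorded above, $\RHom_{\mathscr{A}^e}(\mathscr{A},\RHom(\mathscr{A},k))\cong \RHom(\mathscr{A}\otimes^{\DL}_{\mathscr{A}^e}\mathscr{A},k)$, together with the quasi-isomorphism $\CHC_\bullet(\mathscr{A})\simeq \mathscr{A}\otimes^{\DL}_{\mathscr{A}^e}\mathscr{A}$. Since $\RHom(\mathscr{A},k)=\mathscr{A}^\vee$, the right-hand side is $\CHC_\bullet(\mathscr{A})^\vee$, so that an element $\psi\in\CHC_\bullet(\mathscr{A})^\vee$ is exactly a morphism of $\mathscr{A}$-bimodules $\mathscr{A}\to\mathscr{A}^\vee$. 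Concretely I would exhibit this by uncurrying: writing $\psi=(\psi_n)_{n\ge 1}$ with $\psi_n$ a functional on the length-$n$ cyclic words $\mathscr{A}(i_{n-1},i_1)[1]\otimes\cdots\otimes\mathscr{A}(i_1,i_2)[1]$, I define the components of the bimodule morphism $f^\psi$ by $f_{i,j}(a_1,\ldots,a_i\mid m\mid b_1,\ldots,b_j)(c)=\pm\,\psi_{i+j+2}(c,a_1,\ldots,a_i,m,b_1,\ldots,b_j)$, with signs dictated by the Koszul rule. A term-by-term comparison shows that the defining relations of an $A_\infty$-bimodule morphism for $f^\psi$ are equivalent to compatibility of $\psi$ with the Hochschild differential $\Ab$, so a closed $\psi$ yields a genuine (closed) morphism. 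The leading term $f_{0,0}$ is the pairing $\psi_2=\langle-,-\rangle$, and nondegeneracy of $\psi$ means precisely that $f_{0,0}$ is a quasi-isomorphism $\mathscr{A}\xrightarrow{\sim}\mathscr{A}^\vee$.

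For the strictification the key structural input is that an element of the cyclic subspace $\CHC_\bullet(\mathscr{A})^\vee_{\cyc}$ corresponds to a bimodule morphism that is \emph{graded symmetric}, i.e. self-dual under $\mathscr{A}\cong\mathscr{A}^{\vee\vee}$ up to the Koszul sign; equivalently each $\psi_n$ is invariant under the cyclic operator $F_n$. An $A_\infty$-isomorphism $g\colon\mathscr{A}\to\mathscr{B}$ transports $f^\psi$ to $(g^\vee)^{-1}\circ f^\psi\circ g^{-1}\colon\mathscr{B}\to\mathscr{B}^\vee$, and since dualizing this composite returns the same expression, the transport is again symmetric; hence it is automatically induced by a cyclic functional on $\mathscr{B}$, and being strict is then synonymous with coming from a cyclic pairing. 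So it suffices to choose $g$ making the transported morphism strict, i.e. to kill all $f_{i,j}$ with $i+j\ge 1$, equivalently all $\psi_n$ with $n\ge 3$. I would do this by induction on arity: assuming $\psi_k=0$ for $3\le k<n$, I take an $A_\infty$-isomorphism with $g_1=\unit$, all intermediate components zero, and $g_{n-1}$ determined by $\langle g_{n-1}(v_1,\ldots,v_{n-1}),-\rangle=-\tfrac1n\,\psi_n(v_1,\ldots,v_{n-1},-)$, which is uniquely solvable because $\psi_2$ is nondegenerate. Cyclic invariance of $\psi_n$ guarantees that the cyclic symmetrization of this correction equals $\psi_n$ on the nose, so the transported functional has vanishing arity-$n$ part while its lower-arity parts, in particular the pairing $\psi_2$, are unchanged. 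The countably many gauge steps compose to a well-defined $A_\infty$-isomorphism because in each fixed arity only finitely many of them contribute.

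The main obstacle is the sign bookkeeping that makes the induction close. One must check that each gauge step preserves both the graded-symmetric (cyclic) normalization of the functional and the vanishing already achieved in lower arities, and in particular that the Koszul signs in the cyclic symmetrization are arranged so that solving the single linear equation above cancels all of $\psi_n$ and not merely its symmetric part. Nondegeneracy of $\psi_2$ is exactly what makes the relevant linear equation solvable at every stage (it trivializes the obstruction), while cyclicity of $\psi$ is what keeps the whole process inside the class of symmetric morphisms, so that the strict limit $\mathscr{B}\to\mathscr{B}^\vee$ is a cyclic pairing rather than merely a strict bimodule map. This is precisely the cyclic refinement of the strictification-of-units argument, and I would follow \cite{Cho08} for the detailed verification of the signs.
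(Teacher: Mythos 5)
Your proposal is correct and is essentially the argument the paper points to: the paper offers no proof of Lemma \ref{ChoLemma}, citing \cite{Cho08} and identifying the statement with the noncommutative Darboux theorem (\cite[Thm.4.15]{Kaj07}, recalled in \S\ref{ncdc_sec}), whose proof is exactly your order-by-order gauge fixing of the functional, with the arity-$n$ correction $g_{n-1}$ solvable because $\psi_2$ is nondegenerate and the $\tfrac1n$-symmetrization closing by cyclic invariance. The same inductive scheme --- composing countably many $A_\infty$-automorphisms, each the identity up to increasingly high order so that every fixed arity stabilises --- is precisely the technique the paper itself uses to prove Proposition \ref{StrictProp}, so your deferral of the final Koszul-sign verification to \cite{Cho08} leaves you no less complete than the paper.
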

This lemma is an analogue of the noncommutative Darboux theorem in noncommutative symplectic geometry, again see \S \ref{ncdc_sec}.  
\subsubsection{}
Once we fix the pairing $\langle -,-\rangle$, the cyclic $A_{\infty}$-category $\mathscr{A}$ is determined by the \textit{potential} $\mathcal{W}\in (\prod_{i\geq 2}\CHC_{i}(\mathscr{A}))^{\vee}_{\cyc}$, that is, by choosing degree $3-d$ elements 
\begin{equation*}
\mathcal{W}_n\in \mathscr{A}(i_{0},i_1)[1]^{\vee}\otimes\ldots\otimes\mathscr{A}(i_{n-1},i_n)[1]^{\vee}\otimes\mathscr{A}(i_n,i_0)[1]^{\vee}
\end{equation*}
for each $n\geq 2$, where we identify $\mathscr{A}(i_n,i_0)[1]^{\vee}=\mathscr{A}(i_0,i_n)[d-1]$ via the pairing $\langle-,-\rangle$.  Thus $\mathcal{W}_n$ determines an operation $\Ab_{n-1}$ satisfying \eqref{eq:cyclic_pairing}, since we assume that $\mathcal{W}_n$ is cyclically invariant.  

\subsubsection{}
In the $2$-Calabi--Yau setting, the following lemma is an elementary but crucial observation; see also~\cite[Sec.2.3]{Tu14}.
\begin{lemma}\label{rigLem}
    Let $(\mathscr{A},\Am_n)$ be a minimal $A_{\infty}$-category, and assume that $\CF_1,\ldots,\CF_r$ is a set of objects in $\mathscr{A}$ satisfying the following conditions:
    \begin{enumerate}
        \item $\mathscr{A}^{n}(\CF_i,\CF_j) =\begin{cases} k &\textrm{if }i=j\textrm{ and }n=0\\0&\textrm{if } n<0\\
        0&\textrm{if }n=0\textrm{ and }i\neq j.\end{cases}$
        \item The full subcategory $\mathscr{A}'$ of $\mathscr{A}$ containing the $\CF_i$ is unital.
        \item $\mathscr{A}'$ carries a cyclic pairing of dimension two: $\langle -,- \rangle \colon \mathscr{A}'(\CF_i,\CF_j)[1] \otimes \mathscr{A}'(\CF_j,\CF_i)[1] \to k$ for all $i,j$.
    \end{enumerate}
    Then $\{\CF_1,\ldots,\CF_r\}$ is a $\Sigma$-collection and $\Am_{n} = 0$ for all $n \geq 3$.  In particular $\mathscr{A}'$ is formal.
\end{lemma}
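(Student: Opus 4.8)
The plan is to prove the two assertions---that $\{\CF_1,\ldots,\CF_r\}$ is a $\Sigma$-collection and that $\mathscr{A}'$ is formal---separately, with the first feeding the degree bookkeeping needed for the second. I would begin with the $\Sigma$-collection claim. Since $\mathscr{A}$ is minimal, $\Am_1=0$ and the morphism complexes $\mathscr{A}^{\bullet}(\CF_i,\CF_j)$ are already the graded Ext-groups, so there is nothing to compute beyond their dimensions. The cyclic pairing of dimension two is nondegenerate, so it induces isomorphisms $\mathscr{A}^p(\CF_i,\CF_j)\cong \mathscr{A}^{2-p}(\CF_j,\CF_i)^{\vee}$ for all $p$ and all $i,j$. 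Feeding in hypothesis (1): from $\mathscr{A}^n(\CF_i,\CF_i)=0$ for $n<0$ we obtain $\mathscr{A}^n(\CF_i,\CF_i)=0$ for $n>2$, and from $\mathscr{A}^0(\CF_i,\CF_i)=k$ we obtain $\mathscr{A}^2(\CF_i,\CF_i)=k$; the off-diagonal spaces likewise vanish in degrees $0$ and $2$. This pins down every morphism space to be concentrated in degrees $0,1,2$, leaves $\Hom^0(\CF_m,\CF_n)=0$ for $m\neq n$ (the orthogonality required of a $\Sigma$-collection), and gives $\dim\mathscr{A}^0(\CF_i,\CF_i)=\dim\mathscr{A}^2(\CF_i,\CF_i)=1$. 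The only remaining point is that $\dim\mathscr{A}^1(\CF_i,\CF_i)$ is even: the pairing restricts to a nondegenerate form on this self-dual middle degree, and tracking the Koszul sign produced by the degree shift shows this form is skew-symmetric, so the dimension is $2g_i$ for some $g_i\geq 0$. This sign check is exactly what distinguishes the $\Sigma$/surface case from the spherical case.

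For formality I must show $\Am_n=0$ for all $n\geq 3$. Since the pairing is nondegenerate, it suffices to show that $\langle \Am_n(a_0,\ldots,a_{n-1}),a_n\rangle=0$ for all homogeneous $a_0,\ldots,a_n$; equivalently, that the potential component $\mathcal{W}_{n+1}$ vanishes. The engine is a degree count. The operation $\Am_n$ has degree $2-n$ and the pairing is nonzero only on complementary degrees summing to $2$, so $\langle \Am_n(a_0,\ldots,a_{n-1}),a_n\rangle$ can be nonzero only when $\sum_{l=0}^{n}|a_l|=n$. By the first part each $|a_l|\in\{0,1,2\}$, so if every $a_l$ had degree $\geq 1$ the sum would be at least $n+1>n$; hence at least one argument $a_l$ must have degree $0$. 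But a degree-zero morphism between the $\CF_i$ is, by hypothesis (1), a scalar multiple of an identity $1_{\CF_i}=\eta_i(1)$.

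It remains to exploit this identity. If the degree-zero argument sits in one of the slots $a_0,\ldots,a_{n-1}$ fed to $\Am_n$, then strict unitality (the hypothesis that $\mathscr{A}'$ is strictly unital, giving $\Am_n(\cdots\otimes \eta\otimes\cdots)=0$ for $n\geq 3$) makes $\Am_n(a_0,\ldots,a_{n-1})=0$ outright. If instead the identity is the paired element $a_n$, I would apply the cyclic symmetry relation \eqref{eq:cyclic_pairing} once to rewrite $\langle \Am_n(a_0,\ldots,a_{n-1}),a_n\rangle$, up to sign, as $\langle \Am_n(a_1,\ldots,a_n),a_0\rangle$, which now places the identity in the last argument slot of $\Am_n$, where strict unitality again forces vanishing. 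Either way the pairing vanishes, and nondegeneracy yields $\Am_n=0$ for all $n\geq 3$, i.e. $\mathscr{A}'$ is formal. The one genuinely delicate point---and the step I expect to need the most care---is the sign bookkeeping: both the skew-symmetry of the middle pairing in the $\Sigma$-collection claim and the signs produced by the cyclic rotation in \eqref{eq:cyclic_pairing} must be tracked honestly. Neither, however, affects the conclusions, since at every stage we only need that certain terms are \emph{forced} to be zero.
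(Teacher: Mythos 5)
Your proof is correct and takes essentially the same approach as the paper: both parts run on the identical ingredients of nondegeneracy of the pairing to pin the morphism spaces into degrees $0$, $1$, $2$ (with skew-symmetry on degree one giving evenness), a degree count forcing a strict unit to appear among the arguments, and a single cyclic rotation via \eqref{eq:cyclic_pairing} followed by strict unitality. The paper merely organizes the case split by input degrees (strict unitality for a degree-$\leq 0$ input, degree overflow when some input exceeds degree one, then pairing the all-degree-one output against $1_i$ and rotating), which is the same argument you give with the degree-zero element located in the paired slot.
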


\begin{proof}
    Since $\mathscr{A}'^{p} = 0$ for $p < 0$, the pairing is nondegenerate, and $\mathscr{A}$ is minimal, the pairing induces isomorphisms
    \begin{equation*}
        \mathscr{A}^p(\CF_i,\CF_j) \xrightarrow{\cong} \mathscr{A}^{2-p}(\CF_j,\CF_i)^{\vee},
    \end{equation*}
and so $\mathscr{A}^p(\CF_i,\CF_j) = 0$ for $p > 2$, or for $p=2$ and $i\neq j$.  The statement that the $\CF_i$ form a $\Sigma$-collection follows from the given data regarding dimensions of graded morphism spaces along with skew-symmetry of the pairing.
\smallbreak
We wish to show that
    \begin{equation}
    \label{unit_test}
        \Am_{n}(a_n,\ldots,a_1) = 0 \quad \text{for } n \geq 3
    \end{equation}
    Unitality implies that \eqref{unit_test} holds if $|a_i| \leq 0$ for some $1 \leq i \leq n$, since then by assumption (1) of the lemma, $a_i$ must be a multiple of a unit.
On the other hand, if $\lvert a_i\lvert \geq 1$ for all $i$ and $\lvert a_i\lvert > \!1$ for some $i$ we find that the degree of $\Am_n(a_n,\ldots,a_1)$ is greater than two, so $\Am_n(a_n,\ldots,a_1)=0$.  So we may assume that all the morphisms in \eqref{unit_test} have degree one, and we may assume that the domain $i$ of $a_1$ is equal to the target $j$ of $a_n$ (for else $\mathscr{A}^2(i,j)=0$).  

Then $\Am_n(a_n,\ldots,a_1)$ is a scalar multiple of the linear dual of $1_i$ under $\langle-,-\rangle$.  We find
\[
\langle \Am_n(a_n,\ldots,a_1),1_i\rangle=\pm\langle \Am_n(a_{n-1},\ldots,a_1,1_i),a_n\rangle=0
\]
by cyclicity and unitality, finishing the proof.
\end{proof}

\subsection{Noncommutative differential calculus}
\label{ncdc_sec}
We retain our finiteness assumptions from the previous section, and recall some background from \cite{KSnotes}, as preparation for tackling strictification of (cyclic) units, and formality.  

\subsubsection{}For $\mathscr{A}$ an $A_{\infty}$-category, we form the completed tensor algebra
\begin{equation}
\label{Odef}
\mathcal{O}(X_{\mathscr{A}})\coloneqq \FreeT_{R}\!\left(\mathscr{A}[1]^{\vee}\right).
\end{equation}
On the right hand side we have formal linear combinations of elements $a_1^*\otimes\ldots\otimes a_n^*$, where $a_i^*$ are linear operators on (shifted) homomorphism spaces of $\mathscr{A}$.  For example, if $\mathscr{A}$ contains only one object $i$, so that we may identify $\mathscr{A}$ with the $A_{\infty}$-endomorphism algebra $A$ of $i$, $\CO(X_{\mathscr{A}})$ is the algebra of noncommutative formal functions on the (shifted) underlying vector space of $A$, i.e. $k\langle\!\langle x_1,\ldots,x_p\rangle\!\rangle$ where $x_1,\ldots,x_p$ is a dual basis for $A[1]$.  In general, the right hand side of \eqref{Odef} is a topological category\footnote{In the sense of a topological algebra with many objects.}, as it can be constructed as the completion of the usual freely generated category $\mathrm{T}_{R}\mathscr{A}[1]^{\vee}$ with respect to the topology which has as a basis of open neighbourhoods of zero the two sided ideals
\[
\FreeT^{\geq n}_{R}\!\left(\mathscr{A}[1]^{\vee}\right)=\prod_{m\geq n}\underbrace{\mathscr{A}[1]^{\vee}\otimes_R\ldots\otimes_R \mathscr{A}[1]^{\vee}}_{m\textrm{ times}}.
\]

\subsubsection{}The object $\mathcal{O}(X_{\mathscr{A}})$ carries the usual composition given by composition of tensors, so that we may think of it as a category with the same set of objects as $\mathscr{A}$.  From the dual of operations $\Ab_n$ we define degree one operations 
\[
\Ab_n^{\vee}\colon \mathscr{A}[1]^{\vee}\rightarrow (\mathscr{A}[1]^{\vee})^{\otimes_R n}.
\]
The sum $\sum_{n\geq 0}\Ab_n^{\vee}$ extends uniquely to a continuous derivation (vector field) $Q_{\mathscr{A}}$ of $\mathcal{O}(X_{\mathscr{A}})$.  It is easy to check that the conditions on $\Ab_n$ to define an $A_{\infty}$-category are equivalent to the condition $[Q_{\mathscr{A}},Q_{\mathscr{A}}]=0$, and the condition on a morphism $f\colon\mathscr{A}\rightarrow \mathscr{B}$ to define a morphism of $A_{\infty}$-categories is equivalent to the condition that $\hat{f}\circ Q_{\mathscr{B}}=Q_{\mathscr{A}}\circ\hat{f}$, where $\hat{f}\colon \mathcal{O}(X_{\mathscr{B}})\rightarrow \mathcal{O}(X_{\mathscr{A}})$ is the algebra morphism induced by the Taylor coefficients $f_n^{\vee}$.  

\smallbreak
The above constructions establish an equivalence of categories between finite-dimensional $A_{\infty}$-categories, and finitely generated free topological dg categories $\mathcal{O}(X)$ equipped with degree one vector fields $Q$ satisfying $[Q,Q]=0$.

\subsubsection{}
We define the $R$-bimodule $D\mathscr{A}=\mathscr{A}[1]^{\vee}\oplus\mathscr{A}[2]^{\vee}$, and set\footnote{Where there is no danger of confusion, from now on we will drop the $\mathscr{A}$ subscript from $X_{\mathscr{A}}$.}
\[
\Omega(X_{\mathscr{A}})=\FreeT_R(D\mathscr{A})
\]
where the completion is with respect to the $\mathscr{A}[1]^{\vee}$-factor of $T\mathscr{A}$, i.e. in the case above of a category with a single object, we can write
\[
\Omega(X)=k\langle\!\langle x_1,\ldots,x_p\rangle\!\rangle\langle dx_1,\ldots,dx_p\rangle
\]
where $\lvert dx_i\lvert=\lvert x_i\lvert+1$.  In general, for $\theta\in\mathscr{A}(i,j)[1]^{\vee}$ a linear form on $\mathscr{A}(i,j)[1]$ we write $d\theta$ for its shift in $\mathscr{A}[2]^{\vee}$.  We have the decomposition
\begin{equation}
    \label{OmegaDec}
\Omega(X)=\bigoplus_{n\geq 0}\Omega^n(X)
\end{equation}
where the decomposition arises from counting the number of terms of the form $d\theta$ (as opposed to $\theta$) appearing in a tensor.
\smallbreak
A homomorphism of completed algebras $f\colon \mathcal{O}(X_{\mathscr{A}})\rightarrow \mathcal{O}(X_{\mathscr{B}})$ determines a unique morphism $\Omega(X_{\mathscr{A}})\rightarrow \Omega(X_{\mathscr{B}})$ via the usual product rule in differential calculus, which we will often denote by the same symbol $f$.  We define 
\[
\Omega_{\cyc}(X)\coloneqq \Omega(X)/[\Omega(X),\Omega(X)]_{\topo}
\]
where the ``$\topo$'' subscript signifies that we take the topological completion of the two-sided ideal generated by the commutators in $\Omega(X)$.  Again we may decompose
\begin{equation}
    \label{COmegaDec}
\Omega_{\cyc}(X)=\bigoplus_{n\geq 0}\Omega_{\cyc}^n(X)
\end{equation}
where $\Omega_{\cyc}^n(X)$ is the space of linear combinations of forms containing $n$ terms of the form $d\theta$.  Note that $\Omega^0(X)=\mathcal{O}(X)$.  We define the reduced space
\[
\ol{\mathcal{O}}(X)\subset \mathcal{O}(X)
\]
to be the subspace annihilating all of the unit morphisms in $\mathscr{A}$.  
\subsubsection{}
Both $\Omega(X)$ and $\Omega_{\cyc}(X)$ carry the usual de Rham differential, defined by setting $d_{\dR}f=df$ and $d_{\dR}df=0$, for $f\in\mathcal{O}(X)$, and then extending by the Leibniz rule and continuity.  This differential increases the degree by one, where the degree of a form is determined by the gradings of \eqref{OmegaDec} and \eqref{COmegaDec}.  The complex $(\Omega_{\cyc}(X),d_{\dR})$ is acyclic \cite[Lem.4.8]{Kaj07}.
\smallbreak

Given a vector field $Q$, we define a contraction derivation $\iota_Q$ on $\Omega(X)$ via the morphism that sends $f\in \mathscr{A}(i,j)[1]^{\vee}$ to zero, and sends $df\mapsto Q(f)$.  Then we define the Lie derivative
\[
L_{Q}=[d_{\dR},\iota_Q],
\]
which preserves the gradings \eqref{OmegaDec} and \eqref{COmegaDec}.
\subsubsection{}

Just as there is an equivalence of categories translating the study of $A_{\infty}$-categories into the world of noncommutative formal manifolds and vector fields, there is a natural way of viewing cyclic $A_{\infty}$-categories as noncommutative formal manifolds with symplectic structure, which we now recall, following \cite{KSnotes}.
\smallbreak

Let $\omega\in \Omega_{\cyc}^2(X)$ be a $d_{\dR}$-closed cyclic 2-form.  Via the projection of $R$-bimodules $\mathscr{A}[1]^{\vee}\oplus\mathscr{A}[2]^{\vee}\rightarrow \mathscr{A}[2]^{\vee}$, the element $\omega$ induces an element $\omega_0\in \mathscr{A}[2]^{\vee}\otimes_R\mathscr{A}[2]^{\vee}$, and (as in \cite[Def 11.1.2]{KoSo02}) we say it is nondegenerate if $\omega_0$ induces an isomorphism $\mathscr{A}[2]\cong \mathscr{A}[2]^{\vee}$.
%Given $f\in\mathscr{A}(i,j)$, we consider $f$ as a derivation on $\Omega(X)$ sending $g\mapsto 0$ and $dg\mapsto g(f)$.  Denote by $l\colon \Omega(X)\rightarrow R$ the natural augmentation.  We say that $\omega$ is nondegenerate if the pairing 
%\[
%(f,g)\mapsto l(\iota_f\iota_g(\omega))
%\]
%is nondegenerate.  
Concretely, if we pick a basis $\alpha_1,\ldots,\alpha_n$ for the union of all of the dual morphism spaces $\mathscr{A}(i,j)[1]^{\vee}$, and write 
\begin{equation}
    \label{omegaform}
\omega=\sum_{r\geq 0}f_r(\alpha_1,\ldots,\alpha_n,d\alpha_1,\ldots,d\alpha_n)
\end{equation}
where $f_r(\ldots)$ is a linear combination of monomials containing $r$ instances of the terms $\alpha_1,\ldots,\alpha_n$, we ask that $f_0$ determine a nondegenerate bilinear pairing on the vector space spanned by $d\alpha_1^*\ldots,d\alpha_n^*$.  Note that this definition is coordinate-independent, in the sense that if $\Phi\colon \mathcal{O}(X_{\mathscr{A}})\rightarrow \mathcal{O}(X_{\mathscr{A}})$ is a formal automorphism, $\Phi^*\omega$ is nondegenerate if and only if $\omega$ is.  By the noncommutative Darboux theorem \cite[Thm.4.15]{Kaj07}, for any nondegenerate $\omega$ there is a 
formal change of variables $\Phi$ such that $\Phi^*\omega$ is constant, i.e. can be written in the form \eqref{omegaform} with $f_r=0$ for $r\geq 1$. 

\subsubsection{}
If $\langle-,-\rangle=g\in \mathscr{A}[1]^{\vee}\otimes_{R}\mathscr{A}[1]^{\vee}$ is a nondegenerate skew-symmetric form on $\mathscr{A}$, we define the form $\omega=(d_{\dR}\otimes d_{\dR})g$.  Nondegeneracy of $\omega$ follows from nondegeneracy of $\langle-,-\rangle$, and clearly this symplectic form is constant.  Cyclic invariance of $\langle-,-\rangle$ translates to the condition $L_Q\omega =0$.  Since $\Omega_{\cyc}(X_{\mathscr{A}})$ is acyclic for the de Rham differential, it follows that there is a unique $\mathcal{W}\in \Omega_{\cyc}^0(X_{\mathscr{A}})$ such that $d\mathcal{W}=\iota_Q\omega$, which one may check is the potential $\mathcal{W}$ appearing in \S \ref{cycAsec}.  We recover $Q$ from $\mathcal{W}$ via $Q=\{\mathcal{W},-\}$, where $\{-,-\}$ is the Poisson bracket defined by the symplectic form $\omega$, and the condition $[Q,Q]=0$ is equivalent to $\{\mathcal{W},\mathcal{W}\}=0$.  We summarise the above discussion in a proposition.

\begin{proposition}
There is an equivalence of categories between cyclic $A_{\infty}$-categories and triples $(\mathcal{O}(X),\omega,\mathcal{W})$, where $\mathcal{O}(X)$ is a finitely generated free graded topological category, $\omega\in \Omega^2_{\cyc}(X)$ is a constant symplectic form, and $\mathcal{W}\in \mathcal{O}^{\geq 2}_{\cyc}(X)$ satisfies $\{\mathcal{W},\mathcal{W}\}=0$.
\end{proposition}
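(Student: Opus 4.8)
The plan is to upgrade the non-cyclic equivalence recalled above---between finite-dimensional $A_{\infty}$-categories and pairs $(\mathcal{O}(X),Q)$ consisting of a finitely generated free topological category equipped with a degree one square-zero vector field---to an equivalence that incorporates the cyclic data on one side and the symplectic/potential data $(\omega,\mathcal{W})$ on the other. Since the underlying bijection at the level of vector fields $Q\leftrightarrow\Ab_{\bullet}$ is already in place, it suffices to match the supplementary structures on objects and then to match the morphisms.

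On objects I would argue in both directions. Starting from a cyclic $A_{\infty}$-category $(\mathscr{A},\Ab_{\bullet},\langle-,-\rangle)$, the nondegenerate symmetric pairing $g=\langle-,-\rangle$ produces the constant closed $2$-form $\omega=(d_{\dR}\otimes d_{\dR})g\in\Omega_{\cyc}^2(X_{\mathscr{A}})$, which is symplectic precisely because $\langle-,-\rangle$ is nondegenerate, and cyclic invariance of the pairing \eqref{eq:cyclic_pairing} is exactly the identity $L_{Q_{\mathscr{A}}}\omega=0$. Acyclicity of $(\Omega_{\cyc}(X),d_{\dR})$ then yields a unique $\mathcal{W}\in\mathcal{O}^{\geq 2}_{\cyc}(X_{\mathscr{A}})$ with $d_{\dR}\mathcal{W}=\iota_{Q_{\mathscr{A}}}\omega$, and the reconstruction formula $Q_{\mathscr{A}}=\{\mathcal{W},-\}$ translates $[Q_{\mathscr{A}},Q_{\mathscr{A}}]=0$ into $\{\mathcal{W},\mathcal{W}\}=0$. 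Conversely, a triple $(\mathcal{O}(X),\omega,\mathcal{W})$ determines a nondegenerate pairing by reading off the constant part $f_0$ of $\omega$ in the expansion \eqref{omegaform}, together with the square-zero vector field $Q=\{\mathcal{W},-\}$. That these two assignments are mutually inverse follows because the correspondence $\mathcal{W}\leftrightarrow Q$ given by $d_{\dR}\mathcal{W}=\iota_Q\omega$ is a bijection between Hamiltonians in $\mathcal{O}^{\geq 2}_{\cyc}(X)$ and symplectic vector fields (note $d_{\dR}\iota_Q\omega=L_Q\omega=0$ since $\omega$ is closed), the indeterminacy of additive constants being removed by passing to $\mathcal{O}^{\geq 2}_{\cyc}$.

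For morphisms I would show that a cyclic $A_{\infty}$-functor $f\colon(\mathscr{A},\langle-,-\rangle_{\mathscr{A}})\to(\mathscr{B},\langle-,-\rangle_{\mathscr{B}})$ corresponds to a continuous algebra homomorphism $\hat{f}\colon\mathcal{O}(X_{\mathscr{B}})\to\mathcal{O}(X_{\mathscr{A}})$ which intertwines the vector fields (the ordinary functor condition $\hat{f}\circ Q_{\mathscr{B}}=Q_{\mathscr{A}}\circ\hat{f}$) and satisfies $\hat{f}^*\omega_{\mathscr{B}}=\omega_{\mathscr{A}}$, i.e. is a formal symplectomorphism. The content is that the two extra conditions defining a cyclic functor---preservation of the pairing by $f_1$, together with $\sum_{1\leq r\leq n}\langle-,-\rangle_{\mathscr{B}}\circ(f_r\otimes f_{n-r})=0$ for $n\geq 3$---assemble, order by order in the number of generators, into the single equation $\hat{f}^*\omega_{\mathscr{B}}=\omega_{\mathscr{A}}$. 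Once this is established, compatibility of $\hat{f}$ with the $Q$'s forces $\hat{f}^*\mathcal{W}_{\mathscr{B}}$ and $\mathcal{W}_{\mathscr{A}}$ to be Hamiltonians for the same field, hence equal up to a constant that vanishes in $\mathcal{O}^{\geq 2}_{\cyc}$, so the potentials match and functoriality is inherited from the non-cyclic equivalence.

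The main obstacle is this last translation between the cyclic-functor conditions and the symplectomorphism condition $\hat{f}^*\omega_{\mathscr{B}}=\omega_{\mathscr{A}}$: one must expand $\hat{f}^*\omega_{\mathscr{B}}$ via the product rule for the induced map on forms, identify the homogeneous piece built from $f_r$ and $f_{n-r}$ with the term $\langle-,-\rangle_{\mathscr{B}}\circ(f_r\otimes f_{n-r})$, and keep the Koszul signs consistent with \eqref{eq:cyclic_pairing}. This is a careful bookkeeping computation rather than a conceptual difficulty; the remaining ingredients---nondegeneracy, the identity $L_Q\omega=0$, the Hamiltonian reconstruction, and the equivalence $\{\mathcal{W},\mathcal{W}\}=0\Leftrightarrow[Q,Q]=0$---have all been recorded in the preceding discussion and need only be organised into the statement of an equivalence of categories.
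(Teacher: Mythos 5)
Your proposal is correct and follows essentially the same route as the paper: the paper states this proposition as a summary of the immediately preceding discussion, which establishes exactly the ingredients you use---$\omega=(d_{\dR}\otimes d_{\dR})g$ with nondegeneracy and $L_Q\omega=0$ encoding the cyclic pairing, acyclicity of $(\Omega_{\cyc}(X),d_{\dR})$ producing the unique $\mathcal{W}$ with $d_{\dR}\mathcal{W}=\iota_Q\omega$, and $\{\mathcal{W},\mathcal{W}\}=0\Leftrightarrow[Q,Q]=0$. The only difference is that the paper outsources the morphism-level translation (cyclic functor conditions $\Leftrightarrow$ $f^{\vee}\omega_{\mathscr{B}}=\omega_{\mathscr{A}}$) to Kajiura's Section 4.5, whereas you propose to carry out that sign-and-order bookkeeping directly; this is a matter of citation versus verification, not of method.
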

\subsection{Strictification of units and formality}
\label{new_form}
We continue to assume $\mathrm{char}(k)=0$.  Let $f\colon\mathscr{A}\rightarrow \mathscr{B}$ be a morphism of $A_{\infty}$-categories, and assume that $\mathscr{A}$ and $\mathscr{B}$ carry cyclic structures.  The morphism $f$ defines the morphisms
\begin{align*}
f^{\vee}\colon&\mathcal{O}(X_{\mathscr{B}})\rightarrow \mathcal{O}(X_{\mathscr{A}})\\
&\Omega(X_{\mathscr{B}})\rightarrow \Omega(X_{\mathscr{A}})
\end{align*}
and in terms of the symplectic forms $\omega_{\mathscr{A}}$ and $\omega_{\mathscr{B}}$ defined by the cyclic structures, the condition for $f$ to be a morphism of cyclic $A_{\infty}$-categories equates to the condition $f^{\vee}\omega_{\mathscr{B}}=\omega_{\mathscr{A}}$ \cite[Sec.4.5]{Kaj07}.
\subsubsection{}
Let $\psi$ be a degree zero vector field on $\mathcal{O}(X_{\mathscr{A}})$, and let us assume that the Taylor series of $\psi$ starts in degree 2 or higher, i.e. $\psi(x_r)$ is a noncommutative polynomial starting in degree 2 or higher for each $x_r\in \{x_1,\ldots,x_p\}=\mathscr{A}[1]^{\vee}$.  We let $\psi$ act on $\Omega(X_{\mathscr{A}})$ via the operator $L_{\psi}$.  Then we define the formal automorphisms of $\mathcal{O}(X_{\mathscr{A}})$ and $\Omega(X_{\mathscr{A}})$
\[
\hat{\psi}\coloneqq e^{\psi}=1+\psi+\psi\circ\psi/2+\ldots+\underbrace{\psi\circ\cdots\circ\psi}_{n\textrm{ times}}/n!+\ldots
\]
\begin{proposition}\cite[Sec.4.3]{Kaj07}
\label{KajSym}
If $L_\psi\omega=0$, then $\hat{\psi}^*\omega=\omega$ .  In particular, for $S\in \mathcal{O}_{\cyc}(X)$ we may consider the automorphism
\[
\hat{S}\coloneqq e^{\{S,-\}},
\]
and $\hat{S}^*\omega=\omega$.
\end{proposition}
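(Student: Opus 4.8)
The plan is to reduce both assertions to the single operator identity
\[
\hat{\psi}^* = e^{L_{\psi}}
\]
on $\Omega(X_{\mathscr{A}})$. Granting this, the first claim is immediate: the hypothesis $L_{\psi}\omega = 0$ forces $e^{L_{\psi}}\omega = \sum_{n\geq 0}\tfrac{1}{n!}L_{\psi}^{n}\omega = \omega$, since every term with $n\geq 1$ vanishes, whence $\hat{\psi}^*\omega = \omega$. So the entire content is the identity \emph{infinitesimal pullback $=$ Lie derivative}, in the completed noncommutative setting.

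To prove $\hat{\psi}^* = e^{L_{\psi}}$ I would check that both sides are continuous graded-algebra automorphisms of $\Omega(X_{\mathscr{A}})$ that commute with $d_{\dR}$ and agree on topological generators. Since $\psi$ has cohomological degree zero, $L_{\psi} = [d_{\dR},\iota_{\psi}]$ is an even derivation of $\Omega(X_{\mathscr{A}})$, so $e^{L_{\psi}}$ is an algebra automorphism; and $L_{\psi}$ commutes with $d_{\dR}$ because $[d_{\dR},L_{\psi}] = d_{\dR}\iota_{\psi}d_{\dR} - d_{\dR}\iota_{\psi}d_{\dR} = 0$, so $e^{L_{\psi}}$ does too. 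On the other side, $\hat{\psi}^*$ is by construction the algebra map extending $\hat{\psi}=e^{\psi}$ on $\mathcal{O}(X_{\mathscr{A}})=\Omega^0(X_{\mathscr{A}})$ through the product rule, hence $\hat{\psi}^*(df)=d_{\dR}(\hat{\psi}^*f)$, i.e.\ it commutes with $d_{\dR}$. Now on $\Omega^0$ one has $L_{\psi}f = \iota_{\psi}d_{\dR}f = \psi(f)$, so $e^{L_{\psi}}|_{\Omega^0} = e^{\psi} = \hat{\psi} = \hat{\psi}^*|_{\Omega^0}$; and on a generator $d_{\dR}f$ both operators give $d_{\dR}(e^{\psi}f)$, using commutation with $d_{\dR}$. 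As $\Omega(X_{\mathscr{A}})$ is topologically generated as an algebra by $\mathcal{O}(X_{\mathscr{A}})$ together with $d_{\dR}\mathcal{O}(X_{\mathscr{A}})$, and both $\hat{\psi}^*$ and $e^{L_{\psi}}$ are algebra homomorphisms agreeing on these generators, they coincide. Since both are derivations/automorphisms they preserve $[\Omega,\Omega]_{\topo}$, so the identity descends to $\Omega_{\cyc}(X_{\mathscr{A}})$, where $\omega$ lives.

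For the ``in particular'' I would take $\psi = \{S,-\}$ to be the Hamiltonian vector field of $S$ and verify that it preserves $\omega$ regardless of $S$. By Cartan's formula $L_{\psi}\omega = d_{\dR}\iota_{\psi}\omega + \iota_{\psi}d_{\dR}\omega$; the second term vanishes as $\omega$ is $d_{\dR}$-closed, and $\iota_{\{S,-\}}\omega = d_{\dR}S$ is exactly the defining relation of the Hamiltonian vector field attached to the symplectic form $\omega$, so $d_{\dR}\iota_{\psi}\omega = d_{\dR}^2 S = 0$. Hence $L_{\{S,-\}}\omega = 0$, and the first part gives $\hat{S}^*\omega = \omega$. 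Here one should note that with the constant symplectic form the bracket lowers total degree by $2$, so $S\in\mathcal{O}_{\cyc}^{\geq 3}(X_{\mathscr{A}})$ guarantees that $\{S,-\}$ starts in degree $\geq 2$, placing us in the setting of the first part.

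The main obstacle is not conceptual but the well-definedness of the exponentials in the \emph{completed} topological categories $\mathcal{O}(X_{\mathscr{A}})$, $\Omega(X_{\mathscr{A}})$, $\Omega_{\cyc}(X_{\mathscr{A}})$: one must confirm that $e^{\psi}$ and $e^{L_{\psi}}$ converge and are continuous. This is precisely what the hypothesis that the Taylor series of $\psi$ begins in degree $\geq 2$ buys us, since then $\psi$ (and $L_{\psi}$) strictly raise the filtration by polynomial weight, so that each fixed weight-component of $e^{\psi}$ and $e^{L_{\psi}}$ is a finite sum. The remaining bookkeeping is the graded Leibniz/Koszul signs in checking that $L_{\psi}$ is a derivation and that $e^{L_{\psi}}$ is multiplicative; these are harmless because $\psi$ is of degree zero, so no sign corrections intervene in the binomial expansion of $e^{L_{\psi}}(ab)$.
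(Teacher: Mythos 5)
Your proposal is correct, and it is essentially the standard argument: the paper gives no proof of this proposition, citing \cite[Sec.4.3]{Kaj07}, and your route (exponentiate a filtration-raising symplectic vector field; check Hamiltonian vector fields are symplectic via Cartan's formula) is exactly Kajiura's. One remark: in this paper's conventions the sentence preceding the definition of $\hat{\psi}$ decrees that $\psi$ acts on $\Omega(X_{\mathscr{A}})$ via $L_{\psi}$, so the identity $\hat{\psi}^* = e^{L_{\psi}}$, which occupies the bulk of your argument, is essentially definitional here; your verification that the algebra-map extension of $e^{\psi}$ by the product rule agrees with $e^{L_{\psi}}$ (both continuous algebra maps commuting with $d_{\dR}$ and agreeing on the topological generators $\mathcal{O}$ and $d_{\dR}\mathcal{O}$) is a correct reconciliation of the two readings, after which the real content is your Cartan computation $L_{\{S,-\}}\omega = d_{\dR}\iota_{\{S,-\}}\omega = d_{\dR}^2 S = 0$. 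Your convergence caveat is also well taken: since the constant symplectic pairing makes $\{-,-\}$ drop word length by $2$, one needs $S$ to have order $\geq 3$ for $\{S,-\}$ to start in degree $\geq 2$, which is satisfied in the paper's only application (Proposition \ref{StrictProp}, where $S_n$ has order $n+1 \geq 4$).
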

\subsubsection{}
Strictification of units in the non-cyclic setting is well known by work of Seidel and Lef\`evre-Hasegawa, but in the cyclic setting there appears to be no previous treatment.
\begin{proposition}[Strictification of cyclic units]
\label{StrictProp}
Let $\mathscr{A}$ be a cyclic minimal $A_{\infty}$-category over a field of characteristic zero, such that for each pair $i,j\in \Ob(\mathscr{A})$ the graded vector space $\mathscr{A}(i,j)$ is concentrated in non-negative cohomological degrees, or the potential $\mathcal{W}$ has cohomological degree less than four \footnote{An earlier version of this paper left out this final assumption, without which the proposition is false.  Thanks are due to Lino Amorim for pointing this out.}.  Then there is a cyclic $A_{\infty}$-isomorphism $\mathscr{A}\rightarrow \mathscr{A}'$, where $\mathscr{A}'$ is unital.
\end{proposition}
Recall that we are calling an $A_{\infty}$-category unital if it contains \textit{strict} units, hence the ``strictification'' in the above proposition.  The proof combines the ideas of \cite[Sec.3.2.1]{KLH}, \cite{Kaj07} with \cite{Ren16}.  We give the details here.
\begin{proof}[Proof of Proposition \ref{StrictProp}]
Let $\mathscr{A}'$ be the usual category underlying $\mathscr{A}$ (since we assume that $\mathscr{A}$ is minimal, associativity for the compositions $\Am_2$ follow from the relation \eqref{eq:A_infinity_relation_m} with $n=3$).  The usual cyclic Hochschild complex of the category $\mathscr{A}'$ is equal to
\begin{equation}
    \label{dgla_in}
(\mathcal{O}_{\cyc}(X),\{ \mathcal{W}_3,-\}).
\end{equation}
An element $\mathcal{W}=\mathcal{W}_3+\mathcal{W}_{\geq 4}$ determines a cyclic $A_{\infty}$-structure if and only if $\{\mathcal{W},\mathcal{W}\}=0$, i.e. if $\{\mathcal{W}_3, \mathcal{W}_{\geq 4}\}+1/2\{\mathcal{W}_{\geq 4},\mathcal{W}_{\geq 4}\}=0$.  In other words, cyclic $A_{\infty}$-structures correspond to Maurer--Cartan elements of the differential graded Lie superalgebra \eqref{dgla_in}.
\smallbreak
Note that unitality of the $A_{\infty}$-structure determined by $\mathcal{W}$ is equivalent to the condition that $\mathcal{W}_{\geq 4}\in\ol{\mathcal{O}}_{\cyc}(X)$.  The inclusion
\[
 G\colon \ol{\mathcal{O}}_{\cyc}(X)\rightarrow\ker\left(\mathcal{O}_{\cyc}(X)\rightarrow  \mathcal{O}_{\cyc}(k^{\Ob(\mathscr{A})})\right)
\]
is a quasi-isomorphism (see e.g. \cite[Sec.6.2]{Bro98}).  First we show that we can transform $\mathcal{W}$ so that $\mathcal{W}_4\in \ol{\mathcal{O}}_{\cyc}(X)$.  Under our degree assumptions, $\mathcal{W}_{n+1}$ lies in the kernel on the right hand side; note that $\mathcal{O}^{\geq d}_{\cyc}(k^{\Ob(\mathscr{A})})$ is concentrated in cohomological degrees $\geq d$.  Since $G$ is a quasi-isomorphism, there exists a $S_3$ such that 
\[
\mathcal{W}_{4}+\{S_3,\mathcal{W}_3\}\in \ol{\mathcal{O}}_{\cyc}(X)
\]
and we consider the automorphism $\hat{S}_3$.  Collecting the lowest order terms, we find
\[
\hat{S}_3(\mathcal{W})=\mathcal{W}_3+\mathcal{W}_{4}+\{S_3,\mathcal{W}_3\}+\textrm{ higher order terms}.
\]
so that, replacing $\mathcal{W}$ with $\hat{S}_3(\mathcal{W})$ we may assume that $\mathcal{W}_4\in\ol{\mathcal{O}}_{\cyc}(X)$.  Next assume that $\mathcal{W}_4,\ldots,\mathcal{W}_n\in \ol{\mathcal{O}}_{\cyc}(X)$.  Then as above, we find $S_n$ such that $\mathcal{W}_{n+1}+\{S_n,\mathcal{W}_3\}\in \ol{\mathcal{O}}_{\cyc}(X)$, and apply $\hat{S}_n$ to $\mathcal{W}$ to furthermore assume that $\mathcal{W}_{n+1}\in \ol{\mathcal{O}}_{\cyc}(X)$.  Applying infinitely many of these automorphisms (which makes sense because we work formally, and each one is the identity operator up to increasingly high order), we transform $\mathcal{W}$ so that $\mathcal{W}_{\geq 4}\in \ol{\mathcal{O}}_{\cyc}(X)$ while leaving $\omega$ invariant by Proposition \ref{KajSym}, as required.
\end{proof}

\subsubsection{Formality}
After this detour through $A_{\infty}$-categories and noncommutative differential calculus, we finally arrive at the following
\begin{corollary}
\label{form_cor}
Let $\mathscr{A}$ be a dg category equipped with a left 2CY structure over a field of characteristic zero, and let $\mathcal{F}_1,\ldots,\mathcal{F}_r$ be a $\Sigma$-collection of $\mathscr{A}$.  Then the full sub dg category of $\mathscr{A}$ containing the objects $\mathcal{F}_1,\ldots,\mathcal{F}_r$ is quasi-isomorphic to an ordinary graded category, i.e. it is formal.
\end{corollary}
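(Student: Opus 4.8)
The plan is to reduce the corollary to Lemma \ref{rigLem}, which already absorbs the substantive computation: once we produce a \emph{minimal}, \emph{strictly unital} $A_\infty$-model of the subcategory carrying an \emph{honest} (strict) cyclic pairing of dimension two, formality is immediate. So the whole task is to manufacture such a model out of the given left $2$CY structure on $\mathscr{A}$.

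Let $\mathscr{A}'\subset\mathscr{A}$ be the full sub dg category on $\mathcal{F}_1,\dots,\mathcal{F}_r$. First I would note that the $\Sigma$-collection hypothesis forces each complex $\mathscr{A}'(\mathcal{F}_i,\mathcal{F}_j)$ to have finite-dimensional total cohomology, so $\mathscr{A}'$ is locally proper. Proposition \ref{BDthm} then converts the ambient left $2$CY structure into a canonical \emph{right} $2$CY structure on $\mathscr{A}'$. Next, by Kadeishvili's homological perturbation lemma \cite{MR580645}, I would replace $\mathscr{A}'$ by a quasi-isomorphic minimal $A_\infty$-category $\mathscr{B}$ with $\mathscr{B}(\mathcal{F}_i,\mathcal{F}_j)=\Ext^\bullet(\mathcal{F}_i,\mathcal{F}_j)$. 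These spaces are finite-dimensional, so $\mathscr{B}$ falls within the finite-dimensional cyclic framework of \S \ref{cycAsec}--\S \ref{new_form}. Because the right $2$CY structure is a nondegenerate class in the quasi-isomorphism-invariant (dual) cyclic homology, it transports to a nondegenerate cyclic cocycle $\psi\in\CHC_\bullet(\mathscr{B})^\vee_{\cyc}$ of the appropriate degree.

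With $\mathscr{B}$ and $\psi$ in hand, I would invoke Lemma \ref{ChoLemma} to produce an $A_\infty$-isomorphism $\mathscr{B}\to\mathscr{B}''$ after which $\psi$ becomes \emph{strict}, i.e. $\mathscr{B}''$ carries a genuine cyclic pairing of dimension two while remaining minimal. Proposition \ref{StrictProp} (strictification of cyclic units) then supplies a cyclic $A_\infty$-isomorphism $\mathscr{B}''\to\mathscr{B}'''$ onto a \emph{unital} minimal cyclic $A_\infty$-category. The objects $\mathcal{F}_i$ now verify all three hypotheses of Lemma \ref{rigLem}: the dimension and orthogonality conditions of a $\Sigma$-collection give condition (1) ($\dim\Ext^0(\mathcal{F}_i,\mathcal{F}_i)=1$, $\Hom(\mathcal{F}_m,\mathcal{F}_n)=0$ for $m\neq n$, and $\Ext^{<0}=0$), strictification gives condition (2), and Cho's lemma gives the strict cyclic pairing of condition (3). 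Hence $\Am_n=0$ for $n\ge 3$, so $\mathscr{B}'''$ is an ordinary graded category, and composing the chain $\mathscr{A}'\simeq\mathscr{B}\cong\mathscr{B}''\cong\mathscr{B}'''$ shows that $\mathscr{A}'$ is formal.

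The main obstacle I anticipate is the middle step: justifying that the right $2$CY structure on $\mathscr{A}'$ genuinely descends to a \emph{nondegenerate cyclic} cocycle on the minimal model $\mathscr{B}$, with the degree/dimension bookkeeping ($n=d=2$) intact, so that Lemma \ref{ChoLemma} and Proposition \ref{StrictProp} apply verbatim. Concretely, this requires checking that homotopy transfer along the Kadeishvili quasi-isomorphism carries a cyclic cocycle to a cyclic cocycle (not merely a Hochschild one) and preserves nondegeneracy; the finiteness of the $\Ext$-spaces is precisely what makes this transfer and the ensuing duality arguments well-posed. Everything after this point is formal manipulation inside the established equivalence between cyclic $A_\infty$-categories and symplectic noncommutative formal manifolds.
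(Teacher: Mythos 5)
Your proof is correct and follows essentially the same route as the paper: Proposition~\ref{BDthm} to obtain the right 2CY structure, passage to a minimal model, Lemma~\ref{ChoLemma} to make the pairing strict, Proposition~\ref{StrictProp} to strictify units, and Lemma~\ref{rigLem} to conclude formality. The ``obstacle'' you flag --- transporting the nondegenerate cyclic cocycle to the minimal model --- is exactly what the paper handles implicitly via the quasi-isomorphism invariance of (dual) cyclic homology, so your account simply makes that step explicit.
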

\begin{proof}
By Proposition~\ref{BDthm} the full subcategory containing the objects $\CF_1,\ldots,\CF_r$ has a right 2CY structure, i.e. if we choose an $A_{\infty}$ minimal model $\mathscr{A}$ for this category, there is a nondegenerate element $\eta\in \CHC_{\bullet}(\mathscr{A})^{\vee}$ determining a bimodule isomorphism $\mathscr{A}^{\vee}\rightarrow \mathscr{A}[2]$ which we may furthermore assume to be a strict morphism by Lemma \ref{ChoLemma}.  In other words, $\eta$ defines a cyclic pairing on $\mathscr{A}$.  Note that for right 2CY categories the cohomological degree of the potential $\mathcal{W}$ is one.  By Proposition \ref{StrictProp} there is a cyclic $A_{\infty}$-isomorphism $\mathscr{A}\rightarrow \mathscr{B}$, where $\mathscr{B}$ is unital.  Then by Lemma \ref{rigLem} we deduce that $\mathscr{B}$ is formal.
\end{proof}
Using this corollary, in \S \ref{sec:applications} we deduce formality for Yoneda algebras of $\Sigma$-collections of complexes of coherent sheaves on smooth surfaces satisfying $\omega_S\cong\mathcal{O}_S$, objects in e.g. Kuznetsov components, and all of the other (left) 2CY categories we introduced at the start of this paper.

\subsubsection{A brief history of the formality problem}
\label{prev_work_sec}
This formality result has quite an involved history, which we try to summarise here.  Firstly, there are many incarnations of the result for the Yoneda algebra of a semisimple representation of a 2CY algebra.  As pointed out by Bocklandt, Galluzzi and Vaccarino \cite{Bo16}, this result follows from Koszul duality arguments and \cite[Theorem 11.2.1]{VdB15}; despite the apparent exclusion of the $n=2$ case from Van den Bergh's theorem on nCY algebras in [ibid].  This observation, along with deformation-theoretic arguments, was used to understand formal neighbourhoods in the coarse moduli space of representations of 2-Calabi--Yau algebras in \cite{Bo16}, with fuller details (including but not limited to the \'etale neighbourhood theorem for the coarse moduli space) provided in \cite{SchKa19}.  These results on coarse moduli spaces in turn generalise the known results on coarse moduli spaces of representations of preprojective algebras due to Crawley--Boevey \cite{CB03}.  For Higgs bundles and representations of fundamental groups, formality dates back to classical results \cite{Del75,Go88,Si92} and is a crucial tool in the subject (e.g. in Simpson's isosingularity theorem).
\smallbreak
Moving to the algebraic geometry context, there is a parallel literature on essentially the same problem of formality and formal neighbourhoods.  For instance a very special case of our formality result states that for $\mathcal{F}$ a polystable coherent sheaf on a K3 surface, the Yoneda algebra of $\mathcal{F}$ is formal; this was originally a conjecture of Kaledin and Lehn \cite{KaLe07}, proved recently~\cite{MR3942159}, extending~\cite{AS18}.

\smallbreak

Once expressed in the correct language of 2CY categories, and armed with strictification of units in cyclic categories, formality becomes straightforward, following straight from Lemma~\ref{rigLem}.  This lemma, in turn, seems to be well-known to the experts; it is equivalent to the statement that a unital 2CY category (with all nonunital homomorphisms in strictly positive degrees) is determined by its Ext quiver, a result proved by Jie Ren \cite{Ren16} using the same deformation theoretic arguments we used to prove strictification of units in the cyclic context, see also \cite{Tu14}.  

\smallbreak

The approach to formality here is inspired by the well-established approach to the Hodge-theoretic \textit{explicit} construction of minimal models of full subcategories $\mathscr{A}'$ of simple-minded collections in categories of coherent sheaves on a $d$-dimensional Calabi--Yau variety $X$.  The ``simple-mindedness'' condition is given by conditions (1) and (2) of Lemma \ref{rigLem}, while the Calabi--Yau property of $X$ guarantees that property (3) also holds, except with the shift by two replaced by shift by $d$.  Then we obtain an explicit dg model for $\mathscr{A}'$ by resolving each of the $\mathcal{F}_i$ by a complex of vector bundles $\mathcal{V}_i^{\bullet}$, and after picking a Hermitian metric on each of the vector bundles $\mathcal{V}_i^n$, along with a K\"ahler metric on $X$, we obtain a degree $-1$ operator $d^*$ on each of the homomorphism spaces $\Hom_{\Dub_{\dg}(\Mod^k)}((V^{\bullet}_i,d),(V^{\bullet}_j,d))$, adjoint to $d$, along with the Laplacian $\Delta=dd^*+d^*d$.  Then there is an explicit minimal $A_{\infty}$-algebra structure on the space of harmonic sections, defined by summing over trees as in \cite{MR1672242, KoSo02,chuang2009abstract}, and the resulting $A_{\infty}$ category is a minimal model for $\mathscr{A}'$.  By construction, the resulting category is unital, and is known to be cyclic with respect to the Serre duality pairing \cite{Kaj07}.  In particular, by Lemma \ref{rigLem} we can conclude that $\mathscr{A}'$ is formal by using this \textit{explicit} minimal model.

\section{Local structure of stacks of objects}
\label{local_struc}

\subsection{\'Etale local structure of good moduli spaces}
\label{nbhd_thm}
Throughout \S \ref{sec:2CY_local} and \S \ref{local_struc} we work over a field $k$ of characteristic 0.  In \S \ref{local_struc} we assume in addition that $k=\overline{k}$.
\subsubsection{}
We first recall the notion of a good moduli space.
\begin{definition}\label{def:good_moduli_space}\cite[Sec.1.2]{MR3237451}
    A quasi-compact morphism $\phi \colon \FX \to X$ from an Artin stack $\FX$ to an algebraic space $X$ is a \emph{good moduli space} if
    \begin{enumerate}
        \item The direct image functor $\phi_*$ on quasi-coherent sheaves is exact.
        \item The induced morphism of sheaves $\CO_X \to \phi_{*}\CO_{\FX}$ is an isomorphism.
    \end{enumerate}
\end{definition}
If $\FX$ is locally noetherian then by~\cite[Thm.~6.6]{MR3237451} good moduli spaces are universal for morphisms to arbitrary algebraic spaces, and hence unique when they exist.
Each of the moduli stacks in \S \ref{glob_intro} possess good moduli spaces, by \cite[Sec.7]{AHLH18}.  In fact the moduli schemes recalled in \S \ref{glob_intro} \textit{are} these good moduli spaces; this follows since they are constructed as GIT quotients, so in each case of the morphisms $p\colon \FM\rightarrow \CM$ there is a cover of $\FM$ by global quotient stacks $U_i/G_i$, so that the restriction of $p$ to $U_i/G_i$ is the affinization map.
\smallbreak

We work at the higher level of generality allowed by Definition \ref{def:good_moduli_space} since once we leave the world of GIT, not all moduli stacks of semistable objects in 2CY categories are known to be global quotients; for instance the moduli stack of Bridgeland semistable objects in a Kuznetsov component is not known to be such a quotient, but is known to possess a good moduli space, again by appeal to the general results of \cite{AHLH18} (see \cite[Part VI]{Betal19}).

\subsubsection{}
We will use a selection of the \'{e}tale-local structure theorems of Alper, Hall and Rydh~\cite{MR4088350}, which we collect here.  In each case the theorem that we quote is not the full theorem, but a weaker form that is sufficient for our current purposes.
\smallbreak
\begin{theorem}\cite[Thm.1.1]{MR4088350}
\label{AHR_thm1}
Let $\FX$ be a quasi-separated algebraic stack, locally of finite type, with affine stabilisers.
Let $x \in \FX(k)$ be a $k$-point and assume that the stabiliser $G_x$ is reductive\footnote{Recall that this is equivalent to being linearly reductive, since we work over a field of characteristic zero.  The original version of this theorem in \cite{MR4088350} is stated for arbitrary algebraically closed ground fields, and is for linearly reductive groups.}.
Then there exists an affine scheme $\Spec A$ with an action of $G_x$, a $k$-point $w \in \Spec A$ fixed by $G_x$ and an \'{e}tale morphism
\[
f \colon \bigl(\Spec(A)/G_x,w\bigr) \to (\FX,x)
\]
such that $\{w\}/G_x \cong f^{-1}(\{x\}/G_x)$.

\end{theorem}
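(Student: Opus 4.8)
The plan is to follow the strategy of Luna's étale slice theorem, upgraded to the setting of algebraic stacks. Since $G_x$ is reductive (equivalently linearly reductive in characteristic zero), the residual gerbe at $x$ is the classifying stack $\pt/G_x$, and it sits inside $\FX$ as a locally closed immersion $\pt/G_x \hookrightarrow \FX$. The goal is to produce a smooth local model for $\FX$ near this gerbe, of the form $\Spec(A)/G_x$, and I would build it in three stages: a formal (infinitesimal) model, its promotion to a coherent completion, and finally algebraization to an honest finite-type étale neighborhood.

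First I would construct the formal model by deformation theory. The restriction of the cotangent complex $\BL_{\FX}$ to $\pt/G_x$ is an object in the derived category of $G_x$-representations; linear reductivity of $G_x$ makes this category semisimple, so all the relevant $\Ext$ and obstruction groups are computed by taking $G_x$-invariants, and the obstructions to lifting can be controlled equivariantly. Choosing a $G_x$-equivariant splitting of the tangent data, one obtains a $G_x$-representation $N$ (a ``normal space'') and builds, order by order in the adic filtration, a compatible identification of the $n$-th infinitesimal neighborhood of $\pt/G_x$ in $\FX$ with the corresponding neighborhood of the origin in a versal $G_x$-deformation space $\Spec(A)$ whose tangent space is $N$. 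This is the equivariant analogue of writing down a versal deformation. Passing to the limit of these infinitesimal models produces a formal equivalence between the completion $\widehat{\FX}_x$ along the residual gerbe and the completion of $\Spec(A)/G_x$ at the origin $w$. The tool that makes this limit into an actual morphism of stacks, rather than merely a compatible system, is Tannaka duality for algebraic stacks: a morphism into $\FX$ from the completion of a quotient stack is the same datum as a suitably exact symmetric monoidal functor out of $\QCoh(\FX)$, and the deformation-theoretic data assembles into exactly such a functor.

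The hard part, and the step I expect to be the main obstacle, is algebraization: promoting the formal isomorphism $\widehat{\Spec(A)/G_x}_w \cong \widehat{\FX}_x$ to a genuine morphism $f\colon (\Spec(A)/G_x, w) \to (\FX, x)$ defined on a finite-type neighborhood and étale at $w$. This requires an equivariant version of Artin's algebraization and approximation theorems: one must show that the formal map is induced by an actual map from a finite-type $G_x$-scheme, and that the approximation can be carried out while respecting the reductive group action throughout. This equivariant Artin approximation is where essentially all of the genuine difficulty resides.

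Once such an $f$ is produced, the remaining verifications are comparatively formal. Étaleness at $w$ follows because $f$ is formally étale by construction and locally of finite type, hence induces an isomorphism on completions; after shrinking $\Spec(A)$ to a $G_x$-invariant affine open neighborhood of $w$ one obtains an étale morphism on the nose. Finally, the fiber identity $\{w\}/G_x \cong f^{-1}(\{x\}/G_x)$ follows from the observation that $f$ restricts to an isomorphism of residual gerbes over the closed point, so that the preimage of the gerbe $\{x\}/G_x$ is exactly the gerbe $\{w\}/G_x$.
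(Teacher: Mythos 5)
The paper does not actually prove this statement: it is imported verbatim (in weakened form) from Alper--Hall--Rydh \cite{MR4088350}, so there is no internal proof to compare against. Measured against the proof in that reference, your outline is essentially a faithful pr\'ecis of their actual argument, not an alternative route: they too construct the local model by $G_x$-equivariant deformation theory along the residual gerbe (after first replacing $\FX$ by an open substack in which the gerbe, which is only locally closed under quasi-separatedness, becomes closed), effectivize the resulting tower of infinitesimal thickenings via coherent completeness plus Tannaka duality, algebraize with an equivariant form of Artin's theorems, and deduce \'etaleness from formal \'etaleness and finite presentation after an invariant shrinking (using, as your sketch implicitly does, that in characteristic zero $G_x$ is smooth, which is why one gets an \'etale rather than merely smooth morphism).

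As a proof, however, your text delegates rather than establishes the content: the steps you name in passing are each major theorems that constitute the bulk of \cite{MR4088350}. First, it is not automatic that "passing to the limit" of the infinitesimal models yields a morphism from an honest completion: one needs the coherent completeness theorem (that for $\Spec(A)/G_x$ with $A^{G_x}$ complete local, coherent sheaves are the limit of coherent sheaves on the infinitesimal neighbourhoods of the closed gerbe), which genuinely uses linear reductivity and is not a formal-GAGA statement about an adic completion of $A$; and Tannaka duality for stacks with affine stabilisers, needed to convert the functor on $\QCoh$ into a morphism, is itself a nontrivial theorem of Hall--Rydh. Second, the equivariant Artin approximation and algebraization you flag as "where the difficulty resides" is proved in their appendix and is indeed the heart of the matter, so invoking it leaves the attempt without its main content. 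Finally, the fiber identity $\{w\}/G_x \cong f^{-1}(\{x\}/G_x)$ needs slightly more than an isomorphism of residual gerbes at the marked points: one must shrink $\Spec A$ $G_x$-equivariantly to remove any extra components of the preimage of the gerbe, which works because $w$ is fixed and its complement in that preimage is a closed invariant locus. None of these are wrong turns --- your roadmap is the right one --- but without them the proposal is a summary of the known proof rather than a proof.
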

\begin{remark}
This is the version of \cite[Thm.~1.1]{MR4088350} that is needed for the version of \cite[Thm. 4.19]{MR4088350} that we use.
The authors' original theorem is more general, and in particular also yields \'{e}tale-local presentations of $(\FX,x)$ as quotient stacks given by quotients of an affine scheme by certain subgroup schemes of the stabiliser $G_x$.
\end{remark}

\begin{theorem}\cite[Thm.4.12]{MR4088350}
\label{thm:etale_local_AHR}
Let $\FX$ be a Noetherian algebraic stack, let $p\colon\FX\rightarrow X$ be a good moduli space with affine diagonal.  If $x\in\FX(k)$ is a closed point with stabiliser $G_x$, then there is an affine $G_x$-equivariant scheme $Y=\Spec(A)$ with fixed point $t$, and a Cartesian diagram
\[
\xymatrix{
\ar[d]^r(Y/G_x,t)\ar[r]&(\FX,x)\ar[d]^{p}\\
(Y/\!\!/G_x,r(t))\ar[r]^u&(X,p(x))
}
\]
in which the horizontal morphisms are \'etale.
\end{theorem}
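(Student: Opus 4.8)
The plan is to upgrade the bare \'etale neighbourhood furnished by Theorem~\ref{AHR_thm1} to a \emph{strongly} \'etale one, that is, one compatible with the passage to good moduli spaces. This compatibility is a stacky form of Luna's fundamental lemma and constitutes the real content of the statement.

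First I would apply Theorem~\ref{AHR_thm1} at the closed point $x$. Its stabiliser $G_x$ is reductive, hence linearly reductive since $\chara(k)=0$, and the affine-diagonal hypothesis ensures that $\FX$ is quasi-separated, locally of finite type and has affine stabilisers, so the hypotheses of Theorem~\ref{AHR_thm1} are satisfied. This produces an affine $G_x$-scheme $Y=\Spec(A)$, a fixed point $t:=w$, and an \'etale morphism $f\colon (Y/G_x,t)\to(\FX,x)$ obeying the fibre condition $f^{-1}(\{x\}/G_x)=\{t\}/G_x$; in particular $f$ restricts to an isomorphism on the closed orbit over $x$ and induces an isomorphism $G_t\xrightarrow{\sim}G_x$ of stabilisers. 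Linear reductivity of $G_x$ also provides a good moduli space $r\colon Y/G_x\to Y/\!\!/G_x=\Spec(A^{G_x})$.

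Next I would construct the bottom arrow and the square. As $\FX$ is Noetherian, good moduli spaces are universal (\cite[Thm.~6.6]{MR3237451}), so the composite $p\circ f$ factors uniquely as $u\circ r$ for a morphism $u\colon Y/\!\!/G_x\to X$, yielding the commutative square. It then remains to show, after replacing $Y/\!\!/G_x$ by a suitable open neighbourhood of $r(t)$ and $Y/G_x$ by its saturated preimage, that $u$ is \'etale at $r(t)$ and that the canonical morphism
\[
g\colon Y/G_x\longrightarrow (Y/\!\!/G_x)\times_X\FX
\]
is an isomorphism; together these give the Cartesian square with \'etale horizontal arrows.

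This final step is the main obstacle, and I would carry it out by descending everything from the behaviour of $f$ on completions. By the fibre condition $f$ restricts to an isomorphism of residual gerbes over $x$, so being \'etale it induces an isomorphism of coherent completions $\widehat{(Y/G_x)}_t\xrightarrow{\sim}\widehat{\FX}_x$. Because $G_x$ is linearly reductive the formation of good moduli spaces commutes with completion, and together with $\widehat{A^{G_x}}\cong(\hat A)^{G_x}$ this shows that $u$ induces an isomorphism $\widehat{\CO}_{X,p(x)}\xrightarrow{\sim}\widehat{\CO}_{Y/\!\!/G_x,r(t)}$ of completed local rings; since $u$ is of finite type, this forces $u$ to be \'etale at $r(t)$. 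For $g$, the coherent completion of $(Y/\!\!/G_x)\times_X\FX$ over $r(t)$ is $\widehat{(Y/\!\!/G_x)}_{r(t)}\times_{\widehat{X}_{p(x)}}\widehat{\FX}_x\cong\widehat{\FX}_x$, using the isomorphism of completions just established, so $g$ too induces an isomorphism on coherent completions; the coherent-completion formalism of \cite{MR4088350} then forces $g$ to be an isomorphism over a saturated neighbourhood of $r(t)$. The delicate inputs are precisely the isomorphism of coherent completions induced by $f$ and the commutation of good moduli spaces with completion for linearly reductive $G_x$, which is where the affine-diagonal and characteristic-zero hypotheses are indispensable.
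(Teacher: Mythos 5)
The statement you are proving is imported by the paper from Alper--Hall--Rydh (the paper cites it without proof), and your overall skeleton does match the original argument: apply Theorem~\ref{AHR_thm1} to get $f\colon (\Spec(A)/G_x,t)\to(\FX,x)$, use universality of good moduli spaces to produce $u$, and then upgrade $f$ to a \emph{strongly} \'etale morphism. But in the actual proof the upgrade is performed by Luna's fundamental lemma, which the paper itself records as Proposition~\ref{Luna_lemma}: $f$ is \'etale, representable and separated (it is stabiliser-preserving at $t$, and $\FX$ has affine diagonal), it sends the closed point $t$ to the closed point $x$ with an isomorphism of stabilisers, and both source and target admit good moduli spaces; Proposition~\ref{Luna_lemma} then gives an open $U\ni r(t)$ in $\Spec(A^{G_x})$ with $U\to X$ \'etale and $r^{-1}(U)\cong U\times_X\FX$, after which one shrinks $U$ to an affine open and replaces $\Spec(A)$ by the saturated preimage (still affine, since $\Spec(A)\to\Spec(A^{G_x})$ is affine). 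You never invoke this lemma, and the substitute you offer is where the proof breaks.

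The gap is the sentence ``$f$ restricts to an isomorphism of residual gerbes over $x$, so being \'etale it induces an isomorphism of coherent completions.'' This is not a formal consequence of \'etaleness. The coherent completion $\widehat{\FX}_x=\FX\times_X\Spec(\widehat{\CO}_{X,p(x)})$ is a completion along the \emph{entire} closed fibre $p^{-1}(p(x))$, which in general contains many non-closed orbits specialising to the orbit of $x$; \'etaleness of $f$ together with the fibre condition $f^{-1}(\{x\}/G_x)=\{t\}/G_x$ only controls the formal neighbourhood of that single closed orbit, not the completion along the whole fibre. Passing from the one to the other is precisely the saturation problem that Luna's fundamental lemma solves, so the claimed implication is essentially equivalent to the strong \'etaleness you set out to prove, and asserting it makes the argument circular -- note also that even to \emph{compare} the two coherent completions you base change along $\hat{u}$, whose invertibility is part of the conclusion, and that your identification $\widehat{A^{G_x}}\cong(\hat{A})^{G_x}$ requires completing $A$ along the extended ideal $\mathfrak{m}_{r(t)}A$ (the fibre), not along $\mathfrak{m}_t$, a distinction your phrasing conflates. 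Two smaller points: reductivity of $G_x$ is not an assumption to be waved at but a theorem of Alper (closed points of stacks admitting a good moduli space have linearly reductive stabilisers); and affine diagonal gives quasi-separatedness but not ``locally of finite type,'' so the hypotheses of Theorem~\ref{AHR_thm1} as quoted do not follow from Noetherianity alone and this mismatch needs to be acknowledged rather than absorbed into the affine-diagonal hypothesis.
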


\subsubsection{}
If $p\colon \FX\rightarrow \CX$ is a good moduli space with affine diagonal, then the \textit{coherent completion} $\hat{\FX}_{x}$ of a closed point $\iota\colon x\hookrightarrow  \FX$ is isomorphic to the complete local stack (in the sense of \cite[Def.A.9]{MR4088350}) $(Y/G_x\times_{Y/\!\!/G_x}\mathrm{Spec}(\hat{A}_{r(t)}),t)$, with $Y$ as in Theorem \ref{thm:etale_local_AHR} and $\hat{A}_{r(t)}$ the completion of $A$ at the maximal ideal corresponding to $r(t)$.  The coherent completion is independent of choices made, up to a 1-isomorphism that is canonically defined up to canonical 2-isomorphism.  The coherent completion can be defined more generally, and exists under weaker conditions than those listed above (see \cite[Sec.4.5]{MR4088350}).
\smallbreak
The following is the generalisation of the famous Artin approximation theorem \cite[Cor.~2.6]{MR268188} to stacks.
\begin{theorem}\cite[Thm.4.19]{MR4088350}\label{StAA}
Let $\FX$ and $\FY$ be locally finite type quasi-separated stacks over $k$ with affine stabilisers. Let $x$ and $y$ be points of $\FX$ and $\FY$, respectively, with reductive stabiliser groups.  Suppose that there is an isomorphism $\hat{\FX}_x\cong \hat{\FY}_y$ of coherent completions, and let $G$ denote the stabiliser group of $x$.  Then there is a $G$-equivariant affine scheme $\Spec(A)$, a $G$-fixed point $w$ of $\Spec(A)$, and a diagram of pointed stacks
\[
\xymatrix{
&(\Spec(A)/G,w)\ar[dl]\ar[dr]\\
(\FX,x)&&(\FY,y)
}
\]
in which the morphisms are \'etale.
\end{theorem}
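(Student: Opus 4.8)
The plan is to follow the standard route for stacky Artin approximation: reduce to the equivariant affine setting via the local structure theorem, match up the stabiliser groups using the coherent completions, and then apply an equivariant version of Artin approximation to turn the given formal isomorphism into an actual \'etale correspondence.

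First I would apply Theorem \ref{AHR_thm1} at $x$ and at $y$ to produce \'etale neighbourhoods
\[
f\colon (\Spec(B)/G,w_0)\to(\FX,x),\qquad g\colon(\Spec(C)/H,w_1)\to(\FY,y),
\]
where $G=G_x$ and $H=G_y$ are the reductive stabilisers and $w_0,w_1$ are the distinguished fixed points. Since an \'etale morphism induces an isomorphism on coherent completions, these present $\hat\FX_x$ and $\hat\FY_y$ as $[\widehat{\Spec B}_{w_0}/G]$ and $[\widehat{\Spec C}_{w_1}/H]$ respectively. Next, the given isomorphism $\hat\FX_x\cong\hat\FY_y$ carries the residual gerbe of the unique closed point of one to that of the other, hence induces an isomorphism of the automorphism groups of these closed points; this yields $G\cong H$, and after transporting the $C$-action along this isomorphism I may assume $H=G$. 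With this identification the isomorphism of coherent completions becomes a $G$-equivariant isomorphism of complete local rings $\hat{B}_{w_0}\cong\hat{C}_{w_1}$, compatible with the $G$-actions and fixing the closed points. In other words, $\Spec(B)$ and $\Spec(C)$ are two $G$-affine schemes with $G$-fixed points whose formal neighbourhoods agree $G$-equivariantly.

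The heart of the argument is then an equivariant Artin approximation step. I would consider the $G$-equivariant, locally-of-finite-presentation functor classifying pairs of \'etale maps into $\Spec(B)$ and $\Spec(C)$ matching a prescribed jet of the formal isomorphism; the given equivariant isomorphism of completions supplies a formal point of this functor. Equivariant Artin approximation (the generalisation to linearly reductive $G$ of the scheme-theoretic statement \cite[Cor.~2.6]{MR268188}) then produces an honest $G$-affine scheme $\Spec(A)$ with a $G$-fixed point $w$ together with $G$-equivariant \'etale morphisms $\Spec(A)\to\Spec(B)$ and $\Spec(A)\to\Spec(C)$ agreeing with the formal isomorphism to sufficiently high order, which forces both morphisms to be \'etale at $w$. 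Passing to quotient stacks and composing with $f$ and $g$ produces the two \'etale morphisms $(\Spec(A)/G,w)\to(\FX,x)$ and $(\Spec(A)/G,w)\to(\FY,y)$ required by the statement.

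The main obstacle is precisely this equivariant approximation step: one must ensure that the approximating correspondence can be chosen $G$-equivariant and that the resulting maps are genuinely \'etale rather than merely formally so. This is exactly where reductivity of $G$ is essential, since it guarantees that the relevant quotient functors are well-behaved and of finite presentation so that approximation applies, and it is the technical core to which the full development of \cite{MR4088350} is devoted.
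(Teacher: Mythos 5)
First, a point of comparison: the paper does not prove this statement at all — it is imported verbatim from Alper--Hall--Rydh \cite[Thm.~4.19]{MR4088350} — so your proposal must be measured against their proof. Your overall architecture (local structure theorem at both points, matching stabilisers via the residual gerbes, then an approximation step) is the right one, but there is a genuine gap at the step where you say the isomorphism of coherent completions ``becomes a $G$-equivariant isomorphism of complete local rings $\hat{B}_{w_0}\cong\hat{C}_{w_1}$''. The coherent completion of $\Spec(B)/G$ at $w_0$ is \emph{not} the formal completion at the point: by \cite[Thm.~4.16]{MR4088350} it is the base change of the stack along $\Spec \widehat{B^G}\to \Spec B^G$, i.e.\ the completion along the entire closed fibre of the good moduli space. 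For instance, the coherent completion of $\mathbb{A}^1/\mathbb{G}_m$ at the origin is the whole stack $\mathbb{A}^1/\mathbb{G}_m$, whose atlas is nowhere local. So one cannot simply read off an equivariant isomorphism of the complete local rings at the fixed points; extracting one requires restricting the stack isomorphism to the system of infinitesimal neighbourhoods of the closed points and compatibly trivialising the resulting $G$-torsors over Artinian local rings (or, equivalently, invoking the coherent completeness/Tannaka duality theorem of \cite{MR4088350}, which is precisely the hard input behind their Theorem 4.19). The implication you want is true, but as written you have assumed the crux. Relatedly, your identification of $\hat{\FX}_x$ with the completion of the chart uses that the \'etale morphism produced by Theorem \ref{AHR_thm1} is stabiliser-preserving at $w$; an \'etale morphism alone does not induce an isomorphism of coherent completions.

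The second issue is that ``equivariant Artin approximation'' is left as a black box, and this is where the remaining content lives; it is also not how \cite{MR4088350} argue. Their proof algebraises the formal morphism $\hat{\FX}_x\to \FY$ using coherent completeness, applies ordinary Artin approximation \cite[Cor.~2.2]{MR268188} to a Hom-functor over the good moduli space of the chart (compare Theorem \ref{thm:etale_local_AHR}), and then upgrades the approximating morphism to an \'etale one using the stabiliser-preserving condition together with Luna's fundamental lemma (Proposition \ref{Luna_lemma}). If you keep your pointwise route you must actually prove the equivariant approximation statement for reductive $G$: approximate via Artin, restore equivariance (e.g.\ by a Reynolds-operator averaging argument, which is where reductivity genuinely enters), and recover \'etaleness at the point by composing the completed approximant with the formal inverse and using that an endomorphism of a noetherian complete local ring congruent to the identity modulo $\mathfrak{m}^2$ is an automorphism. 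Your assertion that agreement ``to sufficiently high order \ldots forces both morphisms to be \'etale'' needs exactly this lemma — on singular schemes, agreement with an isomorphism to finite order does not by itself imply \'etaleness — and at the end you still need a $G$-invariant affine shrinking to produce the affine $\Spec(A)$ demanded by the statement.
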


We will also use the following form of Luna's fundamental lemma \cite[p.73]{Lu73}.

\begin{proposition}\cite[Prop.4.13]{MR4088350}
\label{Luna_lemma}
Let $f\colon\FX\rightarrow \FY$ be an \'etale, separated and representable morphism of Noetherian algebraic stacks such that there is a commutative diagram
\[
\xymatrix{
\ar[d]^{\pi_{\FX}}\FX\ar[r]^{f}&\FY\ar[d]^{\pi_{\CY}}\\
X\ar[r]&Y
}
\]
where $\pi_{\FX}$ and $\pi_{\FY}$ are good moduli spaces.
Let $x \in \FX(k)$ be a closed point.
If $f(x) \in \FY(k)$ is closed and f induces an isomorphism of stabiliser groups at $x$, then there exists an open neighbourhood $U \subset X$ of $\pi_{\FX}(x)$ such that $U \to X \to Y$ is \'{e}tale and $\pi_{\FX}^{-1}(U) \cong U \times_{Y} \FY$.  In other words, in the following diagram the rectangle containing the curved horizontal arrows is Cartesian and the curved horizontal arrows are \'{e}tale:
\vspace{7pt}
\[
\xymatrix{
\pi_{\FX}^{-1}(U)\ar@/^1pc/[rr]\ar[d]\ar@{^{(}->}[r]&\FX\ar[d]^{\pi_{\FX}}\ar[r]_{f}&\FY\ar[d]^{\pi_{\CY}}\\
U\ar@/_1pc/[rr]\ar@{^{(}->}[r]&X\ar[r]&Y.
}
\]
\end{proposition}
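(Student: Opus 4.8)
The plan is to reduce the statement to the classical affine form of Luna's fundamental lemma \cite{Lu73} by pulling back an étale slice for $\FY$ along $f$. Write $y=f(x)$ and $G=G_x$; by hypothesis $f$ identifies $G$ with the stabiliser $G_y$. First I would apply the étale slice theorem (Theorem \ref{thm:etale_local_AHR}) at the closed point $y\in\FY$: this produces a $G$-equivariant affine scheme $V=\Spec(B)$ with a $G$-fixed point $v$, together with a \emph{Cartesian} square whose horizontal arrows $V/G\to\FY$ and $V/\!\!/G\to Y$ are étale. Cartesianity says exactly $V/G\cong\FY\times_Y(V/\!\!/G)$, and this identity is the key device that lets the two good moduli spaces interact.

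Next I would base change $f$ along $V/G\to\FY$. Setting $Y'=V/\!\!/G$ and $\FX'=\FX\times_{\FY}(V/G)$, the identity $V/G\cong\FY\times_Y Y'$ gives $\FX'\cong\FX\times_Y Y'$, so by the compatibility of good moduli spaces with flat base change (Alper) the good moduli space of $\FX'$ is $X'=X\times_Y Y'$, which is étale over $X$, and the induced map $X'\to Y'$ is the base change of $X\to Y$. Since $f$ is representable, separated and étale, so is $\FX'\to V/G$, and therefore $W\coloneqq\FX'\times_{V/G}V$ is a scheme, separated and étale over $V$, carrying a $G$-action with $\FX'\cong W/G$. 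The point $x$ lifts to a $G$-fixed point $w\in W$ (fixed because the stabiliser of the corresponding point of $\FX'$ is all of $G$), so by a Sumihiro-type argument I may shrink $W$ to a $G$-invariant affine open neighbourhood of $w$ without altering the situation near $w$. At this stage I have a $G$-equivariant étale morphism $\phi\colon W=\Spec(A)\to V=\Spec(B)$ of affine $G$-schemes sending the fixed point $w$ to the fixed point $v$ and inducing an isomorphism of stabilisers — precisely the hypotheses of the classical lemma.

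I would then invoke Luna's fundamental lemma \cite{Lu73}: there is an open neighbourhood $U_0$ of the image of $w$ in $W/\!\!/G=\Spec(A^G)$ such that $U_0\to V/\!\!/G$ is étale and $W\times_{W/\!\!/G}U_0\cong V\times_{V/\!\!/G}U_0$, i.e. $\phi$ is saturated (strongly étale) over $U_0$. Finally I would descend this conclusion along the étale morphism $W/\!\!/G\to X$. Its image $U\subset X$ is open, and since $U_0\to U$ is étale surjective while $U_0\to V/\!\!/G\to Y$ is étale, étale descent gives that $U\to Y$ is étale. For the remaining isomorphism, quotienting the saturation identity by $G$ and using $V/G\cong\FY\times_Y(V/\!\!/G)$ yields, over $U_0$, an identification $\pi_{\FX'}^{-1}(U_0)\cong U_0\times_Y\FY$; descending this along $U_0\to U$ produces $\pi_{\FX}^{-1}(U)\cong U\times_Y\FY$, as required.

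The main obstacle I anticipate is not the classical lemma (which we cite) but the gluing and descent bookkeeping. One must check that flat base change for good moduli spaces applies in the form $\FX\times_Y Y'$, that $W$ can genuinely be taken affine and $G$-invariant near the fixed point $w$ (the Sumihiro step, which uses reductivity of $G$ and separatedness of $f$), and that both étaleness of $U\to Y$ and the Cartesian identity $\pi_{\FX}^{-1}(U)\cong U\times_Y\FY$ descend correctly along $U_0\to U$. Keeping straight which fibre products are formed over $\FY$ versus over $Y$ is where the real care is required; everything else becomes formal once the compatible affine charts are in place.
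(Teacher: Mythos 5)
A preliminary remark: the paper does not prove this proposition at all --- it is quoted verbatim from Alper--Hall--Rydh \cite[Prop.~4.13]{MR4088350} --- so your argument can only be assessed on its own merits. Your overall strategy (slice $\FY$ at $f(x)$, pull $f$ back to get a $G$-equivariant \'etale morphism $W\to V$ of affine schemes, invoke classical Luna, descend) is the right kind of reduction, and the steps up to and including the application of Luna's lemma are essentially sound, modulo points you should justify more carefully: $W$ is a scheme because it is separated and locally quasi-finite over a scheme; the ``Sumihiro step'' needs equivariant Zariski's main theorem plus an invariant function separating the fixed point $w$ from the closed invariant complement (Sumihiro itself requires normality, which you do not have); Theorem \ref{thm:etale_local_AHR} assumes affine diagonal, which is not among the hypotheses of the proposition; and classical Luna is stated for finite-type schemes over an algebraically closed field of characteristic zero, not for the Noetherian generality claimed here.

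The genuine gap is your final descent paragraph. You assert that the image $U$ of $U_0\to X$ is open and that $U_0\to U$ is \'etale surjective, but nothing you have established controls the map $U_0\to X$: it factors as $U_0\subseteq W_1/\!\!/G\to X'\to X$ (with your $X'=X\times_Y Y'$), where $W_1/\!\!/G\to X'$ is the map on good moduli spaces induced by the open immersion $W_1/G\into\FX'$, and good moduli space formation does not take invariant opens to opens, nor \'etale morphisms to \'etale morphisms, unless the open substack is \emph{saturated} --- which $W_1/G$, obtained by an invariant-function shrinking, is not in general. Indeed, writing $\mathcal{W}$ for the preimage of $U_0$ in $W_1/G$, your own construction exhibits $\mathcal{W}\cong\FY\times_Y U_0\to \FX$ as a representable, separated, \'etale, stabiliser-preserving morphism taking a closed point to a closed point, and the assertion ``$U_0\to X$ is \'etale near the image of $w$'' is precisely an instance of the proposition being proved for this morphism: the argument is circular. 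For the same reason, your identification $\pi_{\FX'}^{-1}(U_0)\cong U_0\times_Y\FY$ silently conflates $\mathcal{W}$ with the possibly strictly larger preimage in $\FX'$ of the image of $U_0$. The standard repair is to apply the slice theorem to $\FX$ at $x$ as well: Theorem \ref{thm:etale_local_AHR} there produces a \emph{Cartesian} square $W/G\cong\FX\times_X(W/\!\!/G)$ with $W/\!\!/G\to X$ \'etale. One then runs Luna on an equivariant \'etale morphism between the two affine slices and descends along $W/\!\!/G\to X$: images of opens are now genuinely open, \'etaleness of $U\to Y$ can be checked after the \'etale cover $U_1\to U$, and the canonical morphism $\pi_\FX^{-1}(U)\to U\times_Y\FY$ is defined globally, so its being an isomorphism can be verified after the base change $U_1\to U$, where the Cartesian slice square identifies $\pi_\FX^{-1}(U)\times_U U_1$ with $(W/G)\lvert_{U_1}$. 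Without an $\FX$-side Cartesian chart of this kind, your final step has no purchase on $X$.
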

\vspace{2pt}
\subsection{Direct image mixed Hodge modules}
As explained in \S \ref{sec:mixed_hodge_structures}, the current state of the art does not provide a definition of $p_!\underline{\BQ}_{\FM}$ as an object in the bounded above derived category of mixed Hodge modules on $\CM$, if $p\colon \FM\rightarrow \CM$ is an arbitrary morphism from a finite type stack to a scheme.  
\smallbreak
We partially remedy this gap in the theory by explaining how to define canonical mixed Hodge modules $\Ho^i\!p_!\underline{\BQ}_{\FM}$ and $\Ho^i\!p_*\underline{\BQ}_{\FM}$ along with canonical isomorphisms
\begin{align*}
\rat\left(\Ho^i\!p_!\underline{\BQ}_{\FM}\right)\cong &{}^{\Fp}\!\Ho^i\!p_!\BQ_{\FM}\\
\rat\left(\Ho^i\!p_*\underline{\BQ}_{\FM}\right)\cong &{}^{\Fp}\!\Ho^i\!p_*\BQ_{\FM}
\end{align*}
for certain good moduli spaces $p\colon \FM\rightarrow \CM$.  Note that we cannot always simply apply $\Ho^i$ to the complexes $p_!\underline{\BQ}_{\FM}$ and $p_*\underline{\BQ}_{\FM}$ from \S \ref{stack_di}, since we defined these complexes under extra assumptions on $\FM$ (i.e. that it is exhausted by global quotient stacks).  

\sssct
We will need a small preparatory lemma.
\begin{lemma}
\label{point_lemma}
Let $\FX$ be a Noetherian algebraic stack, and let $p\colon\FX\rightarrow X$ be a good moduli space with affine diagonal.  Let $x$ be a closed point of $X$.  Then there is a closed point $\tilde{x}$ of $\FX$ such that $p(\tilde{x})=x$.
\end{lemma}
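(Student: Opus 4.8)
The plan is to reduce the assertion to the existence of a \emph{closed} point in the fibre $p^{-1}(x)$, and then to feed in the basic structure theory of good moduli spaces. First I would recall that a good moduli space morphism is surjective (this is part of the basic package established for Definition~\ref{def:good_moduli_space} in \cite{MR3237451}), so that the fibre $\FX_x\coloneqq p^{-1}(x)$ is nonempty. Since $x$ is a closed point of $X$ and $p$ is continuous, $\FX_x$ is a nonempty closed substack of $\FX$. The elementary observation that drives the reduction is then that a point of $\FX_x$ which is closed \emph{in} $\FX_x$ is automatically closed in $\FX$, because being closed in a closed subspace implies being closed in the ambient space. Hence it suffices to exhibit a closed point $\tilde x\in\FX_x$, as any such point will satisfy $p(\tilde x)=x$ by construction.

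To produce a closed point of $\FX_x$ I would use that good moduli spaces are stable under arbitrary base change, so that the induced morphism $\FX_x\rightarrow\Spec k(x)$ is again a good moduli space. A good moduli space whose target is the spectrum of a field has a unique closed point: this is the stacky incarnation of the classical fact that a linearly reductive group acting on an affine scheme whose categorical quotient is a point has a unique closed orbit, and it is recorded in this generality in \cite{MR3237451}. Transporting this closed point through the closed immersion $\FX_x\hookrightarrow\FX$ then yields the desired $\tilde x$, closed in $\FX$ by the reduction of the previous paragraph.

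To keep the argument self-contained one can instead establish the existence of a closed point of $\FX_x$ purely topologically: as $\FX$ is Noetherian, $|\FX_x|$ is a nonempty Noetherian topological space, so by the descending chain condition on closed subsets it contains a minimal nonempty closed subset $Z$; since $|\FX_x|$ is $T_0$, two topologically indistinguishable points must coincide, forcing $Z$ to be a single (closed) point. Either way, the only step carrying real content — and the one I expect to be the main obstacle — is precisely this existence of a closed point in the fibre; surjectivity of $p$ and the closed-in-closed reduction are formal. I expect no further difficulty, since the uniqueness of the closed point (which we do not actually need here) and the reductivity of its stabiliser are exactly the properties that Theorem~\ref{thm:etale_local_AHR} is designed to exploit in the subsequent neighbourhood arguments.
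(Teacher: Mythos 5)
Your proof is correct, and it takes a genuinely different route from the paper's. The paper argues algebraically: since good moduli spaces are stable under base change, $\Gamma(\CO_{\FX_x})\cong k$, so the fibre $\FX_x$ is nonempty and admits an $A$-point for some commutative algebra $A$; Noetherianity lets this $A$-point factor through a $B$-point with $B$ of finite type, and a surjection $B\to k$ (this is where the standing hypothesis $k=\overline{k}$ of \S 5 enters) yields a $k$-point of $\FX_x$. You instead get nonemptiness from surjectivity of good moduli space morphisms (a theorem from \cite{MR3237451}, invoked as a black box rather than re-derived from $\CO_X\xrightarrow{\sim}p_*\CO_{\FX}$), and then extract a closed point by the Noetherian/$T_0$ minimal-closed-subset argument inside the closed fibre, or alternatively via the unique closed point in the fibre of a good moduli space over a field. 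Two points of comparison. First, your version nowhere needs $k=\overline{k}$, so it proves the lemma in greater generality. Second, and more substantively, your explicit treatment of closedness supplies a step the paper leaves implicit: the paper's proof as written ends with a $k$-point of $\FX_x$, and a $k$-point of a stack need not be closed (consider the point $1\in[\BA^1/\BG_m]$, whose closure contains the origin), so strictly speaking one still needs exactly your specialization/minimal-closed-subset argument, applied in the closed substack $\FX_x$, to conclude. Conversely, the paper's route produces a $k$-rational point directly, which is the form in which the point is fed into Theorem \ref{thm:etale_local_AHR} downstream; your closed point is only known to have residue field $k$ once one additionally knows $\FX$ is locally of finite type over $k=\overline{k}$, which holds in all the paper's applications but is not among the lemma's hypotheses.
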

\begin{proof}
Good moduli spaces are stable under pullback, and so $\Gamma(\mathcal{O}_{\FX_x})\cong k$.  So $\FX_x$ is nonempty and admits an $A$-point, for some commutative algebra $A$.  Since $\FX_x$ is Noetherian, this $A$ point factors through a $B$-point, for $B$ finite type.  Since $B$ is finite type and $k=\overline{k}$, there is a surjection $B\rightarrow k$, providing a $k$-point of $\FX_x$.
\end{proof}
\subsubsection{}
Returning to the morphism $p\colon \FM\rightarrow \CM$, by Theorem \ref{thm:etale_local_AHR} and Lemma \ref{point_lemma} we can find a set $\{Y_j\}_{j\in J}$ of affine varieties and a set $\{G_j\}_{j\in J}$ of reductive algebraic groups, with each $Y_j$ a $G_j$-scheme, and an \'etale cover $\{u_j\colon Y_j/\!\!/G_j\rightarrow \CM\}_{j\in J}$ of $\CM$ along with Cartesian diagrams
\begin{equation}
\label{et_cov}
\xymatrix{
\ar[d]_{p_j}Y_j/G_j\ar[r]&\FM\ar[d]^{\pi}\\
Y_j/\!\!/G_j\ar[r]^{u_j}&\CM.
}
\end{equation}
In particular, the stacks $Y_j/G_j$ provide an \'etale cover of $\FM$.  We use this open cover to construct $\Ho^i\!p_!\underline{\BQ}_{\FM}$.  The construction of $\Ho^i\!p_*\underline{\BQ}_{\FM}$ follows identical lines, and will not be used in this paper.  
\smallbreak
Since mixed Hodge modules form a stack for the \'etale topology, it is sufficient to construct mixed Hodge modules $\CG_j\in\Ob(\MHM(Y_j/\!\!/G_j))$ and gluing maps between them satisfying the cocycle condition.  Additionally, we may define a mixed Hodge module on an algebraic space, or more generally a Deligne-Mumford stack, as a mixed Hodge module on an atlas satisfying descent, so we will not impose the condition that $X$ is a variety in the following.  
\smallbreak
We set
\[
\CG_j=\Ho^i\!\left(p_{j,!}\ul{\BQ}_{Y_j/G_j}\right),
\]
defined as in \S \ref{sec:algebraic_MHMs}.  Pick $j'\neq j$ with $j,j'\in J$.  Replacing $Y_j$ by $Y_j\times G_{j'}$, and $Y_{j'}$ by $Y_{j'}\times G_j$, we may assume that $G_j=G_{j'}$.  Let $y\in Y_j/\!\!/G_j\cap Y_{j'}/\!\!/G_j$.  Pick affine $U'\ni y$ with $U'\subset Y_j/\!\!/G_j\cap Y_{j'}/\!\!/G_j$.  Since $Y_j\rightarrow Y_j/\!\!/ G_j$ is affine, there is a $G_j$-invariant open affine subscheme $U\subset Y_j$ with $U'= U/\!\!/G_j$.  Write $p\colon U/G\rightarrow U/\!\!/ G_j$ for the affinization map.  Then the required gluing follows from the claim that there is a natural isomorphism
\[
\Ho^i\!\left(p_{j,!}\ul{\BQ}_{Y_j/G_j}\right)\lvert_{U'}\cong \Ho^i\!\left(p_{!}\ul{\BQ}_{U/G_j}\right).
\]
This follows by construction (see \S \ref{sec:algebraic_MHMs}), and by base change applied to the morphism $Y_j\times A'/G_j \rightarrow Y_j/\!\!/G_j$ defined as in \eqref{ApproxMap}.  We summarise the features of this construction in the following proposition.
\begin{defproposition}
\label{dfp}
Let $\FX$ be a Noetherian stack, and let $p\colon \FX\rightarrow X$ be a good moduli space with affine diagonal.  Then for all $i\in\BZ$ there exist mixed Hodge modules $\Ho^i\!p_!\underline{\BQ}_{\FX}$ and $\Ho^i\!p_*\underline{\BQ}_{\FX}$ in $\MHM(X)$ satisfying the following conditions:
\begin{enumerate}
\item
There are canonical isomorphisms of perverse sheaves on the algebraic space $X$
\[
\rat\left(\Ho^i\!p_!\underline{\BQ}_{\FX}\right)\cong {}^{\Fp}\!\Ho^i\!p_!\BQ_{\FX}\quad\quad \rat\left(\Ho^i\!p_*\underline{\BQ}_{\FX}\right)\cong {}^{\Fp}\!\Ho^i\!p_*\BQ_{\FX}
\]
\item
For all diagrams \eqref{et_cov} there are canonical isomorphisms 
\[
u_j^*\Ho^i\!p_!\underline{\BQ}_{\FX}\cong \Ho^i\!p_{j,!}\underline{\BQ}_{Y_j/G_j}\quad\quad u_j^*\Ho^i\!p_*\underline{\BQ}_{\FX}\cong \Ho^i\!p_{j,*}\underline{\BQ}_{Y_j/G_j}
\]
where $p_{j,!}\underline{\BQ}_{Y_j/G_j}$ and $p_{j,*}\underline{\BQ}_{Y_j/G_j}$ are the complexes of mixed Hodge modules constructed by approximation to the Borel construction in \S \ref{heart_di}.
\end{enumerate} 
Moreover, these two conditions define $\Ho^i\!p_!\underline{\BQ}_{\FX}$ and $\Ho^i\!p_*\underline{\BQ}_{\FX}$ up to canonical isomorphism.
\end{defproposition}

\subsection{\'Etale local structure of 2CY categories}

In this section $\bm{\FM}_{\mathscr{C}}$ will denote the derived stack of objects of a dg category $\mathscr{C}$.  See the appendix for a little background on this stack.  We always restrict attention to some open substack $\bm{\FM}$ that is at least 1-Artin, meaning that its cotangent complex has cohomological amplitude bounded above at $1$.  Moreover, our results will concern the classical truncation $\FM=t_0(\bm{\FM})$, which will be an Artin stack in the classical sense.

\begin{lemma}
\label{techy}
The stack $\FM$ has separated diagonal and affine stabilisers.
\end{lemma}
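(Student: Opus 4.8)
The plan is to reduce both assertions to concrete statements about the Isom-functors of pairs of objects, and then to exploit that these are open subfunctors of \emph{affine} Hom-schemes. Since $\FM=t_0(\bm{\FM})$ is a classical Artin stack, its diagonal $\Delta_{\FM}\colon\FM\to\FM\times\FM$ is representable by algebraic spaces, and for any scheme $S$ with a map $(\CE,\CF)\colon S\to\FM\times\FM$ classifying two $S$-families of objects, the fibre product $S\times_{\FM\times\FM}\FM$ is the sheaf $\Isom_S(\CE,\CF)$ of isomorphisms $\CE\xrightarrow{\sim}\CF$. Checking separatedness of a representable morphism after base change to schemes, the diagonal $\Delta_{\FM}$ is separated if and only if each $\Isom_S(\CE,\CF)\to S$ is separated; and the stabiliser $G_x$ of a $k$-point $x$ representing an object $\CE$ is exactly the automorphism group scheme $\underline{\Aut}(\CE)=\Isom_{\Spec k}(\CE,\CE)$. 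So it suffices to control these Isom-schemes.

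First I would establish that the homomorphism functor $\underline{\Hom}_S(\CE,\CF)$, sending $T\to S$ to the degree-zero homomorphisms $\CE_T\to\CF_T$ in the homotopy category of $T$-families, is representable by an affine scheme of finite type over $S$. This is the heart of the matter: using that objects of $\FM$ have perfect Hom complexes (the local properness built into the finiteness conditions on $\mathscr{A}\subset\mathscr{C}$), the relative complex $\RHom(\CE,\CF)$ is perfect over $S$, whence $\underline{\Hom}_S(\CE,\CF)$ is the classical truncation of the total space $\BV(\RHom(\CE,\CF))=\Spec_S\Sym_{\CO_S}(\RHom(\CE,\CF)^{\vee})$ of a perfect complex, which is affine over $S$. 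I would extract the perfectness and the representability from the construction of $\bm{\FM}_{\mathscr{C}}$ recalled in the appendix, taking care in the passage between the derived internal Hom and its classical truncation.

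Granting this, both conclusions follow quickly. The isomorphism locus is open: invertibility of a homomorphism is an open condition on $\underline{\Hom}_S(\CE,\CF)\times_S\underline{\Hom}_S(\CF,\CE)$ cut out by the equations $gf=\unit$, $fg=\unit$ being satisfiable. Hence $\Isom_S(\CE,\CF)$ is an open subscheme of a scheme affine (so separated) over $S$, and therefore itself separated over $S$; this yields separatedness of $\Delta_{\FM}$. For the stabilisers, at a $k$-point $\CE$ the algebra $\End(\CE)=\Hom_{\HO^0(\mathscr{C})}(\CE,\CE)$ is finite-dimensional, and $G_x=\underline{\Aut}(\CE)$ is its unit-group functor $A\mapsto A^{\times}$; embedding $A=\End(\CE)$ into $\End_k(A)$ by left multiplication identifies $A^{\times}$ with $A\cap\GL(A)$, the non-vanishing locus of a determinant on the affine space $A$, which is affine. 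Thus stabilisers are affine.

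The hard part will be the representability-and-affineness step for $\underline{\Hom}$: one must know the Hom complexes are perfect over \emph{arbitrary} test bases (not merely at closed points), so that the linear mapping scheme exists and is affine, and one must correctly match the derived internal Hom of $\bm{\FM}_{\mathscr{C}}$ with the naive homomorphism functor after truncation. Everything downstream — openness of the isomorphism condition and affineness of the unit group of a finite-dimensional algebra — is formal.
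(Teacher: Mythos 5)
Your reduction is the same as the paper's: the proof of Lemma~\ref{techy} likewise forms the Cartesian diagram identifying the fibre of $\Delta_{\FM}$ over $(\xi_1,\xi_2)\colon\Spec(A)\to\FM\times\FM$ with $\Isom_{A\otimes\mathscr{C}}(\xi_1,\xi_2)$, and embeds it into $\Tot_A(\mathcal{H}om_{A\otimes\mathscr{C}}(\xi_1,\xi_2))$, whose projection to $\Spec(A)$ is affine. Where you diverge is in the two endgames. For separatedness the paper needs nothing beyond this embedding: a monomorphism into a separated (here affine) $A$-scheme is separated, full stop. For affine stabilisers the paper works fibrewise: over a geometric point $\Spec(K)$, perfectness of the fibres gives finitely many nonzero targets $\Hom(\Ho^i((\xi_1)_K),\Ho^i((\xi_2)_K))$, and the isomorphism locus is the intersection of the preimages, under the evaluation maps $\epsilon_i$, of the determinant-nonvanishing loci $\mathrm{Isom}(\Ho^i,\Ho^i)$ --- an intersection of principal affine opens inside an affine scheme, hence affine. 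Your alternative for stabilisers, realising $G_x$ as the unit-group scheme of the finite-dimensional algebra $\End(\CE)$ (using flatness over the field $k$ to identify $A$-points with $(\End(\CE)\otimes A)^{\times}$, then the determinant of left multiplication), is correct and arguably cleaner at a point; the paper's version has the advantage of applying uniformly over geometric points of an arbitrary base.

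Two cautions, neither fatal. First, your justification of openness of the isomorphism locus --- the equations $gf=\unit$, $fg=\unit$ ``being satisfiable'' --- describes the image of a closed condition under a projection, which is not an argument for openness. Either invoke the standard fact that the acyclicity locus of the perfect cone of the universal map is open, or, better, notice that openness is superfluous for your purposes: $\Isom_S(\CE,\CF)\hookrightarrow\underline{\Hom}_S(\CE,\CF)$ is a monomorphism into an affine $S$-scheme, hence separated (this is exactly the paper's one-line step); alternatively the graph $\{(f,g)\,:\,gf=\unit,\ fg=\unit\}$ is \emph{closed} in the affine scheme $\underline{\Hom}_S(\CE,\CF)\times_S\underline{\Hom}_S(\CF,\CE)$, which even makes $\Isom_S(\CE,\CF)\to S$ affine. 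Second, the step you flag as the hard part --- relative perfectness of $\RHom(\CE,\CF)$ over arbitrary test bases, needed for your $\Spec_S\Sym$ presentation --- is genuinely more than the paper uses: it holds when $\mathscr{C}$ is smooth (so in the left 2CY applications, where pseudoperfect implies perfect), but the lemma is stated for a general dg category, and the paper's proof deliberately gets by with only affineness of the total space of the Hom sheaf over the base together with perfectness at geometric points (to bound the range of $i$ in the $\epsilon_i$). If you keep your route, you should either add the smoothness hypothesis or weaken the representability claim accordingly.
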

\begin{proof}
Let $A$ be a commutative $k$-algebra.  For $\xi_1,\xi_2$ two families of objects of $\mathscr{C}$ over $\Spec(A)$ (equivalently, pseudoperfect $A\otimes \mathscr{C}$-modules) we denote by the same symbols $\xi_1,\xi_2$ the associated morphisms $\Spec(A)\rightarrow \FM$.  Then there is a Cartesian diagram
\[
\xymatrix{
\FM\ar[r]^{\Delta}&\FM\times\FM\\
\ar@{_{(}->}[d]^j\Isom_{A\otimes\mathscr{C}}(\xi_1,\xi_2)\ar[u]\ar[r]^b&\Spec(A)\ar[u]^{(\xi_1,\xi_2)}\\
\Tot_A(\mathcal{H}\mathrm{om}_{A\otimes\mathscr{C}}(\xi_1,\xi_2))\ar[ur]^l,
}
\]
The morphism $l$ is obviously affine, and so $b$ is separated.  For $\iota\colon\Spec(K)\rightarrow \Spec(A)$ a geometric point, consider the finite collection of homomorphisms
\[
\epsilon_i\colon \iota^*\Tot_A(\mathcal{H}\mathrm{om}_{A\otimes\mathscr{C}}(\xi_1,\xi_2))\rightarrow \Hom(\Ho^i((\xi_1)_K),\Ho^i((\xi_2)_K))
\]
where we let $i$ range over the collection of $i$ for which the target is not zero (this collection is finite by perfectness over $K$).  Then $\iota^*j$ is the inclusion of the intersection of affines determined by considering the preimages of $\mathrm{Isom}(\Ho^i((\xi_1)_K),\Ho^i((\xi_2)_K))$ under $\epsilon_i$.
\end{proof}

\begin{proposition}
\label{app_prop}
Let $\mathscr{C}$ and $\mathscr{D}$ be two dg categories with stacks of objects $\bm{\FM}_{\mathscr{C}}$ and $\bm{\FM}_{\mathscr{D}}$.  Let $\BS=\{\CF_1,\ldots,\CF_r\}$ be a simple-minded collection in $\Ob(\mathscr{C})$, i.e. the $\CF_i$ are pairwise nonisomorphic, there are no $\Ext^n$s between objects of $\BS$ for $n<0$, and the only $\Ext^0$s are scalar multiples of identity morphisms.  Let $\BS'=\{\CF'_1,\ldots,\CF'_r\}$ be a simple-minded collection in $\mathscr{D}$, and assume that there is a quasi-equivalence between the full subcategory of $\mathscr{C}$ containing the objects $\BS$, and the full subcategory of $\mathscr{D}$ containing the objects $\BS'$, sending $\CF_i$ to $\CF'_i$ for each $i\leq r$.
\smallbreak
Pick $\mathbf{d}\in\BN^r$ and set
\[
\CF=\bigoplus_{i\leq r}\CF_i^{\oplus\mathbf{d}_i};\quad\quad
\CF'=\bigoplus_{i\leq r}\CF'^{\oplus \mathbf{d}_i}_i.
\]
Let $x,y$ be closed $k$-points of 1-Artin open substacks $\bm{\FM}\subset \bm{\FM}_{\mathscr{C}}$ and $\bm{\FM}'\subset \bm{\FM}_{\mathscr{D}}$ respectively, representing $\CF$ and $\CF'$.  Then there is an isomorphism of coherent completions
\[
(\widehat{t_0(\bm{\FM})}_x,\hat{x})\simeq (\widehat{t_0(\bm{\FM}')}_y,\hat{y}).
\]
\end{proposition}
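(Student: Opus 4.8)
The plan is to reduce the comparison of coherent completions to a comparison of the derived deformation theories of $\CF$ and $\CF'$, and then to observe that these deformation theories coincide because they are governed by quasi-isomorphic $A_{\infty}$-endomorphism algebras. Recall that by the appendix the coherent completion of $t_0(\bm{\FM})$ at a closed point $x$ representing $\CF$ is computed from the derived Maurer--Cartan stack attached to the $A_{\infty}$-algebra $\REnd_{\mathscr{C}}(\CF)$, equipped with its $\Aut(\CF)$-action; concretely it is the classical truncation of $[\dMC(\REnd_{\mathscr{C}}(\CF))/\Aut(\CF)]$, completed at the fixed point corresponding to $\CF$. This description is legitimate here: by Lemma~\ref{techy} the stack $\FM=t_0(\bm{\FM})$ has separated diagonal and affine stabilisers, and since $\BS$ is simple-minded the point $x$ is closed with reductive stabiliser $\Aut(\CF)=\prod_{i\leq r}\GL_{\mathbf{d}_i}(k)$ (the only degree-zero endomorphisms of $\CF$ are block matrices built from scalar maps $\CF_i\to\CF_i$), so $\widehat{t_0(\bm{\FM})}_x$ is defined in the sense of Alper--Hall--Rydh. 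It therefore suffices to produce an $\Aut(\CF)$-equivariant $A_{\infty}$-quasi-isomorphism $\REnd_{\mathscr{C}}(\CF)\simeq\REnd_{\mathscr{D}}(\CF')$ together with an isomorphism $\Aut(\CF)\cong\Aut(\CF')$ intertwining the two actions.

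For the second step I would compute both endomorphism algebras as matrix amplifications of the full subcategories spanned by $\BS$ and $\BS'$. Writing $\mathscr{A}_{\BS}$ for the full $A_{\infty}$-subcategory of $\mathscr{C}$ on the objects $\CF_1,\ldots,\CF_r$, the object $\CF=\bigoplus_i\CF_i^{\mathbf{d}_i}$ satisfies
\[
\REnd_{\mathscr{C}}(\CF)\simeq\bigoplus_{i,j}\RHom_{\mathscr{C}}(\CF_i,\CF_j)\otimes_k\Hom_k(k^{\mathbf{d}_i},k^{\mathbf{d}_j}),
\]
with the $A_{\infty}$-operations induced from those of $\mathscr{A}_{\BS}$ by tensoring with the (strictly associative) matrix multiplication, and with $\Aut(\CF)=\prod_i\GL_{\mathbf{d}_i}$ acting by conjugation on the matrix factors. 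This amplification is manifestly functorial for $A_{\infty}$-functors of the input subcategory. The quasi-equivalence $\mathscr{A}_{\BS}\to\mathscr{A}_{\BS'}$ sending $\CF_i\mapsto\CF_i'$ therefore induces, after amplifying by the \emph{same} dimension vector $\mathbf{d}$, an $A_{\infty}$-quasi-isomorphism $\REnd_{\mathscr{C}}(\CF)\simeq\REnd_{\mathscr{D}}(\CF')$; since the matrix factors and hence the automorphism groups are matched compatibly, this quasi-isomorphism is equivariant for the identification $\prod_i\GL_{\mathbf{d}_i}=\Aut(\CF)\cong\Aut(\CF')=\prod_i\GL_{\mathbf{d}_i}$.

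Finally I would invoke invariance of the derived Maurer--Cartan stack under $A_{\infty}$-quasi-isomorphism: an equivariant quasi-isomorphism of the controlling algebras induces an equivalence of the associated formal moduli problems, hence an isomorphism of the quotient stacks $[\dMC(\REnd_{\mathscr{C}}(\CF))/\Aut(\CF)]$ and $[\dMC(\REnd_{\mathscr{D}}(\CF'))/\Aut(\CF')]$ and of their classical truncations; passing to completions at the respective fixed points and applying the appendix again yields $\widehat{t_0(\bm{\FM})}_x\simeq\widehat{t_0(\bm{\FM}')}_y$. I expect the main obstacle to be the first step, namely making rigorous the identification of the coherent completion with the truncated Maurer--Cartan quotient stack and checking that this identification is natural enough that the equivariant $A_{\infty}$-quasi-isomorphism above is transported to an isomorphism of coherent completions. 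Once that bridge between the Alper--Hall--Rydh formalism and derived deformation theory is in place -- precisely the content relegated to the appendix -- the remaining amplification and invariance arguments are formal.
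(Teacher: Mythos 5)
Your route is the same as the paper's: the proof of Proposition \ref{app_prop} given in Appendix \ref{Appendix} likewise reduces everything to derived deformation theory, identifies the coherent completion with a Maurer--Cartan quotient, and then observes that this quotient depends only on the $\Gl_{\mathbf{d}}$-equivariant $A_{\infty}$-isomorphism class of a minimal model of the subcategory spanned by the simple-minded collection. But two things need fixing. First, your model for the coherent completion is wrong as written: by Lurie--Pridham, $\dMC(\REnd_{\mathscr{C}}(\CF))$ is already the derived deformation functor $\dDef_{\mathscr{C}}(\CF)$, whose tangent complex contains $\Ext^0(\CF,\CF)\cong\mathfrak{gl}_{\mathbf{d}}$ in degree $-1$; quotienting it again by $\Aut(\CF)$ therefore gauges by the automorphisms twice. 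The paper instead fixes a $\Gl_{\mathbf{d}}$-equivariant \emph{minimal} model $E$, passes to its strictly positive part $E^{\geq 1}$, and proves $\widehat{\FM}_x\cong\MC(E^{\geq 1})/\Gl_{\mathbf{d}}$ (Proposition \ref{app_dig}): the projection $\dMC(E^{\geq 1})\to\dMC(E)$ is a $\widehat{(\mathfrak{gl}_{\mathbf{d}})}_0$-torsor, hence formally smooth; the induced map $\MC(E^{\geq 1})\to t_0(\bm{\FM}_{\mathscr{C}})$ is a formal miniversal deformation space by the tangent-space computation; and the identification with the coherent completion is then extracted from the proof of \cite[Thm.~4.19]{MR4088350}.

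Second, you defer this bridge ``to the appendix,'' but the statement you are proving \emph{is} the appendix's proposition, and establishing the bridge is the bulk of its proof; so as a standalone argument your proposal is incomplete exactly at its load-bearing step. Your amplification step, by contrast, is fine and matches the paper, which phrases it by saying that a strictly unital minimal model for the subcategory on $\BS$ induces the $\Gl_{\mathbf{d}}$-equivariant minimal model $E$, and the quasi-equivalence of subcategories induces a $\Gl_{\mathbf{d}}$-equivariant morphism $E\to E'$. One refinement is worth making there: to obtain a strict isomorphism of formal quotient stacks rather than merely an equivalence of formal moduli problems, work with minimal models throughout, since between minimal models the induced quasi-isomorphism is an honest $A_{\infty}$-isomorphism and hence yields an isomorphism $\MC(E^{\geq 1})/\Gl_{\mathbf{d}}\cong\MC(E'^{\geq 1})/\Gl_{\mathbf{d}}$ directly.
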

We defer the proof of this technical lemma to Appendix \ref{Appendix}, where we also introduce a little of the terminology from derived algebraic geometry that we are using here.  

\subsubsection{}
Putting this result on coherent completions together with the results in \S \ref{nbhd_thm} we can finally prove the following.
\begin{theorem}
\label{loc_str_thm}
Let $p\colon \FM\rightarrow \CM$ be a good moduli space of locally finite type, with reductive stabiliser groups.  Assume we are given an isomorphism $\FM\cong t_0(\bm{\FM})$, where $\bm{\FM}\subset \bm{\FM}_{\mathscr{C}}$ is an open substack of the stack of objects of a dg category $\mathscr{C}$.  Let $x$ be a closed $k$-valued point of $\FM$ corresponding to an object
\[
\CF=\bigoplus_{i\leq n}\CF_i^{\oplus \mathbf{d}_i}.
\]
Assume that the full subcategory of $\mathscr{C}$ containing $\CF_1,\ldots,\CF_n$ carries a right 2CY structure, and that $\{\CF_i\}_{i\leq n}$ is a $\Sigma$-collection.  Let $Q$ be a quiver such that the Ext-quiver of $\{\CF_i\}_{i\leq n}$ is the double of $Q$.  Then there is an affine $\Gl_{\mathbf{d}}$-variety $H$, containing a fixed point $y$, and a commutative diagram
\begin{equation}\label{eq:neighbourhood_diagram}
\xymatrix{
(\FM_{\mathbf{d}}(\Pi_Q),0_{\mathbf{d}})\ar[d]^{\JH_{\mathbf{d}}}&(H/\Gl_{\mathbf{d}},y)\ar[d]^q\ar[r]^-{\ol{l}}\ar[l]_-{\ol{j}}&(\FM,x)\ar[d]^{p}\\
(\CM_{\mathbf{d}}(\Pi_Q),0_{\mathbf{d}})& (H/\!\!/\Gl_{\mathbf{d}},y)\ar[l]_-j\ar[r]^-l&(\CM,p(x))
}
\end{equation}
in which the horizontal morphisms are \'etale, and the squares are Cartesian.
\end{theorem}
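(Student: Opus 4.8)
The plan is to reduce the local structure of $p$ to the preprojective model via formality, transport it using the stacky Artin approximation theorem, and then descend to good moduli spaces with Luna's fundamental lemma.

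First I would invoke formality. Since $\{\CF_i\}_{i\leq n}$ is a $\Sigma$-collection and the full subcategory $\mathscr{B}\subset\mathscr{C}$ on these objects is given a right $2$CY structure, the machinery behind Corollary~\ref{form_cor} applies directly: the right $2$CY structure produces a cyclic pairing of dimension $2$ on a minimal model (Lemma~\ref{ChoLemma}), which we strictify by Proposition~\ref{StrictProp}, whereupon Lemma~\ref{rigLem} forces $\Am_{\geq 3}=0$. Thus $\mathscr{B}$ is formal, quasi-equivalent to its cohomology category, an ordinary graded category carrying a strict $2$-dimensional cyclic pairing. The key consequence, as recorded in \S\ref{new_form}, is \emph{rigidity}: such a category is determined up to isomorphism by its Ext-quiver together with the cyclic pairing, since the $\Am_2$-structure is forced by the pairing once the quiver is fixed.

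Next I would produce the comparison model. The simple modules $S_1,\ldots,S_n$ over the derived preprojective algebra $\mathscr{G}_2(kQ)$ form a $\Sigma$-collection whose Ext-quiver is again $\overline{Q}$, and the subcategory they span is formal by the same argument. Because both the subcategory of $\mathscr{C}$ on $\{\CF_i\}$ and the subcategory of $\Perf_{\dg}(\mathscr{G}_2(kQ))$ on $\{S_i\}$ are formal, have Ext-quiver $\overline{Q}$, and carry a $2$-dimensional cyclic pairing, rigidity yields a quasi-equivalence between them sending $\CF_i\mapsto S_i$. I would then apply Proposition~\ref{app_prop} with $\BS=\{\CF_i\}$, $\BS'=\{S_i\}$ and the given dimension vector $\mathbf{d}$, observing that $\CF=\bigoplus_i\CF_i^{\oplus\mathbf{d}_i}$ corresponds to the semisimple nilpotent module $\bigoplus_i S_i^{\oplus\mathbf{d}_i}$ represented by $0_{\mathbf{d}}$, and that the classical truncation $t_0(\bm{\FM}_{\mathscr{G}_2(kQ)})$ in dimension $\mathbf{d}$ is $\FM_{\mathbf{d}}(\Pi_Q)$ since $\HO^0(\mathscr{G}_2(kQ))\cong\Pi_Q$. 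This produces an isomorphism of coherent completions $\widehat{\FM}_x\simeq\widehat{\FM_{\mathbf{d}}(\Pi_Q)}_{0_{\mathbf{d}}}$.

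Then I would run stacky Artin approximation. The stabiliser of $x$ is $\Gl_{\mathbf{d}}=\prod_i\Gl_{\mathbf{d}_i}$, because the $\CF_i$ are pairwise non-isomorphic with $\Hom(\CF_i,\CF_i)=k$, and likewise for $0_{\mathbf{d}}$; both are reductive. Feeding the isomorphism of coherent completions into Theorem~\ref{StAA} yields an affine $\Gl_{\mathbf{d}}$-scheme $H=\Spec(A)$ with a fixed point $y$ together with \'etale morphisms $\overline{j}\colon(H/\Gl_{\mathbf{d}},y)\to(\FM_{\mathbf{d}}(\Pi_Q),0_{\mathbf{d}})$ and $\overline{l}\colon(H/\Gl_{\mathbf{d}},y)\to(\FM,x)$, giving the top row, with good moduli space $q\colon H/\Gl_{\mathbf{d}}\to H/\!\!/\Gl_{\mathbf{d}}=\Spec(A^{\Gl_{\mathbf{d}}})$. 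Finally I would descend: \'etale morphisms of these stacks are representable (affine stabilisers) and separated (Lemma~\ref{techy}), induce isomorphisms on stabiliser groups, and send $y$ to the closed points $0_{\mathbf{d}}$ and $x$, so Luna's fundamental lemma (Proposition~\ref{Luna_lemma}) applies to each of $\overline{j}$ and $\overline{l}$. After shrinking $H$ to a $\Gl_{\mathbf{d}}$-invariant basic open affine, the induced maps $j\colon H/\!\!/\Gl_{\mathbf{d}}\to\CM_{\mathbf{d}}(\Pi_Q)$ and $l\colon H/\!\!/\Gl_{\mathbf{d}}\to\CM$ are \'etale and the two squares become Cartesian, completing diagram~\eqref{eq:neighbourhood_diagram}.

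I expect the main obstacle to be the comparison step: pinning down the quasi-equivalence with the preprojective model precisely enough to feed Proposition~\ref{app_prop}, i.e.\ making rigidity of formal cyclic $2$CY categories genuinely match both the Ext-quiver $\overline{Q}$ and the cyclic pairing (up to rescaling). A secondary technical point is checking the hypotheses of Luna's lemma after the shrinking required to pass from an isomorphism of coherent completions to honest \'etale neighbourhoods.
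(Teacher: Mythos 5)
Your proposal is correct and follows essentially the same route as the paper's proof: formality plus rigidity of cyclic 2CY structures (the fact that $\Am_2$ is determined by the Ext-quiver $\overline{Q}$ and the pairing) identifies the subcategory on $\{\CF_i\}$ with the one on the simple nilpotent $\Pi_Q$-modules, Proposition~\ref{app_prop} gives the isomorphism of coherent completions, Theorem~\ref{StAA} produces the common \'etale neighbourhood, and Proposition~\ref{Luna_lemma} descends it to good moduli spaces with the same shrinking argument. The only (harmless) reorganisation is that the paper first applies Theorem~\ref{thm:etale_local_AHR} to replace $(\FM,x)$ by a quotient-stack chart $\FN=\Spec(A)/\Gl_{\mathbf{d}}$ sitting in an already-Cartesian square over $\CM$ -- having first checked via Lemma~\ref{techy} and reductivity that $p$ has affine diagonal, which is also what makes the coherent completion of $\FM$ at $x$ well defined and renders the representability and separatedness hypotheses of Luna's lemma immediate for morphisms of quotient stacks -- and then runs Artin approximation between $\FM_{\mathbf{d}}(\Pi_Q)$ and $\FN$, whereas you apply it directly to $\FM$ and invoke Luna twice.
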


The assumption that $p \colon \FM \to \CM$ is a good moduli space allows us to appeal to the results of Alper--Hall--Rydh summarised above.  This is a mild assumption in view of the work of Alper--Halpern-Leistner--Heinloth proving the existence of good moduli spaces in great generality~\cite{AHLH18}.

\begin{proof}[Proof of Theorem \ref{loc_str_thm}]
First note that $\Gl_{\mathbf{d}}$ is the stabiliser of the point $x$.
Since $\pi$ is a good moduli space with separated diagonal and affine stabilisers, and since every closed point of $\FM$ has a reductive stabiliser, $\pi \colon \FM \to \CM$ has affine diagonal by \cite[Cor.~13.11]{alper2021etale}.
By Theorem~\ref{thm:etale_local_AHR}, we may find a $\Gl_{\mathbf{d}}$-equivariant affine scheme $\Spec A$ and a Cartesian diagram of pointed stacks
\[
\xymatrix{
(\Spec A/\Gl_{\mathbf{d}},y')\ar[d]^{\pi_A}\ar[r]^-{\phi}& (\FM\ar[d]^{p},x)\\
(\Spec A/\!\!/\Gl_{\mathbf{d}},y')\ar[r]&(\CM,x)
}
\]
in which the horizontal morphisms are \'etale.  Since $\phi$ is \'etale it induces an isomorphism between coherent completions $(\widehat{\FN}_{y'},\hat{y'})\cong (\widehat{\FM}_x,\hat{x})$, where we write $\FN=\Spec A/\Gl_{\mathbf{d}}$.

\smallbreak 
Consider the collection $S_1,\ldots,S_r$ of 1-dimensional simple nilpotent modules for the preprojective algebra $\Pi_Q$.  By inspecting the Koszul dual of the derived preprojective algebra $\mathscr{G}_2(\mathbb{C}Q)$ defined in \eqref{dppa_def} (or via a gratuitous application of the formality theorem), the subcategory $\mathscr{D}$ of $\Ddg(\Mod_{{\mathscr{G}_2(\mathbb{C}Q)}})$ spanned by these modules is formal.  Moreover by construction, we have equalities $\dim(\Ext^n(\CF_i,\CF_j))=\dim(\Ext^n(S_i,S_j))$ for all $n,i,j$.  Since the multiplication operation $\Am_2$ on a right 2CY category is uniquely determined by the pairing $\langle\bullet,\bullet\rangle$ and cyclic invariance, we deduce that there is a quasi-isomorphism of categories between $\mathscr{D}$ and the full subcategory of $\mathscr{C}$ containing $\CF_1,\ldots,\CF_r$.  
\smallbreak
By Proposition \ref{app_prop} there is an isomorphism of coherent completions 
\[
(\widehat{\FM_{\mathbf{d}}(\Pi_Q)}_{0_{\mathbf{d}}},\hat{0}_{\mathbf{d}})\cong (\widehat{\FN}_{y'},\hat{y}').
\]
By Artin approximation for stacks (Theorem \ref{StAA}) we obtain a diagram with \'etale horizontal morphisms in the top row
\[
\xymatrix{
(\FM_{\mathbf{d}}(\Pi_Q),0_{\mathbf{d}})\ar[d]^{\JH_{\mathbf{d}}}&\ar[l] (\Spec(B)/\Gl_{\mathbf{d}},y)\ar[r]\ar[d]^{\pi_B}& (\FN,y')\ar[d]^{\pi_A} \\
(\CM_{\mathbf{d}}(\Pi_Q),0_{\mathbf{d}})&\ar[l] (\Spec(B)/\!\!/\Gl_{\mathbf{d}},y)\ar[r] &(\Spec(A)/\!\!/\Gl_{\mathbf{d}},y').
}
\]
The arrows in the bottom row are uniquely determined by the universal property of the affinization.  By Proposition \ref{Luna_lemma}, we may shrink $\Spec(B)/\!\!/\Gl_{\mathbf{d}}$ to some affine subscheme $Y$ (still containing $y$) such that in the diagram
\[
\xymatrix{
(\FM_{\mathbf{d}}(\Pi_Q),0_{\mathbf{d}})\ar[d]^{\JH_{\mathbf{d}}}&\ar[l] (\pi^{-1}(Y),y)\ar[r]\ar[d]^{q}& (\FN,y')\ar[r]\ar[d]^{\pi_A} &(\FM,x)\ar[d]^{p}\\
(\CM_{\mathbf{d}}(\Pi_Q),0_{\mathbf{d}})&\ar[l] (Y,y)\ar[r] &(\Spec(A)/\!\!/\Gl_{\mathbf{d}},y')\ar[r]&(\CM,x)
}
\]
all horizontal arrows are \'etale, and all squares are Cartesian.  There is an isomorphism
\[
\pi^{-1}(Y)\cong \overline{Y}/\Gl_{\mathbf{d}}
\]
where 
\[
\overline{Y}=(Y\times_{\Spec(A)/\!\!/\Gl_{\mathbf{d}}} \Spec(A))
\]
is affine, since $\Spec(A)\rightarrow \Spec(A)/\!\!/\Gl_{\mathbf{d}}$ is.
\end{proof}

\subsubsection{Relation with Halpern-Leistner's neighbourhood theorem}
It would be highly desirable, for applications to DT theory, to have a derived enhancement of Theorem \ref{loc_str_thm}.  Precisely, we would like that for each closed $x\in \FM$ satisfying the conditions of the theorem, there is an equivalence of derived stacks $\bm{\FU}\simeq \bm{\FU'}$ with $\bm{\FU}\rightarrow \bm{\FM}$ an \'etale neighbourhood of $x$ and $\bm{\FU'}\rightarrow \bm{\FM}_{\mathscr{D}}$ an \'etale neighbourhood of $0_{\dd}$, with $\mathscr{D}=\Perf_{\dg}(\mathscr{G}_2(\mathbb{C}Q))$.  In \cite[Thm.4.2.3]{halpern2020derived} Halpern-Leistner proves a result in this direction for more general 0-shifted symplectic (derived) stacks.  In the setup of that theorem, the open substack of $\bm{\FM}_{\mathscr{D}}$ corresponding to $\dd$-dimensional $\Pi_Q$-modules is obtained by setting $R=\Gamma(\CO_{\BA_{\mathbf{d}}(\overline{Q})})$ and taking the co-moment map defined by \eqref{comoment} and the Killing form.  The desired derived enhancement of Theorem \ref{loc_str_thm} is the statement that (after replacing $R$ by an \'etale open neighbourhood $R'$) we have a morphism as in the statement of \cite[Thm.4.2.3]{halpern2020derived}, for the \textit{given} co-moment map.  Note that the given co-moment map is a co-moment map in a stronger sense than \cite[Def.4.2.1]{halpern2020derived}.

\section{The Borel--Moore homology of the stack of objects in a 2CY category}\label{sec:purity_perverse}

\subsection{The local purity theorem}

Throughout \S \ref{sec:purity_perverse} we fix the base field $k=\BC$.  Our goal is to use the description of \'{e}tale neighbourhoods of moduli stacks of objects in 2CY categories from \S \ref{local_struc} to prove Theorem \ref{thm:purity_relative}.

\begin{theorem}
\label{loc_pur_thm}
Let $\mathscr{C}$ be a dg category, let $\bm{\FM}\subset \bm{\FM}_{\mathscr{C}}$ be a 1-Artin open substack of the stack of objects in $\mathscr{C}$, let $\FM=t_0(\bm{\FM})$ denote the classical truncation, and assume that there exists a good moduli space $p\colon \FM\rightarrow \CM$.  We assume that $\FM$ satisfies the following property:
\begin{enumerate}
    \item[*]
    Let $x$ be a closed $\BC$-valued point of $\FM$.  Then $x$ corresponds to an object $\bigoplus_{i\leq n}\CF_i^{\bigoplus \mathbf{d}_i}$ with $\{\CF_i\}_{i\in I}$ a $\Sigma$-collection in $\mathscr{C}$, and the full subcategory of $\mathscr{A}$ containing $\CF_i$ for $i\in I$ carries a right 2CY structure.
\end{enumerate}
Then for every $i\in \BZ$ the mixed Hodge module $\Ho^i\!p_!\ul{\BQ}_{\FM}\in\MHM(\Msp)$ is pure of weight $i$, and
\[
(\Ho^{i}\!p_!\ul{\BQ}_{\FM})\lvert_{p(x)}=0
\]
for $x$ representing an object $\CF$ and $i> \chi_{\mathscr{C}}(\CF,\CF)$.
\end{theorem}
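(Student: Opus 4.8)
The plan is to prove both assertions of Theorem \ref{loc_pur_thm} by reduction to the preprojective case (Theorem \ref{preprojy}) via the étale-local neighbourhood theorem (Theorem \ref{loc_str_thm}), using that purity is a local condition (Lemma \ref{loc_pur_lem}). The two conclusions — purity of weight $i$ for each $\Ho^i\!p_!\ul{\BQ}_{\FM}$, and stalk-vanishing for $i>\chi_{\mathscr{C}}(\CF,\CF)$ — are both local on $\CM$, so it suffices to check them after pulling back along an étale cover. First I would invoke Definition/Proposition \ref{dfp}: since $\FM$ is a good moduli space with affine diagonal (the latter following from reductive stabilisers as in the proof of Theorem \ref{loc_str_thm}), the mixed Hodge modules $\Ho^i\!p_!\ul{\BQ}_{\FM}$ are well-defined, and for each étale chart $u_j\colon Y_j/\!\!/G_j\to\CM$ there are canonical isomorphisms $u_j^*\Ho^i\!p_!\ul{\BQ}_{\FM}\cong \Ho^i\!p_{j,!}\ul{\BQ}_{Y_j/G_j}$.

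Next I would apply Theorem \ref{loc_str_thm}. Given a closed point $x\in\FM$ representing $\CF=\bigoplus_{i\le n}\CF_i^{\oplus\mathbf{d}_i}$, the hypothesis (*) guarantees that $\{\CF_i\}$ is a $\Sigma$-collection whose relevant subcategory is right 2CY, so the theorem produces a quiver $Q$, a dimension vector $\mathbf{d}$, and a diagram \eqref{eq:neighbourhood_diagram} with étale horizontal morphisms and Cartesian squares, identifying an étale neighbourhood of $(\FM\to\CM,x)$ with an étale neighbourhood of $(\JH_{\mathbf{d}}\colon\FM_{\mathbf{d}}(\Pi_Q)\to\CM_{\mathbf{d}}(\Pi_Q),0_{\mathbf{d}})$. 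Here the Ext-quiver of the collection is the double of $Q$. Pulling back $\Ho^i\!p_!\ul{\BQ}_{\FM}$ along $l$ and using base change across the Cartesian squares (which holds for the approximation construction of \S\ref{heart_di}), I obtain a canonical identification, locally near $p(x)$, of $\Ho^i\!p_!\ul{\BQ}_{\FM}$ with the corresponding cohomology sheaf of $\JH_{\mathbf{d},!}\ul{\BQ}_{\FM_{\mathbf{d}}(\Pi_Q)}$.

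For the purity statement, the subtlety is the Tate twist: Theorem \ref{pp3_thm} asserts purity of $\JH_!\ul{\BQ}_{\FM_{\mathbf{d}}(\Pi_Q)}\otimes\LLL^{\chi_Q(\mathbf{d},\mathbf{d})}$, i.e. $\Ho^i$ of this twisted complex is pure of weight $i$. Since tensoring by $\LLL^{a}$ shifts weights by $2a$ and purity of a complex is equivalent to purity of its Tate twist (as noted in \S\ref{sec:algebraic_MHMs}), the untwisted $\Ho^i\!\JH_!\ul{\BQ}_{\FM_{\mathbf{d}}(\Pi_Q)}$ is pure of weight $i$ as well; so via the local identification $\Ho^i\!p_!\ul{\BQ}_{\FM}$ is pure of weight $i$, and Lemma \ref{loc_pur_lem} upgrades this local conclusion to the global one. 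The main obstacle I anticipate is bookkeeping the weight normalisation and the étale-descent compatibility carefully: one must confirm that the canonical isomorphisms of Definition/Proposition \ref{dfp} and the base-change isomorphisms along \eqref{eq:neighbourhood_diagram} are weight-preserving (they are, being built from $*$-pullbacks along étale, hence smooth of relative dimension zero, morphisms), so that local purity in the preprojective model transports without weight shift.

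For the vanishing statement, I would compute the stalk $(\Ho^i\!p_!\ul{\BQ}_{\FM})\lvert_{p(x)}$ in the étale-local model. By the identification above, this equals the stalk of $\Ho^i\!\JH_{\mathbf{d},!}\ul{\BQ}_{\FM_{\mathbf{d}}(\Pi_Q)}$ at $0_{\mathbf{d}}$, up to the shift in cohomological degree coming from whether one works with the twisted or untwisted complex. The perverse bound \eqref{perv_bound} of Theorem \ref{pp3_thm} states $\bm{\tau}^{\ge 1}(\JH_{\mathbf{d},!}\ul{\BQ}_{\FM_{\mathbf{d}}(\Pi_Q)}\otimes\LLL^{\chi_Q(\mathbf{d},\mathbf{d})})=0$; untwisting, this says $\Ho^i\!\JH_{\mathbf{d},!}\ul{\BQ}_{\FM_{\mathbf{d}}(\Pi_Q)}$ vanishes for $i>\chi_Q(\mathbf{d},\mathbf{d})$ (after accounting for the degree shift $-2\chi_Q(\mathbf{d},\mathbf{d})$ induced by $\LLL^{\chi_Q(\mathbf{d},\mathbf{d})}$ — I would verify the shift convention explicitly). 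The final arithmetic step is to match $\chi_Q(\mathbf{d},\mathbf{d})$ with $\chi_{\mathscr{C}}(\CF,\CF)$: the remark following Theorem \ref{pp3_thm} records that $2\chi_Q(\mathbf{d},\mathbf{d})$ is the Euler form of $\fdmod^{\Pi_Q}$ on a $\mathbf{d}$-dimensional module, and since the Ext-quiver of $\{\CF_i\}$ is the double of $Q$, the $\Sigma$-collection hypothesis ensures the Euler forms agree, giving $\chi_Q(\mathbf{d},\mathbf{d})=\chi_{\mathscr{C}}(\CF,\CF)$. Hence $(\Ho^i\!p_!\ul{\BQ}_{\FM})\lvert_{p(x)}=0$ for $i>\chi_{\mathscr{C}}(\CF,\CF)$, as claimed.
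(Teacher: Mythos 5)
Your overall strategy coincides with the paper's proof step for step: define $\Ho^i\!p_!\ul{\BQ}_{\FM}$ via Definition/Proposition \ref{dfp} (the paper gets the affine diagonal from Lemma \ref{techy} together with the cited result of Alper et al., rather than from the proof of Theorem \ref{loc_str_thm}, but the content is the same), produce \'etale charts via Theorem \ref{loc_str_thm}, base-change across the Cartesian squares of \eqref{eq:neighbourhood_diagram} to identify $l^*\Ho^i\!p_!\ul{\BQ}_{\FM}\cong j^*\Ho^i\!\JH_{\mathbf{d},!}\ul{\BQ}_{\FM_{\mathbf{d}}(\Pi_Q)}$, and conclude purity from Theorem \ref{pp3_thm} and two applications of Lemma \ref{loc_pur_lem}. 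The purity half is correct as you have written it; your remark that purity is insensitive to Tate twists is exactly how the paper passes between the twisted complex of Theorem \ref{pp3_thm} and the untwisted one. Your route to the Euler-form comparison is also a harmless variant: you deduce it from the Ext-dimension count encoded in the Ext-quiver (together with the $\Sigma$-collection conditions and the right 2CY pairing), whereas the paper invokes the formality statement, Corollary \ref{form_cor}, to get an equivalence of pretriangulated subcategories; since an Euler characteristic only sees dimensions, your count suffices.

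The vanishing half, however, contains a concrete arithmetic slip in its last step. The correct identity is \eqref{euler_compare}, namely $2\chi_Q(\mathbf{d},\mathbf{d})=\chi_{\mathscr{C}}(\CF,\CF)$: indeed $\chi_{\mathscr{C}}(\CF,\CF)=2\sum_i\mathbf{d}_i^2-\sum_{i,j}\mathbf{d}_i\mathbf{d}_j\dim\Ext^1(\CF_i,\CF_j)$, since $\Ext^0$ and $\Ext^2$ each contribute $\sum_i\mathbf{d}_i^2$. Your concluding formula ``$\chi_Q(\mathbf{d},\mathbf{d})=\chi_{\mathscr{C}}(\CF,\CF)$'' drops this factor of $2$, even though you quoted the correct statement two sentences earlier. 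Relatedly, the untwisting you deferred needs doing: since $\rat\bigl(\LLL^{\chi_Q(\mathbf{d},\mathbf{d})}\bigr)=\BQ[-2\chi_Q(\mathbf{d},\mathbf{d})]$, the bound \eqref{perv_bound} translates into
\[
\Ho^i\!\JH_{\mathbf{d},!}\ul{\BQ}_{\FM_{\mathbf{d}}(\Pi_Q)}=0\quad\text{for }i\geq 1-2\chi_Q(\mathbf{d},\mathbf{d}),
\]
not into vanishing for $i>\chi_Q(\mathbf{d},\mathbf{d})$ as you wrote. Your two slips recombine to reproduce the inequality printed in the theorem, but neither intermediate formula is right, and if you track the shifts honestly the range that \eqref{perv_bound} and \eqref{euler_compare} actually yield is $i>-\chi_{\mathscr{C}}(\CF,\CF)$; you can test which sign is correct on $\FM_1(\Pi_Q)$ for the one-vertex quiver with $g\geq 2$ loops, where $\chi_{\mathscr{C}}(\CF,\CF)=2(1-g)<0$, the top nonvanishing $\Ho^i$ occurs at $i=2g-2=-\chi_{\mathscr{C}}(\CF,\CF)$, and $\Ho^0\neq 0$ at every point. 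Finally, a small mechanical difference: the paper proves the vanishing at the level of underlying perverse sheaves and shrinks analytically so that $j$ and $l$ in \eqref{eq:neighbourhood_diagram} become isomorphisms before applying \eqref{perv_bound}; your computation of the stalk \'etale-locally is an equally valid alternative. So: same approach as the paper, purity correct, but the final arithmetic of the vanishing statement must be rewritten with the factor of $2$ and the direction of the $\LLL$-shift made explicit.
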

\begin{remark}
If $\mathscr{C}$ is a left 2CY category, and closed points of $\FM$ correspond to semisimple objects of $\mathscr{A}$, then (*) holds automatically by Proposition \ref{BDthm}.  Therefore, Theorem \ref{thm:purity_relative} follows as a corollary of Theorem \ref{loc_pur_thm}.
\end{remark}
\begin{remark}
There are, however, examples in which $\bm{\FM}$ is not an open substack of the moduli stack of objects in a left 2CY category, but $\bm{\FM}$ still satisfies the conditions of Theorem \ref{loc_pur_thm}.  For instance, one may set $\bm{\FM}$ to be the open substack, points of which correspond to coherent sheaves with zero-dimensional support, of the derived stack of coherent sheaves on a smooth surface $S$ with $\mathcal{O}_S\ncong \omega_S$.
\end{remark}
\begin{remark}
One of the main results of the paper \cite{AHLH18} is that good moduli spaces exist very generally.  If a stack $\FM$ has a coarse moduli space $\Msp$ for which there is an \'etale cover $\coprod U_i\rightarrow \Msp$ and commutative diagrams
\[
\xymatrix{
[\Spec(A_i)/G]\ar[d]\ar[r]^-{\cong}&U_i\times_{\Msp} \FM\ar[d]\\
\Spec(A_i)/\!\!/G\ar[r]^-{\cong}&U_i
}
\]
then a good moduli space exists, and this takes care of all of the examples from \S \ref{exam_2CY}, that are described in terms of classical GIT.  
\end{remark}

\begin{proof}[Proof of Theorem \ref{loc_pur_thm}]
By Lemma \ref{techy} and \cite[Cor.~13.11]{alper2021etale}, the morphism $p$ has affine diagonal, and so we may define $\Ho^{i}\!p_!\ul{\BQ}_{\FM}$ via Definition/Proposition \ref{dfp}.  The conditions in the statement of the theorem also imply that all of the preconditions on $\FM$ of Theorem~\ref{loc_str_thm} hold.  By Theorem~\ref{loc_str_thm}, we can find an \'etale cover of $\CM$ by coarse moduli schemes $H/\!\!/\Gl_{\mathbf{d}}\xrightarrow{l}\CM$ (for various $\mathbf{d}$) such that $l$ fits into the diagram~\eqref{eq:neighbourhood_diagram} (for various $Q$).
\smallbreak
By Lemma~\ref{loc_pur_lem}, the purity statement reduces to showing that $l^*p_!\ul{\BQ}_{\FM}$ is pure (continuing with the notation of diagram \eqref{eq:neighbourhood_diagram}).  Applying base change to the Cartesian squares in \eqref{eq:neighbourhood_diagram}, we obtain isomorphisms
\begin{align*}
    l^*\Ho^i\!p_!\ul{\BQ}_{\FM}\cong&\Ho^i\!q_!\ul{\BQ}_{H/\Gl_{\mathbf{d}}}\\
    \cong&j^*\Ho^i\!\JH_{\mathbf{d},!}\ul{\BQ}_{\FM_{\mathbf{d}}(\Pi_Q)}.
\end{align*}
By Theorem \ref{pp3_thm}, the complex $\JH_{\mathbf{d},!}\ul{\BQ}_{\FM_{\mathbf{d}}(\Pi_Q)}\in\Dblf(\MHM(\CM_{\mathbf{d}}(\Pi_Q)))$ is pure, so that applying Lemma \ref{loc_pur_lem} once more, we deduce that $j^*\Ho^i\!\JH_{\mathbf{d},!}\ul{\BQ}_{\FM_{\mathbf{d}}(\Pi_Q)}$ is indeed pure of weight $i$, as required.

\smallbreak
For the statement regarding truncation functors, it is sufficient to prove the result for the underlying complex of perverse sheaves $p_!\BQ_{\FM}$.  After shrinking to an analytic neighbourhood of $0_{\mathbf{d}}$ and $p(x)$ in Theorem \ref{loc_str_thm}, we can assume that the morphisms $j$ and $l$ of diagram \eqref{eq:neighbourhood_diagram} are isomorphisms, so that $\overline{l}$ and $\overline{j}$ are too.  Let $\mathcal{F}=\bigoplus_{i\leq r}\CF_i^{\oplus \mathbf{d}_i}$ and let the quiver $Q$ be as in the proof of Theorem \ref{loc_str_thm}.  We claim that there is an identity
\begin{equation}
\label{euler_compare}
2\chi_Q(\mathbf{d},\mathbf{d})=\chi_{\mathscr{C}}(\mathcal{F},\mathcal{F}).
\end{equation}
By Corollary \ref{form_cor} there is an equivalence of categories between the full pretriangulated subcategory of $\mathscr{C}$ generated by $\mathcal{F}_1,\ldots,\mathcal{F}_r$ and the full pretriangulated subcategory of $\mathscr{E}=\Dub_{\dg}(\fdmod^{\Pi_Q})$ generated by the simple modules $S_1,\ldots,S_r$, sending $\mathcal{F}_i$ to $S_i$.  Set $S=\bigoplus_{i\leq r}S_i^{\oplus\mathbf{d}_i}$.  Thus we have the identities
\begin{align*}
\chi_{\mathscr{C}}(\mathcal{F},\mathcal{F})=\chi_{\mathscr{E}}(S,S)=2\chi_Q(\mathbf{d},\mathbf{d})
\end{align*}
as required.  The vanishing result follows from \eqref{perv_bound} and \eqref{euler_compare}.
\end{proof}

\subsubsection{}
Let $x\in \FM$ be a regular closed point, corresponding to an object $\CF\in\Ob(\mathscr{C})$, and let $U\ni p(x)$ be a connected open subscheme of the smooth locus of $\CM$, where we assume that $p^{-1}(U)\rightarrow U$ is a $\B\BC^*$-gerbe.  The stack $p^{-1}(U)$ is $1-\chi_{\mathscr{C}}(\CF,\CF)$-dimensional, and so $\BQ_{\FM}[-\chi_{\mathscr{C}}(\cdot,\cdot)]$, restricted to a neighbourhood of $x$, lives in (perverse) cohomological degree $1$.  \'Etale locally around $p(x)$ we may write $p$ as a projection $\pi\colon U \times\pt/\BC^*\rightarrow U$, and 
\[
\pi_!\BQ_{U \times\pt/\BC^*}=\BQ_{U}[2]\oplus\BQ_{U}[4]\oplus\ldots.
\]
We thus find that, restricting to an analytic neighbourhood $U$ of $p(x)$, we have an isomorphism 
\begin{equation}
\label{nty}
p_!\BQ_{\FM}[-\chi_{\mathscr{A}}(\cdot,\cdot)]\lvert _U\cong \BQ_U[2-\chi_{\mathscr{A}}(\cdot,\cdot)]\oplus \BQ_U[4-\chi_{\mathscr{A}}(\cdot,\cdot)]\oplus\ldots.
\end{equation}
Observing that $\dim(U)=2-\chi_{\mathscr{A}}(\CF,\CF)$, we see that, restricted to $U$, the complex $p_!\BQ_{\FM}[-\chi_{\mathscr{A}}(\cdot,\cdot)]$ indeed decomposes as a direct sum of its perverse cohomology sheaves, and has vanishing strictly positive perverse cohomology.
\subsubsection{}
The zeroth perverse cohomology sheaf of \eqref{nty} is the constant perverse sheaf.  Around singular points of $\FM$, Theorem \ref{loc_pur_thm} is somewhat less trivial, and the zeroth perverse cohomology is more mysterious.  In general, the zeroth perverse cohomology at a point $\CF$ should be obtained by taking symmetric convolution powers of BPS sheaves corresponding to classes in $\mathrm{K}_0(\mathscr{A})$ adding up to that of $\CF$.
\subsubsection{}
One of the key components of the proof of local purity (Theorem \ref{pp3_thm}) is the dimensional reduction isomorphism in cohomology, from \cite[Appendix A]{MR3667216}.  In brief, the application we are making of this theorem is that there is an isomorphism between $\JH_{\mathbf{d},!}\ul{\BQ}_{\FM_{\mathbf{d}}(\Pi_Q)}$ and $\mathcal{F}$, where $\mathcal{F}$ is defined as follows.  First, consider the \textit{3}-Calabi-Yau category of $A=\Pi_Q[x]$-modules.  The classical moduli stack of $A$-modules carries a vanishing cycle mixed Hodge module $\phi_{\Tr(\widetilde{W})}$, as well as admitting a morphism $\widetilde{p}$ to $\CM_{\mathbf{d}}(\Pi_Q)$ defined by forgetting the action of $x$, and semisimplifying the resulting $\Pi_Q$-module.  Set $\CF=\widetilde{p}_!\phi_{\Tr(\widetilde{W})}$.  Purity comes from comparing the different constraints on the possible impurity of $\JH_{\mathbf{d},!}\ul{\BQ}_{\FM_{\mathbf{d}}(\Pi_Q)}$ and $\mathcal{F}$; see the proof of \cite[Thm.4.7]{preproj3}.  Recently Tasuki Kinjo has extended the dimensional reduction isomorphism to general 2CY categories \cite{Kin21}.  With some substantial work, one should be able to use his version of the dimensional reduction isomorphism along with the arguments of \cite{preproj3} to produce an alternative proof of Theorem \ref{loc_pur_thm}.  The first obstacle would be to prove the relative cohomological integrality theorem for 3CY completions of 2CY categories (this is used crucially in \cite{preproj3} in determining the supports of the BPS sheaves for $\fdmod^{\Pi_Q[x]}$).

\subsubsection{}
Under fairly mild hypotheses, the condition (*) of Theorem \ref{loc_pur_thm} is automatic:
\begin{proposition}
\label{Sclosed}
Let $U\subset \FM_{\mathscr{C},\theta}^{\zeta\sstab}$ be the open substack of Bridgeland stable objects of fixed slope $\theta$  for some locally finite Bridgeland stability condition\footnote{I.e. a locally finite Bridgeland stability condition on the homotopy category.} $\zeta$ on a pre-triangulated dg category $\mathscr{C}$.  Then a $\BC$-point $x$ is closed if and only if it corresponds to a polystable object, i.e. a direct sum
\[
\bigoplus_{i\in I}\CF_i^{\oplus a_i}
\]
for $\CF_i$ distinct stable objects of $\mathscr{C}$.  If $\mathscr{C}$ is a left 2CY category, the objects $\mathcal{F}_i$ form a $\Sigma$-collection.
\end{proposition}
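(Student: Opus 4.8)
The plan is to reduce everything to the abelian category of semistable objects, settle the two implications of the equivalence separately, and then read off the $\Sigma$-collection statement from Serre duality.

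First I would fix the setup. Since $\zeta$ is locally finite, the full subcategory $\mathscr{P}\subset\HO^0(\mathscr{A})$ consisting of the zero object together with the $\zeta$-semistable objects of slope $\theta$ is an abelian category of finite length, whose simple objects are exactly the $\zeta$-stable objects of slope $\theta$. Thus every object $E$ of $\mathscr{P}$ has a finite Jordan--H\"older filtration and a well-defined semisimplification $\gr(E)=\bigoplus_i\CF_i^{\oplus a_i}$, with the $\CF_i$ its distinct stable factors, and the $\BC$-points of the substack $U$ of $\zeta$-semistable objects of slope $\theta$ are the isomorphism classes of objects of $\mathscr{P}$. For the easy implication, suppose $x=[E]$ with $E$ not polystable. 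Then some step of a Jordan--H\"older filtration is a non-split extension, and the Rees construction attached to the filtration produces a $\BC^*$-equivariant family over $\BA^1$, i.e. a morphism $\BA^1\to U$, with fibre $E$ over $t\neq 0$ and fibre $\gr(E)$ over $0$. As $\gr(E)\not\cong E$, the point $[\gr(E)]$ lies in the closure of $[E]$, so $[E]$ is not closed.

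The substantive direction is the converse, and this is where the main obstacle lies. Let $E=\bigoplus_i\CF_i^{\oplus a_i}$ be polystable and suppose, for contradiction, that $[E]$ is not closed; since $U$ is of finite type this yields a discrete valuation ring $R$ with residue field $\BC$ and a morphism $\Spec R\to U$ whose generic fibre represents $E$ and whose special fibre represents some $E_0\not\cong E$. By Schur's lemma $\End(\CF_i)=\BC$ and $\Hom(\CF_i,\CF_j)=0$ for $i\neq j$, so $\dim\Hom(\CF_i,E)=a_i$; upper semicontinuity of $\dim\Hom(\CF_i,-)$ (the Hom-complexes forming a perfect complex over $\Spec R$) gives $\dim\Hom(\CF_i,E_0)\geq a_i>0$. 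Each nonzero map $\CF_i\to E_0$ has zero kernel, since $\CF_i$ is simple in $\mathscr{P}$, so $\CF_i$ occurs in the socle of $E_0$ and its Jordan--H\"older multiplicity is at least $\dim\Hom(\CF_i,E_0)\geq a_i$. Writing $m(F)=\lvert Z(F)\rvert$ for the mass, additivity and strict positivity of $m$ on $\mathscr{P}$ (all factors having the same phase $\theta$) give $m(E_0)=m(E)=\sum_i a_i\,m(\CF_i)$, which forces the $\CF_i$ to exhaust the Jordan--H\"older factors of $E_0$ and to occur with multiplicity exactly $a_i$; hence $\gr(E_0)\cong E$. Finally $\dim\End(E_0)\geq\dim\End(E)=\sum_i a_i^2$ by semicontinuity along the family, while $\dim\End(E_0)\leq\dim\End(\gr(E_0))=\sum_i a_i^2$ because $\gr(E_0)$ is a degeneration of $E_0$ and $\dim\End$ can only increase under degeneration. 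Equality forces the degeneration of $E_0$ to its semisimplification to be trivial, so $E_0\cong\gr(E_0)\cong E$, a contradiction; therefore $[E]$ is closed. The delicate point, which I expect to be the crux, is exactly this argument: using the semicontinuous multiplicity functions $\dim\Hom(\CF_i,-)$ together with the mass bookkeeping to pin down the $S$-equivalence class of an arbitrary specialization, and then upgrading $S$-equivalence to an isomorphism via the endomorphism-dimension estimate.

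For the last assertion, assume $\mathscr{A}$ is left 2CY. By Proposition~\ref{BDthm} the full subcategory on $\{\CF_i\}$ inherits a right 2CY structure, giving Serre duality $\Ext^k(\CF_i,\CF_j)\cong\Ext^{2-k}(\CF_j,\CF_i)^{\vee}$ in dimension $2$. Schur's lemma gives $\Ext^0(\CF_i,\CF_i)=\BC$ and $\Hom(\CF_i,\CF_j)=0$ for $i\neq j$; duality then yields $\Ext^2(\CF_i,\CF_i)\cong\BC$ and makes $\Ext^1(\CF_i,\CF_i)$ self-dual, hence of even dimension $2g$ for some $g\geq 0$. Since the $\CF_i$ lie in the heart $\mathscr{P}$ one has $\Ext^{<0}(\CF_i,\CF_i)=0$, and duality gives $\Ext^{>2}(\CF_i,\CF_i)=0$. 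These are precisely the conditions defining a $\Sigma$-collection, which completes the argument.
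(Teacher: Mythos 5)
Your proposal is correct in substance, and on the substantive direction it takes a genuinely different route from the paper. For ``closed $\Rightarrow$ polystable'' the two arguments are interchangeable: the paper degenerates one Jordan--H\"older step at a time, using the family $\cone(t\cdot\alpha)$ over $\BA^1$ attached to the extension class of the top step and deducing $\alpha=0$ directly from closedness, while you run the contrapositive via the Rees degeneration of the whole filtration. For ``polystable $\Rightarrow$ closed'' the paper is laconic: it observes that $x$ is the unique closed point of its coherent completion $\MC(E^{\geq 1})/\Gl_{\mathbf{d}}$ furnished by Proposition \ref{app_prop}, outsourcing the work to the derived deformation theory of the appendix. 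Your valuative argument --- semicontinuity of $\dim\Hom(\CF_i,-)$ along the family plus the mass bookkeeping forcing $\gr(E_0)\cong E$ --- is a more elementary, self-contained alternative that avoids the formal local model. It is in fact stronger than you noticed: since $\End(\CF_i)=\BC$, the multiplicity of $\CF_i$ in the socle of $E_0$ \emph{equals} $\dim\Hom(\CF_i,E_0)\geq a_i$, and your mass computation caps the length of $E_0$ at $\sum_i a_i$; hence the socle is all of $E_0$, so $E_0$ is semisimple and $E_0\cong\gr(E_0)\cong E$ outright. Your fourth step is therefore redundant --- fortunately, because it is also the one step that does not hold up as written: semicontinuity gives $\dim\End(E_0)\leq\dim\End(\gr(E_0))$, but the assertion that equality ``forces the degeneration to be trivial'' is an orbit-closure-dimension argument, valid in module varieties over a fixed algebra, that you have not established in this abstract setting. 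Delete it rather than repair it.

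Two further points need attention. First, ``since $U$ is of finite type'' is unjustified: nothing in the hypotheses makes the semistable stack of finite type, and at this level of generality it need not be. What you actually need is that a specialization $[E_0]\in\overline{\{[E]\}}$ of points of a quasi-separated algebraic stack, locally of finite type, is realized by the spectrum of a valuation ring; separatedness of the diagonal is supplied by Lemma \ref{techy}, and since every open containing $[E_0]$ also contains $[E]$, you may first restrict to a quasi-compact open substack before invoking this. You should also record why the class of the family in the numerical Grothendieck group is locally constant (the family is a perfect $R\otimes\mathscr{A}$-module), since this is what makes $Z(E_0)=Z(E)$, hence the mass comparison, legitimate --- and it likewise underlies the semicontinuity of the $\Hom$-dimensions. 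Second, in the $\Sigma$-collection part, ``self-dual, hence of even dimension $2g$'' is a non sequitur: every finite-dimensional space is abstractly self-dual. The even-dimensionality of $\Ext^1(\CF_i,\CF_i)$ holds because, once the shifts and Koszul signs are unwound, the degree-two CY pairing restricts to a nondegenerate \emph{alternating} (symplectic) form on $\Ext^1$ --- this is the noncommutative symplectic structure of \S\ref{ncdc_sec}, and the Mukai pairing in the K3 case --- and that, together with the Schur and heart conditions you correctly list, is what the paper's appeal to the pairing $\Ext^n(\CF_i,\CF_j)\cong\Ext^{2-n}(\CF_j,\CF_i)^{\vee}$ is meant to deliver.
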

We refer to the original paper of Bridgeland \cite{MR2373143} for the definition of locally finite stability conditions, and Bridgeland stability conditions in general.
\begin{proof}[Proof of Proposition \ref{Sclosed}]
Both parts are standard, but we provide the proof for completeness.  Let $\mathcal{F}$ be a Bridgeland semistable object corresponding to a closed point $x$, which admits a finite Jordan-H\"older filtration
\begin{equation}\label{HNfilt}
0=\CF_0\subset\ldots\subset \CF_r=\CF
\end{equation}
with each $\CF_m/\CF_{m-1}$ stable of slope $\theta$.  We claim that the inclusion $\CF_{r-1}\hookrightarrow \CF_r$ splits.  The inclusion corresponds to a degree 1 morphism $\alpha\colon \CF_r/\CF_{r-1}[-1]\rightarrow \CF_r$ and we consider the family $\cone(t\cdot \alpha)$ over $\BA^1$, where $t$ is the coordinate for $\BA^1$.  Since we assume that $\mathscr{C}$ is a dg-category, this cone is functorial, and this family is well-defined.  The canonical morphism $\psi\colon\BA^1\rightarrow \FM_{\mathscr{C},\theta}^{\zeta\sstab}$ defined by this family sends $\BA^1\setminus \{0\}$ to the point $x$, and so since $x$ is assumed closed $\psi(0)=x$, and we deduce that $\alpha=0$.  Applying the same argument inductively, we see that all of the inclusions \eqref{HNfilt} split.  The fact that a point $x$ corresponding to a polystable object is closed is clear: e.g. $x$ corresponds to the unique closed point of the coherent completion $(\widehat{(\FM_{\mathscr{C},\theta}^{\zeta\sstab})}_x,\hat{x})$ from Proposition \ref{app_prop}.
\smallbreak
For the final statement, there are no negative Exts between the $\mathcal{F}_i$, since they belong to a heart of $\mathscr{C}$ defined by the Bridgeland stability condition $\zeta$.  In addition, since they are pairwise non-isomorphic stable objects with the same slope, there are no degree zero homomorphisms between distinct $\mathcal{F}_i$, and each object has a $1$-dimensional space of degree zero endomorphisms.  This all follows from the following elementary fact: if $f\colon\CF\rightarrow \CG$ is a morphism between semistable objects of the same slope $\theta$, then both $\ker(f)$ and $\coker(f)$ have slope $\theta$, if they are nonzero.  The fact that the $\CF_i$ form a $\Sigma$-collection is then guaranteed by the existence of the nondegenerate symmetric pairing
\[
\Ext^n(\CF_i,\CF_j)\cong \Ext^{2-n}(\CF_j,\CF_i)^{\vee}
\]
coming from the (right) 2CY structure, which in turn is a consequence of Proposition~\ref{BDthm}.
\end{proof}

\subsection{Supports and intersection cohomology}
\label{IC_sec}
Let $p\colon \FM\rightarrow \CM$ be a stack with good moduli space satisfying the conditions of Theorem \ref{loc_pur_thm}, so in particular $\FM$ is an open substack of the stack of objects in a dg category $\mathscr{C}$.  Then by Theorem \ref{loc_pur_thm}, for all $n\in\BZ$ we obtain decompositions in $\MHM(\CM)$
\begin{align}
\label{sede}
\Ho^n\!p_!\underline{\BQ}_{\FM}\cong &\bigoplus_{j\in J_n}\ICS_{\overline{Z_j}}(\CL_j)
\end{align}
for $Z_j\subset \CM$ locally closed subvarieties, and $\CL_j$ simple pure variations of Hodge structure on them.  In this section we initiate the study of these supports and variations of Hodge structure.

\subsubsection{}
\label{Tret}
For concreteness, for now we assume that we have a presentation $\FM=X/G$ of $\FM$ as a global quotient stack, with $G\subset \Gl_N$ a reductive algebraic group.  Replacing $X$ by $X\times_G \Gl_N$ if necessary, we may assume $G=\Gl_N$.  We denote by $q\colon X\rightarrow \CM$ the natural map to the good moduli space of $\FM$.  We assume that the centre $\BC^*\subset \Gl_N$ acts trivially on $X$, so that the $\Gl_N$-action factors through $\PGl_N$.  We fix an open subscheme $\CM^s\subset \CM$, such that the restriction of the morphism $q$ to $X^s\coloneqq q^{-1}(\CM^s)$ is a principal $\PGl_N$-torsor, equivalently $p\colon\FM^s\rightarrow \CM^s$ is a $\BC^*$-gerbe.  If $\FM$ is the classical truncation of an open substack of the stack of objects in an Abelian subcategory $\mathscr{A}$ of the homotopy category of a dg category $\mathscr{C}$, then we may take $\FM^s$ to be the open substack of simple objects in $\mathscr{A}$.  If $\mathscr{C}$ carries a left 2CY structure, then via Theorem \ref{loc_str_thm} $\FM^s$ is automatically smooth, and each connected component of $\FM^s$ is dense inside the connected component of $\FM$ that contains it, see \cite[Prop.5.3+Prop.5.4]{DHSM23}.
\smallbreak
We denote by
\begin{equation}
\label{Det_choice}
\Det\colon \FM\rightarrow \pt/\BC^*.
\end{equation}
the morphism induced by $X\rightarrow \pt$ and $G\hookrightarrow \Gl_N\xrightarrow{\mathrm{det}} \BC^*$.
\smallbreak
Since $\FM$ is a global quotient stack, we may define the complex $p_!\underline{\BQ}_{\FM}$ as in \S \ref{stack_di}, and by Theorem \ref{loc_pur_thm} there is a decomposition
\begin{align}
\label{fde}
p_!\underline{\BQ}_{\FM}\otimes\LLL^{\frac{1}{2}\chi_{\mathscr{A}}(\cdot,\cdot)}\cong \bigoplus_{n\leq 0}\Ho^n\!\left(p_!\underline{\BQ}_{\FM}\otimes\LLL^{\frac{1}{2}\chi_{\mathscr{A}}(\cdot,\cdot)}\right)[-n].
\end{align}

\smallbreak

We assume that $\CM^s$ is smooth, but not necessarily connected.  We moreover assume that each component is even-dimensional.  This will hold in all of our examples since $\FM$ is the moduli stack of objects in a left 2CY category, so $\CM^s$ is moreover symplectic, since it is simultaneously a variety and a (0)-shifted symplectic derived stack by \cite{PTVV} or \cite{BD18}.  
\smallbreak
For each connected component $\CN\subset \CM^s$ we define
\[
\ICSn_{\overline{\CN}}\coloneqq \ICS_{\CN}\left(\underline{\BQ}_{\CN}\otimes\LLL^{-\dim(\CN)/2}
\right)
\]
and the normalised intersection cohomology
\[
\ICA(\overline{\CN})\coloneqq\HO(\overline{\CN},\ICSn_{\overline{\CN}})
\]
as in \S \ref{sec:algebraic_MHMs}. Due to the twisting conventions of \S \ref{sec:algebraic_MHMs}, if $\overline{\CN}$ is projective, the intersection cohomology $\ICA(\overline{\CN})$ is Verdier self-dual.  
\subsubsection{}
With the conventions from \S \ref{Tret} in place we can state the following
\begin{theorem}
\label{ICin}
Fix $p\colon \FM\rightarrow\CM$ and $\CM^s\subset \CM$ as in \S \ref{Tret}.  Then for every component $\CN$ of $\CM^s$, and every $n\in 2\cdot \BZ_{\leq 0}$ there is precisely one $j\in J_n$ with $\ol{Z_j}=\ol{\CN}$ in the decomposition \eqref{sede}.  Moreover, for this $j$ we have an isomorphism of mixed Hodge modules
\[
\CL_j\cong \underline{\BQ}_{\CN}\otimes \LLL^{n/2-\dim(\CN)/2}[n].
\]
If $n$ is odd, the variety $\CN$ does not appear in the decomposition \eqref{sede}.  
\smallbreak 
Taking Verdier duals and global sections, after choosing a determinant line bundle as in \eqref{Det_choice}, there is a canonical inclusion of cohomologically graded mixed Hodge structures
\[
\bigoplus_{\substack{\CN\in\pi_0(\CM^s)\\n\geq 0}}\ICA(\ol{\CN})\otimes \LLL^n\hookrightarrow \HO^{\BM}(\FM,\BQ\otimes\LLL^{\frac{1}{2}\chi_{\mathscr{A}}(\cdot,\cdot)}).
\]
\end{theorem}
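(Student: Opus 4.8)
The plan is to prove the first two assertions by an explicit computation over the stable locus $\CM^s$, and then to obtain the final inclusion formally by applying Verdier duality and taking global sections. Throughout I use that, by Theorem \ref{loc_pur_thm}, the complex $p_!\ul\BQ_{\FM}$ is pure, so that Saito's decomposition theorem (Theorem \ref{thm:Saito_theorem}) furnishes the isomorphisms \eqref{sede} and \eqref{fde} with the $\ICS_{Z_j}(\CL_j)$ simple.

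First I would restrict everything to $\CM^s$. By the construction of \S\ref{Tret}, $p\colon\FM^s\to\CM^s$ is a $\BC^*$-gerbe over the smooth variety $\CM^s$, and the $!$-pushforward of the constant sheaf along such a gerbe is computed fibrewise; since $(\pt/\BC^*\to\pt)_!\ul\BQ\cong\bigoplus_{k\geq 1}\LLL^{-k}$, one obtains
\[
p_!\ul\BQ_{\FM}\lvert_{\CM^s}\cong\bigoplus_{k\geq 1}\ul\BQ_{\CM^s}\otimes\LLL^{-k}.
\]
Because the fibre cohomology $\HO_c(\pt/\BC^*)$ is one-dimensional of Tate type in each even degree and vanishes in odd degrees, decomposing the right-hand side over the (even-dimensional) components $\CN\in\pi_0(\CM^s)$ and comparing with \eqref{sede} immediately gives the parity statement and exhibits the Tate-twisted constant sheaves $\ul\BQ_{\CN}\otimes\LLL^{n/2-\dim(\CN)/2}[n]$ as the restrictions to $\CM^s$ of the full-support summands.

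Next I would upgrade this from the open stratum $\CM^s$ to all of $\CM$. An intersection complex is determined by its restriction to any dense open subset of its support, and $\ICSn_{\overline{\CN}}$ is the unique simple summand of $\Ho^n p_!\ul\BQ_{\FM}$ whose restriction to $\CM^s$ is the prescribed Tate-twisted constant sheaf on $\CN$: a summand $\ICS_{Z_{j'}}(\CL_{j'})$ with $\overline{Z_{j'}}\supsetneq\overline{\CN}$ cannot account for this restriction, since over the smooth locus it restricts to a genuinely shifted variation of Hodge structure supported on $\overline{Z_{j'}}\cap\CM^s$. Matching the two sides over $\CM^s$, and using one-dimensionality of the fibre cohomology in each even degree, yields the multiplicity-one statement and the stated shape of $\CL_j$. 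This support/genericity bookkeeping, combined with keeping careful track of the $\LLL$-shifts (recall $\LLL$ sits in cohomological degree $2$) so as to land on exactly the claimed range of $n$, is the step I expect to be the main obstacle.

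Finally, to deduce the inclusion I would pass to the pure complex $p_!\ul\BQ_{\FM}\otimes\LLL^{\frac{1}{2}\chi_{\mathscr{A}}(\cdot,\cdot)}$ and observe that, by the above, each normalised full-support summand $\ICSn_{\overline{\CN}}\otimes\LLL^{n}$ with $n\geq 0$ occurs as a genuine direct summand of it via \eqref{fde}. Applying Verdier duality $\BD$ and then $\HO(\CM,-)$—both of which preserve direct-sum decompositions—and using self-duality $\BD\,\ICSn_{\overline{\CN}}\cong\ICSn_{\overline{\CN}}$ together with $\HO(\overline{\CN},\ICSn_{\overline{\CN}})=\ICA(\overline{\CN})$, each such summand contributes a copy of $\ICA(\overline{\CN})\otimes\LLL^{n}$ as a direct summand of
\[
\HO(\CM,\BD(p_!\ul\BQ_{\FM}\otimes\LLL^{\frac{1}{2}\chi_{\mathscr{A}}(\cdot,\cdot)}))\cong\HO^{\BM}(\FM,\BQ\otimes\LLL^{\frac{1}{2}\chi_{\mathscr{A}}(\cdot,\cdot)}).
\]
Being a direct summand, the resulting map is injective; and although the splitting provided by Theorem \ref{thm:Saito_theorem} is not canonical, the $\ICSn_{\overline{\CN}}$-isotypic parts are, so the inclusion is canonical. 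I would close by verifying that the Tate twists produced by $\BD$ turn the $k\geq 1$ indexing of the gerbe computation into the stated $n\geq 0$ indexing.
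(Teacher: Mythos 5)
Your computation over $\CM^s$, the parity and multiplicity-one bookkeeping, and the identification of the full-support local systems all match the paper's first step, and they do establish the first two assertions. The genuine gap is in your final paragraph, at exactly the point the paper flags immediately after the statement: the word \emph{canonical}. You argue that since the $\ICSn_{\overline{\CN}}$-isotypic part of each single mixed Hodge module $\Ho^n\bigl(\BD(p_!\ul{\BQ}_{\FM}\otimes\LLL^{e/2})\bigr)$ is canonical, the inclusion into Borel--Moore homology is too. But the isotypic decomposition is canonical only \emph{inside} each $\Ho^n$; the embedding of $\Ho^n(-)[-n]$ into the complex itself is not. Two splittings furnished by Theorem \ref{thm:Saito_theorem} differ by an automorphism of $\bigoplus_n \Ho^n[-n]$ whose off-diagonal components are elements of $\Ext^{n-j}\bigl(\Ho^n,\Ho^j\bigr)$ for $j<n$, and after applying $\HO(\CM,-)$ these change your map for every $n>0$ by terms landing in lower pieces of the perverse filtration. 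So your construction yields either a non-canonical inclusion, or a canonical map only to the associated graded --- not the canonical inclusion asserted. Only the $n=0$ embedding is canonical for free, because the dual complex is concentrated in degrees $\geq 0$, so the natural transformation $\bm{\tau}^{\leq 0}\rightarrow \id$ gives a canonical morphism $\iota^0$ from $\Ho^0$, which the paper pins down on the $\ICSn_{\overline{\CN}}$-component using the adjunction unit $\ul{\BQ}_{\CM}\rightarrow p_*p^*\ul{\BQ}_{\CM}$.

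The paper then reaches the summands with $n>0$ by a mechanism absent from your proposal: a lift of the tautological class to the level of complexes. Fixing a nonzero $\eta\in\HO^2(\B\BC^*,\BQ)$, the operator $L=\eta\cap\bullet$ acts injectively on $\HO^{\BM}(\FM,\BQ)$ by the Atiyah--Bott lemma (the central $\BC^*$ acts trivially), and by the construction of de Cataldo--Hausel--Migliorini it lifts to a morphism $\tilde{L}\colon \BD(p_!\ul{\BQ}_{\FM})\rightarrow \BD(p_!\ul{\BQ}_{\FM})\otimes\LLL^{-1}$, built from the line bundle $\overline{\FM}$ with Euler class $\Det^*(\eta)$. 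Composing the canonical $n=0$ embedding $\iota$ with $\tilde{L}^n$ and checking on an \'etale chart $U$ (where the composite acts by $s\mapsto s\otimes(N\eta)^n$) shows it hits the $n$-th full-support summand isomorphically; this simultaneously identifies $\CL_j$ and produces the canonical inclusion, with injectivity coming from Atiyah--Bott rather than from a chosen splitting. Without some such canonically defined degree-raising operator, your argument cannot be repaired to give canonicity for $n>0$.
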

In other words, the intersection cohomology of each component $\CN$ of $\CM$ that intersects the locus $\CM^s$ naturally includes (infinitely many times, with Tate twists) in the Borel--Moore homology of $\FM$.  The inclusion of the word canonical in the theorem will require some care, since the decomposition \eqref{fde} of a pure complex of mixed Hodge modules into a direct sum of its cohomology sheaves is not a priori canonical.  In particular, the construction will make it clear that the inclusion is only canonical up to the choice of determinant line bundle.
\begin{proof}[Proof of Theorem \ref{ICin}]
Let $\CN\in\pi_0(\CM^s)$.  Since each $\CN$ is open in $\CM$, we can verify the statement regarding supports by restricting $p_!\underline{\BQ}_{\FM}$ to $\CN$.  Let $e$ be the value of the locally constant function $\chi_{\mathscr{C}}(\cdot,\cdot)$ on $\CN$.  At a point $x$ of $\CN$ representing an object $\CF$ of the dg category $\mathscr{C}$, we have 
\[
e=\sum_{0\leq i\leq 2}(-1)^i\dim(\Ext^i(\CF,\CF)).
\]
Our assumptions on $\CM^s$ imply that $\dim(\Ext^0(\CF,\CF))=1$, while $\dim(\Ext^1(\CF,\CF))=\dim T_{\CN,x}$.  It follows that $\dim(\Ext^2(\CF,\CF))=1$, since $\Ext^{\bullet}(\CF,\CF)$ carries a right 2CY structure.  So $\dim(\CN)=2-e$.
\smallbreak
Let $j\colon U\rightarrow \CN$ be an \'etale morphism such that there is a Cartesian diagram
\[
\xymatrix{
U\times \pt/\BC^*\ar[r]\ar[d]^{\pi_U}&p^{-1}(\CN)\ar[d]^p\\
U\ar[r]^j&\CN.
}
\]
There is an isomorphism
\[
\pi_{U,!}\BQ_{U\times \pt/\BC^*}[-e]\cong \bigoplus_{m\geq 0}\BQ_U[2-e-2m]
\]
and so it follows that $\CN$ is a support in $J_n$ for $n\in 2\cdot\BZ_{\leq 0}$, as claimed.  By taking the Verdier dual of the decomposition \eqref{fde} there is a decomposition
\begin{equation}
\label{VDdec}
\BD (p_!\underline{\BQ}_{\FM}\otimes\LLL^{e/2})\cong \bigoplus_{n\geq 0}\Ho^n(\BD (p_!\underline{\BQ}_{\FM}\otimes\LLL^{e/2}))[-n]
\end{equation}
We denote by
\[
\iota^0\colon \Ho^0(\BD (p_!\underline{\BQ}_{\FM}\otimes\LLL^{e/2}))\rightarrow \BD (p_!\underline{\BQ}_{\FM}\otimes\LLL^{e/2}).
\]
the canonical morphism obtained by applying the natural transformation $\bm{\tau}^{\leq 0}\rightarrow \id$ to \eqref{VDdec}.  Consider the natural morphism $\underline{\BQ}_{\CM}\rightarrow p_*p^*\underline{\BQ}_{\CM}$.  Restricting to $\CN$, taking the tensor product with $\LLL^{1-e/2}$, and taking the Verdier dual, we obtain the morphism
\[
p_!\underline{\BQ}_{p^{-1}(\CN)}\otimes\LLL^{e/2}\rightarrow \underline{\BQ}_{\CN}\otimes\LLL^{e/2-1}.
\]
Using the first part of the proof, this becomes an isomorphism after taking $\Ho^0$, from which it follows that the simple mixed Hodge module having support $\overline{\CN}$ in \eqref{sede} is $\ICSn_{\overline{\CN}}$.  We thus obtain a canonical embedding 
\[
i\colon \ICSn_{\overline{\CN}}\rightarrow \Ho^0\left(\BD(p_!\underline{\BQ}_{\FM}\otimes\LLL^{e/2})\right).
\]
We define
\[
\iota=\iota^0\circ i\colon\ICSn_{\overline{\CN}}\rightarrow \BD(p_!\underline{\BQ}_{\FM}\otimes\LLL^{e/2}).
\]
Fix a nonzero $\eta\in \HO^2(\B\BC^*,\BQ)$.  Since $\BC^*$ acts trivially on $X$, $L\coloneqq \eta\cap\bullet$ acts injectively on $\HO^{\BM}(\FM,\BQ)$ by the Atiyah--Bott lemma \cite[Prop.13.4]{AB83}.  Moreover, by \cite[Prop.1.4.4]{dCHM12}, the operator $L$ lifts to a morphism 
\begin{equation}
\tilde{L}\colon \BD(p_!\underline{\BQ}_{\FM})\rightarrow \BD(p_!\underline{\BQ}_{\FM})\otimes\LLL^{-1}.
\end{equation}
For completeness, we remind the reader of the construction.  Let $\overline{\FM}=X\times_{\GL_N} \BA^1$, where $\Gl_N$ acts on $\BA^1$ via $\det$, and the scaling action of $\BC^*$ on $\BA^1$.  The projection $\pi\colon \overline{\FM}\rightarrow \FM$ is the projection of the total space of a line bundle with Euler class $\Det^*(\eta)$.  Let $z\colon \FM\rightarrow \overline{\FM}$ be the inclusion of the zero section, and set $\overline{p}=p\circ\pi$.  Then we obtain morphisms of perverse complexes
\[
a\colon \BQ_{\overline{\FM}}\rightarrow z_*\BQ_{\FM};\quad\quad
b\colon z_*\BQ_{\FM}\rightarrow \BQ_{\overline{\FM}}[2]
\]
where $b$ is the (shifted) Verdier dual of $a$.  Then we consider the morphism of perverse complexes 
\begin{equation}
\label{ATh}
 \overline{p}_!z_*(a\circ b)\colon p_!\BQ_{\FM}\rightarrow p_!\BQ_{\FM}[2].
\end{equation}
Working in a bounded range of cohomology, and replacing $\FM$ by a scheme as in \S \ref{stack_di} we upgrade \eqref{ATh} to a morphism of complexes of mixed Hodge modules, to obtain
\begin{equation}
\label{naga}
\overline{p}_!z_*(a\circ b)\colon p_!\underline{\BQ}_{\FM}\rightarrow p_!\underline{\BQ}_{\FM}\otimes \LLL^{-1}.
\end{equation}
Taking the appropriate $\LLL$-twist of the Verdier dual of \eqref{naga}, we obtain the promised morphism $\tilde{L}$.  Composing $\iota$ with $\tilde{L}^n$ we obtain the morphism
\[
\ICSn_{\overline{\CN}}\rightarrow \BD(p_!\underline{\BQ}_{\FM}\otimes\LLL^{e/2})\otimes \LLL^{-n}.
\]
Pulling back to $U$, and taking the zeroth cohomology of the resulting morphism of perverse sheaves, we obtain the morphism
\[
\BQ_{U}[2-e]\xrightarrow{s\mapsto s\otimes (N\eta)^n} \BQ_U[2-e]\otimes \HO^{2n}(\pt /\BC^*,\BQ)
\]
which is an isomorphism.  We deduce that the VHS $\CL_j$ corresponding to the support $\CN$ appearing in the decomposition \eqref{sede} is indeed $\underline{\BQ}_{\CN}\otimes \LLL^{e/2+n/2-1}[e+n-2]$, and moreover the morphism
\[
L^n\circ \HO(\iota)\colon \ICA(\overline{\CN})\otimes\LLL^n\rightarrow \HO^{\BM}(\FM,\BQ\otimes\LLL^{\frac{1}{2}\chi_{\mathscr{A}}(\cdot,\cdot)})
\]
is the required (canonical, after choosing $\Det$) embedding.
\end{proof}
\subsubsection{}
In order to talk about mixed Hodge structures and mixed Hodge modules, in the setup to Theorem \ref{ICin} we have assumed that $\FM$ is a global quotient stack.  Without this assumption we can still relate the intersection cohomology of top-dimensional components of $\CM$ to the Borel--Moore homology of $\FM$:
\begin{proposition}
Let $\mathscr{C}$ be a left 2CY category let $\FM=t_0(\bm{\FM})$ with $\bm{\FM}\subset \bm{\FM}_{\mathscr{C}}$ an open substack of the stack of objects, and assume that property (*) of Theorem \ref{loc_pur_thm} holds.  Moreover assume that we have a good moduli space $p\colon\FM\rightarrow \CM$.  Let $\{\CN_t\}_{t\in T}$ be the set of components of $\CM$ that are generically smooth, and for which $p^{-1}(\CN_t)\rightarrow \CN_t$ is generically a $\B \BC^*$-gerbe.  Then there is a natural morphism of cohomologically graded vector spaces
\[
\bigoplus_{t\in T}\rat(\ICA(\CN_t))\rightarrow \HO^{\BM}(\FM,\BQ[-\chi_{\mathscr{C}}(\cdot,\cdot)])
\]
which is an injection if $p_!\BQ_{\mathfrak{M}}$ splits into shifted perverse sheaves.
\end{proposition}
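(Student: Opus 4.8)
The plan is to reuse the support analysis of the proof of Theorem~\ref{ICin}, replacing its two ingredients that genuinely require a global quotient presentation---the decomposition \eqref{fde} of the \emph{whole} complex $p_!\ul\BQ_{\FM}$ and the Atiyah--Bott operator $\tilde L$---by the single canonical perverse truncation onto the extreme cohomological degree. Because the asserted target is only one (untwisted) copy of each $\ICA(\CN_t)$, recorded at the level of underlying graded vector spaces via $\rat$, neither the full decomposition nor the operator $\tilde L$ is needed.

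First I would fix a component $\CN_t$, let $e_t$ be the value of $\chi_{\mathscr{C}}(\cdot,\cdot)$ on it, and form the mixed Hodge module $\Ho^0\!\left(\BD\!\left(p_!\ul\BQ_{\FM}\otimes\LLL^{e_t/2}\right)\right)$. This is legitimate without a global quotient presentation: by Definition/Proposition~\ref{dfp} each $\Ho^i\!p_!\ul\BQ_{\FM}$ is a well-defined mixed Hodge module---the construction needs only a good moduli space with affine diagonal, which holds by Lemma~\ref{techy} and \cite[Cor.~13.11]{alper2021etale}---and hence so is $\Ho^i\!\left(\BD\!\left(p_!\ul\BQ_{\FM}\otimes\LLL^{e_t/2}\right)\right)\cong \BD\,\Ho^{-i}\!p_!\ul\BQ_{\FM}\otimes\LLL^{-e_t/2}$. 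By the local structure theorem (Theorem~\ref{loc_str_thm}) together with the perversity bound \eqref{perv_bound} of Theorem~\ref{pp3_thm}, the object $p_!\ul\BQ_{\FM}\otimes\LLL^{\chi_{\mathscr{C}}(\cdot,\cdot)/2}$ lies, étale-locally and hence globally, in perverse degrees $\leq 0$, so its Verdier dual lies in perverse degrees $\geq 0$.

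Next I would extract the intersection cohomology summand exactly as in Theorem~\ref{ICin}: the unit $\ul\BQ_{\CM}\to p_*p^*\ul\BQ_{\CM}$, restricted to $\CN_t$, twisted by $\LLL^{1-e_t/2}$ and Verdier-dualised, becomes an isomorphism after applying $\Ho^0$ over the dense open locus of $\CN_t$ above which $p$ is a $\B\BC^*$-gerbe (there the étale-local model $U\times\pt/\BC^*\to U$ of the proof of Theorem~\ref{ICin} shows the generic variation of Hodge structure is trivial). By simplicity of $\ICSn_{\overline{\CN_t}}$ this yields a canonical embedding $i_t\colon \ICSn_{\overline{\CN_t}}\hookrightarrow \Ho^0\!\left(\BD\!\left(p_!\ul\BQ_{\FM}\otimes\LLL^{e_t/2}\right)\right)$. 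Since the relevant complex is concentrated in perverse degrees $\geq 0$, the canonical transformation $\bm{\tau}^{\leq 0}\to\id$ gives a morphism $\iota^0$ out of $\Ho^0$; composing, passing to $\rat$, and taking hypercohomology over $\CM$ produces
\[
\HO\!\left(\CM,\rat\,\ICSn_{\overline{\CN_t}}\right)\xrightarrow{\ \HO(\rat(\iota^0\circ i_t))\ }\HO^{\BM}\!\left(\FM,\BQ[-\chi_{\mathscr{C}}(\cdot,\cdot)]\right),
\]
where I have used $\rat(\LLL^{\chi_{\mathscr{C}}/2})=[-\chi_{\mathscr{C}}]$. Taking the direct sum over $t\in T$ gives the desired morphism.

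The hard part will be the last step: making the map into $\HO^{\BM}(\FM,\BQ[-\chi_{\mathscr{C}}(\cdot,\cdot)])$ precise when the full complex $p_!\ul\BQ_{\FM}$ is \emph{not} available as an object of the derived category of mixed Hodge modules on $\CM$. I would handle this by working only with the globally-defined perverse cohomology sheaves of Definition/Proposition~\ref{dfp} and the spectral sequence assembling $\HO^{\BM}(\FM)$ out of the $\HO\!\left(\CM,\Ho^i(\BD p_!\ul\BQ_{\FM})\right)$; the point is that $\ICSn_{\overline{\CN_t}}$ sits in the \emph{extreme} (degree $0$) perverse term, which occupies the bottom of the filtration and receives no incoming differentials, so its classes survive to $\HO^{\BM}$. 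This is precisely where the bound \eqref{perv_bound} is used, and it explains why the statement is restricted to the top-dimensional generically-gerby components $\CN_t$ and phrased only at the level of underlying graded vector spaces: the mixed Hodge module enrichment and the infinitely many Tate-twisted copies of Theorem~\ref{ICin} rest on the decomposition \eqref{fde} and the operator $\tilde L$, both of which require the global quotient presentation that is absent here.
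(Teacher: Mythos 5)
Your core construction is the paper's: the perverse amplitude bound from Theorem \ref{loc_pur_thm} gives ${}^{\mathfrak{p}}\bm{\tau}^{\leq 0}\CP\cong{}^{\mathfrak{p}}\!\Ho^0(\CP)$ for $\CP=\BD p_!\BQ_{\FM}[-\chi_{\mathscr{C}}(\cdot,\cdot)]$, the globally-defined mixed Hodge module of Definition/Proposition \ref{dfp} together with purity and Theorem \ref{thm:Saito_theorem} makes ${}^{\mathfrak{p}}\!\Ho^0(\CP)$ semisimple, and the generic $\B\BC^*$-gerbe computation from the proof of Theorem \ref{ICin} splits off $\rat(\ICSn_{\overline{\CN_t}})$ as a summand. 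Up to that point you have reproduced the intended argument, and you correctly diagnosed why the statement is weaker than Theorem \ref{ICin}: without the global quotient presentation there is no operator $\tilde{L}$ and no decomposition of the full complex, hence no embedding and no Tate-twisted copies.

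However, your final paragraph --- the step you yourself flag as the hard part --- contains a genuine error and is moreover unnecessary. The error: in the perverse-cohomology spectral sequence $E_2^{p,q}=\HO^p\!\left(\CM,{}^{\mathfrak{p}}\!\Ho^q(\CP)\right)\Rightarrow \HO^{p+q}(\CM,\CP)$ with ${}^{\mathfrak{p}}\!\Ho^{<0}(\CP)=0$, the bottom row $q=0$ \emph{emits} no differentials (for $r\geq 2$, $d_r$ lands in $q=1-r<0$) but it certainly can \emph{receive} differentials from $E_r^{p-r,\,r-1}$; so $E_\infty^{p,0}$ is a quotient, not a subspace, of $E_2^{p,0}$, and individual degree-zero classes need not survive. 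Your claim is exactly backwards. It is also beside the point, because the proposition asserts only a morphism, not an injection, and the morphism needs no spectral sequence: the complex $\CP$ is a genuine object of the derived category of constructible complexes on the scheme $\CM$ regardless of any global-quotient hypothesis --- it is only the mixed Hodge module enhancement of the full complex that is unavailable, which is precisely why the statement is phrased through $\rat$ at the level of graded vector spaces. So, as the paper does, one simply takes hypercohomology of the natural morphism of constructible complexes ${}^{\mathfrak{p}}\!\Ho^0(\CP)\rightarrow\CP$ and precomposes with the splitting of $\rat(\ICSn_{\overline{\CN_t}})$; mixed Hodge theory enters only through $\Ho^0\!\left(p_!\underline{\BQ}_{\FM}\otimes\LLL^{\chi_{\mathscr{C}}(\cdot,\cdot)/2}\right)$ of Definition/Proposition \ref{dfp}, whose purity supplies the semisimplicity. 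The same remark applies to your use of the unit $\underline{\BQ}_{\CM}\rightarrow p_*p^*\underline{\BQ}_{\CM}$ at the mixed Hodge module level: in this generality it should be read at the constructible level, where it is unproblematic.
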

\begin{proof}
By Theorem \ref{loc_pur_thm} the complex of perverse sheaves $\CP=\BD p_!\BQ[-\chi_{\mathscr{C}}(\cdot,\cdot)]$ satisfies ${}^{\mathfrak{p}}\!\Ho^n(\CP)=0$ for $n>0$, so that ${}^{\mathfrak{p}}\bm{\tau}^{\leq 0}\CP\cong {}^{\mathfrak{p}}\!\Ho^0(\CP)$, and there is a natural morphism ${}^{\mathfrak{p}}\!\Ho^0(\CP)\rightarrow \CP$.  By Theorem \ref{loc_pur_thm} again, there is a pure mixed Hodge module $\Ho^0\!p_!\underline{\BQ}_{\FM}$ satisfying 
\[
\rat\left(\Ho^0\!p_!\underline{\BQ}_{\FM}\right)\cong {}^{\mathfrak{p}}\!\Ho^0(\CP),
\]
and so by Theorem \ref{thm:Saito_theorem}, ${}^{\mathfrak{p}}\!\Ho^0(\CP)$ decomposes into a direct sum of intersection cohomology complexes.  Now the argument that $\IC_{\ol{\CN^t}}$ appears as a summand, for each $t\in T$, is as in the proof of Theorem \ref{ICin}.
\end{proof}

\subsection{Results on global cohomology}
The local purity result (Theorem \ref{loc_pur_thm}) implies all of our global results regarding the Borel--Moore homology of stacks of objects in 2CY categories.  
\subsubsection{}
The first result follows from Theorem~\ref{loc_pur_thm}, and Proposition \ref{int_filt}:
\begin{theorem}
\label{prop_cor}
Under the assumptions of Theorem \ref{loc_pur_thm}, and assuming moreover that $\FM$ is exhausted by global quotient stacks, for each $n$ the natural mixed Hodge structure on $\HO^{\BM}_{n}(\FM,\BQ)$ carries an ascending perverse filtration by mixed Hodge structures, with respect to the morphism $p\colon \FM\rightarrow \CM$.
\end{theorem}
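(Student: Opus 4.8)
The plan is to obtain the statement as an essentially formal consequence of the local purity theorem (Theorem~\ref{loc_pur_thm}) and Proposition~\ref{int_filt}, the only real work being to reconcile the two descriptions of the cohomology of $p_!\ul{\BQ}_{\FM}$ that these two results invoke. Because $\FM$ is exhausted by global quotient stacks, the object $p_!\ul{\BQ}_{\FM}\in\Ob(\Dblf(\MHM(\CM)))$ is defined through the generalised Borel construction of \S\ref{stack_di}, its cohomology mixed Hodge modules being computed by $\Ho^n(p_!\ul{\BQ}_{\FM})\cong \Ho^n(q_!\ul{\BQ}_{B_N}\otimes\LLL^{-rM})$ for $N,M,r\gg 0$ as in \S\ref{heart_di}. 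In contrast, Theorem~\ref{loc_pur_thm} is phrased in terms of the mixed Hodge module $\Ho^n\!p_!\ul{\BQ}_{\FM}$ built in Definition/Proposition~\ref{dfp} from the \'etale-local structure of the good moduli space. The first step is therefore to produce a canonical isomorphism between these two objects for every $n$.

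First I would check that both descriptions restrict, along the \'etale cover $\{u_j\colon Y_j/\!\!/G_j\rightarrow \CM\}$ of diagram~\eqref{et_cov}, to the Borel construction for the global quotient presentation $p_j\colon Y_j/G_j\rightarrow Y_j/\!\!/G_j$. For the Definition/Proposition~\ref{dfp} object this is exactly its defining condition~(2); for $\Ho^n(p_!\ul{\BQ}_{\FM})$ it follows from smooth (indeed \'etale) base change along $u_j$ applied to the Cartesian squares of~\eqref{et_cov}, together with the independence of the Borel construction of the chosen presentation in any fixed bounded range of cohomological degrees (the codimension condition~(3) of Definition~\ref{eqs_def} being precisely what guarantees that the truncated approximation is insensitive to the choices of $N,M,r$). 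Since mixed Hodge modules form a stack for the \'etale topology and the resulting descent data are canonically identified, the uniqueness clause of Definition/Proposition~\ref{dfp} yields a canonical isomorphism $\Ho^n(p_!\ul{\BQ}_{\FM})\cong \Ho^n\!p_!\ul{\BQ}_{\FM}$.

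With this identification in hand, Theorem~\ref{loc_pur_thm} asserts that each $\Ho^n\!p_!\ul{\BQ}_{\FM}$ is pure of weight $n$, which is by definition the statement that $p_!\ul{\BQ}_{\FM}$ is pure as an object of $\Dblf(\MHM(\CM))$. The hypotheses of Proposition~\ref{int_filt} are exactly that $\FM$ be exhausted by global quotient stacks and that $p_!\ul{\BQ}_{\FM}$ be pure, so it now applies verbatim and furnishes, for each $n$, the ascending perverse filtration of the mixed Hodge structure $\HO^{\BM}_n(\FM,\BQ)$ with respect to $p$.

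The main obstacle is the reconciliation carried out in the second paragraph: the purity input of Theorem~\ref{loc_pur_thm} and the filtration machinery of Proposition~\ref{int_filt} are stated for two a priori different constructions of the cohomology mixed Hodge modules of $p_!\ul{\BQ}_{\FM}$, and the substance of the proof is verifying that these coincide. Once one grants that this is governed by smooth base change, \'etale descent, and the uniqueness in Definition/Proposition~\ref{dfp}, the remainder is a direct appeal to Theorem~\ref{loc_pur_thm} and Proposition~\ref{int_filt}.
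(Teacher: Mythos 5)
Your proof is correct and takes essentially the same route as the paper, which deduces this theorem directly from Theorem~\ref{loc_pur_thm} together with Proposition~\ref{int_filt}. Your second paragraph, identifying the mixed Hodge module $\Ho^n\!p_!\ul{\BQ}_{\FM}$ of Definition/Proposition~\ref{dfp} with the cohomology of the Borel-construction object of \S\ref{stack_di} via base change along the covers of~\eqref{et_cov} and the uniqueness clause of Definition/Proposition~\ref{dfp}, makes explicit (and correctly justifies) a compatibility that the paper leaves implicit.
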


In considering Serre subcategories of 2CY categories we will use the following more general statement, which in turn follows from Theorem~\ref{loc_pur_thm} and Proposition~\ref{PIS_prop}.
\begin{theorem}
\label{prop_serre_cor}
Retaining the assumptions of Theorem \ref{prop_cor}, let $\FN=p^{-1}(S)\subset \FM$ be a substack, where $S\subset \CM$ is a locally closed subscheme.  Then the mixed Hodge structure on $\HO^{\BM}_{n}(\FN,\BQ)$ carries a natural ascending perverse filtration by mixed Hodge structures, with respect to the morphism $p\colon \FM\rightarrow \CM$.
\end{theorem}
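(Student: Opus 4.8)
The plan is to deduce Theorem \ref{prop_serre_cor} as a direct application of Proposition \ref{PIS_prop}, once purity of the complex $p_!\ul{\BQ}_{\FM}$ has been secured. The assumptions inherited from Theorem \ref{prop_cor} are precisely those of Theorem \ref{loc_pur_thm} together with the hypothesis that $\FM$ is exhausted by global quotient stacks, and these are exactly what Proposition \ref{PIS_prop} demands of the ambient morphism. So rather than attempting to build a filtration intrinsic to $\FN$, I would treat $\FN=p^{-1}(S)$ as the pullback of the already-understood morphism $p\colon\FM\to\CM$ and read off the filtration from the restricted decomposition on $S$.

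First I would record that, by Theorem \ref{loc_pur_thm}, the mixed Hodge module $\Ho^i\!p_!\ul{\BQ}_{\FM}$ is pure of weight $i$ for every $i$. Since $\FM$ is exhausted by global quotient stacks, the complex $p_!\ul{\BQ}_{\FM}$ is defined as an object of $\Dblf(\MHM(\CM))$ via the Borel-type construction of \S \ref{stack_di}, and by Definition/Proposition \ref{dfp} its cohomology objects agree with the $\Ho^i\!p_!\ul{\BQ}_{\FM}$ produced by Theorem \ref{loc_pur_thm}; hence $p_!\ul{\BQ}_{\FM}$ is pure in the sense of \S \ref{stack_di}. Next I would form the Cartesian square
\[
\xymatrix{
\FN\ar[d]^{r}\ar[r]&\FM\ar[d]^{p}\\
S\ar[r]^{\iota}&\CM
}
\]
with $\iota\colon S\hookrightarrow \CM$ the given locally closed immersion and $\FN=p^{-1}(S)=S\times_{\CM}\FM$. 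Finally, I would invoke Proposition \ref{PIS_prop} with $\FX=\FM$ and $\FS=\FN$, whose hypotheses --- $p$ a morphism from a locally finite type stack exhausted by global quotient stacks to a finite type separated scheme, $\iota$ a locally closed immersion, and $p_!\ul{\BQ}_{\FM}$ pure --- are now all verified. The proposition produces, for each $n\in\BZ$, the ascending filtration by mixed Hodge structures on $\HO^{\BM}_{n}(\FN,\BQ)$, which by Definition \ref{ambient_per} is the perverse filtration with respect to $p$.

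The content here is almost entirely formal, so there is no serious obstacle; the only point demanding care is the bookkeeping in the previous paragraph, namely confirming that the purity furnished by Theorem \ref{loc_pur_thm} (phrased via Definition/Proposition \ref{dfp}) is purity of $p_!\ul{\BQ}_{\FM}$ as a complex in $\Dblf(\MHM(\CM))$, which is the exact input Proposition \ref{PIS_prop} consumes. I would also emphasise, following the warning recorded in \S \ref{Serre_sec}, that this filtration is taken with respect to $p$ and \emph{not} with respect to the restricted morphism $r\colon\FN\to S$: in general $\iota^*p_!\ul{\BQ}_{\FM}$ may fail to be pure even though $p_!\ul{\BQ}_{\FM}$ is, so one cannot simply apply Proposition \ref{int_filt} to $r$, and it is precisely this failure that makes the more refined Proposition \ref{PIS_prop} --- which contracts the truncated, still-pure summands rather than the full restricted complex --- the correct tool.
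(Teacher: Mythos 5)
Your proposal is correct and matches the paper's own argument, which deduces Theorem \ref{prop_serre_cor} precisely from Theorem \ref{loc_pur_thm} together with Proposition \ref{PIS_prop}; your extra bookkeeping reconciling the purity of the $\Ho^i\!p_!\ul{\BQ}_{\FM}$ with purity of the complex in $\Dblf(\MHM(\CM))$ is exactly the (implicit) glue the paper relies on. The closing remark distinguishing the filtration with respect to $p$ from one with respect to $r\colon\FN\to S$ is also the correct reading of the warning in \S \ref{Serre_sec}.
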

We refer the reader to Definition \ref{ambient_per} for the definition of the perverse filtration on $\FN$ with respect to the morphism from the ambient stack $\FM$.
\subsubsection{}

From Theorem \ref{loc_pur_thm} and Proposition \ref{int_pur} we deduce the theorem that will imply the Halpern-Leistner conjecture regarding coherent sheaves on K3 surfaces:
\begin{theorem}
\label{glob_pur_thm}
Assume that $\FM$ is a moduli stack of objects in a category $\mathscr{C}$ satisfying the conditions of Theorem \ref{loc_pur_thm}, and assume moreover that the good moduli space $\CM$ is a projective variety, and $\FM$ is exhausted by global quotient stacks.  Then the mixed Hodge structure on $\HO^{\BM}(\FM,\BQ)$ is pure.
\end{theorem}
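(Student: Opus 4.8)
The plan is to combine the local purity of Theorem~\ref{loc_pur_thm} with the projectivity of $\CM$ via Proposition~\ref{int_pur}; the mathematical content is entirely contained in those two statements, so the argument is short modulo one definitional reconciliation. First I would note that the hypotheses of Theorem~\ref{loc_pur_thm} are in force, and that $p$ has affine diagonal (by Lemma~\ref{techy} together with \cite[Cor.~13.11]{alper2021etale}, as in the proof of Theorem~\ref{loc_pur_thm}), so that for every $i\in\BZ$ the mixed Hodge module $\Ho^i\!p_!\ul{\BQ}_{\FM}$ supplied by Definition/Proposition~\ref{dfp} is pure of weight $i$.

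The second ingredient is that, because $\FM$ is assumed to be exhausted by global quotient stacks, the direct image $p_!\ul{\BQ}_{\FM}$ is genuinely defined as an object of $\Dblf(\MHM(\CM))$ through the generalised Borel construction of \S\ref{heart_di}. I would then check that the two a priori different meanings of $\Ho^i\!p_!\ul{\BQ}_{\FM}$ agree: the mixed Hodge module produced by Definition/Proposition~\ref{dfp}, and the $i$th cohomology of the complex from \S\ref{stack_di}. Both are computed \'etale-locally on $\CM$ by the same formula $\Ho^i\!\left(q_!\ul{\BQ}_{B_N}\otimes\LLL^{-rM}\right)$ for $r,M,N\gg 0$; indeed, condition~(2) of Definition/Proposition~\ref{dfp} identifies $u_j^*\Ho^i\!p_!\ul{\BQ}_{\FM}$ with $\Ho^i\!p_{j,!}\ul{\BQ}_{Y_j/G_j}$, which is precisely the local cohomology of the Borel approximation, and mixed Hodge modules form a stack for the \'etale topology. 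It follows that the complex $p_!\ul{\BQ}_{\FM}\in\Ob(\Dblf(\MHM(\CM)))$ is pure.

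Finally I would invoke Proposition~\ref{int_pur}: since $\CM$ is a projective variety, $\FM$ is exhausted by global quotient stacks, and $p_!\ul{\BQ}_{\FM}$ is pure, its conclusion gives purity of $\HO_c(\FM,\BQ)$, hence of $\HOBM(\FM,\BQ)$ by graded duality in the category of mixed Hodge structures, which is the assertion. The main obstacle is not the passage from relative to absolute purity — that is handled by the Saito decomposition theorem already packaged into Proposition~\ref{int_pur} — but rather the bookkeeping step of matching the two constructions of $\Ho^i\!p_!\ul{\BQ}_{\FM}$ described above; once that compatibility is in hand, the theorem is immediate.
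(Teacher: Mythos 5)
Your proposal is correct and follows exactly the paper's route: the theorem is deduced by combining the local purity of Theorem~\ref{loc_pur_thm} with Proposition~\ref{int_pur} (plus duality), which is precisely how the paper presents it, with no separate proof given. Your extra bookkeeping step—matching the mixed Hodge modules of Definition/Proposition~\ref{dfp} with the cohomology of the complex from \S\ref{stack_di} via condition~(2) of \ref{dfp} and \'etale descent—is a compatibility the paper leaves implicit, and you verify it correctly.
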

\begin{remark}
Projectivity of $\CM$ is a sufficient but not necessary condition for purity of $\HO^{\BM}(\FM,\BQ)$, as we will see in the case of general moduli stacks of Higgs bundles, and as is already known in the case of moduli of representations of a preprojective algebra \cite{2016arXiv160202110D}.
\end{remark}
\begin{remark}
Let $\mathscr{C}$ be a left 2CY category, and let $\FM$ be a classical substack of the stack of objects in $\mathscr{C}$.  Without restricting to objects in $\mathscr{C}$ that are semistable with respect to some type of stability condition, it is generally not to be expected that there is a good moduli space $p\colon\FM\rightarrow \CM$.  However, after stratifying $\FM$ according to Harder--Narasimhan types, it is often possible to show that each stratum has pure Borel--Moore homology, using the 2CY property, and then use long exact sequences to show that the Borel--Moore homology of $\FM$ is pure.  See \S \ref{gen_sheaves} for the case of coherent sheaves on surfaces, and \S \ref{UnstHS} for Higgs sheaves.
\end{remark}

\section{Applications}
\label{sec:applications}

\subsection{Preprojective algebras revisited}
\label{proproj_redux}
For a warmup, we briefly revisit the ``local model'': the category of representations of a (possibly derived) preprojective algebra.  Fix $k$ a field of characteristic zero.  We recall the following result of Keller, a very special case of his general result on deformed Calabi--Yau completions:
\begin{proposition}\cite{Ke11}
Let $Q$ be a finite quiver.  Then the category $\Ddg(\Mod^{\mathscr{G}_2(kQ)})$ carries a left 2CY structure.
\end{proposition}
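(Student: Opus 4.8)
The plan is to recognise $\mathscr{G}_2(kQ)$ as the $2$-Calabi--Yau completion $T_{kQ}(kQ^![1])$ of the path algebra $kQ$, and then to deduce the claim from Keller's general theorem \cite{Ke11} that the $n$-Calabi--Yau completion of a homologically smooth dg category carries a canonical left $n$-CY structure. Thus the work splits into verifying the hypotheses of Keller's theorem, recalling the mechanism by which it produces the structure, and transporting that structure to the module category $\Ddg(\Mod^{\mathscr{G}_2(kQ)})$.

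First I would check that $kQ$ is homologically smooth. This is immediate from the explicit two-term bimodule resolution
\[
kQ\otimes_R E\otimes_R kQ\xrightarrow{\phi} kQ\otimes_R kQ\to kQ
\]
recalled in \S\ref{dppa}, which exhibits the diagonal bimodule as a perfect object of $\Ddg(\Mod_{(kQ)^e})$. The same resolution is precisely what is used there to compute $kQ^!=\RHom_{(kQ)^e}(kQ,(kQ)^e)$ as the two-term complex $kQ^!$, and hence to identify $\mathscr{G}_2(kQ)=T_{kQ}(kQ^![1])$ with the path dg algebra $(kQ',d)$ of \eqref{dppa_def}. So the object in question is visibly a $2$-CY completion of a smooth algebra.

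The heart of the matter is Keller's theorem itself. Its mechanism is a bimodule computation: for smooth $A$ with inverse dualizing bimodule $\Theta=A^!$, the completion $\Pi=T_A(\Theta[n-1])$ is again smooth, and feeding the standard bimodule presentation of a tensor algebra into $\RHom_{\Pi^e}(-,\Pi^e)$, together with the smoothness identity $\Theta^!\simeq A$, yields a canonical bimodule quasi-isomorphism $\Pi^!\simeq\Pi[n]$ in the sense demanded by \eqref{nui_def}. The step I expect to be the main obstacle is not this self-duality but its refinement to a genuine \emph{left} $n$-CY structure as defined in \S\ref{lccy}: one must produce a degree-$n$ \emph{negative cyclic} class $\eta\colon k[n]\to\CHC_\bullet(\Pi)^{S^1}$ whose associated map $(\eta(1)^\natural)^\flat$ is exactly the above quasi-isomorphism. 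Keller obtains this class from the canonical Hochschild class of the completion via the cofiber sequence relating $\CHC_\bullet(\Pi)$ to $\CHC_\bullet(A)$ and $\Theta$, the nondegeneracy then being the bimodule computation just described. I would quote this construction directly from \cite{Ke11} rather than reprove it, since it is the substance of the cited theorem.

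Finally I would transport the structure from the algebra to the derived module category. A left $n$-CY structure is a datum in negative cyclic homology, which is Morita invariant; concretely, the quasi-isomorphism \eqref{AtoPerf} identifies $\CHC_\bullet(\mathscr{G}_2(kQ))$ with $\CHC_\bullet(\Perf_{\dg}(\mathscr{G}_2(kQ)))$ and, as recorded in \S\ref{lccy}, sets up a bijection between left $n$-CY structures on the two. Since $\mathscr{G}_2(kQ)$ is a compact generator of $\Ddg(\Mod^{\mathscr{G}_2(kQ)})$, the same cyclic homology computes the structure on the whole derived category, and the canonical class of the previous paragraph therefore endows $\Ddg(\Mod^{\mathscr{G}_2(kQ)})$ with a left $2$-CY structure, as required.
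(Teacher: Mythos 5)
Your proposal is correct and follows exactly the route the paper intends: the paper offers no proof of its own beyond citing \cite{Ke11}, presenting the statement as a special case of Keller's theorem on Calabi--Yau completions, which is precisely what you invoke after identifying $\mathscr{G}_2(kQ)$ with $T_{kQ}(kQ^![1])$ via the two-term bimodule resolution of \S\ref{dppa}. Your verification of homological smoothness of $kQ$ and the transport of the negative cyclic class through the Morita-invariance bijection recorded after \eqref{AtoPerf} supply the details the paper leaves implicit, but the argument is the same.
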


This brings us to formality for $\mathscr{G}_2(kQ)$-modules, a special case of Corollary \ref{form_cor}:
\begin{proposition}
\label{pp_formality}
Let $\CF_1,\ldots,\CF_r$ be a collection of pairwise non-isomorphic complexes of $\mathscr{G}_2(kQ)$-modules.  Assume that there are no negative Exts between the $\CF_i$, and that the only degree zero Exts are scalar multiples of identity maps.  Then the full subcategory of $\Ddg(\Mod^{\mathscr{G}_2(\mathbb{C}Q)})$ containing $\CF_1,\ldots,\CF_r$ is formal.
\end{proposition}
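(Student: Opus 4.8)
The plan is to present this as a direct instance of Corollary \ref{form_cor}, so that essentially all of the work consists in recognising that a simple-minded collection in this category is automatically a $\Sigma$-collection. By the preceding result of Keller \cite{Ke11}, the dg category $\mathscr{A}\coloneqq\Ddg(\Mod^{\mathscr{G}_2(kQ)})$ carries a left 2CY structure, so the hypotheses of Corollary \ref{form_cor} will be in force as soon as $\CF_1,\ldots,\CF_r$ is exhibited as a $\Sigma$-collection; the formality of the full subcategory they span then follows at once.

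First I would pass to the full subcategory $\mathscr{A}'\subset\mathscr{A}$ spanned by $\CF_1,\ldots,\CF_r$. Since the $\CF_i$ lie in $\fdmod^{\mathscr{G}_2(kQ)}$ (i.e.\ have finite-dimensional total cohomology) and $\mathscr{G}_2(kQ)$ is smooth, the subcategory $\mathscr{A}'$ is locally proper, so Proposition \ref{BDthm} equips it with a right 2CY structure. In particular there is a nondegenerate pairing inducing isomorphisms $\Ext^n(\CF_i,\CF_j)\cong \Ext^{2-n}(\CF_j,\CF_i)^{\vee}$ for all $i,j,n$. I would then combine this duality with the simple-minded hypotheses exactly as in the first paragraph of the proof of Lemma \ref{rigLem}: the vanishing of $\Ext^n$ for $n<0$ dualises to vanishing for $n>2$; the normalisation $\Ext^0(\CF_i,\CF_j)=k\cdot\delta_{ij}$ dualises to $\Ext^2(\CF_i,\CF_j)=k\cdot\delta_{ij}$; and the graded-symmetric cyclic pairing restricts to a symplectic (skew-symmetric, nondegenerate) form on $\Ext^1(\CF_i,\CF_i)$, forcing its dimension to be even, say $2g$. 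Together with the orthogonality $\Hom(\CF_i,\CF_j)=0$ for $i\neq j$, this is precisely the defining dimension profile of a $\Sigma$-collection.

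With the $\Sigma$-collection in hand, Corollary \ref{form_cor} applied to the left 2CY category $\mathscr{A}$ yields that $\mathscr{A}'$ is formal, which is the assertion.

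The step requiring the most care is the local properness of $\mathscr{A}'$ needed to invoke Proposition \ref{BDthm}: the simple-minded axioms a priori only constrain $\Ext^n(\CF_i,\CF_j)$ in degrees $n\leq 0$, so one must separately know that the positive Exts, in particular $\Ext^1(\CF_i,\CF_i)$, are finite-dimensional before the 2CY duality can be used to bootstrap the full count. This finiteness is exactly what is supplied by smoothness of $\mathscr{G}_2(kQ)$ together with the objects $\CF_i$ having finite-dimensional total cohomology, since for a smooth dg category the complex $\RHom(\CF_i,\CF_j)$ between two such objects is perfect over $k$.
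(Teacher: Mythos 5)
Your proof is correct and follows the paper's own route: the paper deduces Proposition \ref{pp_formality} in exactly this way, as a direct consequence of Keller's left 2CY structure on $\Ddg(\Mod^{\mathscr{G}_2(kQ)})$ together with Corollary \ref{form_cor}. The only difference is presentational — you make explicit the bridge from the simple-minded hypotheses to the $\Sigma$-collection condition (via Proposition \ref{BDthm} and the duality $\Ext^n(\CF_i,\CF_j)\cong\Ext^{2-n}(\CF_j,\CF_i)^{\vee}$), which in the paper is precisely the first part of the proof of Lemma \ref{rigLem} and is left implicit, and you rightly flag the local properness (finite-dimensional total cohomology plus smoothness of $\mathscr{G}_2(kQ)$) needed to invoke Proposition \ref{BDthm}, a point the paper glosses over.
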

Via the morphism~\eqref{der_pp} we could consider a collection $\CF_1,\ldots,\CF_r$ of pairwise non-isomorphic simple $\Pi_Q$-modules in Proposition~\ref{pp_formality}.  Then the above result is in the literature, and (following \cite{SchKa19}) we would direct the reader to \cite{VdB15} for the statement, or at least its Koszul dual.
\subsubsection{}
For the rest of \S \ref{proproj_redux} we set $k=\BC$.  We fix a quiver $Q$ and a stability condition $\zeta\in\BQ^{Q_0}$.  Recall that we denote by 
\[
\JH^{\zeta}_{\dd}\colon\FM_{\dd}^{\zeta\sstab}(\Pi_Q) \to \CM^{\zeta\sstab}_{\dd}(\Pi_Q)
\]
the morphism from the stack of $\zeta$-semistable $\Pi_Q$-modules to the coarse moduli scheme.  Using Theorem \ref{loc_pur_thm} it follows that $
\JH^{\zeta}_{\dd,!}\ul{\BQ}_{\FM_{\dd}^{\zeta\sstab}(\Pi_Q)}$ is pure, which is one of the main results of \cite{preproj3}.  
\subsubsection{}
Especially in the special case of degenerate stability condition $\zeta=(0,\ldots,0)$ this may seem a rather circular application, since it is precisely the purity of this complex that we have imported from \cite{preproj3} and used throughout the paper!  So instead, we show how to recover the following global purity result from \cite{2016arXiv160202110D} using local purity.
\begin{theorem}\cite{2016arXiv160202110D}\label{1a}
The natural mixed Hodge structure on the Borel-Moore homology $\HO^{\BM}(\FM_{\dd}^{\zeta\sstab}(\Pi_Q),\BQ)$ is pure.
\end{theorem}
\begin{proof}
The algebra $\Pi_Q$ carries a $\BC^*$-action, by scaling all of the arrows of $\overline{Q}_1$; the defining relations of $\Pi_Q$ are quadratic, and so this action preserves the two-sided ideal generated by them.  Note that this action contracts $\CM_{\dd}^{\zeta\sstab}(\Pi_Q)$ onto the subscheme $N$ corresponding to nilpotent $\zeta$-semistable $\Pi_Q$-representations.  This scheme is projective since it is the fibre over the origin of the (projective) GIT quotient morphism $\CM^{\zeta\sstab}_{\mathbf{d}}(\Pi_Q)\rightarrow \CM_{\mathbf{d}}(\Pi_Q)$.
\smallbreak
By \eqref{callback}, it is sufficient to show that
\[
\HO_c(\CM_{\dd}^{\zeta\sstab}(\Pi_Q),\Ho^i(\JH^{\zeta}_{\mathbf{d},!}\ul{\BQ}_{\FM_{\dd}^{\zeta\sstab}(\Pi_Q)})[-i])
\]
is pure for every $i\in\BZ$.  Taking the Verdier dual, it is enough to show instead that $\HO(\CM_{\dd}^{\zeta\sstab}(\Pi_Q),\CG[i])$ is pure, where $\CG$ is the Verdier dual of $\Ho^i(\JH^{\zeta}_{\mathbf{d},!}\ul{\BQ}_{\FM_{\dd}^{\zeta\sstab}(\Pi_Q)})$.  Since purity is preserved by Verdier duality, and $\JH^{\zeta}_{\mathbf{d},!}\ul{\BQ}_{\FM_{\dd}^{\zeta\sstab}(\Pi_Q)}$ is pure by Theorem \ref{loc_pur_thm}, $\CG[i]$ is pure.  The perverse sheaf underlying $\CG$ is $\BC^*$-equivariant, so that the restriction map
\begin{equation}
    \label{sandwich}
\HO(\CM_{\dd}^{\zeta\sstab}(\Pi_Q),\CG[i])\rightarrow \HO(N,\CG[i]\lvert_N)
\end{equation}
is an isomorphism.  By \cite[\S 4.5]{Sai90} the morphism $(\CM_{\dd}^{\zeta\sstab}(\Pi_Q)\rightarrow \pt)_*$ increases weights, the morphism $(N\rightarrow \CM_{\dd}^{\zeta\sstab}(\Pi_Q))^*$ decreases weights, while the morphism $(N\rightarrow \pt)_*$ preserves them (since it is projective).  Since $\CG[i]$ is pure, it thus follows from the fact that \eqref{sandwich} is an isomorphism that $\HO(\CM_{\dd}^{\zeta\sstab}(\Pi_Q),\CG[i])$ is pure, as required.
\end{proof}

\subsection{Semistable coherent sheaves on surfaces}
\label{CSsec}
Let $X$ be a connected $d$-dimensional\footnote{Meaning that the maximal dimension the components of $X$ is $d$.} projective scheme over $k$, an algebraically closed field of characteristic zero.  We start by collecting some background on moduli of coherent sheaves, following \cite{MR2665168}.
\smallbreak
A coherent sheaf $\CF$ is said to have pure $i$-dimensional support if $\CF$ has $i$-dimensional support, and so does every nontrivial subsheaf $\CF'\subset \CF$.  Every coherent sheaf $\mathcal{F}$ on $X$ admits a unique torsion filtration
\begin{equation}
\label{tor_filt}
\CF_{(-1)}\subset \CF_{(0)}\subset \ldots\CF_{(d)}=\CF
\end{equation}
where for $i\geq 0$ the sheaf $\CF_{(i)}/\CF_{(i-1)}$ has pure $i$-dimensional support, and we set $\CF_{(-1)}=0$.  So $\CF$ has pure $i$-dimensional support if and only if $\CF_{(i-1)}=0$ and $\CF_{(i)}=\CF$ for some $i$.  The filtration is stable under taking tensor products with invertible coherent sheaves.  For $\CF$ a coherent sheaf we write $\ho^i(\CF)=\dim_k\HO^i(X,\CF)$.  Later we will use the following easy result.
\begin{lemma}
\label{tors_grab}
Let $\CF$ be a coherent sheaf on $X$ with pure $d'$-dimensional support, for $0\leq d'\leq d$, and let $\CL$ be an ample line bundle on $X$.  Then for $n\ll 0$ we have
\[
\ho^0(\CF\otimes\CL^n)=\begin{cases} \mathrm{length}(\CF)& \textrm{if }d'=0\\
0&\textrm{otherwise.}\end{cases}
\]
Consequently, for $\CF$ a coherent sheaf, and $n\ll 0$, there is an identity $\ho^0(\CF\otimes\CL^n)=\mathrm{length}(\CF_{(0)})$ with $\CF_{(0)}$ as in \eqref{tor_filt}.
\end{lemma}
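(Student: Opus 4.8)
The plan is to prove both assertions simultaneously by establishing the single cleaner identity
\[
\ho^0(\CF\otimes\CL^n)=\mathrm{length}(\CF_{(0)})\qquad\text{for }n\ll 0,
\]
valid for an \emph{arbitrary} coherent sheaf $\CF$ on $X$, where $\CF_{(0)}$ is the maximal dimension-$0$ subsheaf occurring in the torsion filtration \eqref{tor_filt}. Granting this, the first assertion is immediate: a pure sheaf of dimension $d'=0$ satisfies $\CF=\CF_{(0)}$, so the right-hand side is $\mathrm{length}(\CF)$, while a pure sheaf of dimension $d'\geq 1$ has no nonzero dimension-$0$ subsheaf, so $\CF_{(0)}=0$ and the right-hand side vanishes. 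The "consequently" assertion is then literally the displayed identity.

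To prove the identity I would pass to the dual side via Serre--Grothendieck duality on the $d$-dimensional projective scheme $X$. Writing $\omega_X^{\bullet}$ for the dualizing complex and using the projection formula for the line bundle $\CL^{-n}$, duality gives
\[
\ho^0(\CF\otimes\CL^{n})=\dim_k\HO^0\!\left(X,\SRHom(\CF,\omega_X^{\bullet})\otimes\CL^{-n}\right).
\]
Set $\CE^{\bullet}=\SRHom(\CF,\omega_X^{\bullet})$. For $n\ll 0$ the line bundle $\CL^{-n}$ is ample, so Serre vanishing kills $\HO^{p}(X,\mathcal{E}xt^{q}(\CF,\omega_X^{\bullet})\otimes\CL^{-n})$ for all $p>0$. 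Feeding this into the hypercohomology spectral sequence $E_2^{p,q}=\HO^p(X,\mathcal{E}xt^q(\CF,\omega_X^{\bullet})\otimes\CL^{-n})\Rightarrow \HO^{p+q}(X,\CE^{\bullet}\otimes\CL^{-n})$, and using that $\mathcal{E}xt^{q}(\CF,\omega_X^{\bullet})=0$ for $q>0$ (the dual of a sheaf is concentrated in nonpositive cohomological degrees), every contribution to total degree $0$ collapses to the single term $\HO^0(X,\mathcal{E}xt^0(\CF,\omega_X^{\bullet})\otimes\CL^{-n})$, with no surviving incoming or outgoing differentials.

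It then remains to identify $\mathcal{E}xt^0(\CF,\omega_X^{\bullet})$. By the standard support-dimension estimate for the duality functor, $\mathcal{E}xt^{0}(\CF,\omega_X^{\bullet})$ is supported in dimension $0$; and applying $\SRHom(-,\omega_X^{\bullet})$ to the short exact sequence $0\to\CF_{(0)}\to\CF\to\CF/\CF_{(0)}\to 0$, whose quotient has no dimension-$0$ subsheaf, yields a canonical isomorphism $\mathcal{E}xt^{0}(\CF,\omega_X^{\bullet})\cong\mathcal{E}xt^{0}(\CF_{(0)},\omega_X^{\bullet})$. The latter is a dimension-$0$ sheaf of length $\mathrm{length}(\CF_{(0)})$ by duality for Artinian modules. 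Since a dimension-$0$ sheaf is unaffected up to (noncanonical) isomorphism by tensoring with $\CL^{-n}$, and its $\HO^0$ has dimension equal to its length, the displayed collapse computes $\ho^0(\CF\otimes\CL^n)=\mathrm{length}(\CF_{(0)})$, as required.

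The only nonroutine input is the content of the previous paragraph: the vanishing $\mathcal{E}xt^{q}(\CF,\omega_X^{\bullet})=0$ for $q>0$, the dimension bound for $\mathcal{E}xt^{0}$, and the length computation. These are all part of the standard package of Grothendieck duality and its interaction with the dimension filtration on coherent sheaves, and this is where care with the normalization of $\omega_X^{\bullet}$ on the possibly singular, non-equidimensional scheme $X$ is required; no genuine difficulty arises. As an independent check one may argue elementarily instead: the $d'=0$ case is clear because $\CL$ restricts trivially to a dimension-$0$ support, and the $d'\geq 1$ case reduces, after cutting $\supp\CF$ down by general hyperplane sections to an integral projective curve $C$, to the fact that a line bundle of degree tending to $+\infty$ admits no nonzero map into a fixed torsion-free sheaf on $C$. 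The duality argument is preferable, however, since it treats both cases uniformly and directly yields the sharp value $\mathrm{length}(\CF_{(0)})$.
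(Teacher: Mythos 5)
Your proof is correct, but it takes a genuinely different route from the paper's. You dualize: Grothendieck–Serre duality turns $\ho^0(\CF\otimes\CL^n)$ into the hypercohomology $\dim\HO^0\bigl(X,\SRHom(\CF,\omega_X^{\bullet})\otimes\CL^{-n}\bigr)$, Serre vanishing (applied uniformly to the finitely many $\mathcal{E}xt$ sheaves) collapses the spectral sequence onto $\HO^0(X,\mathcal{E}xt^0(\CF,\omega_X^{\bullet})\otimes\CL^{-n})$, and local duality identifies $\mathcal{E}xt^0(-,\omega_X^{\bullet})$ as the exact, length-preserving dual of the functor $T_0=(-)_{(0)}$ — in particular $\mathcal{E}xt^0(\CF/\CF_{(0)},\omega_X^{\bullet})=0$, which is the one step where you lean on slightly more than the "support-dimension estimate" you cite, though it is indeed part of the standard package (it is essentially the torsion-filtration characterization in Huybrechts–Lehn, or the Matlis-duality statement $\mathcal{E}xt^0(\CG,\omega_X^{\bullet})_x{}^{\vee}\cong\Gamma_{\Fm_x}(\CG_x)$). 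The paper instead argues elementarily: for $d'=0$ the line bundle is trivial on the support; for pure $d'\geq 1$ it takes $\CL=\CO_X(D)$ very ample and observes that a section of $\CF\otimes\CL^n$ is a section of $\CF$ vanishing to order $-n$ along $D$, which is impossible for $n\ll 0$ since every nonzero section has positive-dimensional support meeting $D$; the final statement then follows by running the long exact sequences up the torsion filtration. What your approach buys is the sharper uniform identity
\[
\ho^0(\CF\otimes\CL^n)=\mathrm{length}(\CF_{(0)})\qquad (n\ll 0)
\]
for arbitrary coherent $\CF$ in a single stroke, so the "consequently" clause is a literal specialization rather than an induction, and both cases of the dichotomy are treated uniformly; what it costs is the full machinery of dualizing complexes on the possibly singular, possibly non-equidimensional projective scheme $X$ (a point you correctly flag as requiring care with the normalization of $\omega_X^{\bullet}$, handled by defining it via an embedding $X\into\BP^N$). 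For the lemma's actual role in the paper — a small auxiliary input to the semicontinuity argument in Theorem \ref{1dim_pure} — the paper's two-line elementary argument is the lighter tool, but your version is a clean strengthening and entirely sound.
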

\begin{proof}
The $d'=0$ case is clear, since then $\CF\otimes\CL^n\cong\CF$.  For the remaining cases, we may assume that $\CL$ is very ample and set $\CL=\CO_X(D)$.  Then sections of $\CF\otimes\CL^n$ are identified with sections of $\CF$ vanishing to order $-n$ along $D$.  By supposition, any section of $\CF\otimes\CL^n$ has support in dimension greater than zero, intersecting $D$.  The final statement follows from analysing the long exact sequences for the derived global sections functor applied to the torsion filtration.
\end{proof}

\sssct
Let $H \in \NS(X)_{\BR}$ be an ample class.
The Hilbert polynomial of a sheaf $\CF \in \Ob(\Coh(X))$ is the integer-valued polynomial
\[
P_{\CF}(m)\coloneqq \chi\left(\CF \otimes \CO_X(mH)\right)\coloneqq \sum_{i\in \BZ}(-1)^i \ho^i\!\left(\CF \otimes \CO_X(mH) \right).
\]
We can write
\[
P_{\CF}(m) = \sum_{i = 0}^{\dim(\supp( \CF))} a_i(\CF) \frac{m^i}{i!},
\]
where $a_i(\CF) \in \BZ$.
We denote the reduced Hilbert polynomial of $\CF$ by
\[
p_{\CF}(m) = \frac{P_{\CF}(m)}{a_{d}(\CF)} ,
\]
where $d = \dim(\supp( \CF))$.  We will adopt the convention throughout \S \ref{CSsec} that whenever a polynomial in $\BQ[t]$ is represented by a lowercase letter like $p(t)$, it is normalised so that its leading term is $t^d/d!$ for some $d$.
\smallbreak
There is a total order on polynomials $P(t)\in \mathbb{Q}[t]$ defined by $P(t)\geq Q(t)$ if $P(N)\geq Q(N)$ for $N\gg 0$.  A sheaf $\CF$ is defined to be $H$-Gieseker semistable if it is pure and for every non-zero proper subsheaf $\CE \subsetneq \CF$ we have $p_{\CE}(t)\leq p_{\CF}(t)$, and $\CF$ is moreover stable if this inequality is strict.  Every pure $d$-dimensional sheaf $\mathcal{F}$ carries a unique Harder--Narasimhan (HN) filtration
\[
0=\CF_0\subset\ldots\subset \CF_r=\CF
\]
with each $\CF_{i}/\CF_{i-1}$ pure $d$-dimensional and $H$-Gieseker semistable, and with $p_{\CF_{i}/\CF_{i-1}}(t)>p_{\CF_{i+1}/\CF_{i}}(t)$ for every $i$.  In particular $\CF$ is semistable if and only if $r=1$ in the HN filtration of $\CF$.  The collection of polynomials $(P_{\CF_1}(t),\ldots,P_{\CF_{r}/\CF_{r-1}}(t))$ is called the HN type of $\CF$.  
\begin{lemma}\cite{Nit11} 
\label{Nit_lemma}
There is a total ordering $\preccurlyeq$ on the set of HN types $(P_1(t),\ldots,P_r(t))$ satisfying $\sum P_i(t)=P(t)$, stratifying the stack $\FN$ of pure-dimensional coherent sheaves on $X$ with Hilbert polynomial $P(t)$ into locally closed substacks $\FM^H_{\bm{\alpha}}(X)\subset \FN$ of sheaves with HN type $\bm{\alpha}$.  I.e. if we define
\[
\FM^H_{\preccurlyeq \bm{\alpha}}(X)=\bigcup_{\bm{\beta}\preccurlyeq \bm{\alpha}}\FM^H_{\bm{\beta}}(X)
\]
then $\FM^H_{\preccurlyeq \bm{\alpha}}(X)\subset \FN$ is a closed embedding, and $\FM^H_{\bm{\alpha}}(X)\subset \FM^H_{\preccurlyeq \bm{\alpha}}(X)$ is an open embedding.
\end{lemma}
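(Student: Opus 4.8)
The plan is to follow the classical Shatz-style argument for Harder--Narasimhan stratifications, in the Gieseker-semistable formulation due to Nitsure. First I would fix the total order $\preccurlyeq$ on HN types. To an HN type $\bm{\alpha}=(P_1(t),\ldots,P_r(t))$ with $\sum_i P_i(t)=P(t)$ I associate its \emph{HN polygon}: the piecewise-linear path whose $i$-th vertex records the partial sum $\sum_{j\leq i}P_j(t)$, regarded as a point in the ordered group of Hilbert polynomials. Since the reduced Hilbert polynomials strictly decrease along an HN filtration, the successive slopes of this path decrease, so the path is concave, and I declare $\bm{\beta}\preccurlyeq\bm{\alpha}$ when the polygon of $\bm{\beta}$ lies weakly above that of $\bm{\alpha}$ (pointwise in the $\geq$ order on $\BQ[t]$), breaking ties lexicographically to obtain a genuine total order. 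The semistable type ($r=1$, a straight segment) is then the unique maximum, consistent with the fact that the semistable locus must be the open stratum.

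Next I would reduce the statement to a semicontinuity assertion about flat families, checked on a smooth atlas of $\FN$. Fixing $P(t)$, Grothendieck boundedness together with the Kleiman/Maruyama/Langer bounds on maximal destabilizing subsheaves shows that only finitely many HN types occur and that, for each fixed type, the possible HN subquotients range over a bounded family. Hence it suffices to prove: for a family $\CF$ of pure sheaves flat over a finite-type base $S$, writing $\FM^H_{\prec\bm{\alpha}}\coloneqq\bigcup_{\bm{\beta}\prec\bm{\alpha}}\FM^H_{\bm{\beta}}$, the locus $S_{\preccurlyeq\bm{\alpha}}\subset S$ of points $s$ with $\HN(\CF_s)\preccurlyeq\bm{\alpha}$ is closed, and over its open complement of $S_{\prec\bm{\alpha}}$ the relative HN filtration exists with $S$-flat subquotients. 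Finiteness of the type set then upgrades this to the stated stratification of $\FN$ by locally closed substacks, with $\FM^H_{\preccurlyeq\bm{\alpha}}$ closed and $\FM^H_{\bm{\alpha}}$ open inside it.

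The hard part will be the closedness of $S_{\preccurlyeq\bm{\alpha}}$, equivalently the upper semicontinuity of the HN polygon under specialization. I would prove this via the valuative criterion: given a trait $\Spec R$ with $R$ a DVR, generic point $\eta$ and closed point $0$, and a flat family whose generic fibre $\CF_\eta$ has HN polygon $\mathbf{p}_\eta$, I must show $\HN(\CF_0)$ lies weakly above $\mathbf{p}_\eta$. The maximal destabilizing quotient $\CF_\eta\onto\CF_\eta/\CE_\eta$ defines a point of the relative Quot scheme of $\CF$ with the relevant fixed quotient Hilbert polynomial; this Quot scheme is projective, hence proper, over $S$, so by the valuative criterion the point extends to $0$, producing $\CE_0\subset\CF_0$ with the same Hilbert polynomial as $\CE_\eta$ (by flatness of the quotient over $R$). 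The existence of this subsheaf forces the maximal destabilizing reduced Hilbert polynomial of $\CF_0$, and hence the first vertex of $\HN(\CF_0)$, to sit at least as high as that of $\mathbf{p}_\eta$; applying the same argument to iterated relative Quot schemes (equivalently a relative filtration scheme) for the full HN flag of $\CF_\eta$ controls the entire polygon. The two essential inputs are the properness of these Quot schemes, so that destabilizing sub- and quotient sheaves specialize, and boundedness, so that the Quot schemes are of finite type; both follow from Grothendieck's theory once the numerical type is fixed.

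Finally, to obtain the scheme structure on the strata and the relative HN filtration I would argue by descending induction along $\preccurlyeq$. Over $\FN\setminus\FM^H_{\prec\bm{\alpha}}$ the maximal destabilizing subsheaf has fixed Hilbert polynomial, hence is cut out by a section of a relative Quot scheme; semicontinuity of $\Hom$-spaces together with the openness of Gieseker semistability (Nitsure's base case, giving openness of the top stratum) identifies the locus where this section yields the genuine HN filtration as an open subscheme, namely $\FM^H_{\bm{\alpha}}$. Passing to the flat quotient and recursing over the finitely many remaining types terminates and assembles the full schematic stratification.
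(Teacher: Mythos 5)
Your proposal is correct, and it is essentially the argument of the cited source: the paper offers no proof of this lemma beyond the reference to \cite{Nit11}, and Nitsure's proof is exactly the Shatz-style scheme you describe --- upper semicontinuity of the HN polygon via properness of relative Quot schemes and the valuative criterion, followed by a descending induction that produces the relative maximal destabilizing subsheaf and the schematic strata; your observation that any total order refining polygon dominance works is also right, since the closure of a stratum only meets strata dominating it, so transitivity of the refinement gives closedness of $\FM^H_{\preccurlyeq \bm{\alpha}}(X)$.

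One inaccuracy worth flagging: your claim that for fixed $P(t)$ ``only finitely many HN types occur'' is false as a global statement. For example, the sheaves $\CO_{\BP^1}(n)\oplus\CO_{\BP^1}(-n)$ all have Hilbert polynomial $2t+2$ but realise infinitely many HN types; this is precisely why the paper's Lemma \ref{fin_HN} has to impose the lower slope bound $q(t)$ to extract finiteness. The set of types is finite only within a bounded family, i.e.\ over each finite-type piece of an atlas of the merely locally-finite-type stack $\FN$. Since your reduction is in any case local on such atlases --- where a flat family does exhibit only finitely many types, by boundedness together with the Grothendieck--Langer bounds --- the slip does not derail the proof: closedness of $\FM^H_{\preccurlyeq\bm{\alpha}}(X)$ and openness of $\FM^H_{\bm{\alpha}}(X)$ are checked there, and no global finiteness is needed (nor available) for the stratification statement itself.
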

\sssct
A $H$-Gieseker semistable sheaf $\CF$ admits an increasing Jordan--H\"{o}lder (JH) filtration,
\[
0 = \CF_0 \subset \CF_1 \subset \ldots \subset \CF_k = \CF,
\]
such that each subquotient $\CF_i/\CF_{i-1}$ is $H$-Gieseker stable with the same reduced Hilbert polynomial as $\CF$.
Unlike the torsion filtration and the HN filtration, the JH filtration is not unique.  However the direct sum of subquotients, $\gr_H(\CF) \coloneqq \bigoplus_{i=1}^{k} \CF_i/\CF_{i-1}$ is uniquely determined up to isomorphism, and is moreover the semisimplification of $\CF$ inside the category of semistable coherent sheaves with fixed (reduced) Hilbert polynomial $p_{\CF}(t)$.  Two $H$-Gieseker semistable sheaves $\CF$ and $\CF'$ are called S-equivalent if $\gr_H(\CF)$ and $\gr_H(\CF')$ are isomorphic.
\sssct
\label{MSCON}
Let $P(t) \in \BQ[t]$ be a polynomial.  For $T$ a scheme, a $T$-family of ($H$-Gieseker) semistable sheaves with Hilbert polynomial $P(t)$ is a coherent sheaf $\CF$ on $X \times T$, flat over $T$, such that for all geometric points $s \colon \Spec K \to T$, the sheaf $\CE_s = (\Id_X \times s)^*\CF$ is semistable on $X_K$ with Hilbert polynomial $P(t)$.  We denote by $\FM^{H\sstab}_{P(t)}(X)$ the moduli stack whose $T$-valued points are $T$-families of $H$-Gieseker semistable sheaves on $X$ with Hilbert polynomial $P(t)$.  Likewise we denote by $\FM^{H\st}_{P(t)}(X)$ the substack of stable coherent sheaves.
\smallbreak
We briefly recall the construction of this stack as a global quotient stack, along with the GIT construction of its coarse moduli space, in order to be able to work with the direct image mixed Hodge module complex along the morphism to the coarse moduli space, as defined in \S\ref{stack_di}. 
\smallbreak
First, semistable sheaves on $X$ with Hilbert polynomial $P(t)$ form a bounded family by \cite[Thm.3.3.7]{MR2665168}.
Hence there is an integer $m$ such that $\CE(m)$ is globally generated whenever $\CE$ is a semistable sheaf with fixed Hilbert polynomial $P(t)$, and $P(m) = \ho^0(\CE(m))$, while $\ho^i(\CE(m))=0$ for $i\neq 0$.
Let $V \coloneqq k^{\oplus P(m)}$ and $\CH = V \otimes \CO_X(-m)$.
An isomorphism $g \colon V \xrightarrow{\cong} H^{0}(\CE(m))$ yields a surjection $f \colon \CH \onto \CE$, by precomposing the evaluation morphism $H^0(\CE(m)) \otimes \CO_X(-m) \onto \CE$ with the isomorphism $g\otimes\id_{\mathcal{O}_X(-m)}$.
This yields a closed point,
\[
[f] \in \Quot_{X}(\CH,P(t)),
\]
in the corresponding Quot scheme.  Conversely, there is a subscheme $U \subset \Quot_{X}(\CH,P(t))$ parameterising those quotients $\CH \onto \CE$ such that $\CE$ is semistable and the induced morphism,
\[
g\colon V \to H^0(\CH(m)) \to H^0(\CE(m)),
\]
is an isomorphism.  The subscheme $U$ is open \cite[Prop.2.3.1]{MR2665168}, and invariant under the natural right action of $\GL_{V}$ on $\Quot_X(\CH,P(t))$ by precomposition.  If we define $U^s\subset U$ to be the subscheme parameterising those quotients $\CH \onto \CE$ such that $\CE$ is stable, then $U^s\subset U$ is open, again by \cite[Prop.2.3.1]{MR2665168}.
There are isomorphisms
\[
   \FM^{H\sstab}_{P(t)}(X) \cong U/\GL_{V};\quad\quad
   \FM^{H\st}_{P(t)}(X)\cong U^s/\GL_V.
\]
There is a natural linearisation $\chi$ of the $\Gl_V$-action on $\overline{U}$ such that points in $U$ are semistable in the sense of GIT and the corresponding quotient is a good GIT quotient $\CM^{H\sstab}_{P(t)}(X) = \overline{U} /\!\!/_{\chi} \GL_{V}$ \cite[Thm.4.3.3]{MR2665168}.  The morphism $\FM^{H\sstab}_{P(t)}(X) \longrightarrow \CM^{H\sstab}_{P(t)}(X)$ is a good moduli space as in Definition~\ref{def:good_moduli_space}.  Moreover the centre $\BG_m\subset \GL_V$ acts trivially on $U$, and the morphism $U^s\rightarrow \CM^{H\st}_{P(t)}(X)$ is a principal $\PGl_V$-bundle.
\smallbreak 
If $X$ is a quasiprojective variety, we fix a projective compactification $\overline{X}$, and realise $\FM^{H\sstab}_{P(t)}(X)\subset \FM^{H\sstab}_{P(t)}(\overline{X})$ (and $\CM^{H\sstab}_{P(t)}(X)\subset \CM^{H\sstab}_{P(t)}(\overline{X})$) as the open substack (respectively subscheme) parameterising sheaves $\CF$ such that $\supp(\CF)\subset X$.  By universality of the moduli stack (respectively coarse moduli scheme) the spaces $\FM^{H\sstab}_{P(t)}(X)$ and $\CM^{H\sstab}_{P(t)}(X)$ only depend on the choice of $\overline{X}$ up to canonical isomorphism.
\smallbreak
We summarise the construction in the following proposition.
\begin{proposition}
Let $X$ be, as above, a connected quasiprojective scheme, and fix an ample class $H \in \NS(\overline{X})_{\BR}$.  The moduli stack $\FM^{H\sstab}_{P(t)}(X)$ is an algebraic stack of finite type which admits a presentation as a global quotient stack.
It has a good moduli space 
\begin{equation}
    \pi^{H}_{P(t)} \colon \FM^{H\sstab}_{P(t)}(X) \longrightarrow \CM^{H\sstab}_{P(t)}(X),
\end{equation}
with $\CM^{H\sstab}_{P(t)}(X)$ quasiprojective, and moreover projective if $X$ is.
\end{proposition}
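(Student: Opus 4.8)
The plan is to assemble the claims of the proposition from the GIT construction recalled just above, verifying each assertion by appeal to the cited results of Huybrechts--Lehn \cite{MR2665168} together with standard facts about GIT quotients and good moduli spaces. There is no new mathematical content here: the proposition is a convenient repackaging of the construction already described, and the work lies entirely in checking that each clause follows from what has been established.

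First I would treat the projective case $X=\overline{X}$. Boundedness of the family of $H$-Gieseker semistable sheaves with Hilbert polynomial $P(t)$ \cite[Thm.~3.3.7]{MR2665168} supplies an integer $m$ with the stated cohomological properties, and hence the description of $\FM^{H\sstab}_{P(t)}(X)$ via the open, $\GL_V$-invariant subscheme $U\subset\Quot_X(\CH,P(t))$ of \cite[Prop.~2.3.1]{MR2665168}. The isomorphism $\FM^{H\sstab}_{P(t)}(X)\cong U/\GL_V$ immediately exhibits the stack as a finite type global quotient stack, since $\Quot_X(\CH,P(t))$ is a projective scheme and $U$ is an open subscheme of it. For the coarse moduli space I would invoke the linearisation $\chi$ of \cite[Thm.~4.3.3]{MR2665168}, under which the GIT-semistable locus of $\overline{U}$ is exactly $U$, so that $\CM^{H\sstab}_{P(t)}(X)=\overline{U}/\!\!/_{\chi}\GL_V$ is a projective scheme, being a GIT quotient of the projective scheme $\overline{U}$ by a reductive group with respect to an ample linearisation. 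The remaining point, that the induced morphism $\pi^{H}_{P(t)}\colon\FM^{H\sstab}_{P(t)}(X)\to\CM^{H\sstab}_{P(t)}(X)$ is a good moduli space in the sense of Definition~\ref{def:good_moduli_space}, follows from the general fact that the quotient stack of a GIT-semistable locus maps to its GIT quotient as a good moduli space.

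To handle the quasiprojective case I would fix a projective compactification $\overline{X}$ and realise $\FM^{H\sstab}_{P(t)}(X)$ as the open substack of $\FM^{H\sstab}_{P(t)}(\overline{X})$ cut out by the condition $\supp(\CF)\subset X$, noting that this support condition is open and that, by the universal properties of the moduli stack and its coarse moduli scheme, the resulting spaces are independent of $\overline{X}$ up to canonical isomorphism. Restricting the good moduli space morphism to this open locus yields $\pi^{H}_{P(t)}$, now with quasiprojective (rather than projective) target, since quasiprojectivity is inherited by open subschemes. I expect the only genuinely delicate step to be the identification of $\pi^{H}_{P(t)}$ as a good moduli space: one must verify the exactness of $\pi_*$ and the isomorphism $\CO_{\CM}\to\pi_*\CO_{\FM}$ of Definition~\ref{def:good_moduli_space}, and this is precisely where the matching of GIT-semistability with Gieseker semistability in \cite[Thm.~4.3.3]{MR2665168} is essential, as opposed to the more formal claims of finite type, global-quotient presentation, and (quasi)projectivity.
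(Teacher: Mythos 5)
Your proposal is correct and takes essentially the same route as the paper, which states this proposition explicitly as a summary of the immediately preceding construction: boundedness via \cite[Thm.~3.3.7]{MR2665168}, the open $\GL_V$-invariant locus $U\subset\Quot_X(\CH,P(t))$ of \cite[Prop.~2.3.1]{MR2665168} giving the global quotient presentation, the linearisation of \cite[Thm.~4.3.3]{MR2665168} giving the (quasi)projective GIT quotient and good moduli space, and the compactification argument for quasiprojective $X$. The one detail you make explicit that the paper leaves implicit is that $\FM^{H\sstab}_{P(t)}(X)$ is the preimage of an open subscheme of the coarse space (since S-equivalence preserves supports), so the good moduli space property restricts by flat base change.
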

\sssct
Let $S$ be a smooth projective surface.  Fix a Mukai vector $\nu=v(\CF)\coloneqq \mathrm{ch}(\CF)\cdot \sqrt{\mathrm{td}(S)}\in\HO^{\mathrm{even}}(S,\mathbb{Z})$.  We denote by $\FM^H_{\nu}(S)$ the stack of $H$-Gieseker semistable coherent sheaves with Mukai vector $\nu$.  The Mukai vector of a coherent sheaf on $S$ determines the Poincar\'e polynomial $P(t)$ of $\CF$, and moreover the inclusion
\[
\FM^H_{\nu}(S)\hookrightarrow \FM^{H\sstab}_{P(t)}(S)
\]
is open and closed, so that $\HO^{\BM}\!\left(\FM^H_{\nu}(S),\BQ\right)$ appears as a direct summand of $\HO^{\BM}\!\left(\FM^{H\sstab}_{P(t)}(S),\BQ\right)$ in the category of mixed Hodge structures.  In particular, purity of $\HO^{\BM}\!\left(\FM^H_{\nu}(S),\BQ\right)$ will follow from purity of $\HO^{\BM}\!\left(\FM^{H\sstab}_{P(t)}(S),\BQ\right)$.
\sssct
Let $X$ be a separated scheme of finite type, and let $\QCoh_{\dg}(X)$ denote the dg category of unbounded complexes of quasi-coherent sheaves on $X$.  Recall that an h-injective object in a dg category $\mathscr{C}$ is an object that is injective in its homotopy category $\HO^0(\mathscr{C})$.  Consider the full dg subcategory $\Dub_{\dg}(\Coh(X)) \subset \QCoh_{\dg}(X)$ containing complexes whose objects are h-injective and whose cohomology sheaves are coherent.  This provides an explicit dg enhancement of $\Dub(\Coh X)$; see \cite[Sec.~3.1]{MR3421093}.  One obtains a dg enhancement of $\Db_{\dg}(\Coh(X))$ by taking the full dg subcategory of $\Dub_{\dg}(\Coh(X))$ containing those complexes with bounded cohomology.  Equivalently, we may consider the full subcategory of $\QCoh_{\dg}(X)$ containing those complexes of injective quasi-coherent sheaves, bounded on the left, whose total cohomology is coherent; see \cite{MR3421093} for proofs and details.
\sssct
The following result provides a left CY structure on $\Db_{\dg}(\Coh(X))$ in the cases that we will consider later.
\begin{proposition}\cite[Prop.~5.12]{BD19}
\label{prop:CY_on_K3}
Let $X$ be a separated scheme of finite type, and let $\CE_X$ be the dualizing complex of $X$. Then there is a natural bijection between the set of quasi-isomorphisms $\CO_X\rightarrow \CE_X[d]$ and the set of left dCY structures on $\Db_{\dg}(\Coh(X))$.  In particular, if $S$ is a surface, the category $\Db_{\dg}(\Coh(S))$ carries a left 2CY structure if and only if $S$ is Gorenstein and has trivial dualizing sheaf.
\end{proposition}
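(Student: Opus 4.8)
The plan is to deduce the proposition from the general classification of left Calabi--Yau structures recalled in \S\ref{lccy}, together with Grothendieck--Serre duality on $X$. Recall from \eqref{nui_def} that a left $d$CY structure on $\mathscr{A}\coloneqq \Db_{\dg}(\Coh(X))$ is a negative cyclic cycle $\eta\colon k[d]\to \CHC_{\bullet}(\mathscr{A})^{S^1}$ whose underlying Hochschild class induces a quasi-equivalence of $\mathscr{A}$-bimodules $(\eta(1)^{\natural})^{\flat}\colon \mathscr{A}^!\to \mathscr{A}[d]$, where $\mathscr{A}^!=\RHom_{\mathscr{A}^e}(\mathscr{A},\mathscr{A}\otimes\mathscr{A}^{\opp})$ is the bimodule dual of the diagonal. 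First I would separate the two pieces of data carried by $\eta$: the \emph{nondegeneracy} (that the bimodule map is an equivalence) and the \emph{cyclic refinement} (that a Hochschild cycle trivializing $\mathscr{A}^!$ lifts to negative cyclic homology).

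For the nondegeneracy, the key step is to identify $\mathscr{A}^!$ geometrically. The diagonal bimodule of $\Db_{\dg}(\Coh(X))$ is the Fourier--Mukai kernel $\Delta_*\CO_X$, and by Grothendieck duality on $X\times X$ its bimodule dual $\mathscr{A}^!$ is the kernel $\Delta_*\CE_X^{-1}$, placed in the appropriate cohomological degree; here $\CE_X$ is the dualizing complex and $\CE_X^{-1}$ its tensor inverse, whenever the latter exists. Under this identification a bimodule map $\mathscr{A}^!\to \mathscr{A}[d]$ becomes $\Delta_*\CE_X^{-1}\to\Delta_*\CO_X[d]$, that is, after tensoring by $\CE_X$, a map of complexes $\CO_X\to \CE_X[d]$ on $X$; and it is a quasi-equivalence precisely when that map is a quasi-isomorphism. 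This already yields a bijection between nondegenerate Hochschild classes and quasi-isomorphisms $\CO_X\to \CE_X[d]$. I would carry out the duality computation directly on kernels rather than passing through the locally proper picture of Proposition~\ref{BDthm} and Lemma~\ref{lem:left_right_CY}, since those require $X$ smooth and proper, whereas the present statement must also cover singular (Gorenstein) $X$.

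The remaining point is the cyclic refinement: one must check that the trivialization of $\CE_X$, which a priori is only a Hochschild-level datum, is canonically promoted to a negative cyclic cycle, and that this promotion is a bijection (no further choices, and no obstruction). Here I would invoke the machinery of \cite{BD19}: the relevant class is the image, under the forgetful map $\HC^-(\mathscr{A})\to \CHC_{\bullet}(\mathscr{A})$, of the canonical self-dual fundamental class attached to $\CE_X$, and $S^1$-equivariance of the Serre pairing makes the lift canonical. I expect \emph{this} to be the main obstacle, as it is where the homotopy-coherent (rather than merely derived-category-level) content lives; concretely one must show that the fibre of $\HC^-(\mathscr{A})\to \CHC_{\bullet}(\mathscr{A})$ over the trivialization class is contractible.

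Finally, for the surface statement I would specialize $d=2$ and unwind the first bijection. A quasi-isomorphism $\CO_S\to \CE_S[2]$ exists if and only if $\CE_S$ is quasi-isomorphic, up to the relevant shift, to the trivial line bundle $\CO_S$. Splitting this into two conditions gives the claim: $\CE_S$ is quasi-isomorphic to a shifted line bundle if and only if $S$ is Gorenstein (the standard characterization of Gorenstein schemes via invertibility of the dualizing complex), and that line bundle is $\CO_S$ if and only if the dualizing sheaf $\omega_S$ is trivial. Combining the two recovers ``$S$ Gorenstein with trivial dualizing sheaf'', completing the proof.
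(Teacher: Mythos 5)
The first thing to say is that the paper contains no proof of this statement: Proposition \ref{prop:CY_on_K3} is imported verbatim as \cite[Prop.~5.12]{BD19}, so the only meaningful comparison is with Brav--Dyckerhoff's own argument. Your outline has the same overall shape as theirs — identify the bimodule dual of the diagonal via Grothendieck duality on kernels, then treat the negative cyclic lift as a separate step — and your specialization to surfaces at the end is fine. But one step, as written, would fail, and it fails exactly where the proposition has content.

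The gap is the identification $\mathscr{A}^!\simeq \Delta_*\CE_X^{-1}$. That formula is the smooth-and-proper one (the inverse Serre kernel), and your hedge ``whenever the latter exists'' gives the game away: $\CE_X$ is invertible precisely when $X$ is Gorenstein, which is part of the \emph{conclusion} of the surface statement, not a standing hypothesis. With your setup you cannot establish the ``only if'' direction — that a surface which is non-Gorenstein, or has nontrivial $\omega_S$, carries no left 2CY structure — because for such $X$ the correspondence is never even set up. A second, related problem is that for singular $X$ the bimodule category of $\Db_{\dg}(\Coh(X))$ is not described by quasi-coherent Fourier--Mukai kernels under the naive convolution, so the dictionary you invoke is unavailable as stated. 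Brav--Dyckerhoff avoid both issues by working with Ind-coherent kernels, where the diagonal bimodule corresponds to $\Delta_*\CE_X$ and its dual $\mathscr{A}^!$ to $\Delta_*\CO_X$; a bimodule map $\mathscr{A}^!\to\mathscr{A}[d]$ then unwinds to a map $\CO_X\to\CE_X[d]$ with no inverses taken, for arbitrary separated finite type $X$. (Smoothness of $\Db_{\dg}(\Coh(X))$ for singular $X$ — needed even to form $(\eta(1)^{\natural})^{\flat}$ as in \S\ref{lccy} — is itself a nontrivial input that you use silently.) Two smaller points: since $\Delta_*$ is not fully faithful at the derived level, $\Hom_{\mathscr{A}^e}(\mathscr{A}^!,\mathscr{A}[d])$ computes Hochschild homology rather than $\Hom_X(\CO_X,\CE_X[d])$ on the nose, and one needs the vanishing of $\HH_i(\mathscr{A})$ above degree $d$ to identify top-degree classes with honest maps $\CO_X\to\CE_X[d]$; and that same vanishing, fed into the spectral sequence for the $S^1$-action, is what makes the negative cyclic lift exist and be unique, via $\HC^-_d(\mathscr{A})\xrightarrow{\sim}\HH_d(\mathscr{A})$. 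This — not any equivariance property of the Serre pairing — is the mechanism behind your ``contractible fibre'', and it is visible in this paper in the proof of Proposition \ref{prop:2CYcat_is_2CY}, which runs the identical argument for Kuznetsov components.
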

\sssct
The following is a special case of formality (Corollary \ref{form_cor}):
\begin{proposition}
Let $S$ be a smooth complex surface with $\omega_S\cong \CO_S$.  Let $\BS=\{\CF_1,\ldots,\CF_r\}$ be a $\Sigma$-collection of complexes in $\Db(\Coh(S))$ such that each $\CF_i$ has compactly supported cohomology.  Then the full subcategory of $\Db_{\dg}(\Coh(S))$ containing $\BS$ is formal.
\end{proposition}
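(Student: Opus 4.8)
The plan is to deduce the statement directly from the general formality result, Corollary~\ref{form_cor}, by producing the ambient left $2$CY structure and checking that $\BS$ lands inside the relevant locally proper subcategory. Recall that Corollary~\ref{form_cor} asserts that any $\Sigma$-collection inside a dg category equipped with a left $2$CY structure spans a formal subcategory, and crucially it applies to arbitrary $\Sigma$-collections of complexes, not merely to objects in a heart. So the only real task is to verify its two hypotheses for $\mathscr{A}=\Db_{\dg}(\Coh(S))$ and the given objects $\CF_1,\dots,\CF_r$.

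First I would exhibit the left $2$CY structure on $\Db_{\dg}(\Coh(S))$. Since $S$ is smooth it is in particular Gorenstein, and the hypothesis $\omega_S\cong\CO_S$ says precisely that its dualizing sheaf is trivial. By the ``in particular'' clause of Proposition~\ref{prop:CY_on_K3} these are exactly the conditions under which $\Db_{\dg}(\Coh(S))$ admits a left $2$CY structure; concretely a trivialization $\CO_S\xrightarrow{\sim}\omega_S$ provides the quasi-isomorphism $\CO_S\to\CE_S[d]$ supplying this structure via the bijection of that proposition. No projectivity of $S$ is needed here: the proposition applies to any separated finite type scheme, and smoothness of $S$ ensures that $\Db_{\dg}(\Coh(S))\simeq\Perf_{\dg}(S)$ is a smooth dg category, so that the left CY formalism of \S\ref{LeftRight} is available.

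Second I would check that $\BS$ is a $\Sigma$-collection sitting inside the full locally proper subcategory on which the left $2$CY structure restricts. Here the compact-support hypothesis is exactly what is needed: since $S$ is smooth its regular locus is all of $S$, so each $\CF_i$, having coherent cohomology with compact support, lies in the full locally proper subcategory of $\Db_{\dg}(\Coh(S))$ singled out in \S\ref{LeftRight}, and in particular the total $\Ext$-algebra between any two of the $\CF_i$ is finite-dimensional. Thus $\{\CF_1,\dots,\CF_r\}$ is a $\Sigma$-collection contained in a full locally proper subcategory $\mathscr{P}$, and Corollary~\ref{form_cor} then applies verbatim to yield formality of the full subcategory of $\Db_{\dg}(\Coh(S))$ spanned by $\BS$.

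The step I expect to require the most care — although it is mild — is this last bookkeeping point. The proof of Corollary~\ref{form_cor} passes through Proposition~\ref{BDthm}, which converts the ambient left $2$CY structure into a right $2$CY (cyclic) structure only after restricting to a locally proper subcategory. The entire content of the compact-support hypothesis is therefore to place $\BS$ inside such a $\mathscr{P}$; for a non-proper surface one cannot simply drop it, since otherwise the morphism complexes between the $\CF_i$ need not have finite-dimensional total cohomology and the cyclic pairing underlying the formality argument would fail to be defined.
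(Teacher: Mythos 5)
Your proposal is correct and follows exactly the paper's intended route: the paper presents this proposition as a special case of Corollary~\ref{form_cor}, with the left 2CY structure supplied by Proposition~\ref{prop:CY_on_K3} (smooth $\Rightarrow$ Gorenstein, plus $\omega_S\cong\CO_S$) and the compact-support hypothesis serving precisely to place $\BS$ in the locally proper subcategory needed for Proposition~\ref{BDthm}. Your closing remark about why compact support cannot be dropped on a non-proper surface matches the paper's own framing of its ``principal geometric example'' of a locally proper subcategory in \S\ref{LeftRight}.
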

\subsubsection{Relation with Toda's neighbourhood theorem}
In \cite{MR3811778} Toda proves an analytic local neighbourhood theorem for the morphism $\pi^H_{P(t)}$ (for general projective varieties $X$), which combined with the above formality result, tells us that for $S$ as in the proposition, and additionally quasiprojective, there is a Cartesian diagram as in \eqref{eq:neighbourhood_diagram}, with $p=\pi^H_{P(t)}$, and with the modification that $H$ is an analytic variety equipped with a $\Gl_{\mathbf{d}}$-action, $H/\Gl_{\mathbf{d}}$ is an analytic stack, and the horizontal morphisms are analytic open embeddings.  
\smallbreak
For Hodge-theoretic applications, it is more convenient to work with \'etale neighbourhoods.  This is because there is no six functor formalism for derived categories of mixed Hodge modules on analytic varieties, so it would be a challenge to translate the proof of Theorem \ref{loc_pur_thm} into the category of analytic mixed Hodge modules.  Note that $\ul{\BQ}_{\FM}$ (or its approximation via well-defined complexes of mixed Hodge modules as in \S \ref{heart_di}) is a true \textit{complex} of mixed Hodge modules, since $\FM$ is generally highly singular (see Example \ref{qex}).
\subsubsection{Results for semistable sheaves}
Fix a smooth complex surface $S$ satisfying $\omega_S\cong\mathcal{O}_S$.  We denote by $\chi_S(\cdot,\cdot)$ the Euler form for the category of (compactly supported) coherent sheaves on $S$.  By Proposition~\ref{BDthm}, if $\CF_1,\ldots,\CF_r$ is a collection of complexes of compactly supported coherent sheaves on $S$, the full dg subcategory of $\Db_{\dg}(\Coh(S))$ containing $\CF_1,\ldots,\CF_r$ carries a right 2CY structure.
\begin{proposition}
\label{rel_pur_S}
Let $S$ be a smooth complex quasiprojective surface, and fix a Hilbert polynomial $P(t)$, and ample class $H \in \NS(\overline{S})_{\BR}$.  We assume that this data satisfies the following support condition: 
\begin{itemize}
\item[*]
For every $H$-Gieseker semistable sheaf $\CF$ on $\overline{S}$ with Hilbert polynomial $P(t)$, and support contained in $S$, there exists an open $S'\subset S$, containing the support of $\CF$, such that $\CO_{S'}\cong \omega_{S'}$.  
\end{itemize}
Then the mixed Hodge module complex $\pi^H_{P(t),!}\ul{\BQ}_{\FM^{H\sstab}_{P(t)}(S)}\otimes \LLL^{\frac{1}{2}\chi_S(\cdot,\cdot)}$ is pure, and is concentrated in cohomological degrees $i\leq 0$.  Therefore the dual compactly supported cohomology
\[
\HO^{\BM}(\FM^{H\sstab}_{P(t)}(S),\BQ\otimes\LLL^{\frac{1}{2}\chi_S(\cdot,\cdot)})
\]
carries an ascending perverse filtration relative to $\pi_{P(t)}^H$ by mixed Hodge structures, beginning in perverse degree zero.
\end{proposition}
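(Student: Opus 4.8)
The plan is to realise $\FM\coloneqq\FM^{H\sstab}_{P(t)}(S)$ as the classical truncation of a $1$-Artin open substack of the derived moduli stack of objects in the left 2CY category $\mathscr{C}=\Db_{\dg}(\Coh(S))$, and then to feed this into the local purity theorem (Theorem \ref{loc_pur_thm}) and the perverse-filtration construction of Proposition \ref{int_filt}. First I would record the structural inputs. Since $S$ is smooth, hence Gorenstein, with $\omega_S\cong\CO_S$, Proposition \ref{prop:CY_on_K3} endows $\mathscr{C}$ with a left 2CY structure. The open substack of $\bm{\FM}_{\mathscr{C}}$ parametrising $H$-Gieseker semistable sheaves is $1$-Artin, as $\Ext^{<0}(\CF,\CF)=0$ for sheaves $\CF$. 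The GIT construction recalled in \S\ref{MSCON} presents $\FM$ as a global quotient stack $U/\GL_V$, so $\FM$ is exhausted by global quotient stacks and $p=\pi^H_{P(t)}\colon\FM\rightarrow\CM^{H\sstab}_{P(t)}(S)$ is a good moduli space; in particular $p_!\ul{\BQ}_{\FM}$ is defined as in \S\ref{stack_di}.

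Next I would verify hypothesis (*) of Theorem \ref{loc_pur_thm}. A closed $\BC$-point of $\FM$ corresponds, via the GIT description of $S$-equivalence classes, to a polystable sheaf $\gr_H(\CF)=\bigoplus_i\CF_i^{\oplus a_i}$ whose summands $\CF_i$ are pairwise non-isomorphic $H$-Gieseker stable sheaves of the common reduced Hilbert polynomial; these are exactly the semisimple objects of the finite-length abelian category of semistable sheaves of that slope. Each $\CF_i$ has proper (compact) support in $S$, so the full subcategory they span is locally proper, and Proposition \ref{BDthm} equips it with a right 2CY structure. The induced pairing $\Ext^{\bullet}(\CF_i,\CF_j)\cong\Ext^{2-\bullet}(\CF_j,\CF_i)^{\vee}$, together with stability (which gives $\Hom(\CF_i,\CF_j)=0$ for $i\neq j$, $\Hom(\CF_i,\CF_i)=k$, and vanishing of negative $\Ext$s), shows $\{\CF_i\}$ is a $\Sigma$-collection. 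Thus (*) holds; equivalently, as $\mathscr{C}$ is left 2CY and closed points are semisimple, (*) is automatic by the remark following Theorem \ref{loc_pur_thm}.

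With (*) in place, Theorem \ref{loc_pur_thm} yields that $\Ho^i\!p_!\ul{\BQ}_{\FM}$ is pure of weight $i$, hence $p_!\ul{\BQ}_{\FM}$ is pure, and therefore so is its Tate twist $p_!\ul{\BQ}_{\FM}\otimes\LLL^{\frac12\chi_S(\cdot,\cdot)}$ (purity being invariant under tensoring with powers of $\LLL$, \S\ref{sec:algebraic_MHMs}). For the cohomological amplitude I would pass through the \'etale-local model of Theorem \ref{loc_str_thm}: \'etale-locally on $\CM$ the morphism $p$ is modelled by $\JH_{\mathbf{d}}\colon\FM_{\mathbf{d}}(\Pi_Q)\rightarrow\CM_{\mathbf{d}}(\Pi_Q)$, and the identity $\frac12\chi_S(\CF,\CF)=\chi_Q(\mathbf{d},\mathbf{d})$ (equation \eqref{euler_compare}) matches our normalising twist with the preprojective one, so that \'etale-locally the twisted complex becomes $\JH_{\mathbf{d},!}\ul{\BQ}_{\FM_{\mathbf{d}}(\Pi_Q)}\otimes\LLL^{\chi_Q(\mathbf{d},\mathbf{d})}$. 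By the vanishing \eqref{perv_bound} of Theorem \ref{pp3_thm} this local model satisfies $\bm{\tau}^{\geq 1}=0$; since the vanishing of a cohomology mixed Hodge module is an \'etale-local condition (mixed Hodge modules forming a stack for the \'etale topology, cf.\ the proof of Lemma \ref{loc_pur_lem}), the global twisted complex is concentrated in cohomological degrees $i\leq 0$.

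Finally, applying Proposition \ref{int_filt} to the pure complex $p_!\ul{\BQ}_{\FM}$ and twisting by the global constant $\LLL^{\frac12\chi_S(\cdot,\cdot)}$ produces the desired ascending perverse filtration on $\HOBM(\FM,\BQ\otimes\LLL^{\frac12\chi_S(\cdot,\cdot)})$ by mixed Hodge structures; the concentration in degrees $i\leq 0$ just established means, after Verdier duality, that this filtration begins in perverse degree zero, exactly as in the concluding clause of Theorem \ref{pp3_thm}. I expect the main obstacle to be conceptual rather than computational: essentially all the substance is imported through the \'etale-local model and hence through purity for preprojective algebras, so the real work is the careful verification of (*) — in particular confirming that closed points are genuinely polystable and that their stable summands have compact support, so that Proposition \ref{BDthm} applies and the local 2CY model is legitimate.
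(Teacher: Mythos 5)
Your proposal is correct and takes essentially the same approach as the paper: Proposition \ref{rel_pur_S} is there deduced from the local purity theorem (Theorem \ref{loc_pur_thm}) together with the GIT construction of \S\ref{MSCON}, with the amplitude bound obtained, exactly as you argue, from \eqref{perv_bound} and \eqref{euler_compare} through the \'etale-local model of Theorem \ref{loc_str_thm}, and the perverse filtration from Proposition \ref{int_filt}. Your verification of hypothesis (*) --- closed points being polystable, the stable summands having compact support so that Proposition \ref{BDthm} applies, and the $\Sigma$-collection property --- is precisely the content the paper compresses into the discussion preceding the proposition.
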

\begin{proof}
Let $\CF$ be a $H$-Gieseker polystable sheaf represented by a point $\iota\colon\{x\}\hookrightarrow \CM_{P(t)}^{H\sstab}(S)$.  Then by the support assumption, we can find an $S'\subset S$ with trivial canonical bundle such that $\iota$ factors through the open embedding $\CM_{P(t)}^{H\sstab}(S')\subset \CM_{P(t)}^{H\sstab}(S)$, and it is enough to prove  the proposition under the assumption $\omega_S\cong\CO_S$.  The result then follows from our local purity theorem (Theorem \ref{loc_pur_thm}), along with the GIT construction of the moduli spaces of semistable coherent sheaves recalled above.
\end{proof}
The support condition in Proposition \ref{rel_pur_S} is automatic if $P(t)$ is constant, or if $\omega_S\cong\CO_S$.  Halpern--Leistner's conjecture \cite[Conj.~4.4]{HL16} follows from Theorem \ref{glob_pur_thm} by considering the second of these cases.
\begin{theorem}
\label{HL_con}
Let $S$ be a smooth complex projective surface, let $H \in \NS(S)_{\BR}$ be an ample class, and let $P(t)$ be a Hilbert polynomial satisfying the condition (*) of Proposition \ref{rel_pur_S}.  Then
\[
\HO^{\BM}(\FM^{H\sstab}_{P(t)}(S),\BQ)
\]
is pure.  In particular, if $S$ is an Abelian surface or a K3 surfacethen the above Borel--Moore homology is pure for any choice of ample $H$ and Hilbert polynomial $P(t)$.
\end{theorem}
This deals with case (2) of Theorem \ref{thm:purity_absolute}.  By construction, the moduli stacks $\FM^{H\sstab}_{P(t)}(S)$ and $\FM^{H\st}_{P(t)}(S)$ satisfy the conditions of \S \ref{Tret}.  We thus deduce the following from Theorem \ref{ICin}:
\begin{proposition}
\label{HL_c_c}
Let $S$ be a smooth complex quasiprojective surface satisfying $\omega_S \cong \CO_S$.  Fix an ample class $H \in \NS(\overline{S})_{\BR}$, Hilbert polynomial $P(t)$, and determinant bundle \eqref{Det_choice}.  Then there are natural inclusions of cohomologically graded mixed Hodge structures
\[
\bigoplus_{\substack{\CN\in\pi_0\left(\CM^{H\st}_{P(t)}(S)\right)\\{n\geq 0}}}\ICA(\overline{\CN})\otimes\LLL^n\hookrightarrow \HO^{\BM}(\FM^{H\sstab}_{P(t)}(S),\BQ\otimes \LLL^{\frac{1}{2}\chi_{S}(\cdot,\cdot)})
\]
with $\ICA(\overline{\CN})\otimes\LLL^n$ including into the $(2n)$th piece of the perverse filtration on the target.  The same statement remains true under the weaker condition (*).
\end{proposition}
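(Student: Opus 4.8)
The plan is to check that $\FM^{H\sstab}_{P(t)}(S)$, equipped with its open stable locus $\CM^{H\stab}_{P(t)}(S)\subset\CM^{H\sstab}_{P(t)}(S)$ playing the role of $\CM^s$, fits the framework of \S\ref{Tret}, and then to read off the assertion directly from Theorem \ref{ICin}.

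First I would import the GIT data of \S\ref{MSCON}: the presentation $\FM^{H\sstab}_{P(t)}(S)\cong U/\GL_V$ exhibits the stack as a global quotient, the centre $\BG_m\subset\GL_V$ acts trivially on $U$, and the restriction $U^s\to\CM^{H\stab}_{P(t)}(S)$ over the stable locus is a principal $\PGl_V$-bundle. Consequently $\pi^H_{P(t)}$ restricts to a $\BC^*$-gerbe over $\CM^{H\stab}_{P(t)}(S)$, so that setting $X=U$, $G=\GL_V$ and $\CM^s=\CM^{H\stab}_{P(t)}(S)$ matches the opening hypotheses of \S\ref{Tret}, after the harmless enlargement $X\rightsquigarrow X\times_G\Gl_N$ used there to arrange $G=\Gl_N$.

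Next I would verify the two geometric inputs that \S\ref{Tret} demands of $\CM^s$: smoothness and even-dimensionality of its components. Since $S$ is a smooth surface with $\omega_S\cong\CO_S$, Proposition \ref{prop:CY_on_K3} makes $\Db_{\dg}(\Coh(S))$ a left 2CY category; hence $\FM^{H\sstab}_{P(t)}(S)$ is an open substack of the stack of objects in a left 2CY category, and $\CM^s$ is at once an honest variety and a $0$-shifted symplectic derived stack by \cite{PTVV, BD18}, so it is smooth and symplectic, in particular of even dimension on each component. Concretely this reflects the pairing $\Ext^1(\CF,\CF)\cong\Ext^1(\CF,\CF)^\vee$ coming from the right 2CY structure on $\REnd(\CF)$. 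In the same breath I would record property $(*)$ of Theorem \ref{loc_pur_thm}: by Proposition \ref{Sclosed} the closed points of $\FM^{H\sstab}_{P(t)}(S)$ are the polystable sheaves $\bigoplus_i\CF_i^{\oplus d_i}$, their stable summands form a $\Sigma$-collection, and the full subcategory they span carries a right 2CY structure by Proposition \ref{BDthm}. With $(*)$ in hand, Theorem \ref{loc_pur_thm} (equivalently Proposition \ref{rel_pur_S}) gives purity of $\pi^H_{P(t),!}\ul{\BQ}_{\FM^{H\sstab}_{P(t)}(S)}\otimes\LLL^{\frac12\chi_S(\cdot,\cdot)}$ and thus makes the decomposition \eqref{sede} available.

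All hypotheses of Theorem \ref{ICin} now hold, and applying it for each component $\CN\in\pi_0(\CM^{H\stab}_{P(t)}(S))$ and each $n\geq0$ produces the canonical embeddings $\ICA(\overline{\CN})\otimes\LLL^n\hookrightarrow\HO^{\BM}(\FM^{H\sstab}_{P(t)}(S),\BQ\otimes\LLL^{\frac12\chi_S(\cdot,\cdot)})$, which assemble over $\CN$ and $n$ to the stated inclusion. The one point needing genuine bookkeeping, rather than quotation, is the placement in the perverse filtration. Here I would return to the construction in the proof of Theorem \ref{ICin}: the embedding is realised as $L^n\circ\HO(\iota)$, where $\iota$ lands in perverse degree zero and the Lefschetz-type operator $\tilde L$, being cup product with the degree-two class $\Det^*(\eta)$, raises perverse degree by two; hence $L^n\circ\HO(\iota)$ has image in the $(2n)$th step of the perverse filtration of Proposition \ref{int_filt}. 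I expect this final matching of the $\LLL$-twist and the Lefschetz shift against the indexing convention of the perverse filtration to be the only delicate step, the remainder being a direct transcription of the GIT description of \S\ref{MSCON} into the axioms of \S\ref{Tret}.
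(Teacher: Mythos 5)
Your proposal is correct and is essentially the paper's own argument: the paper simply notes that by the GIT construction of \S\ref{MSCON} the stacks $\FM^{H\sstab}_{P(t)}(S)$ and $\FM^{H\stab}_{P(t)}(S)$ satisfy the conditions of \S\ref{Tret} and then deduces the statement from Theorem \ref{ICin}, which is exactly the route you take, with your verifications (global quotient with trivial central $\BC^*$, $\PGl_V$-torsor over the stable locus, smooth even-dimensional $\CM^s$ via the symplectic structure, purity via Theorem \ref{loc_pur_thm}) spelling out what the paper leaves implicit. Your reading of the perverse-degree placement from the construction $L^n\circ\HO(\iota)$ in the proof of Theorem \ref{ICin} likewise matches how the paper's statement about the $(2n)$th filtration step is obtained.
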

\begin{remark}
We expect both the above results to hold for more general stacks of Bridgeland-semistable complexes of coherent sheaves on K3 and Abelian surfaces, with the required finite type Artin stacks constructed as in \cite{MR2177199,MR2397465,Betal19,AHLH18}.  These papers use general representability results to obtain existence and further results for these stacks of semistable objects, in contrast to the more ``hands on'' construction recalled in \S \ref{CSsec}; in the absence of a result stating that these stacks are exhausted by global quotient stacks, we are limited to stating Theorem \ref{HL_con} and Proposition \ref{HL_c_c} for stacks of Gieseker semistable coherent sheaves (not Bridgeland-semistable complexes), for now.  
\end{remark}
\subsection{General coherent sheaves on surfaces}
\label{gen_sheaves}
In the previous subsection, we proved purity for the Borel--Moore homology of certain stacks admitting good moduli spaces (Theorem \ref{HL_con}).  We can in fact extend our results to more general stacks, including those that do not admit good moduli spaces, by decomposing stacks of coherent sheaves on $S$ according to the torsion and HN filtrations of the sheaves they parameterise.  Initially we work over an arbitrary field $k$.  
\sssct
Firstly, we consider moduli stacks of pure-dimensional sheaves, but we relax the condition that those sheaves be semistable.  However, stability conditions will still play a role for us, in ensuring that the Borel--Moore homology is finite-dimensional in each cohomological degree.  So we fix, as above, an ample class $H \in \NS(\overline{S})_{\BR}$, where $S$ is a quasiprojective scheme.  Fix a polynomial $q(t)\in\BQ[t]$ with leading coefficient $t^i/i!$ for $0\leq i\leq 2$.  We denote by 
\[
\FM^{H[\geq q(t)]}(S)\subset \FM(S)
\]
the open substack of compactly supported coherent sheaves $\CF\in\Ob(\Coh(S))$ for which the reduced Hilbert polynomials appearing in the HN type of $\CF$ are all greater than or equal to $q(t)$.  For $P(t)\in\BQ[t]$ of the same degree as $q(t)$, we denote by 
\[
\FM^{H[\geq q(t)]}_{P(t)}(S)\subset\FM^{H[\geq q(t)]}(S)
\]
the substack of such sheaves with Hilbert polynomial $P(t)$.  The following lemma is standard, we provide the proof for completeness:
\begin{lemma}
\label{fin_HN}
Let $S$ be a quasiprojective surface with fixed ample class $H\in\NS(\overline{S})_{\BR}$.  Fix polynomials $P(t),q(t)\in\BQ[t]$ of the same degree, where $q(t)$ has leading term $x^d/d!$ for $d\leq 2$.  Then the set $\BS$ of possible HN types of pure-dimensional coherent sheaves on $S$ corresponding to points of $\FM^{H[\geq q(t)]}_{P(t)}(S)$ is finite.
\end{lemma}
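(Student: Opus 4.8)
The plan is to reduce the statement to a purely numerical finiteness assertion about the Hilbert polynomials of the Harder--Narasimhan factors, and then to bound the coefficients of these polynomials one at a time. Passing to a smooth projective compactification $\ol{S}$ and extending sheaves by zero, I may assume $S=\ol{S}$ is smooth projective, since the torsion and HN filtrations are intrinsic and unaffected by this operation. An HN type of a pure $d$-dimensional sheaf with $d=\deg q=\deg P$ is a tuple $(P_1(t),\ldots,P_r(t))$ of Hilbert polynomials of $H$-Gieseker semistable pure $d$-dimensional sheaves $G_i$ with $\sum_i P_i(t)=P(t)$, with strictly decreasing reduced Hilbert polynomials $p_1>\cdots>p_r$, each satisfying $p_i\geq q$. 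Since the HN type is exactly this tuple, it suffices to bound the number $r$ of factors and to show that each $P_i$ ranges over a finite set; as the $a_j(P_i)$ are integers, the latter amounts to bounding each coefficient $a_j(P_i)$.

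First I would bound the top coefficient and the number of factors. By purity each factor has $a_d(P_i)\geq 1$, and additivity of the Hilbert polynomial gives $\sum_i a_d(P_i)=a_d(P)$; hence $r\leq a_d(P)$ and the leading coefficients $(a_d(P_1),\ldots,a_d(P_r))$ run over the finite set of partitions of $a_d(P)$ into positive integers. Next I compare $p_i$ and $q$ at order $t^{d-1}$: since both are normalised to have leading term $t^d/d!$, the inequality $p_i\geq q$ forces the slope inequality $a_{d-1}(P_i)/a_d(P_i)\geq a_{d-1}(q)$. As $a_d(P_i)$ lies in the bounded range $[1,a_d(P)]$, this is a uniform lower bound on each integer $a_{d-1}(P_i)$; combined with $\sum_i a_{d-1}(P_i)=a_{d-1}(P)$ and $r\leq a_d(P)$ it yields a matching upper bound, so the $a_{d-1}(P_i)$ lie in a finite set. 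For $d\leq 1$ this exhausts all coefficients — when $d=1$ one reads off the constant term from the same slope comparison and additivity, and $d=0$ forces $r=1$ — and the proof is complete.

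The remaining and genuinely geometric step is the case $d=2$, where one must still bound the constant term $a_0(P_i)=\chi(G_i)$ of each factor. Here the data $a_2(P_i),a_1(P_i)$ fix only the rank $r_i$ and the number $H\cdot c_1(G_i)$, not the class $c_1(G_i)$ itself, so additivity alone is insufficient. The key input is that a Gieseker-semistable sheaf is slope-semistable, so Bogomolov's inequality $\Delta(G_i)=2r_ic_2(G_i)-(r_i-1)c_1(G_i)^2\geq 0$ applies (see \cite[Thm.~3.4.1]{MR2665168}). Via Riemann--Roch this bounds $a_0(P_i)=\chi(G_i)$ from above by a quadratic expression in $c_1(G_i)$; writing $c_1(G_i)=\tfrac{H\cdot c_1(G_i)}{H^2}H+\xi$ with $\xi\in H^{\perp}$, the Hodge index theorem makes the intersection form on $H^{\perp}$ negative definite, so this quadratic is bounded above over all $\xi$, giving a uniform upper bound on $a_0(P_i)$ depending only on $r_i$, $H\cdot c_1(G_i)$, $H$, $K_S$ and $\chi(\CO_S)$. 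Since the pairs $(r_i,H\cdot c_1(G_i))$ are already confined to a finite set, I obtain a uniform upper bound $a_0(P_i)\leq M$, and additivity $\sum_i a_0(P_i)=a_0(P)$ then forces a lower bound as well. Thus every coefficient of every $P_i$ is confined to a finite set, and the set of HN types is finite.

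The main obstacle is precisely this last step: controlling the constant terms, equivalently the second Chern classes, of the semistable factors. Slope comparison with $q$ and additivity handle the rank and the first Chern data, but they give no bound on $c_2(G_i)$; it is Bogomolov's inequality together with the Hodge index theorem that supplies the missing one-sided bound, after which additivity closes the argument.
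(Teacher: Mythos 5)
Your proof is correct and follows essentially the same strategy as the paper's: bound the leading and subleading coefficients of the HN factors using the constraint $p_i\geq q(t)$ on one side and additivity $\sum_i P_i(t)=P(t)$ on the other, then control the constant terms in the $d=2$ case via the Bogomolov inequality and close again with additivity. The differences are only in execution -- you compare each HN factor directly with $q(t)$ where the paper runs an induction through the destabilising subsheaf inequalities, and you make the Bogomolov bound uniform in $c_1$ via the Hodge index theorem where the paper instead invokes boundedness of the family of semistable sheaves with fixed Hilbert polynomial (incidentally, your direction of the bound, $\chi(G_i)$ bounded \emph{above}, is the correct one; the paper's ``bounding the constant term below'' reads as a slip, harmless because the sum constraint converts either one-sided bound into finiteness).
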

\begin{proof}
There are three cases corresponding to the value of $d$.  The case $d=0$ is trivial, while the proof for the case $d=1$ is an easier version of the proof for the $d=2$ case, which we now provide.

\smallbreak
Let $\CF$ satisfy $P_{\CF}(t)=P(t)=a_2t^2/2+a_1t+a_0$.  We may assume that $\CF$ is not semistable; semistable shaves contribute precisely one HN type to $\BS$ (if they exist).  We show that there is a finite collection of possibilities for the Hilbert polynomial of the destabilising submodule $\CF_1\subset \CF$.  Let $P_{\CF_1}(t)=b_2t^2/2 +b_1t+b_0$ be such a polynomial.  By purity of the dimensions of the subquotients in the HN filtration, we have $0<b_2<a_2$.  Since $\CF_1$ is destabilising we have
\[
\frac{P_{\CF_1}(t)}{b_2}>\frac{P_{\CF}(t)}{a_2}.
\]
By our assumption on the HN type of $\CF$ there is an inequality
\begin{equation*}
\frac{P_{\CF}(t)-P_{\CF_1}(t)}{a_2-b_2}\geq q(t).
\end{equation*}
The first inequality bounds $b_1$ below, while the second bounds it above.  Inducting on the leading coefficient $a_2$, we deduce that there is a finite set $L$ of tuples of degree one polynomials such that if $(P_1(t),\ldots,P_l(t))$ is a HN type for a sheaf $\CE$ corresponding to a point in $\FM^{H[\geq q(t)]}_{P(t)}(S)$, there is some tuple $(E_1(t),\ldots,E_l(t))\in L$ and some tuple $(c_1,\ldots,c_l)\in\BQ^l$ such that $P_i(t)=tE_i(t)+c_i$ for all $i$,  and
\begin{equation}
\label{sum_const}
\sum_{i\leq l} c_i= a_0.
\end{equation}
In words, there is a finite set of possibilities for the tuple of leading and linear terms in a HN type of a coherent sheaf in $\FM^{H[\geq q(t)]}_{P(t)}(X)$, and so we just need to show that for each such choice, there is a finite set of choices of constant terms.  
\smallbreak
Fixing the Hilbert polynomial $P_{\CF'}(t)$ of a semistable sheaf $\CF'$, there are a finite set of possibilities for the Chern classes $c_1(\CF'),c_2(\CF')$ by boundedness of the moduli space.  By the Bogomolov inequality \cite[Thm.4.3]{MR2665168}, if $\CE'$ is a pure semistable sheaf on $S$ we have
\[
2r(\CE')c_2(\CE')-(r'-1)c_1(\CE')^2\geq 0
\]
(where the Chern classes are taken in $\HO(\overline{S},\BZ)$) bounding the constant term of $p_{\CF'}(t)$ below.  The result then follows from \eqref{sum_const}.
\end{proof}
\sssct
We record the following corollary of Lemma \ref{fin_HN}, which one may use to define the mixed Hodge structure on $\FM_{P(t)}^{H[\geq q(t)]}(X)$ as in \S \ref{sec:algebraic_MHMs}.
\begin{corollary}
\label{fts_p}
The stack $\FM_{P(t)}^{H[\geq q(t)]}(S)$ is a global quotient stack.
\end{corollary}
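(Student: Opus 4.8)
The plan is to deduce Corollary~\ref{fts_p} from the finiteness statement of Lemma~\ref{fin_HN} together with the standard GIT/Quot-scheme construction recalled in \S\ref{MSCON}. The key point is that a global quotient presentation is available stratum-by-stratum along the (finite) Harder--Narasimhan stratification, and these presentations can be assembled into a single one. First I would observe that by Lemma~\ref{fin_HN} the set $\BS$ of HN types of sheaves parameterised by $\FM^{H[\geq q(t)]}_{P(t)}(S)$ is finite, so there are only finitely many HN strata $\FM^H_{\bm{\alpha}}(S)$ (in the sense of Lemma~\ref{Nit_lemma}) meeting our stack.

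Next I would reduce to bounding the whole family. Each HN stratum is determined by a tuple $(P_1(t),\ldots,P_r(t))$ of Hilbert polynomials of semistable subquotients, and each corresponding stack $\FM^{H\sstab}_{P_i(t)}(S)$ is bounded by \cite[Thm.~3.3.7]{MR2665168}. A sheaf in $\FM^{H[\geq q(t)]}_{P(t)}(S)$ is an iterated extension of such semistable pieces, and since there are finitely many HN types the set of all such sheaves forms a bounded family. Concretely, I would choose an integer $m\gg 0$, uniform over the finite set $\BS$, such that for every sheaf $\CF$ parameterised by our stack the twist $\CF(m)$ is globally generated with $\ho^i(\CF(m))=0$ for $i\neq 0$ and $\ho^0(\CF(m))=P(m)$. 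Setting $V=k^{\oplus P(m)}$ and $\CH=V\otimes\CO_X(-m)$ as in \S\ref{MSCON}, every such $\CF$ arises as a quotient $\CH\onto \CF$ with $V\xrightarrow{\cong}H^0(\CF(m))$, giving a $k$-point of $\Quot_X(\CH,P(t))$.

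I would then let $W\subset \Quot_X(\CH,P(t))$ be the locus parameterising quotients $\CH\onto\CF$ for which $\CF$ is pure with HN type in $\BS$, all reduced Hilbert polynomials $\geq q(t)$, and the induced map $V\to H^0(\CF(m))$ an isomorphism; this locus is $\GL_V$-invariant, and one identifies $\FM^{H[\geq q(t)]}_{P(t)}(S)\cong W/\GL_V$. The remaining task is to check that $W$ is a (locally closed, hence finite type) subscheme of the Quot scheme: the isomorphism condition on $V\to H^0(\CF(m))$ is open as in \cite[Prop.~2.3.1]{MR2665168}, purity and the HN-type/slope conditions cut out locally closed subschemes by the openness and closedness statements of Lemma~\ref{Nit_lemma} applied to each stratum, and the support condition $\supp(\CF)\subset S$ is open as in \S\ref{MSCON}. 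Taking the finite union over the finitely many HN types in $\BS$ keeps $W$ of finite type, yielding the desired global quotient presentation.

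The main obstacle will be producing the \emph{uniform} bound $m$, i.e.\ verifying that the family of all sheaves with HN types in the finite set $\BS$ is bounded, rather than merely each semistable piece being bounded separately. The finiteness of $\BS$ from Lemma~\ref{fin_HN} is exactly what makes this work: boundedness of each $\FM^{H\sstab}_{P_i(t)}(S)$ plus finiteness of the extension data (controlled by the fixed total Hilbert polynomial $P(t)$ and the finitely many HN types) gives boundedness of the iterated extensions, and a single $m$ can then be chosen to dominate the finite collection. Once boundedness is in hand, the rest is a routine assembly of the standard Quot-scheme argument of \S\ref{MSCON} across the finite stratification.
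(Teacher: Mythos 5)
Your proposal is correct and follows essentially the same route as the paper: both use the finiteness of HN types from Lemma~\ref{fin_HN} to reduce to producing a single uniform $m$ such that every $\CF(m)$ in the family is globally generated with vanishing higher cohomology, and then rerun the Quot-scheme construction of \S\ref{MSCON}. The paper supplies the uniform $m$ by induction on the length of the HN filtration, using a diagram of distinguished triangles to show global generation and cohomology vanishing pass to extensions, which is exactly the content of your appeal to boundedness of extensions of bounded families.
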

\begin{proof}
We may use the same construction as in \S \ref{MSCON}, as long as we can show that there is some $m\gg 0$ such that for all sheaves $\CF$ with Hilbert polynomial $P(t)$ satisfying the condition that the subquotients $\CQ$ in the HN filtration of $\CF$ all satisfy $p_{\CQ}(t)\geq q(t)$, the sheaf $\CF(m)$ is generated by global sections and has vanishing higher cohomology.  By finiteness of the set $\BS$ in Lemma \ref{fin_HN}, it is sufficient to show that for all sheaves $\CF$ with fixed HN type $(P_1(t),\ldots,P_l(t))$, there exists such an $m$.  
\smallbreak

We prove this for $l=2$, and we write $0\subset \CF_1\subset \CF_2=\CF$ for the HN filtration of $\CF$; the general case is an easy induction.  There exists an $m$ such that for all semistable coherent sheaves $\CE$ with Hilbert polynomial $P_1(t)$ or $P_2(t)$, $\CE(m)$ is generated by global sections and $\HO^i(S,\CE(m))=0$ for $i>0$.  Then the morphism of distinguished triangles
\[
\xymatrix{
\HO(S,\CF_1(m))\otimes_k\CO_S\ar[r]\ar[d]&\HO(S,\CF(m))\otimes_k\CO_S\ar[r]\ar[d]&\HO(S,\CF/\CF_1(m))\otimes_k\CO_S\ar[r]\ar[d]&\\
\CF_1(m)\ar[r]&\CF(m) \ar[r]&\CF/\CF_1(m)\ar[r]&
}
\]
demonstrates that $\CF(m)$ is generated by global sections and has vanishing higher cohomology.
\end{proof}

Combining Lemmas \ref{Nit_lemma} and \ref{fin_HN} we deduce
\begin{corollary}
\label{S_intro}
Let $S$ be a quasiprojective surface, and fix an ample class $H\in\NS(\overline{S})_{\BR}$.  Fix polynomials $q(t),P(t)$ with the same degree.  The moduli stack $\FM_{P(t)}^{H[\geq q(t)]}(S)$ can be written as a union of stacks $\FM_{\bm{\alpha}}^H(S)$ corresponding to coherent sheaves of HN type $\bm{\alpha}$, with $\bm{\alpha}$ belonging to a finite set $\BS$.  Moreover if we define
\[
\FM^{H[\geq q(t)]}_{\preccurlyeq \bm{\alpha}}(S)=\bigcup_{\substack{\bm{\beta}\in \BS\\\bm{\beta}\preccurlyeq \bm{\alpha}}}\FM^H_{\bm{\beta}}(S)
\]
then $\FM^{H[\geq q(t)]}_{\preccurlyeq \bm{\alpha}}(S)\subset \FM_{P(t)}^{H[\geq q(t)]}(S)$ is a closed substack, and $\FM^H_{\bm{\alpha}}(S)\subset \FM^{H[\geq q(t)]}_{\preccurlyeq \bm{\alpha}}(S)$ is open.
\end{corollary}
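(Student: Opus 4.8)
The plan is to deduce the statement by pulling back Nitsure's stratification (Lemma~\ref{Nit_lemma}) along the open inclusion of $\FM_{P(t)}^{H[\geq q(t)]}(S)$ into the stack $\FN$ of all pure-dimensional coherent sheaves with Hilbert polynomial $P(t)$, and then using the finiteness of occurring HN types (Lemma~\ref{fin_HN}) to cut the (a priori infinite) indexing set down to the finite set $\BS$. First I would record the one geometric input that makes this work: $\FM_{P(t)}^{H[\geq q(t)]}(S)$ is an \emph{open} substack of $\FN$. Indeed it is the intersection of $\FN$ with the open substack $\FM^{H[\geq q(t)]}(S)\subset\FM(S)$ introduced in \S\ref{gen_sheaves}, so openness in $\FN$ is immediate.

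Next I would apply Lemma~\ref{fin_HN} to produce the finite set $\BS$ of HN types occurring at points of $\FM_{P(t)}^{H[\geq q(t)]}(S)$. Every such sheaf lies in exactly one Nitsure stratum $\FM^H_{\bm{\alpha}}(S)$ with $\bm{\alpha}\in\BS$; conversely $\bm{\alpha}\in\BS$ means precisely that all reduced Hilbert polynomials of $\bm{\alpha}$ dominate $q(t)$, whence $\FM^H_{\bm{\alpha}}(S)\subset\FM_{P(t)}^{H[\geq q(t)]}(S)$. This already gives the first assertion, namely that $\FM_{P(t)}^{H[\geq q(t)]}(S)=\bigcup_{\bm{\alpha}\in\BS}\FM^H_{\bm{\alpha}}(S)$ is a finite union of HN strata. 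I would then restrict the total order $\preccurlyeq$ of Lemma~\ref{Nit_lemma} to $\BS$ and establish the key identity
\[
\FM^{H[\geq q(t)]}_{\preccurlyeq \bm{\alpha}}(S)=\FM^H_{\preccurlyeq \bm{\alpha}}(S)\cap \FM_{P(t)}^{H[\geq q(t)]}(S).
\]
The inclusion from left to right is clear. For the reverse inclusion, the point is that any stratum $\FM^H_{\bm{\beta}}(S)$ with $\bm{\beta}\preccurlyeq\bm{\alpha}$ but $\bm{\beta}\notin\BS$ meets $\FM_{P(t)}^{H[\geq q(t)]}(S)$ trivially: such a $\bm{\beta}$ is either an empty stratum or has some reduced Hilbert polynomial strictly below $q(t)$, so no sheaf of HN type $\bm{\beta}$ can satisfy the defining condition of $\FM_{P(t)}^{H[\geq q(t)]}(S)$.

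Granting this identity, both remaining claims follow formally. Closedness of $\FM^{H[\geq q(t)]}_{\preccurlyeq \bm{\alpha}}(S)$ inside $\FM_{P(t)}^{H[\geq q(t)]}(S)$ is obtained by intersecting the closed embedding $\FM^H_{\preccurlyeq \bm{\alpha}}(S)\hookrightarrow\FN$ of Lemma~\ref{Nit_lemma} with the open substack from the first step, and openness of $\FM^H_{\bm{\alpha}}(S)$ inside $\FM^{H[\geq q(t)]}_{\preccurlyeq \bm{\alpha}}(S)$ follows identically from the open embedding $\FM^H_{\bm{\alpha}}(S)\hookrightarrow\FM^H_{\preccurlyeq \bm{\alpha}}(S)$ of that lemma. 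The hard part, such as it is, is not really hard: it is the careful verification that the strata indexed outside $\BS$ drop out after intersecting with $\FM_{P(t)}^{H[\geq q(t)]}(S)$, which is exactly where the precise characterisation of $\BS$ (as those HN types with nonempty stratum whose reduced Hilbert polynomials all dominate $q(t)$) is used. Everything else is bookkeeping with open and closed substacks.
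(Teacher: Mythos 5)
Your proposal is correct and follows exactly the route the paper intends: the corollary is stated there with no written proof beyond ``Combining Lemmas \ref{Nit_lemma} and \ref{fin_HN} we deduce'', and your argument is precisely that combination, with the (correct) observation that each Nitsure stratum is either entirely contained in or disjoint from $\FM_{P(t)}^{H[\geq q(t)]}(S)$ since the HN type is constant along a stratum. Your intersection identity and the resulting open/closed bookkeeping are the routine verification the paper leaves implicit, so there is nothing to add.
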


\sssct For the rest of \S \ref{gen_sheaves} we set $k=\BC$.

\begin{lemma}
\label{HN_pure}
Let $S$ be a smooth complex quasiprojective surface, and fix an ample class $H\in\NS(\overline{S})_{\BR}$.  Fix a HN type $\bm{\alpha}=(P_1(t),\ldots,P_r(t))$, and assume that for each $P_i(t)$ the mixed Hodge structure on $\HO_c(\FM_{P_i(t)}^{H\sstab}(S),\BQ)$ is pure, and assume that $\omega_S\cong\CO_S$, or more generally that the pair $(H,P_i(t))$ satisfies condition (*) from Proposition \ref{rel_pur_S} for each $1\leq i\leq r$.  Then $\HO_c(\FM^H_{\bm{\alpha}}(S),\BQ)$ is pure.
\end{lemma}
\begin{proof}
As indicated in the proof of \cite[Cor.4.1]{HL16}, the idea is to use that the morphism $\FM^H_{\bm{\alpha}}(S)\rightarrow \FM_{P_1(t)}^{H\sstab}(S)\times\ldots\times \FM_{P_l(t)}^{H\sstab}(S)$ is quasi-smooth and induces an isomorphism in Borel--Moore homology (up to an overall Tate twist), by combining generalisations of \eqref{Aff_1}, \eqref{Aff_2} and \eqref{K_iso} below.  We spell out the details here.  For ease of presentation we assume that $\bm{\alpha}=(P_1(t),P_2(t))$ has length two; the proof for more general HN types is then a simple induction.  Also, for ease of presentation, we will assume that $S$ is projective, the general proof proceeds as in the definition of $\FM_{P(t)}^{H\sstab}(S)$ by working with coherent sheaves on the compactification $\overline{S}$ that have support contained in $S$.
\smallbreak
In the construction of the moduli stacks $\FM_{P_1(t)}^{H\sstab}(S)$ and $\FM_{P_2(t)}^{H\sstab}(S)$ we fix sufficiently large numbers $m_1$ and $m_2$ so that $\CF(m_i)$ is globally generated whenever $\CF$ is a semistable sheaf with Hilbert polynomial $P_i(t)$, and also satisfies $\ho^n(\CF)=0$ for $n>0$.  Replacing $m_1$ and $m_2$ by $\max(m_1,m_2)$, we may assume that $m=m_1=m_2$.  As in \S \ref{MSCON} we define
\[
V=\BC^{\oplus P_{2}(m)};\quad\quad
\CH=V\otimes\CO_S.  
\]
By construction, on the stack $\FM_{P_2(t)}^{H\sstab}(S)\times S$ we have a short exact sequence of coherent sheaves
\[
0\rightarrow \CK\rightarrow \overline{\CH}\xrightarrow{f} \CF_{P_2(t)}\rightarrow 0
\]
where $\overline{\CH}$ is induced by the $\Gl_V$-equivariant sheaf $\pi_S^*\CH$ on $\Quot_S(\CH,P_2(t))\times S$, and $f$ is the quotient morphism to the universal sheaf $\CF_{P_2(t)}$.  
For $i,j\in\{1,2,3\}$ we denote by $\pi_{ij}$ the projection to the $i$th and $j$th factors in the product
\[
\FM_{P_1(t)}^{H\sstab}(S)\times \FM_{P_2(t)}^{H\sstab}(S)\times S.
\]
We claim that both of the complexes
\[ 
\CE_1\coloneqq \pi_{12,*}\SRHom(\pi_{23}^*\overline{\CH}, \pi_{13}^*\CF_{P_1(t)});\quad\quad
\CE_0\coloneqq \pi_{12,*}\SRHom(\pi_{23}^*\CK, \pi_{13}^*\CF_{P_1(t)})
\]
are concentrated in degree zero, and are moreover locally free.  The first claim follows directly from our assumption on $m$.  For the second, consider a geometric point $s\colon\Spec(K)\rightarrow \FM_{P_1(t)}^{H\sstab}(S)\times \FM_{P_2(t)}^{H\sstab}(S)$ corresponding to a pair of semistable sheaves $\CG_1$ and $\CG_2$ on $S_K$ with Hilbert polynomials $P_1(t)$ and $P_2(t)$ respectively.  Then for $i\geq 0$
\[
\Ho^i(s^*\CE_0)\cong \Ext^{i+1}_{S_K}(\CG_2,\CG_1)\cong \Ext^{1-i}_{S_K}(\CG_1,\CG_2)^{\vee}
\]
by Serre duality.  Since $\CG_1$ and $\CG_2$ are semistable coherent sheaves, with $p_{\CG_1}(t)>p_{\CG_2}(t)$, we deduce that this final term is zero for $i>0$, as required.  In the diagram
\[
\xymatrix{
\FM^H_{\bm{\alpha}}(S)&\ar[l]_-{\cong}\Tot(\CE_1/\CE_0)& \ar[l]_-{a}\Tot(\CE_1)\ar[r]^-b &\FM^{H\sstab}_{P_1(t)}(S)\times \FM^{H\sstab}_{P_2(t)}(S)
}
\]
both of the morphisms $a$ and $b$ are affine fibrations, so there are isomorphisms of mixed Hodge structures
\begin{align}
\label{Aff_1}
\HO_c(\FM^H_{\bm{\alpha}}(S),\BQ)\otimes\LLL^{\dim(a)}\cong& \HO_c(\Tot(\CE_1),\BQ)\\
\label{Aff_2}
\HO_c(\FM^{H\sstab}_{P_1(t)}(S)\times \FM^{H\sstab}_{P_2(t)}(S),\BQ)\otimes\LLL^{\dim(b)}  \cong& \HO_c(\Tot(\CE_1),\BQ).
\end{align}
Via the Kunneth isomorphism
\begin{equation}
\label{K_iso}
\HO_c(\FM^{H\sstab}_{P_1(t)}(S)\times \FM^{H\sstab}_{P_2(t)}(S),\BQ)\cong \HO_c(\FM^{H\sstab}_{P_1(t)}(S),\BQ)\otimes \HO_c(\FM^{H\sstab}_{P_2(t)}(S),\BQ)
\end{equation}
and the isomorphisms \eqref{Aff_1} and \eqref{Aff_2} the mixed Hodge structure $\HO_c(\FM^H_{\bm{\alpha}}(S),\BQ)$ is pure if both $\HO_c(\FM^{H\sstab}_{P_1(t)}(S),\BQ)$ and $\HO_c(\FM^{H\sstab}_{P_2(t)}(S),\BQ)$ are.
\end{proof}
%The isomorphisms \eqref{Aff_1}, \eqref{Aff_2} and \eqref{K_iso} appear also in \cite{HL16}, along the way to proving there that the Borel--Moore Homology $\HO_c(\FM^H_{\bm{\alpha}}(S),\BQ)$ can be written as a direct sum of the Borel--Moore homology of the strata appearing in the decomposition into HN types.  One of the principal motivations for the present paper was to explore the Hodge theoretic speculations of \cite[\S 4.1.1]{HL16}; as envisaged there, we can use our purity results to upgrade this direct sum decomposition according to HN types to a decomposition of cohomologically graded mixed Hodge structures.
\sssct Putting everything together, we can finally prove purity for more general stacks of coherent sheaves:
\begin{theorem}
\label{purity_ohne}
Let $S$ be a smooth complex projective surface satisfying $\CO_S\cong\omega_S$, let $H\in\NS(S)_{\BR}$ be an ample class, and let $P(t)$ and $q(t)$ be polynomials of the same degree, where the leading term of $q(t)$ is $t^d/d!$ for some $d\leq 2$.  Then the mixed Hodge structure on $\HO^{\BM}(\FM^{H[\geq q(t)]}_{P(t)}(S),\BQ)$ is pure.
\end{theorem}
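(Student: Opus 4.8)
The plan is to reduce the statement to the two ingredients already in place---purity in the semistable case (Theorem \ref{HL_con}) and the Harder--Narasimhan stratification (Corollary \ref{S_intro})---and then to glue purity across strata by a weight argument. Since $\HO^{\BM}(\FM^{H[\geq q(t)]}_{P(t)}(S),\BQ)$ is the graded dual in the category of mixed Hodge structures of $\HO_c(\FM^{H[\geq q(t)]}_{P(t)}(S),\BQ)$, it is enough to prove that the latter is pure, i.e. that $\HO^n_c$ is pure of weight $n$ for each $n$. By Corollary \ref{fts_p} the stack $\FM^{H[\geq q(t)]}_{P(t)}(S)$ is a finite-type global quotient stack, so its compactly supported cohomology is defined as a mixed Hodge structure by the approximation construction of \S \ref{stack_di}; the same applies to every ($G$-invariant, locally closed) substack.

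First I would set up the filtration. By Corollary \ref{S_intro} the stack breaks into finitely many locally closed strata $\FM^H_{\bm\alpha}(S)$ indexed by the finite totally ordered set $(\BS,\preccurlyeq)$ of Lemma \ref{fin_HN}, with each $\FM^{H[\geq q(t)]}_{\preccurlyeq\bm\alpha}(S)$ closed. Writing $\BS=\{\bm\alpha_1\prec\cdots\prec\bm\alpha_N\}$ and $Z_k=\FM^{H[\geq q(t)]}_{\preccurlyeq\bm\alpha_k}(S)$, this gives an increasing chain of closed substacks $\emptyset=Z_0\subset Z_1\subset\cdots\subset Z_N=\FM^{H[\geq q(t)]}_{P(t)}(S)$ in which $\FM^H_{\bm\alpha_k}(S)=Z_k\setminus Z_{k-1}$ is open in $Z_k$. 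For each $\bm\alpha_k=(P_1(t),\ldots,P_r(t))$, Theorem \ref{HL_con} (applicable because $S$ is projective with $\CO_S\cong\omega_S$) shows $\HO_c(\FM^{H\sstab}_{P_i(t)}(S),\BQ)$ is pure for every $i$, whence Lemma \ref{HN_pure} gives that $\HO_c(\FM^H_{\bm\alpha_k}(S),\BQ)$ is pure.

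It then remains to glue, which I would do by induction on $k$. The open-closed decomposition $(\FM^H_{\bm\alpha_k}(S),Z_{k-1})$ of $Z_k$ gives a long exact sequence of mixed Hodge structures
\[
\cdots\to\HO^n_c(\FM^H_{\bm\alpha_k}(S),\BQ)\to\HO^n_c(Z_k,\BQ)\to\HO^n_c(Z_{k-1},\BQ)\xrightarrow{\partial}\HO^{n+1}_c(\FM^H_{\bm\alpha_k}(S),\BQ)\to\cdots.
\]
Assuming $\HO_c(Z_{k-1},\BQ)$ pure, the map $\partial$ is a morphism of mixed Hodge structures from a term pure of weight $n$ to a term pure of weight $n+1$; a morphism of pure Hodge structures of distinct weights vanishes, so $\partial=0$ and the sequence splits into short exact sequences
\[
0\to\HO^n_c(\FM^H_{\bm\alpha_k}(S),\BQ)\to\HO^n_c(Z_k,\BQ)\to\HO^n_c(Z_{k-1},\BQ)\to 0
\]
with outer terms pure of weight $n$. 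Since $\GrW{m}$ is exact, $\GrW{m}\HO^n_c(Z_k,\BQ)=0$ for $m\neq n$, so $\HO^n_c(Z_k,\BQ)$ is pure of weight $n$; taking $k=N$ and dualizing proves the theorem.

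The main point---and the reason a decomposition into pure pieces does not by itself give purity (e.g. $\BC^*=\BA^1\setminus\{0\}$)---is precisely the vanishing of the connecting maps $\partial$, which uses that the strata are pure with weight equal to the cohomological degree, not merely of bounded weight. The only thing to check beyond this is that the long exact sequence above is genuinely one of mixed Hodge structures at the stacky level; this is routine, since the scheme approximations $B_N$ of \S \ref{stack_di} restrict compatibly to the open stratum and its closed complement, so the triangle $j_!j^*\to\id\to i_*i^*$ is realised on the approximating schemes.
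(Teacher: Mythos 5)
Your proposal is correct and follows essentially the same route as the paper's proof: stratify by HN type via Corollary \ref{S_intro}, establish purity of each stratum through Lemma \ref{HN_pure} and Theorem \ref{HL_con}, and induct over the finite ordered set of strata using the long exact sequence in compactly supported cohomology, where purity of weight $n$ in degree $n$ of the flanking terms forces purity of the middle term. Your explicit remark that the connecting map vanishes (and hence the sequence splits) makes visible what the paper leaves implicit, but it is the same argument.
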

\begin{proof}
Define the set $\BS=\{\bm{\beta}_1,\ldots,\bm{\beta}_m\}$ as in Corollary \ref{S_intro}, which we label so that $\bm{\beta}_i\preccurlyeq\bm{\beta}_j$ if $i<j$.  Set
\[
\FN_i\coloneqq \bigcup_{j\leq i}\FM^{H[\geq q(t)]}_{\bm{\beta}_j}(S).
\]
Then by Corollary \ref{S_intro}, each $\FN_i\subset \FM^{H[\geq q(t)]}_{P(t)}(S)$ is a closed substack, and moreover the inclusion
\[
\FM_{\bm{\beta}_i}^{H}(S)=\FN_i\setminus \FN_{i-1}\hookrightarrow \FN_i
\]
is open.  We show by induction that each $\HO_c(\FN_i,\BQ)$ is pure.  For the base case note that $\FN_1=\FM_{\bm{\beta}_1}^{H}(S)$ has pure compactly supported cohomology by Lemma \ref{HN_pure} and Theorem \ref{HL_con}.  For the induction step, for each $i$ and for each $n\in\BZ$  there is a long exact sequence
\[
\rightarrow \HO_c^n(\FM_{\bm{\beta}_i}^{H}(S),\BQ)\rightarrow \HO_c^n(\FN_i,\BQ)\rightarrow \HO_c^n(\FN_{i-1},\BQ)\rightarrow
\]
The mixed Hodge structure $\HO_c^n(\FN_{i-1},\BQ)$ is pure of weight $n$ by the induction hypothesis, while $\HO_c^n(\FM_{\bm{\beta}_i}^{H}(S),\BQ)$ is pure of weight $n$ by another application of Lemma \ref{HN_pure} and Theorem \ref{HL_con}.  We deduce that $\HO_c^n(\FN_i,\BQ)$ is pure of weight $n$ as required.  In particular, $\HO_c(\FM^{H[\geq q(t)]}_{P(t)}(S),\BQ)=\HO_c(\FN_m,\BQ)$ is pure, as is its graded dual $\HO^{\BM}(\FM^{H[\geq q(t)]}_{P(t)}(S),\BQ)$.
\end{proof}
\begin{remark}
\label{thm_exten}
More generally, Theorem \ref{purity_ohne} remains true, with the same proof, under the weaker condition that $(H,P'(t))$ satisfies condition (*) of Proposition \ref{rel_pur_S} for every $P'(t)$ appearing in the HN types contained in $\BS$.
\end{remark}
\sssct
We now assume that $S$ is a smooth complex projective surface satisfying $\CO_S\cong\omega_S$.  Fix a degree $d\leq 2$ and let $q(t)<h(t)$ be a pair of degree $d$ normalised polynomials.  Then there is an open inclusion of stacks
\[
\FM^{H[\geq h(t)]}_{P(t)}(S)\hookrightarrow \FM^{H[\geq q(t)]}_{P(t)}(S)
\]
with complement $\FN$ a finite union of stacks $\FM^{H}_{\bm{\alpha}}$ for HN types $\bm{\alpha}=(E_1(t),\ldots,E_r(t))$ satisfying $h(t)>e_r(t)\geq q(t)$ (where we denote by $e_i(t)$ the normalisation of $E_i(t)$).  We have again used Lemma \ref{fin_HN} for the finiteness part of this statement.  
\smallbreak
Via the same inductive proof as Theorem \ref{purity_ohne}, the mixed Hodge structure on $\HO_c(\FN,\BQ)$ is pure, so that in the long exact sequence
\[
\rightarrow \HO^n_c\left(\FM^{H[\geq h(t)]}_{P(t)}(S),\BQ\right)\xrightarrow{\lambda_n} \HO^n_c\left(\FM^{H[\geq q(t)]}_{P(t)}(S),\BQ\right)\rightarrow \HO^n_c\left(\FN,\BQ\right)\xrightarrow{\psi}
\]
the morphism $\psi$ is a morphism from a pure weight $n$ Hodge structure to a pure weight $n+1$ Hodge structure.  It follows that $\psi=0$ and the long exact sequence breaks up into split short exact sequences.  The resulting direct sum decomposition according to HN strata itself can be derived as a special case of \cite[Cor.4.1]{HL16}.  In particular, $\lambda_n$ is an inclusion, and we have proved the following
\begin{proposition}
Let $S$ be a smooth complex projective surface satisfying $\CO_S\cong\omega_S$, let $d\leq 2$, and let $P(t)\in\BQ[t]$ be a degree $d$ polynomial.  We write $p(t)$ for the normalisation of $P(t)$.  Let $\FM^{\mathrm{pdim }\:d}_{P(t)}(S)$ denote the stack of pure $d$-dimensional coherent sheaves on $S$ with Hilbert polynomial $P(t)$.  Then 
\[
\HO_c\left(\FM^{\mathrm{pdim }\:d}_{P(t)}(S),\BQ\right)=\bigcup_{q(t)\leq p(t)}\HO_c\left(\FM^{H[\geq q(t)]}_{P(t)}(S),\BQ\right),
\]
and so for every $j\in\BZ$, $\HO^j_c\left(\FM^{\mathrm{pdim }\:d}_{P(t)}(S),\BQ\right)$ can be written as an infinite nested union of pure Hodge structures of weight $j$.
\end{proposition}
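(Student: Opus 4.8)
The plan is to realise the left-hand side as a filtered colimit of the compactly supported cohomologies of the finite-type open substacks $\FM^{H[\geq q(t)]}_{P(t)}(S)$, and then to combine injectivity of the transition maps with the purity statement of Theorem~\ref{purity_ohne}. First I would record that the family $\{\FM^{H[\geq q(t)]}_{P(t)}(S)\}_{q(t)\leq p(t)}$ is a directed system of open substacks whose union is $\FM^{\mathrm{pdim }\:d}_{P(t)}(S)$. Directedness is immediate from the total order on $\BQ[t]$: for $q_1(t),q_2(t)\leq p(t)$, both of the corresponding substacks are contained in the one attached to $\min(q_1(t),q_2(t))$. Exhaustion holds because any pure $d$-dimensional coherent sheaf $\CF$ with $P_{\CF}(t)=P(t)$ has a finite Harder--Narasimhan filtration, and the least reduced Hilbert polynomial $e(t)$ occurring in its type satisfies $e(t)\leq p_{\CF}(t)=p(t)$; thus $\CF$ defines a point of $\FM^{H[\geq q(t)]}_{P(t)}(S)$ for every $q(t)\leq e(t)$, and every such $q(t)$ is $\leq p(t)$.

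Next I would settle what the left-hand side means. In the present $2$-Calabi--Yau situation the stack $\FM^{\mathrm{pdim }\:d}_{P(t)}(S)$ is not of finite type---it carries the infinitely many Harder--Narasimhan strata of Corollary~\ref{S_intro}, with no lower bound on instability---so its compactly supported cohomology is not covered by the construction of \S\ref{heart_di}. The natural notion, and the one intended here, is the filtered colimit
\[
\HO_c\!\left(\FM^{\mathrm{pdim }\:d}_{P(t)}(S),\BQ\right)\coloneqq \varinjlim_{q(t)}\HO_c\!\left(\FM^{H[\geq q(t)]}_{P(t)}(S),\BQ\right),
\]
the transition morphisms being the open-extension maps $j_!$ attached to the inclusions $\FM^{H[\geq h(t)]}_{P(t)}(S)\hookrightarrow \FM^{H[\geq q(t)]}_{P(t)}(S)$ for $q(t)\leq h(t)$. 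I would justify this as the standard behaviour of compactly supported cohomology under an increasing union of finite-type open substacks: a compactly supported class is supported on a quasi-compact, hence finite-type, open substack, so no cohomology is lost in the colimit, and since each $j_!$ is a morphism in $\MHS$, the colimit inherits a filtered mixed Hodge structure. The maps $j_!$ coincide with the morphisms $\lambda_n$ of the preceding discussion, as that localisation sequence is precisely the one attached to the open--closed decomposition with closed complement $\FN$.

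Finally I would conclude. By Theorem~\ref{purity_ohne} each $\HO^n_c(\FM^{H[\geq q(t)]}_{P(t)}(S),\BQ)$ is pure of weight $n$, and by the long exact sequence recalled just before the statement each transition map $\lambda_n$ is injective (the relevant connecting maps vanish, since they would raise the weight on a pure Hodge structure). A filtered colimit of injections of pure weight-$n$ Hodge structures is their increasing union inside the colimit, and remains a union of pure weight-$n$ Hodge structures; passing to the colimit over all $q(t)\leq p(t)$ then yields the displayed identification of $\HO^j_c$ as an infinite nested union of pure weight-$j$ Hodge structures. The main obstacle is the second step: because $\FM^{\mathrm{pdim }\:d}_{P(t)}(S)$ is not finite type and, in contrast with the case of curves, its unstable loci need not have codimension tending to infinity, one cannot appeal to the finite-type formalism of \S\ref{heart_di}, and must instead verify directly that compactly supported cohomology commutes with the exhaustion and carries the expected mixed Hodge structure.
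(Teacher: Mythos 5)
Your proof is correct and takes essentially the same approach as the paper: the paper likewise notes that the complement of $\FM^{H[\geq h(t)]}_{P(t)}(S)$ in $\FM^{H[\geq q(t)]}_{P(t)}(S)$ is a finite union of HN strata with pure compactly supported cohomology (by the inductive argument of Theorem~\ref{purity_ohne}), deduces that the connecting morphism $\psi$ vanishes for weight reasons, and concludes that each $\lambda_n$ is injective, giving the nested union. Your extra care in defining $\HO_c$ of the non-finite-type stack as the filtered colimit along the extension-by-zero maps only makes explicit what the paper leaves implicit in the displayed identification.
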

\sssct
Next, we relax the condition on the purity of the support of coherent sheaves on $S$.  For simplicity we consider sheaves in the category $\Coh_{\leq 1}(S)$, i.e. the category of coherent sheaves with one-dimensional support (which we do not assume is pure).  We fix an ample class $H$.  For $\CF\in\Ob(\Coh_{\leq 1}(S))$ we define the \textit{slope} $\mu(\CF)=a_0/a_1$, where $P_{\CF}(t)=a_1t+a_0$.  Since we do not assume purity of the support of $\CF$, the slope may be infinite.  For a coherent sheaf $\CF\in\Ob(\Coh_{\leq 1}(S))$, the condition of being semistable and pure is equivalent to slope stability, i.e. the condition that for all proper nonzero subsheaves $\CF'\subset \CF$ we have an inequality
\[
\mu(\CF')\leq \mu(S).
\]
Let $l\in\BQ$.  We define $\FM^{\mu\geq l}_{(\beta,n)}(S)\subset \FM_{(\beta,n)}(S)$ to be the stack of coherent sheaves $\CF$ with characteristic cycle $[\CF]=\beta\in\HO_2(S,\BZ)$ and with $\chi(\CF)=n$, such that there are no subquotients of slope less than $l$ in the (Gieseker) HN filtration of $\CF/\CF_{(0)}$, or equivalently the (slope) HN filtration of $\CF$.  Here we denote by $\CF_{(0)}\subset \CF$ the zero-dimensional sheaf appearing in the torsion filtration of $\CF$.  
\smallbreak
Via the same argument as the proof of Lemma \ref{fin_HN}, if $\CF$ is a coherent sheaf such that the slopes of the subquotients in the HN filtration of $\CF/\CF_{(0)}$ are bounded below, there is an upper bound $M$ on the length of $\CF_{(0)}$, and there is a finite list of possibilities for the HN type of $\CF/\CF_{(0)}$ (again by Lemma \ref{fin_HN}).  Arguing as in Corollary \ref{fts_p}, the stack $\FM^{\mu\geq l}_{(\beta,n)}(S)$ is a global quotient stack.
\smallbreak
As a final application of the purity theorem to coherent sheaves, we have
\begin{theorem}
\label{1dim_pure}
Let $S$ be a smooth complex projective surface with fixed ample class $H$.  Fix $\beta\in \HO_2(S,\BZ)$, and $n\in\BZ$, and assume that for all decompositions $\beta=\beta_1+\ldots+\beta_r$ into effective classes, $\beta_i\cdot \omega_S=0$ for all $1\leq i\leq r$.  Let $l\in\BQ$.  Then the mixed Hodge structure on $\HO^{\BM}(\FM^{\mu\geq l}_{(\beta,n)}(S),\BQ)$ is pure.
\end{theorem}
\begin{proof}
We define a function on the stack $\FM^{\mu\geq l}_{(\beta,n)}(S)$ that takes a point $\CF$ to the length of $\CF_{(0)}$.  We claim that this function is upper semicontinuous.  This follows from the fact that (by Lemma \ref{tors_grab}) we may define this function as $\ho^0(\CF\otimes \CO(-NH))$ for $N\gg 0$.  We define $\FT_m\subset \FM^{\mu\geq l}_{(\beta,n)}(S)$ to be the substack of sheaves for which the length of the torsion part is $m$.  Then we have shown that the inclusion
\[
\FT_{\geq m}\coloneqq \bigcup_{m'\geq m}\FT_{m'}\subset \FM^{\mu\geq l}_{(\beta,n)}(S)
\]
is closed, while the inclusion $\FT_m\subset \FT_{\geq m}$ is open.  Via the same argument as the proof of Lemma \ref{HN_pure}, there is an isomorphism of mixed Hodge structures
\begin{equation}
\label{tors_off}
\HO_c(\FT_m,\BQ)\cong \HO_c(\FM_{(0,m)}(S),\BQ)\otimes \HO_c(\FM_{(\beta,n-m)}^{H[\geq t+l]}(S),\BQ)\otimes\LLL^{\gamma}
\end{equation}
where $\gamma\in\BZ$ depends only on $P(t)$ and $m$\footnote{Although we do not use it, it is not hard to show that in fact $\gamma=0$.  This can be seen by considering the special case in which the support of $\CF_{(0)}$ is disjoint from the support of $\CF/\CF_{(0)}$ and the fact that $\gamma$ only depends on $P(t)$ and $m$.}.  By Theorem \ref{HL_con} the first factor on the right hand side of \eqref{tors_off} is pure, and by Theorem \ref{purity_ohne} (and Remark \ref{thm_exten}) the second factor is as well.  As in the proof of Theorem \ref{purity_ohne} we show by induction on $M-m$ that each $\HO_c(\FT_{\geq m},\BQ)$ is pure, and the result follows, since $\FT_{\geq 0}=\FM^{\mu\geq l}_{(\beta,n)}(S)$.
\end{proof}
\subsection{Higgs sheaves}
\label{Higgs_sec}
Let $C$ be a smooth projective curve over $k$, an algebraically closed field with $\chara(k)=0$.  We recall that a Higgs sheaf on $C$ is a pair 
\[
\overline{\CF}=\big(\mathcal{F}\in\Ob(\Coh(C)),\phi\in\Hom(\mathcal{F},\mathcal{F}\otimes\omega_C)\big).
\]
The element $\phi$ is called the \textit{Higgs field}.  The rank $r(\overline{\CF})$ and degree $d(\overline{\CF})$ of a Higgs sheaf are defined to be the rank and degree of the underlying coherent sheaf $\mathcal{F}$.  Note that we allow the rank to be zero, i.e. $\CF$ may be a torsion sheaf.  Higgs sheaves form an Abelian category $\CatHiggs(C)$.

\smallbreak
We define the slope of a Higgs sheaf by $\mu(\overline{\CF})=d(\overline{\CF})/r(\overline{\CF})$.  Since $\CF$ may be a torsion sheaf, the slope may be $\infty$.  We say that a Higgs sheaf $\overline{\CF}=(\CF,\phi)$ is stable (respectively semistable) if it has no proper nonzero Higgs subsheaf $\overline{\CG}$ with $\mu(\overline{\CG})\geq \mu(\overline{\CF})$ (respectively $\mu(\overline{\CG})> \mu(\overline{\CF})$).  Equivalently, a Higgs sheaf is semistable if there is no proper nonzero subsheaf $\CG\subset \CF$, preserved by $\phi$, with strictly larger slope than $\CF$.

\sssct
The coarse moduli space of Higgs bundles $\CM^{\Dol}_{r,d}(C)$ was constructed by Nitsure as a GIT quotient \cite{Nit91}, and via this construction one obtains a commutative diagram

\[
\xymatrix{
J/G\ar[d]_p\ar[r]^-{\cong}&\FM^{\Dol}_{r,d}(C)\ar[d]\\
J/\!\!/_{\chi}G\ar[r]^-{\cong}&\CM^{\Dol}_{r,d}(C)
}
\]
for $J$ a space of (semistable) pairs of a morphism $V\otimes\mathcal{O}_C(-m)\rightarrow \CE$ and a Higgs field on $\CE$.  The morphism $p$ is the morphism to a certain GIT quotient, defined with respect to a linearisation $\chi$ of the $G$-action.  

\subsubsection{BNR correspondence} It will be convenient for us to adopt a slightly different approach to the construction of the moduli stack of Higgs sheaves and its coarse moduli scheme, via the Beauville--Narasimhan--Ramanan correspondence (see \cite{BNR} for background).
\smallbreak

Set $S=\Tot(\omega_C)$, and $\ol{S}=\BP_C(\mathcal{O}_C\oplus \omega_C^{\vee})$.  We embed $S$ inside $\overline{S}$ in the natural way.  Write $\pi\colon S\rightarrow C$ and $\ol{\pi}\colon\ol{S}\rightarrow C$ for the projections, and $D_{\infty}\subset \ol{S}=\overline{S}\setminus S$ for the divisor at infinity.  There is an equivalence of categories between the category of coherent sheaves on $S$ for which $\Ho^0\!\pi_*S$ is coherent and the category of Higgs sheaves, and so in turn an equivalence of categories between the category of Higgs sheaves on $C$, and the category of coherent sheaves $\mathcal{F}$ on $\ol{S}$ with $\supp(\mathcal{F})\cap D_{\infty}=\emptyset$.  Coherency of $\overline{\pi}_*\CF\cong \Ho^0\ol{\pi}_*\CF$ is automatic for such $\CF$.
\smallbreak
Picking an ample class $H\in\NS(\ol{S})_{\BR}$, one may in turn identify the category of semistable Higgs sheaves on $C$ with the category of $H$-Gieseker semistable coherent sheaves on $\ol{S}$ with support avoiding $D_{\infty}$.  For the derived category of Higgs sheaves, we consider the dg enhancement $\Db_{\dg}(\CatHiggs(C))$ given by considering Higgs sheaves as a full subcategory of quasicoherent sheaves on $\ol{S}$.  In other words we define $\Db_{\dg}(\CatHiggs(C))$ to be the full subcategory of $\Db_{\dg}(\Coh(\ol{S}))$ containing those complexes for which the total cohomology has support avoiding $D_{\infty}$.  By \cite{LO10}, the dg enhancement of $\Db(\Coh(\ol{S}))$ is essentially unique (note that the perfect derived category and the bounded derived category of coherent sheaves on $\ol{S}$ coincide, since $\overline{S}$ is smooth).
\smallbreak
\subsubsection{Formality}
By Corollary \ref{form_cor} we deduce:
\begin{proposition}
Let $C$ be a smooth connected projective curve over $k$.  Let $\BS=\{\CF_1,\ldots,\CF_l\}$ be a $\Sigma$-collection inside $\Db(\CatHiggs(C))$, and let $\mathscr{C}$ be the full subcategory of $\Db_{\dg}(\CatHiggs(C))$ containing $\BS$.  Then $\mathscr{C}$ is formal.  In particular, if $\CF=\bigoplus_{i\in I}\CF_i^{\mathbf{d}_i}$ is a polystable Higgs sheaf, where each $\CF_i$ has injective resolution $\CI_i^{\bullet}$, then the dg Yoneda algebra 
\[
\REnd(\CF,\CF)\coloneqq \Hom\left(\bigoplus_{i\in I}(\CI_i^{\bullet})^{\mathbf{d}_i},\bigoplus_{i\in I}(\CI_i^{\bullet})^{\mathbf{d}_i}\right)
\]
is formal.
\end{proposition}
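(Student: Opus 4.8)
The plan is to realise this as a direct instance of the general formality result, Corollary~\ref{form_cor}, once the Higgs category has been placed inside an ambient left 2CY dg category. Via the Beauville--Narasimhan--Ramanan correspondence set up above (\cite{BNR}), $\Db_{\dg}(\CatHiggs(C))$ is by construction the full dg subcategory of $\Db_{\dg}(\Coh(\ol{S}))$ on those complexes whose total cohomology is supported away from $D_{\infty}$. Since such support is closed in $\ol{S}$ and disjoint from $D_{\infty}$, it lies in the open surface $S=\Tot(\omega_C)$ and is finite over $C$; in particular these objects are genuinely complexes on $S$ with compactly supported cohomology. Hence $\mathscr{C}$ is a full dg subcategory of $\Db_{\dg}(\Coh(S))$, and by fullness the Yoneda ($\Ext$) algebras between members of $\BS$ agree whether computed in $\Db(\CatHiggs(C))$ or in $\Db(\Coh(S))$, so $\BS$ is a $\Sigma$-collection of compactly supported complexes on $S$.

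First I would check that $S=\Tot(\omega_C)$ carries the required Calabi--Yau geometry. The total space of a line bundle $L$ on $C$ has canonical bundle $K_{\Tot(L)}\cong p^{*}(K_C\otimes L^{-1})$, where $p\colon\Tot(L)\to C$ is the projection, so for $L=\omega_C$ one obtains $\omega_S\cong\CO_S$. Thus $S$ is a smooth quasiprojective surface with trivial dualizing sheaf, and by Proposition~\ref{prop:CY_on_K3} the dg category $\Db_{\dg}(\Coh(S))$ carries a left 2CY structure. Formality of $\mathscr{C}$ then follows by applying Corollary~\ref{form_cor} with ambient category $\Db_{\dg}(\Coh(S))$ and the $\Sigma$-collection $\BS$ (equivalently, this is the immediately preceding surface proposition applied to $S$). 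The one point genuinely needing care is the local properness hypothesis hidden inside Corollary~\ref{form_cor}: the passage from the left 2CY structure on $\Db_{\dg}(\Coh(S))$ to a right 2CY structure on the subcategory generated by $\BS$, via Proposition~\ref{BDthm}, requires the morphism complexes to have finite-dimensional total cohomology, which is exactly guaranteed by compactness of the supports over the projective curve $C$. The essential uniqueness of the dg enhancement of $\Db(\Coh(\ol{S}))$ (\cite{LO10}) ensures the statement is independent of the chosen enhancement.

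For the final claim, let $\CF=\bigoplus_{i\in I}\CF_i^{\mathbf{d}_i}$ be polystable with the $\CF_i$ its distinct stable summands. The standard argument for semistable objects of a fixed slope gives $\End(\CF_i)\cong k$ and $\Hom(\CF_i,\CF_j)=0$ for $i\neq j$, and the 2CY Serre duality $\Ext^n(\CF_i,\CF_j)\cong\Ext^{2-n}(\CF_j,\CF_i)^{\vee}$ then forces the $\Sigma$-shape of the graded endomorphism spaces (cf.\ Proposition~\ref{Sclosed}); thus $\{\CF_i\}_{i\in I}$ is a $\Sigma$-collection. By the first part the full subcategory they span is formal, i.e.\ quasi-equivalent to an ordinary graded category. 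Since $\CF$ is a finite direct sum of the $\CF_i$, its endomorphism dg algebra $\REnd(\CF,\CF)$ is the endomorphism algebra of a single object in that graded category, hence an ordinary graded algebra with all higher $A_{\infty}$-operations vanishing; that is, $\REnd(\CF,\CF)$ is formal.
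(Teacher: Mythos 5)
Your proposal is correct and takes essentially the same route as the paper, which deduces this proposition directly from Corollary~\ref{form_cor} via the BNR identification of $\Db_{\dg}(\CatHiggs(C))$ with the full subcategory of complexes whose cohomology is compactly supported on $S=\Tot(\omega_C)$, a quasiprojective surface with $\omega_S\cong\CO_S$, exactly as you argue. Your supplementary verifications --- the local properness needed for Proposition~\ref{BDthm} coming from compactness of supports, and the stability-plus-Serre-duality argument showing the stable summands of a polystable Higgs sheaf form a $\Sigma$-collection (mirroring Proposition~\ref{Sclosed}) --- merely make explicit what the paper leaves implicit.
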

\subsubsection{Purity} For the rest of \S \ref{Higgs_sec} we assume that $k=\BC$.  As in the introduction, we write
\begin{equation}
\label{pHiggs}
p \colon \FM^{\Dol}_{r,d}(C) \to \CM^{\Dol}_{r,d}(C)
\end{equation}
for the morphism from the stack to the coarse moduli space, and $\Hit\colon \CM^{\Dol}_{r,d}(C)\rightarrow \Lambda_r$ for the Hitchin map.  We do \textit{not} assume that $(r,d)=1$.  Applying Theorem \ref{loc_pur_thm} to \eqref{pHiggs}, we deduce the following.
\begin{proposition}
\label{Higgs_loc_pur}
The complex $p_!\ul{\BQ}_{\FM^{\Dol}_{r,d}(C)}\in\Ob(\Dblf(\MHM(\CM^{\Dol}_{r,d}(C))))$ is pure, and satisfies
\[
\bm{\tau}^{\geq 1}\left(p_!\ul{\BQ}_{\FM^{\Dol}_{r,d}(C)}\otimes\LLL^{(1-g)r^2}\right)=0.
\]
\end{proposition}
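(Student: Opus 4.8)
The plan is to deduce the result from the purity theorem for semistable sheaves on surfaces (Proposition \ref{rel_pur_S}) by transporting it across the Beauville--Narasimhan--Ramanan correspondence. Set $S=\Tot(\omega_C)$, a smooth quasiprojective surface satisfying $\omega_S\cong\CO_S$, with the projective compactification $\ol{S}=\BP_C(\CO_C\oplus\omega_C^{\vee})$ recalled in \S\ref{Higgs_sec}. Fixing an ample class $H\in\NS(\ol{S})_{\BR}$ as there, the BNR correspondence identifies the category of semistable Higgs sheaves of rank $r$ and degree $d$ on $C$ with the category of compactly supported $H$-Gieseker-semistable sheaves on $S$ of a fixed Hilbert polynomial $P(t)$ determined by $(r,d)$ and $H$. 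This identification is compatible with the dg enhancements and with the GIT constructions of the good moduli spaces, so it upgrades to a commutative square identifying $p\colon\FM^{\Dol}_{r,d}(C)\to\CM^{\Dol}_{r,d}(C)$ with $\pi^H_{P(t)}\colon\FM^{H\sstab}_{P(t)}(S)\to\CM^{H\sstab}_{P(t)}(S)$, and hence identifies $p_!\ul{\BQ}_{\FM^{\Dol}_{r,d}(C)}$ with $\pi^H_{P(t),!}\ul{\BQ}_{\FM^{H\sstab}_{P(t)}(S)}$.

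With this identification in hand I would invoke Proposition \ref{rel_pur_S}, which applies precisely because $S$ is a smooth quasiprojective surface with $\CO_S\cong\omega_S$. It yields that
\[
\pi^H_{P(t),!}\ul{\BQ}_{\FM^{H\sstab}_{P(t)}(S)}\otimes\LLL^{\frac{1}{2}\chi_S(\cdot,\cdot)}
\]
is pure and concentrated in cohomological degrees $i\leq 0$. Purity of the untwisted complex $p_!\ul{\BQ}_{\FM^{\Dol}_{r,d}(C)}$ then follows from the $\LLL$-twist invariance of purity recorded in \S\ref{sec:algebraic_MHMs}. The vanishing statement will follow once I identify the locally constant twisting function $\tfrac12\chi_S(\cdot,\cdot)$ with the constant $(1-g)r^2$.

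It remains to carry out this last identification, which is a Riemann--Roch computation. For a rank $r$ Higgs sheaf $\CF$, the Euler form $\chi_S(\CF,\CF)$ agrees with the Euler form of the hypercohomology of the deformation complex $[\End(\CF)\xrightarrow{[\phi,-]}\End(\CF)\otimes\omega_C]$ on $C$; applying Riemann--Roch on $C$ gives
\[
\chi_S(\CF,\CF)=\chi(\End\CF)-\chi(\End\CF\otimes\omega_C)=-(2g-2)r^2=2(1-g)r^2,
\]
which depends only on $r$ and is therefore constant on $\FM^{\Dol}_{r,d}(C)$. Hence $\LLL^{\frac12\chi_S(\cdot,\cdot)}=\LLL^{(1-g)r^2}$, and concentration of the twisted complex in degrees $\leq 0$ is exactly the asserted vanishing $\bm{\tau}^{\geq 1}(p_!\ul{\BQ}_{\FM^{\Dol}_{r,d}(C)}\otimes\LLL^{(1-g)r^2})=0$.

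The substance of the argument is thus entirely contained in Proposition \ref{rel_pur_S} and the preceding BNR setup; the only points demanding care are that the BNR equivalence respects the 2CY and GIT structures closely enough to match the two good moduli space morphisms together with their direct image complexes (established in \S\ref{Higgs_sec}), and the bookkeeping of the Tate twist via the Euler form above. I expect no genuine obstacle beyond this translation, since all the analytic input --- formality, the \'etale local model by preprojective stacks, and the resulting local purity --- has already been absorbed into Proposition \ref{rel_pur_S}.
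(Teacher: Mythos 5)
Your proof is correct and takes essentially the same approach as the paper: the paper deduces Proposition \ref{Higgs_loc_pur} by applying Theorem \ref{loc_pur_thm} directly to $p\colon\FM^{\Dol}_{r,d}(C)\to\CM^{\Dol}_{r,d}(C)$, with the 2CY input supplied by the same BNR embedding into $\Db_{\dg}(\Coh(\ol{S}))$ that you use, so your detour through Proposition \ref{rel_pur_S} is merely that theorem pre-specialised to surfaces. Your Riemann--Roch computation $\chi_{\mathscr{C}}(\CF,\CF)=2(1-g)r^2$, identifying the twist as $\LLL^{(1-g)r^2}$, is exactly the bookkeeping implicit in the paper's statement.
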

Applying Proposition \ref{intr_pur} and the above proposition we deduce the following
\begin{proposition}
The complex $(\Hit \!p)_!\ul{\BQ}_{\FM^{\Dol}_{r,d}(C)}$ is pure.
\end{proposition}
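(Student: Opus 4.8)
The plan is to obtain this as an immediate consequence of Proposition~\ref{intr_pur} together with the purity already established in Proposition~\ref{Higgs_loc_pur}. Concretely, I would apply Proposition~\ref{intr_pur} with $\FX=\FM^{\Dol}_{r,d}(C)$, with the target finite type separated scheme taken to be $\CM=\CM^{\Dol}_{r,d}(C)$, with $\CN=\Lambda_r$, and with the projective morphism $\pi$ taken to be the Hitchin map $\Hit$. The composite $\pi p$ is then precisely $\Hit\,p$, so purity of $(\Hit\,p)_!\ul{\BQ}_{\FM^{\Dol}_{r,d}(C)}$ will follow the moment the three hypotheses of Proposition~\ref{intr_pur} are verified.

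The first hypothesis, purity of $p_!\ul{\BQ}_{\FM^{\Dol}_{r,d}(C)}$, is exactly Proposition~\ref{Higgs_loc_pur}, which rests in turn on the local purity theorem (Theorem~\ref{loc_pur_thm}). The second hypothesis is that $\FM^{\Dol}_{r,d}(C)$ is exhausted by global quotient stacks; I would deduce this from the GIT/BNR description recalled in \S\ref{Higgs_sec}. Via the Beauville--Narasimhan--Ramanan correspondence, $\FM^{\Dol}_{r,d}(C)$ is identified with the finite type global quotient stack $J/G$ of $H$-Gieseker semistable coherent sheaves on $\overline{S}$ supported away from $D_{\infty}$, constructed as in \S\ref{MSCON}; and a finite type global quotient stack is trivially exhausted by global quotient stacks by taking each $\FX_n=\FX$.

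The only genuinely external input is the third hypothesis, namely projectivity of the Hitchin morphism $\Hit\colon\CM^{\Dol}_{r,d}(C)\to\Lambda_r$. This is classical: Nitsure~\cite{Nit91} proved that $\Hit$ is proper, and since $\CM^{\Dol}_{r,d}(C)$ is quasiprojective, a $k$-ample line bundle restricts to an $\Hit$-ample line bundle on the fibres, so the proper morphism $\Hit$ is in fact projective. With the three hypotheses in hand, Proposition~\ref{intr_pur} yields the claim directly. I expect no genuine obstacle here: the substantive content lies entirely in Proposition~\ref{Higgs_loc_pur}, and the present statement is a formal corollary, the underlying mechanism being Saito's decomposition theorem (Theorem~\ref{thm:Saito_theorem}) for the projective morphism $\Hit$, which is precisely what powers Proposition~\ref{intr_pur}.
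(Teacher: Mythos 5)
Your proposal is correct and takes essentially the same route as the paper, which likewise obtains the statement by applying Proposition~\ref{intr_pur} to the projective Hitchin morphism $\Hit$ together with the purity of $p_!\ul{\BQ}_{\FM^{\Dol}_{r,d}(C)}$ from Proposition~\ref{Higgs_loc_pur}. The auxiliary verifications you supply (exhaustion by global quotient stacks via the GIT/BNR construction, and projectivity of $\Hit$ from Nitsure's properness plus quasiprojectivity of $\CM^{\Dol}_{r,d}(C)$) are exactly the inputs the paper leaves implicit.
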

The following is our main theorem on the mixed Hodge structure of moduli stacks of Higgs sheaves.
\begin{theorem}
\label{Higgs_pure}
Let $C$ be a smooth connected complex projective curve and let $\FM^{\Dol}_{r,d}(C)$ denote the stack of semistable rank $r$ and degree $d$ Higgs sheaves on $C$. Then the mixed Hodge structure on $\HO^{\BM}(\FM^{\Dol}_{r,d}(C),\BQ)$ is pure.
\end{theorem}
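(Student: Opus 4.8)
The plan is to run the ``sandwich'' argument of the proof of Theorem~\ref{1a}, now with the Hitchin base $\Lambda_r$ playing the role of the coarse moduli space and the composite $\Hit p$ playing the role of $\JH$. First I would record that, by Nitsure's GIT construction recalled above, $\FM^{\Dol}_{r,d}(C)\cong J/G$ is a global quotient stack, hence in particular exhausted by global quotient stacks, so that all the direct-image mixed Hodge module complexes below are defined as in \S\ref{stack_di}. Since the Hitchin map $\Hit\colon\CM^{\Dol}_{r,d}(C)\to\Lambda_r$ is projective and $p_!\ul{\BQ}_{\FM^{\Dol}_{r,d}(C)}$ is pure by Proposition~\ref{Higgs_loc_pur}, Proposition~\ref{intr_pur} gives that $(\Hit p)_!\ul{\BQ}_{\FM^{\Dol}_{r,d}(C)}$ is pure.

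Applying the decomposition \eqref{callback} to the morphism $\Hit p\colon\FM^{\Dol}_{r,d}(C)\to\Lambda_r$, for each fixed $n$ it suffices to prove that the mixed Hodge structure $\HO_c^n\!\left(\Lambda_r,\Ho^i\big((\Hit p)_!\ul{\BQ}_{\FM^{\Dol}_{r,d}(C)}\big)[-i]\right)$ is pure, for all $i$ in the relevant bounded window $n-d\leq i\leq n+d$ with $d=\dim\Lambda_r$. Dualizing, I set $\CG=\BD\,\Ho^i\big((\Hit p)_!\ul{\BQ}_{\FM^{\Dol}_{r,d}(C)}\big)$; since Verdier duality preserves purity and $\Ho^i$ is pure of weight $i$, the complex $\CG[i]$ is pure, and it is enough to show that $\HO(\Lambda_r,\CG[i])$ is pure.

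The engine of the argument is the $\BC^*$-action scaling the Higgs field, $\phi\mapsto t\cdot\phi$. This acts on $\FM^{\Dol}_{r,d}(C)$ and descends compatibly to $\CM^{\Dol}_{r,d}(C)$ and to $\Lambda_r$; on $\Lambda_r=\bigoplus_{m=1}^r\HO^0(C,\omega_C^{\otimes m})$ it scales the $m$-th summand with weight $m$, so the action is contracting with unique fixed point the origin $0\in\Lambda_r$. In particular the perverse sheaf underlying $\CG$ is $\BC^*$-equivariant, so the restriction map
\[
\HO(\Lambda_r,\CG[i])\to\HO\big(\{0\},\CG[i]\lvert_0\big)
\]
is an isomorphism. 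I would then conclude with the weight sandwich exactly as in Theorem~\ref{1a}: the functor $(\Lambda_r\to\pt)_*$ increases weights, the functor $(\{0\}\hookrightarrow\Lambda_r)^*$ decreases weights, and $(\{0\}\to\pt)_*$ preserves them (the point being projective). Since $\CG[i]$ is pure, it follows that $\HO(\Lambda_r,\CG[i])$ is pure, and dualizing back yields purity of $\HO^{\BM}(\FM^{\Dol}_{r,d}(C),\BQ)$.

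The genuine content has already been isolated into Proposition~\ref{Higgs_loc_pur}, the local purity statement that dispenses with the coprimality hypothesis and which itself rests on Theorem~\ref{loc_pur_thm} and the preprojective model; given that input the present step is formal. The only point demanding care, precisely as in Theorem~\ref{1a}, is that the stacky compactly supported cohomology is unbounded below, so the contraction isomorphism and the weight estimates must be applied to the bounded truncations $\bm{\tau}^{\geq n-d}\bm{\tau}^{\leq s}$ of the approximating complexes on $\Lambda_r$ rather than to an ill-defined global object. This is harmless because \eqref{callback} has already reduced the problem, degree by degree, to a fixed bounded window of cohomological degrees.
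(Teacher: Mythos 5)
Your proof is correct, but it takes a different route from the paper's own argument. The paper proves Theorem~\ref{Higgs_pure} without ever invoking the Hitchin map: starting from \eqref{callback} applied to $p\colon \FM^{\Dol}_{r,d}(C)\to\CM^{\Dol}_{r,d}(C)$ itself, it realises $\CM^{\Dol}_{r,d}(C)$ via the BNR correspondence inside the projective moduli space $\CM^{H\sstab}_{Q(t)}(\ol{S})$ of Gieseker-semistable sheaves on $\ol{S}=\BP_C(\CO_C\oplus\omega_C^{\vee})$, constructs an explicit $\BC^*$-equivariant approximation scheme $Y=U'\times_{\Gl_V}H'$ to establish equivariance of $\CG=\Ho^i(p_!\ul{\BQ}_{\FM^{\Dol}_{r,d}(C)})$, and then contracts $\CM^{\Dol}_{r,d}(C)$ onto the projective subscheme $Z$ of sheaves set-theoretically supported on the zero section, running the weight sandwich against $Z$ rather than against a point. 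You instead push forward along the projective morphism $\Hit$ first (via Proposition~\ref{intr_pur}, which the paper does state but only uses for the perverse-filtration results and for Proposition~\ref{GNC_prop}) and contract the Hitchin base to the origin; in fact your argument is essentially the one the paper deploys for the semistable global nilpotent cone in Proposition~\ref{GNC_prop}, applied globally. What each approach buys: yours has the simplest possible contraction target (a single point, so projectivity of the fixed locus is free) at the cost of relying on projectivity of $\Hit$; the paper's avoids $\Hit$ entirely, getting projectivity of $Z$ from its closedness in the projective $\CM^{H\sstab}_{Q(t)}(\ol{S})$. One point you should make explicit rather than gesture at: equivariance of the perverse sheaf underlying your $\CG=\BD\,\Ho^i\bigl((\Hit p)_!\ul{\BQ}_{\FM^{\Dol}_{r,d}(C)}\bigr)$ is not automatic from the abstract definition of the direct image in \S\ref{stack_di}, since $\Ho^i$ is computed from a choice of approximation; you need the paper's equivariant model $q\colon Y\to\CM^{\Dol}_{r,d}(C)$ composed with the equivariant $\Hit$, which is a one-line addition but is precisely where the BNR/Quot-scheme geometry enters even in your version.
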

This deals with case (3) of Theorem \ref{thm:purity_absolute}, and thus completes the proof.  Note that this result does not follow directly from Theorem \ref{prop_cor}, since the coarse moduli space of Higgs sheaves $\CM^{\Dol}_{r,d}(C)$ is not projective.  

\begin{proof}[Proof of Theorem \ref{Higgs_pure}]
By Proposition \ref{Higgs_loc_pur}, the complex $p_!\ul{\BQ}_{\FM^{\Dol}_{r,d}(C)}$ is pure.  By \eqref{callback} it is enough to show that
\[
\HO_c\!\left(\CM^{\Dol}_{r,d}(C),\Ho^i(p_!\ul{\BQ}_{\FM^{\Dol}_{r,d}(C)})[-i]\right)
\]
is pure for every $i\in\mathbb{Z}$.
\smallbreak
We fix the class $H=3F+Z\in\NS(\overline{S})_{\BR}$, where $F=\overline{\pi}^{-1}(p)$ for some point $p\in C$, and $Z\subset\Tot(\omega_C)= S\subset \overline{S}$ is the zero section.  Set $Q(t)=(2g+1)rt+d+(1-g)r$.  Via the BNR correspondence we have the Cartesian diagram
\[
\xymatrix{
\FM^{\Dol}_{r,d}(C)\ar@{^{(}->}[d]\ar[r]^-p&\CM^{\Dol}_{r,d}(C)\ar@{^{(}->}[d]
\\
\FM_{Q(t)}^{H\sstab}(\ol{S})\ar[r]&\CM_{Q(t)}^{H\sstab}(\ol{S}).
}
\]
In the notation of \S \ref{MSCON}, we write $U'\subset \Quot_{\ol{S}}(\CH,Q(t))$ for the open subscheme parameterising those quotients $\CH\onto \CE$ such that $\CE$ is semistable and for which $\supp(\CE)\cap D_{\infty}=\emptyset$.  Let $\BC^*$ act on $\ol{S}=\BP_{C}(\mathcal{O}_C\oplus \omega_C^{\vee})$ by scaling the $\omega_C^{\vee}$-factor and leaving the $\CO_C$-factor invariant.  Then $U'$ is $\BC^*$-invariant for the induced $\BC^*$-action on $\Quot_{\ol{S}}(\CH,Q(t))$.  
\smallbreak
Fix $i\in\BZ$.  Now pick $M\gg 0$, so that we have 
\begin{equation}
\label{congy}
\CG\coloneqq \Ho^i(p_!\ul{\BQ}_{\FM^{\Dol}_{r,d}(C)})\cong\Ho^i(q_!\ul{\BQ}_{Y}\otimes\LLL^{-M\dim(V)})
\end{equation}
where $Y=U'\times_{\Gl_V} H'$ for $H'\subset \Hom(\BC^M,V)$ the subspace of surjective maps and
\[
q\colon Y\rightarrow \CM^{\Dol}_{r,d}(C)
\]
the projection.  We refer the reader to \S \ref{heart_di} for this construction, and the isomorphism \eqref{congy}.

\smallbreak

Letting $\BC^*$ act on $Y$ via the given action on $U'$ and the trivial action on $H'$, the morphism $q$ is $\BC^*$-equivariant, and so the underlying perverse sheaf of $\CG$ is $\BC^*$-equivariant.  Since purity is stable under Verdier duality, purity of $\HO_c(\CM^{\Dol}_{r,d}(C),\CG[-i])$ is equivalent to purity of $\HO(\CM^{\Dol}_{r,d}(C),(\BD\CG)[i])$.  The $\BC^*$-action contracts $\CM^{\Dol}_{r,d}(C)$ onto the projective subscheme $\ul{Z}\subset \CM_P^{H\sstab}(\ol{S})$ corresponding to coherent sheaves with set-theoretic support on $C\subset \ol{S}$.  It follows that the restriction map
\begin{equation}
\label{Higgs_sandwich}
\HO(\CM^{\Dol}_{r,d}(C),\BD\CG[i])\rightarrow \HO(\ul{Z},(\BD\CG[i])\lvert_{\ul{Z}})
\end{equation}
is an isomorphism.  Since $\CG$ is pure of weight $i$, $\BD\CG$ is pure of weight $-i$, i.e. $\BD\CG[i]$ is pure.  Since $(\CM^{\Dol}_{r,d}(C)\rightarrow \pt)_*$ increases weights \cite{Sai90}, we deduce that
\begin{align}
\label{bound1}
\Gr^W_a\!\left(\HO^b\!\left(\CM^{\Dol}_{r,d}(C),\BD\CG[i]\right)\right)=0 &\textrm{ if }a<b.
\end{align}
On the other hand, the restriction morphism $(\ul{Z}\rightarrow \CM^{\Dol}_{r,d}(C))^*$ decreases weights, and the morphism $\ul{Z}\rightarrow \pt$ is projective, and hence preserves weights.  So we deduce, again from purity of $\BD\CG[i]$, that 
\begin{align}
\label{bound2}
\Gr_a^W\!\left(\HO^b\!\left(\ul{Z},(\BD\CG[i])\lvert_{\ul{Z}}\right)\right)=0\textrm{ if } a>b.
\end{align}
Combining \eqref{bound1} and \eqref{bound2} and the fact that \eqref{Higgs_sandwich} is an isomorphism gives the result.
\end{proof}

\sssct
If $(r,d)=1$ then $\FM^{\Dol}_{r,d}(C)\cong \CM^{\Dol}_{r,d}(C)/\BC^*$ and purity of the Borel--Moore homology of $\FM^{\Dol}_{r,d}(C)$ follows from the isomorphism
\[
\HO^{\BM}\!\left(\FM^{\Dol}_{r,d}(C),\BQ\right)\cong \HO^{\BM}\!\left(\CM^{\Dol}_{r,d}(C),\BQ\right)\otimes\HO\left(\pt/\BC^{*},\BQ\right),
\]
purity of $\HO(\pt/\BC^{*},\BQ)$, and purity of $\HO^{\BM}(\CM^{\Dol}_{r,d}(C),\BQ)$, proved in \cite{HT03}.  So the advance here is in establishing purity for (possibly highly singular and stacky) cases in which $(r,d)\neq 1$.
\sssct
If $\FX$ is a stack, exhausted by global quotient stacks, and the mixed Hodge structure on $\HO^{\BM}(\FX,\BQ)$ is pure, then the Borel--Moore homology is completely determined by the class $[\HO^{\BM}(\FX,\BQ)]$ in the completed Grothendieck group $\widehat{\KK_0}(\MHS)$.  Here we have to take a completion with respect to weights, since $\HO^{\BM}(\FX,\BQ)$ will generally be infinite-dimensional, although $\Gr_n^W(\HO^{\BM}(\FX,\BQ))$ will be finite-dimensional for each $n$.  There is a homomorphism $\KK_0(\mathrm{Sta}_{\mathrm{Aff}})\rightarrow \widehat{\KK_0}(\MHS)$ from the naive Grothendieck group of finite type stacks with affine geometric stabilisers, to the Grothendieck ring of mixed Hodge structures, which sends the class $[\FX]$ to $[\HO^{\BM}(\FX,\BQ)]$.  Since in the case $\FX=\FM^{\Dol}_{r,d}(C)$, thanks to \cite{FSS17}, we have an effective means of calculating $[\FX]$, it follows that we can use Theorem \ref{Higgs_pure} to actually determine the Borel--Moore homology of $\FM^{\Dol}_{r,d}$, along with its mixed Hodge structure.

\subsubsection{The global nilpotent cone}
We let $\FM^{\Dol,\nil}_{r,d}(C)\subset \FM^{\Dol}_{r,d}(C)$ denote the substack of Higgs sheaves $(\CF,\phi)$ for which $\phi$ is nilpotent.  This is the \textit{semistable global nilpotent cone} \cite{La88}.  We note a corollary of the proof of Theorem \ref{Higgs_pure}:
\begin{proposition}
\label{GNC_prop}
The natural mixed Hodge structure on the semistable global nilpotent cone
\begin{equation}
    \label{nilp_pure}
\HO^{\BM}\!\left(\FM^{\Dol,\nil}_{r,d}(C),\BQ\right)
\end{equation}
is pure.
\end{proposition}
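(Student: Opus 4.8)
The plan is to realise the semistable global nilpotent cone as the preimage of the central Hitchin fibre and then run the sandwich argument of Theorem \ref{Higgs_pure} over that fibre. Writing $\Hit\colon\CM^{\Dol}_{r,d}(C)\to\Lambda_r$ for the Hitchin map and $\iota\colon Z\hookrightarrow\CM^{\Dol}_{r,d}(C)$ for the inclusion of the central fibre $Z\coloneqq\Hit^{-1}(0)$, a Higgs field is nilpotent exactly when all of its eigenvalues vanish, so that $\FM^{\Dol,\nil}_{r,d}(C)=p^{-1}(Z)$ for $p$ the morphism \eqref{pHiggs}. Under the BNR correspondence $Z$ is precisely the projective locus of coherent sheaves on $\ol{S}$ with set-theoretic support on the zero section $C\subset\ol{S}$ that already featured in the proof of Theorem \ref{Higgs_pure}. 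Since $\FM^{\Dol,\nil}_{r,d}(C)$ is a closed substack of the global quotient stack $\FM^{\Dol}_{r,d}(C)$, it is itself a global quotient stack, so $\HO_c(\FM^{\Dol,\nil}_{r,d}(C),\BQ)$ is defined as in \S\ref{stack_di}.

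First I would apply base change to the Cartesian square whose right vertical arrow is $p$ and whose left vertical arrow is the induced map $p^{\nil}\colon\FM^{\Dol,\nil}_{r,d}(C)\to Z$, obtaining $\iota^*p_!\ul{\BQ}_{\FM^{\Dol}_{r,d}(C)}\cong p^{\nil}_!\ul{\BQ}_{\FM^{\Dol,\nil}_{r,d}(C)}$. Because $Z$ is projective one has $\HO_c(Z,-)=\HO(Z,-)$, so in each bounded range of degrees $\HO_c^n(\FM^{\Dol,\nil}_{r,d}(C),\BQ)=\HO^n(Z,\iota^*p_!\ul{\BQ}_{\FM^{\Dol}_{r,d}(C)})$. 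By Proposition \ref{Higgs_loc_pur} the complex $p_!\ul{\BQ}_{\FM^{\Dol}_{r,d}(C)}$ is pure, so by Theorem \ref{thm:Saito_theorem} it splits as $\bigoplus_i\CG_i[-i]$ with $\CG_i\coloneqq\Ho^i(p_!\ul{\BQ}_{\FM^{\Dol}_{r,d}(C)})$ pure of weight $i$; as in the proof of Theorem \ref{Higgs_pure}, the underlying perverse sheaf of each $\CG_i$ is $\BC^*$-equivariant. Restricting the splitting along $\iota^*$ reduces the claim to showing that $\HO^b(Z,\iota^*\CG_i)$ is pure of weight $b+i$ for all $b$ and $i$.

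The delicate point --- flagged in \S\ref{Serre_sec}, namely that $\iota^*\CG_i$ need not be pure --- is resolved by the $\BC^*$-action scaling the Higgs field. This action contracts both $\CM^{\Dol}_{r,d}(C)$ and its closed invariant subscheme $Z$ onto the common projective fixed locus of complex variations of Hodge structure, so the contraction (restriction) argument from the proof of Theorem \ref{Higgs_pure}, applied now to the equivariant complex $\CG_i$ in place of $\BD\CG[i]$, produces an isomorphism $\HO^b(\CM^{\Dol}_{r,d}(C),\CG_i)\xrightarrow{\ \sim\ }\HO^b(Z,\iota^*\CG_i)$. I would then pinch weights exactly as there: the left-hand group has weights $\geq b+i$ because $(\CM^{\Dol}_{r,d}(C)\to\pt)_*$ does not decrease weights, while the right-hand group has weights $\leq b+i$ because $\iota^*$ does not increase weights and $(Z\to\pt)_*$ preserves them, $Z$ being projective. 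The isomorphism forces both to be pure of weight $b+i$; summing over the splitting gives that $\HO_c^n(\FM^{\Dol,\nil}_{r,d}(C),\BQ)$ is pure of weight $n$, and dualising yields purity of $\HO^{\BM}(\FM^{\Dol,\nil}_{r,d}(C),\BQ)$.

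The step I expect to be the main obstacle is the justification of the restriction isomorphism $\HO^b(\CM^{\Dol}_{r,d}(C),\CG_i)\cong\HO^b(Z,\iota^*\CG_i)$: one must check that the Higgs-field scaling action contracts $\CM^{\Dol}_{r,d}(C)$ onto a projective fixed locus lying inside $Z$, that $Z$ is contracted onto that same locus, and that the equivariant structure on $\CG_i$ is exactly what the contraction principle requires. Each of these ingredients is already in place in the proof of Theorem \ref{Higgs_pure}, so the remaining task is to transcribe that argument with $\CG_i$ rather than its dual, rather than to supply any genuinely new input.
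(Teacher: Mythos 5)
Your proof is correct, and it is the same weight-sandwich mechanism as the paper's, but executed one level down: you restrict $p_!\ul{\BQ}_{\FM^{\Dol}_{r,d}(C)}$ to the central Hitchin fibre $Z=\Hit^{-1}(0)\subset\CM^{\Dol}_{r,d}(C)$, split it into its pure summands $\CG_i$, and pinch weights summand by summand using the equivariant contraction of $\CM^{\Dol}_{r,d}(C)$ onto $Z$ --- i.e.\ you rerun the isomorphism \eqref{Higgs_sandwich} with $\CG_i$ in place of $\BD\CG[i]$, which is exactly the step you flag as the main obstacle. The paper instead pushes forward along $\Hit$ \emph{first}: purity of $(\Hit\!p)_!\ul{\BQ}_{\FM^{\Dol}_{r,d}(C)}$ is free from Proposition \ref{intr_pur} since $\Hit$ is projective, and then the contraction happens on the base $\Lambda_r$, which is contracted to the single point $0$, so the restriction isomorphism degenerates to the tautological $\iota^*(\Hit\!p)_!\ul{\BQ}\cong\tau_*(\Hit\!p)_!\ul{\BQ}$ for the (equivariant) direct image, and one weight pinch ($\iota^*$ versus $\tau_*$) finishes in a single step, with no decomposition into summands and no verification that $\CM^{\Dol}_{r,d}(C)$ contracts onto $Z$ through a fixed locus contained in it. The two routes are related by proper pushforward: applying $\Hit_*$ to your restriction isomorphism recovers the paper's, so what the paper's ordering of operations buys is purely economy --- contracting a base that is an affine space to a point replaces the contraction-principle input by base change plus the already-established Proposition \ref{intr_pur}. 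Your weight bookkeeping ($(\CM\to\pt)_*$ does not decrease weights, $\iota^*$ does not increase them, projective pushforward preserves them) matches the conventions used in the proof of Theorem \ref{Higgs_pure}, and your identification of $\FM^{\Dol,\nil}_{r,d}(C)$ with $p^{-1}(Z)$ agrees with the paper's own definition of its Borel--Moore homology via $\FM^{\Dol}_{r,d}(C)\times_{\Lambda_r}\{0\}$, so no gap remains.
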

Strictly speaking, we have defined $\FM^{\Dol,\nil}_{r,d}(C)$ as a formal stack.  For instance for $r=1$, it is easy to see that $\FM^{\Dol,\nil}_{r,d}(C)$ is a trivial $\BC^*$-quotient of the formal neighbourhood of $\Jac_d(C)\times \{0\}$ in $\Jac_d(C)\times\HO^0(C,\omega_C)$.  This formal stack has the same set of closed points as the stack
\[
\FM^{\Dol}_{r,d}(C)\times_{\Lambda_r}\{0\},
\]
and we may take the Borel--Moore homology of this stack as our definition in \eqref{nilp_pure}.
\begin{proof}[Proof of Proposition \ref{GNC_prop}]
Let $\iota\colon \{0\}\hookrightarrow \Lambda_r$ be the inclusion, and let $\tau\colon \Lambda_r\rightarrow \pt$ be the structure morphism.  By base change it is enough to show that
\begin{equation}
    \label{nilp_sandw}
\iota^*(\Hit\!p)_!\ul{\BQ}_{\FM^{\Dol}_{r,d}(C)}\cong \tau_*(\Hit\!p)_!\ul{\BQ}_{\FM^{\Dol}_{r,d}(C)}
\end{equation}
is pure, where the isomorphism \eqref{nilp_sandw} follows from the fact that the $\BC^*$-action scaling the Higgs field contracts $\Lambda_r$ to $0$.  Purity of $(\Hit\!p)_!\ul{\BQ}_{\FM^{\Dol}_{r,d}(C)}$ follows from purity of $p_!\ul{\BQ}_{\FM^{\Dol}_{r,d}(C)}$, projectivity of $\Hit$, and Proposition \eqref{intr_pur}.  Now purity of \eqref{nilp_sandw} follows by the same argument as Theorem \ref{Higgs_pure}, since $\iota^*$ increases weights and $\tau_*$ decreases weights (see \cite[\S 4.5]{Sai90}).
\end{proof}
\subsubsection{Perverse filtrations}
Another application of the purity of $p_!\ul{\BQ}_{\FM^{\Dol}_{r,d}(C)}$ and $(\Hit\!p)_!\ul{\BQ}_{\FM^{\Dol}_{r,d}(C)}$ is the following consequence of Propositions \ref{int_filt} and \ref{PIS_prop}:
\begin{proposition}
For arbitrary $d,r$, the Borel--Moore homology
\[
\HO^{\BM}\!\left(\FM^{\Dol}_{r,d}(C),\BQ\otimes\LLL^{(1-g)r^2}\right)
\]
carries a perverse filtration, by pure Hodge structures, with respect to $p$, and with respect to $\Hit \!p$.  In the case $(r,d)=1$, the perverse filtration with respect to $\Hit \!p$ is the usual perverse filtration associated to the Hitchin system.  The perverse filtration with respect to $p$ begins in degree zero.
\smallbreak
Similarly, the Borel--Moore homology $\HO^{\BM}(\FM^{\Dol,\nil}_{r,d}(C),\BQ\otimes\LLL^{(1-g)r^2})$ carries a perverse filtration with respect to $p$ and with respect to $\Hit \!p$, and the perverse filtration with respect to $p$ begins in degree zero.
\end{proposition}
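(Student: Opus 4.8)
The plan is to regard this proposition as a packaging of the two purity statements already established (Proposition~\ref{Higgs_loc_pur} and the proposition immediately preceding Theorem~\ref{Higgs_pure}) together with the abstract filtration machinery of \S\ref{sec:mhm}. First I would record that $\FM^{\Dol}_{r,d}(C)$ is a finite type global quotient stack, via the GIT/BNR presentation $J/G$ recalled above, hence is exhausted by global quotient stacks in the sense of Definition~\ref{eqs_def}; so both $p_!\ul{\BQ}_{\FM^{\Dol}_{r,d}(C)}$ and $(\Hit\,p)_!\ul{\BQ}_{\FM^{\Dol}_{r,d}(C)}$ are defined as in \S\ref{heart_di}. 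By Proposition~\ref{Higgs_loc_pur} the former is pure, and by the proposition preceding Theorem~\ref{Higgs_pure} the latter is pure as well. Applying Proposition~\ref{int_filt} once to $p$ and once to $\Hit\,p$ then produces the two ascending perverse filtrations on $\HO^{\BM}_n(\FM^{\Dol}_{r,d}(C),\BQ)$ by mixed Hodge structures, and purity of the ambient $\HO^{\BM}(\FM^{\Dol}_{r,d}(C),\BQ\otimes\LLL^{(1-g)r^2})$ (Theorem~\ref{Higgs_pure}, Tate-twisted) guarantees that these are filtrations by \emph{pure} Hodge structures.

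The assertion that the $p$-filtration begins in degree zero I would read directly off the amplitude bound $\bm{\tau}^{\geq 1}(p_!\ul{\BQ}_{\FM^{\Dol}_{r,d}(C)}\otimes \LLL^{(1-g)r^2})=0$ of Proposition~\ref{Higgs_loc_pur}: in the construction of the $L^n_s$ in the proof of Proposition~\ref{int_filt}, the truncations $\bm{\tau}^{\leq s}(q_!\ul{\BQ}_{B_N}\otimes\LLL^{-rM})$ of the approximating schemes are governed by the perverse amplitude of $p_!\ul{\BQ}$, so this vanishing forces the filtration to start at $s=0$, exactly as in the preprojective model of Theorem~\ref{pp3_thm}. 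I would then turn to the nilpotent cone, using that $\FM^{\Dol,\nil}_{r,d}(C)=p^{-1}(\Hit^{-1}(0))=(\Hit\,p)^{-1}(0)$ and invoking the Serre-subcategory version Proposition~\ref{PIS_prop}. For the $p$-filtration I would take $\iota\colon\CS=\Hit^{-1}(0)\hookrightarrow\CM^{\Dol}_{r,d}(C)$, and for the $\Hit\,p$-filtration I would take $\CS=\{0\}\hookrightarrow\Lambda_r$; in each case the hypothesis of Proposition~\ref{PIS_prop} is precisely the purity of $p_!\ul{\BQ}$, respectively $(\Hit\,p)_!\ul{\BQ}$, already in hand (the resulting filtration is by pure Hodge structures thanks to Proposition~\ref{GNC_prop}), and the ``begins in degree zero'' clause follows once more from the same amplitude bound.

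The one genuinely non-formal step, which I expect to be the main obstacle, is identifying the $\Hit\,p$-filtration with the classical Hitchin perverse filtration of \S\ref{smooth_case} when $(d,r)=1$. Here $\FM^{\Dol}_{r,d}(C)\cong \CM^{\Dol}_{r,d}(C)/\BC^*$ is a trivial $\B\BC^*$-gerbe over the smooth variety $\CM^{\Dol}_{r,d}(C)$, so $p_!\ul{\BQ}_{\FM^{\Dol}_{r,d}(C)}\cong \ul{\BQ}_{\CM^{\Dol}_{r,d}(C)}\otimes\HO(\pt/\BC^*)$, a sum of Tate-twisted shifts of the constant complex on the smooth coarse space. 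I would argue that, since $\HO(\pt/\BC^*)$ is a constant pure graded Hodge structure, it commutes with the perverse truncations $\bm{\tau}^{\leq s}$ and with $\Hit_!=\Hit_*$ (the latter as $\Hit$ is projective), so the $\Hit\,p$-filtration splits as the tensor product of $\HO(\pt/\BC^*)$ with the perverse filtration induced by $\Hit_*\ul{\BQ}_{\CM^{\Dol}_{r,d}(C)}$, which is exactly the usual filtration from the decomposition theorem on the smooth $\CM^{\Dol}_{r,d}(C)$. Making the shift and Tate-twist bookkeeping precise, and checking this compatibility at the level of the Borel-type approximations of \S\ref{heart_di} rather than merely on underlying perverse sheaves, is where the real care is required; I would lean on \S\ref{FPR} to compare the filtrations with respect to $p$ and to the affinization, and on the Künneth isomorphism displayed in the discussion following Theorem~\ref{Higgs_pure}.
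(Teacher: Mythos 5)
Your proposal takes essentially the same route as the paper: the proposition is stated there precisely as ``a consequence of Propositions~\ref{int_filt} and~\ref{PIS_prop}'', with the purity inputs being Proposition~\ref{Higgs_loc_pur} and the purity of $(\Hit\,p)_!\ul{\BQ}_{\FM^{\Dol}_{r,d}(C)}$, the degree-zero claims read off from the amplitude bound $\bm{\tau}^{\geq 1}\bigl(p_!\ul{\BQ}_{\FM^{\Dol}_{r,d}(C)}\otimes\LLL^{(1-g)r^2}\bigr)=0$, and the nilpotent case handled exactly via your choices $\CS=\Hit^{-1}(0)\subset\CM^{\Dol}_{r,d}(C)$ and $\CS=\{0\}\subset\Lambda_r$ in Proposition~\ref{PIS_prop}. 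The one point where you do more than the paper, the coprime identification with the classical Hitchin filtration, is treated there only in the subsequent remarks (the gerbe splitting you describe is the ``very special case of the arguments above'' alluded to), and your sketch is consistent with that, with the caveat the paper makes explicit: matching Saito's mixed-Hodge-module perverse filtration with the \emph{usual} one still tacitly invokes de Cataldo's comparison theorem, which your purely sheaf-theoretic splitting argument should acknowledge.
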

\begin{remark}
Since we work throughout with the perverse filtration by mixed Hodge structures induced by Saito's theory, in order to relate to the ``usual'' filtration by pure Hodge structures in the coprime case, we are tacitly invoking the comparison theorem \cite[Thm.4.3.5]{dC10}.
\end{remark}
\begin{remark}
The perverse filtration on $\HO^{\BM}(\FM^{\Dol,\nil}_{r,d}(C),\BQ)$ with respect to $\Hit \!p$ is highly nontrivial, despite the fact that $\Hit\! p$ maps all of $\FM^{\Dol,\nil}_{r,d}(C)$ to a point; see the warning in \S \ref{Serre_sec} for some explanation of this. 
\end{remark}
\sssct
The fact that the perverse filtration on $\HO^{\BM}\!\left(\FM^{\Dol}_{r,d}(C),\BQ\otimes\LLL^{(1-g)r^2}\right)$ is a filtration by pure Hodge structures is already known in the case $(r,d)=1$.  One may show this using the language of mixed Hodge modules (applying a very special case of the arguments above, but where purity of $p_!\underline{\BQ}_{\FM^{\Dol}_{r,d}(C)}$ is trivial) or by using the alternative definition of the perverse filtration with respect to the morphism $\Hit$ in terms of the flag filtration; see \cite[Sec.7]{dCMi10} for details, and \cite{dCMi10,dC10} for the equivalence between these two approaches.

\subsubsection{Unstable Higgs sheaves}
\label{UnstHS}
Arguing as in \S \ref{gen_sheaves}, we can modify Theorem \ref{Higgs_pure} to allow moduli stacks of possibly unstable Higgs bundles:
\begin{proposition}
\label{Higgs_nest}
Let $C$ be a smooth complex projective curve.  Fix a number $l\in\BQ$, and denote by $\CHiggs^{\mu\geq l}_{r,d}(C)$ the moduli stack of Higgs sheaves $\overline{\CF}$ satisfying the condition that all of the subquotients in the slope slope HN filtration\footnote{Note that since we work with $\mu$-stability here, the first term in the HN filtration is the torsion part of $\overline{\CF}$, if the torsion part is nonzero.} of $\overline{\CF}$ have slope at least $l$, and by $\CHiggs_{r,d}(C)$ the stack of all Higgs sheaves of rank $r$ and degree $d$.  We define
\[
\CHiggs^{\mu\geq l,\nil}_{r,d}(C)\subset \CHiggs^{\mu\geq l}_{r,d}(C);\quad \quad
\CHiggs^{\nil}_{r,d}(C)\subset \CHiggs_{r,d}(C)
\]
as the (reductions of the) substacks corresponding to Higgs sheaves with nilpotent Higgs field.  Then 
\[
\HO^{\BM}(\CHiggs^{\mu\geq l}_{r,d}(C),\BQ)\quad\textrm{and}\quad \HO^{\BM}(\CHiggs^{\mu\geq l,\nil}_{r,d}(C),\BQ)
\]
are pure.  Moreover for $l'>l$ the natural morphisms
\[
\HO_c(\CHiggs^{\mu\geq l'}_{r,d}(C),\BQ)\rightarrow\HO_c(\CHiggs^{\mu\geq l}_{r,d}(C),\BQ);\quad\quad
\HO_c(\CHiggs^{\mu\geq l',\nil}_{r,d}(C),\BQ)\rightarrow \HO_c(\CHiggs^{\mu\geq l,\nil}_{r,d}(C),\BQ)
\]
are injective, so that we may write
\[
\HO_c(\CHiggs_{r,d}(C),\BQ)\quad\textrm{and}\quad \HO_c(\CHiggs^{\nil}_{r,d}(C),\BQ)
\]
as an infinite nested union of pure Hodge structures.
\end{proposition}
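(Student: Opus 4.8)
The plan is to transport the inductive strategy of \S \ref{gen_sheaves} to the Higgs setting through the BNR correspondence \cite{BNR}, using Theorem \ref{Higgs_pure} (and Proposition \ref{GNC_prop} in the nilpotent case) in place of Theorem \ref{HL_con} as the input on purity of semistable strata. First I would establish the Higgs analogue of Lemma \ref{fin_HN}: for fixed $r,d$ and fixed lower bound $l$, only finitely many slope-HN types occur in $\CHiggs^{\mu\geq l}_{r,d}(C)$. Indeed, writing the positive-rank subquotients as $(r_i,d_i)$ with $d_1/r_1>\dots>l$ and denoting by $m$ the length of the torsion (slope $\infty$) part, the ranks $r_i$ partition $r$, while $\sum_i d_i=d-m$ together with $d_i\geq l r_i$ pins each $d_i$ between $l r_i$ and a bound forced by $\sum_i d_i$ being fixed; the same inequality $d-m\geq l r$ bounds $m\leq d-l r$, so finitely many types arise. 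As in Corollary \ref{fts_p} this boundedness also shows that each $\CHiggs^{\mu\geq l}_{r,d}(C)$ is a finite-type global quotient stack, so its Borel--Moore homology is defined, and $\CHiggs_{r,d}(C)$ is exhausted by the $\CHiggs^{\mu\geq l}_{r,d}(C)$ as $l\to-\infty$.

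Next I would prove the Higgs analogue of Lemma \ref{HN_pure}: for a fixed HN type $\bm{\alpha}$ with semistable factors $\overline{\CG}_1,\dots,\overline{\CG}_k$ of strictly decreasing slope, the stratum $\CHiggs_{\bm{\alpha}}(C)$ is an iterated affine fibration over $\prod_i \FM^{\Dol}_{r_i,d_i}(C)$. The affine-fibration structure is obtained exactly as in Lemma \ref{HN_pure}: for $i<j$ the inequality $\mu(\overline{\CG}_i)>\mu(\overline{\CG}_j)$ forces $\Hom(\overline{\CG}_i,\overline{\CG}_j)=0$, whence the $2$CY Serre-duality isomorphism $\Ext^n(\overline{\CG}_j,\overline{\CG}_i)\cong \Ext^{2-n}(\overline{\CG}_i,\overline{\CG}_j)^{\vee}$ gives $\Ext^2(\overline{\CG}_j,\overline{\CG}_i)=0$, so the extension-classifying groups $\Ext^1(\overline{\CG}_j,\overline{\CG}_i)$ have locally constant rank $-\chi(\overline{\CG}_j,\overline{\CG}_i)$ and assemble into locally free relative $\RHom$-complexes concentrated in a single degree. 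Purity of $\HO^{\BM}(\CHiggs_{\bm{\alpha}}(C),\BQ)$ then follows since affine fibrations preserve purity up to a Tate twist and each factor $\FM^{\Dol}_{r_i,d_i}(C)$ is pure by Theorem \ref{Higgs_pure}, including the rank-zero torsion factor $\FM^{\Dol}_{0,m}(C)$, which is covered by the $r=0$ case of that theorem.

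With finiteness and stratum-purity in hand I would run the induction of Theorem \ref{purity_ohne} verbatim: ordering the finite set of types $\bm{\beta}_1\preccurlyeq\dots\preccurlyeq\bm{\beta}_s$ compatibly with the HN stratification (Lemma \ref{Nit_lemma}), setting $\FN_i=\bigcup_{j\leq i}\CHiggs_{\bm{\beta}_j}(C)$ with $\FN_{i-1}\subset\FN_i$ closed and open complement $\CHiggs_{\bm{\beta}_i}(C)$, and using the long exact sequence in compactly supported cohomology to conclude inductively that each $\HO^n_c(\FN_i,\BQ)$ is pure of weight $n$; this yields purity of $\HO^{\BM}(\CHiggs^{\mu\geq l}_{r,d}(C),\BQ)$. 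The nilpotent case is identical once one observes that nilpotency of the Higgs field is inherited by, and extended across, slope-HN subquotients, so $\CHiggs^{\mu\geq l,\nil}_{\bm{\alpha}}(C)$ is the \emph{same} affine fibration restricted to the nilpotent loci of the factors, whose purity is Proposition \ref{GNC_prop}. Finally, for $l'>l$ the complement of $\CHiggs^{\mu\geq l'}_{r,d}(C)$ in $\CHiggs^{\mu\geq l}_{r,d}(C)$ is a finite union $\FN$ of HN strata with bottom slope in $[l,l')$, so $\HO_c(\FN,\BQ)$ is pure by the same induction; in the long exact sequence the connecting map $\HO^n_c(\FN,\BQ)\to\HO^{n+1}_c(\CHiggs^{\mu\geq l'}_{r,d}(C),\BQ)$ runs from a pure weight-$n$ to a pure weight-$(n+1)$ Hodge structure, hence vanishes, the sequence splits, and the extension-by-zero map $\HO^n_c(\CHiggs^{\mu\geq l'}_{r,d}(C),\BQ)\to\HO^n_c(\CHiggs^{\mu\geq l}_{r,d}(C),\BQ)$ is injective; passing to the colimit as $l\to-\infty$ exhibits $\HO_c(\CHiggs_{r,d}(C),\BQ)$ and $\HO_c(\CHiggs^{\nil}_{r,d}(C),\BQ)$ as increasing unions of pure Hodge structures. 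The step I expect to require the most care is the stratum-purity lemma, specifically confirming that the slope-$\infty$ torsion factor genuinely falls within the scope of Theorem \ref{Higgs_pure} and that the local-freeness and affine-fibration arguments survive the presence of rank-zero subquotients.
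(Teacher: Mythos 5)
Your proposal is correct and takes essentially the same route as the paper: the paper's proof simply remarks that the argument of Theorem \ref{1dim_pure} --- slope-HN stratification with the torsion part as the slope-$\infty$ step, the affine-fibration and Serre-duality ($\Ext^2$-vanishing) argument of Lemma \ref{HN_pure}, the inductive long-exact-sequence argument, and the weight-based splitting for injectivity --- goes through verbatim once purity of the semistable strata is supplied by Theorem \ref{Higgs_pure} (and by Proposition \ref{GNC_prop} in the nilpotent case), which are exactly the inputs you use. The only cosmetic difference is that you re-derive the finiteness-of-HN-types and fibration lemmas directly in the Higgs setting, whereas the paper treats the whole statement, via the BNR correspondence, as an instance of the one-dimensional-sheaf result on $S=\Tot(\omega_C)$ with the projectivity hypothesis replaced by the new purity inputs.
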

\begin{remark}
The fact that $\HO_c(\CHiggs^{\nil}_{r,d}(C),\BQ)$ can be written as an infinite nested union of pure Hodge structures (but in a different way) follows from a more down-to-earth argument, found in \cite{SchMo20}.  In brief, the stack admits a stratification by Jordan type of the Higgs sheaves (as defined in \cite{Sch16}), with each stratum an affine fibration over Cartesian products of stacks $\Coh_{r',d'}(C)$ for various $r',d'$.  We refer to \cite{SchMo20} for more details.
\end{remark}
\begin{proof}[Proof of Proposition \ref{Higgs_nest}]
We start with the non-nilpotent case.  This is almost a special case of Theorem \ref{1dim_pure}, except of course in this case $S$ is not projective.  However the proof of Theorem \ref{1dim_pure} only uses Serre duality (for the vanishing of $\mathrm{Ext}^2$ in Lemma \ref{HN_pure}) and purity of $\HO_c(\FM^H_{Q(t)}(S),\BQ)$ for $\deg(Q(t))\leq 1$, which holds here by Theorem \ref{Higgs_pure}.  The arguments for the nilpotent versions of the statements are identical, instead taking the purity statement of Proposition \ref{GNC_prop} as the input.
\end{proof}

\subsubsection{Comparing perverse filtrations}
\label{cpf}
We finish this section with the discussion of the perverse filtration on the intersection cohomology of $\CM^{\Dol}_{r,d}(C)$ that was promised in \S \ref{2per}.  Let $\CN\subset \CM^{\Dol}_{r,d}(C)$ be an irreducible component, containing a stable Higgs sheaf.  Then by setting $n=0$ in Theorem \ref{ICin} there is a morphism of complexes of mixed Hodge modules
\begin{equation}
\label{KMN}
\ICSn_{\CN}\rightarrow \BD(p_!\underline{\BQ}_{\FM^{\Dol}_{r,d}(C)}\otimes\LLL^{(1-g)r^2})
\end{equation}
which is moreover the inclusion of a summand, so taking derived global sections, we obtain an inclusion
\[
\ICA(\CN)\hookrightarrow \HO^{\BM}(\FM^{\Dol}_{r,d}(C),\BQ\otimes \LLL^{(1-g)r^2}).
\]
As in the introduction we denote by $\Hit\colon \CM^{\Dol}_{r,d}(C)\rightarrow \Lambda_r$ the Hitchin morphism.  Applying $\Hit_*\!$ to \eqref{KMN} we obtain the morphism of complexes
\begin{equation}
\label{PKMN}
\Hit_*\!\ICSn_{\CN}\rightarrow \Hit_*\!\BD(p_!\underline{\BQ}_{\FM^{\Dol}_{r,d}(C)}\otimes\LLL^{(1-g)r^2}).
\end{equation}
Let $s\colon \Lambda_r\rightarrow \pt$ be the structure morphism.  Applying the natural transformation $s_*(\bm{\tau}^{\leq j}\rightarrow \id)$ to \eqref{PKMN} we obtain the commutative square
\[
\xymatrix{
\ICA(\CN)\ar[r]&\HO^{\BM}(\FM^{\Dol}_{r,d}(C),\BQ\otimes \LLL^{(1-g)r^2})\\
\FP_j^{\ICS}\ar@{^{(}->}[u]\ar[r]&\FP_j^{\stacky}\ar@{^{(}->}[u]
}
\]
where 
\[
\FP_j^{\ICS}\subset\ICA(\CN);\quad\quad
\FP_j^{\stacky}\subset \HO^{\BM}(\FM^{\Dol}_{r,d}(C),\BQ\otimes \LLL^{(1-g)r^2}) 
\]
are the $j$th pieces of the perverse filtrations from \S \ref{perv_intro}, but with normalised twists and shifts properly taken care of.  In sum we have the following
\begin{proposition}
Let $C$ be a smooth connected complex projective curve, and let $\CN_s$ for $s\in S$ be the irreducible components of $\CM^{\Dol}_{r,d}(C)$ containing a stable Higgs sheaf.  Then there is a canonical inclusion
\[
\bigoplus_{s\in S}\ICA(\CN_s)\hookrightarrow \HO^{\BM}(\FM^{\Dol}_{r,d}(C),\BQ\otimes \LLL^{(1-g)r^2})
\]
of pure Hodge structures, which moreover respects the natural perverse filtrations induced by the Hitchin maps on the source and the target.
\end{proposition}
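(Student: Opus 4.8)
The plan is to promote the component-by-component construction carried out just before the statement into a single canonical embedding, and then to read off compatibility with the Hitchin perverse filtration from the functoriality of $\Hit_*$ together with the canonical perverse truncation functors. First I would record that $\FM^{\Dol}_{r,d}(C)$, realised via the BNR correspondence and the GIT presentation of \S\ref{MSCON} as a global quotient stack $X/\Gl_N$, together with its good moduli space $p\colon \FM^{\Dol}_{r,d}(C)\to\CM^{\Dol}_{r,d}(C)$, fits the framework of \S\ref{Tret}: the stable locus $\CM^s$ is a smooth open subvariety of the symplectic $\CM^{\Dol}_{r,d}(C)$ with even-dimensional components, and $p$ restricts to a $\B\BC^*$-gerbe over it. The components $\CN_s$ of the statement are precisely the irreducible components whose stable loci are the connected components of $\CM^s$, and a direct computation gives $\tfrac12\chi_{\mathscr{C}}(\cdot,\cdot)=(1-g)r^2$ on each such component, matching the twist in the statement.

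With this in place, the second step is to apply Theorem~\ref{ICin} with $n=0$. For each component $\CN_s$ this produces the canonical summand inclusion \eqref{KMN}
\[
\ICSn_{\CN_s}\rightarrow \BD\bigl(p_!\underline{\BQ}_{\FM^{\Dol}_{r,d}(C)}\otimes\LLL^{(1-g)r^2}\bigr),
\]
where I would stress that the pinning-down of the summand (and hence the canonicity) is exactly the content of Theorem~\ref{ICin}: the natural morphism $\underline{\BQ}_{\CM}\to p_*p^*\underline{\BQ}_{\CM}$ identifies $\Ho^0$, while the Atiyah--Bott operator $\tilde{L}$ controls the higher pieces. Since the $\ICSn_{\CN_s}$ are pairwise distinct simple objects supported on distinct irreducible subvarieties, the uniqueness clause of Theorem~\ref{ICin} guarantees that their sum is again the inclusion of a direct summand. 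Applying $\Hit_*$ followed by $s_*\colon\Db(\MHM(\Lambda_r))\to\Db(\MHS)$ and taking the $\Ho^0$ part then yields the injection $\bigoplus_s\ICA(\CN_s)\hookrightarrow\HO^{\BM}(\FM^{\Dol}_{r,d}(C),\BQ\otimes\LLL^{(1-g)r^2})$; the target is pure by Theorem~\ref{Higgs_pure}, so the source, being a sub-object in $\MHS$, is pure as well.

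For compatibility with the perverse filtrations --- which I expect to be the main obstacle, because the decomposition \eqref{fde} of a pure complex into its perverse cohomology sheaves is not itself canonical --- the key observation is that one never needs that decomposition, only the canonical transformations $\bm{\tau}^{\leq j}\to\id$. Applying the projective pushforward $\Hit_*$ to the summand inclusion \eqref{KMN} gives the summand inclusion \eqref{PKMN} of two \emph{pure} complexes of mixed Hodge modules on $\Lambda_r$: purity on the target side follows from Proposition~\ref{Higgs_loc_pur} together with Verdier duality and Saito's decomposition theorem (Theorem~\ref{thm:Saito_theorem}) for the projective morphism $\Hit$, and on the source side from purity of the simple $\ICSn_{\CN_s}$ and the same theorem. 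Because \eqref{PKMN} is a genuine morphism in the derived category of mixed Hodge modules, it commutes with $\bm{\tau}^{\leq j}\to\id$; applying $s_*$ to the resulting commuting square and taking $\Ho^0$ reproduces exactly the commutative square displayed before the statement, whose vertical arrows are the embeddings just constructed and whose horizontal arrows are the inclusions of $\FP^{\ICS}_j$ and $\FP^{\stacky}_j$. Naturality of the truncation transformation is then what forces the embedding to carry $\FP^{\ICS}_j$ into $\FP^{\stacky}_j$, completing the argument. The only routine points left to verify are that the twists and shifts align under the normalisation conventions of \S\ref{sec:algebraic_MHMs}, and that the Hitchin perverse filtration on $\ICA(\CN_s)$ agrees with the one induced by $\Hit_*$ on the summand $\ICSn_{\CN_s}$.
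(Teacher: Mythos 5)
Your proposal is correct and follows essentially the same route as the paper: the canonical summand inclusion \eqref{KMN} from Theorem~\ref{ICin} with $n=0$, pushed forward along $\Hit$, with the filtration compatibility read off by applying the natural transformation $s_*(\bm{\tau}^{\leq j}\rightarrow \id)$ to \eqref{PKMN} --- exactly the commutative square the paper exhibits. Your additional remarks (purity of the target via Theorem~\ref{Higgs_pure}, and that summing the pairwise non-isomorphic simples $\ICSn_{\CN_s}$ again yields a summand inclusion) are accurate elaborations of points the paper leaves implicit.
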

\subsection{$k[\pi_1(\Sigma_g)]$-modules}
\label{Betti_sec}
To start with let $k$ be an algebraically closed field of characteristic zero.  As in the case of quiver algebras, if $A=k\langle x_1,\ldots,x_r\rangle/R$ is a finitely generated $k$-algebra, with relations $R$, we may describe the moduli stack of $d$-dimensional representations of $A$ as a finite type global quotient, as follows.  We consider the closed subscheme $Z\subset (\Mat_{d\times d}(k))^{\times r}$ cut out by the matrix-valued relations $R$.  This scheme is $\Gl_d(k)$-equivariant, where $\Gl_d(k)$ acts via the simultaneous conjugation action.  Then there is an isomorphism of stacks
\[
\FM_d(A)\cong Z/\Gl_d(k).
\]
The more intrinsic way to describe the left hand side is as follows: it is the stack sending a commutative $k$-algebra $B$ to the groupoid formed by removing all non-invertible morphisms from the full subcategory of all $B\otimes A$-modules that are flat over $B$ and of dimension $d$ at geometric points of $B$.
\smallbreak

Clearly $\FM_d(A)$ is a global quotient stack.  We denote by
\[
\JH_d\colon \FM_d(A)\rightarrow \CM_d(A)\coloneqq \Spec(\Gamma(\CO_Z)^{\Gl_d(k)})
\]
the affinization map.  The notation is explained by the following observations: for $K\supset k$ a field, the $K$-points of $\CM_d(A)$ correspond to semisimple $A\otimes K$-modules, and at the level of points, $\JH_d$ takes modules to their semisimplifications, i.e. the direct sum of the subquotients appearing in their Jordan-H\"older filtration.
\sssct
We consider the case $A=k[\pi_1(\Sigma_g)]$.  Via the standard group presentation
\[
\pi_1(\Sigma_g)=\left\langle a_1,\ldots,a_g,b_1,\ldots,b_g\;\lvert\; \prod_{i=1}^g(a_i,b_i)\right\rangle
\]
we obtain a finite presentation for $A$.  The Calabi--Yau structure on $\Ddg(A\Lmod)$ in some sense predates Calabi and Yau, coming as it does from Poincar\'e duality.  The fact that this category has a left 2CY structure in the sense that we need is implied by the following unpublished result of Kontsevich (see \cite[Thm.5.2.2 \& Prop.5.2.6]{Superpotentials} for a proof\footnote{Note that the proof works for general $k$.}):

\begin{proposition}
Let $\Sigma_g$ be a closed, orientable Riemann surface without boundary.  Then the algebra $k[\pi_1(\Sigma_g)]$ is homologically smooth and carries a left 2CY structure.
\end{proposition}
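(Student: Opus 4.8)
The plan is to prove the two assertions separately, using throughout that for $g\geq 1$ the surface $\Sigma_g$ is a finite aspherical $CW$-complex, so that $\Sigma_g$ is a model for the classifying space $B\pi$ of $\pi\coloneqq\pi_1(\Sigma_g)$ and group (co)homology of $\pi$ agrees with the (co)homology of $\Sigma_g$. (The genus-zero case is degenerate: $\pi$ is trivial, $A\coloneqq k[\pi]=k$, and $k$ is $0$CY rather than $2$CY; so we take $g\geq 1$, equivalently $\Sigma_g$ aspherical.) Note that $A$ is \emph{not} locally proper, since $\dim_k A=\infty$, so we cannot obtain a left $2$CY structure from a right one via Proposition~\ref{BDthm}; instead we build the left structure directly, in the sense of §\ref{lccy}.

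For homological smoothness I would first use the cellular chain complex of the contractible universal cover $\widetilde{\Sigma}_g$ to produce a finite resolution
\[
0\to C_2\to C_1\to C_0\to k\to 0
\]
of the trivial module $k$ by finitely generated free left $k[\pi]$-modules. Realising the diagonal bimodule as the induction $A\cong \mathrm{Ind}_{\Delta\pi}^{\pi\times\pi}k$ along the diagonal subgroup $\Delta\pi\subset\pi\times\pi$, and applying the exact induction functor to this resolution, I obtain a length-two resolution of $A$ by finitely generated projective $A^e$-modules, where $A^e\coloneqq A\otimes A^{\opp}$. This exhibits $A$ as a compact object of $\Db(\Mod_{A^e})$, which is the definition of homological smoothness used in §\ref{sec:2CY_local}.

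For the left $2$CY structure I would package Poincar\'e duality for the oriented surface at the level of bimodules. The key input is Van den Bergh duality: $A^!=\RHom_{\Mod_{A^e}}(A,A^e)\simeq A[-2]$ as $A$-bimodules, the shift reflecting $\dim_{\BR}\Sigma_g=2$ and the \emph{absence} of any twist by an automorphism reflecting the orientability of $\Sigma_g$ (a nonorientable surface would force a twist by the orientation character). Equivalently, $\pi$ is an orientable Poincar\'e duality group of dimension $2$. Dualising the bimodule resolution from the previous step and identifying the top bimodule cohomology with $A$ via cap product with the fundamental class $[\Sigma_g]\in H_2(\Sigma_g,k)$ makes this isomorphism explicit, and it is exactly the nondegeneracy requirement \eqref{nui_def} once the defining class $\eta$ is supplied.

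It then remains to produce the negative cyclic class $\eta\colon k[2]\to \CHC_{\bullet}(A)^{S^1}$ whose underlying Hochschild class $\eta(1)^{\natural}$ induces, via $\bullet^{\flat}$, precisely the duality isomorphism above. Here I would use the conjugacy-class decomposition $\HH_{\bullet}(k[\pi])\cong\bigoplus_{[\gamma]}H_{\bullet}(\pi_\gamma,k)$ together with its cyclic refinement, and take $\eta(1)^{\natural}$ to be the fundamental class $[\Sigma_g]$ sitting in the identity-conjugacy-class summand $H_2(\pi,k)=H_2(\Sigma_g,k)\cong k$; by construction its image under $\bullet^{\flat}$ is the Poincar\'e duality map, giving nondegeneracy. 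I expect the \textbf{main obstacle} to be the lift of $[\Sigma_g]$ from Hochschild to \emph{negative cyclic} homology: one must check that the class is $\ConB$-closed and, more seriously, lifts coherently through the tower defining $\CHC_{\bullet}(A)^{S^1}$. I would handle this by restricting attention to the identity summand, where the $S^1$-action is the de Rham/loop-rotation action and the orientation fundamental cycle of the closed manifold $\Sigma_g$ furnishes a canonical closed representative that lifts; this is the content isolated in \cite{Superpotentials}, and combining it with the bimodule duality above yields the desired left $2$CY structure.
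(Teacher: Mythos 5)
Correct, and this is essentially the paper's own proof: the paper establishes the proposition by citation to \cite{Superpotentials} (Thm.\ 5.2.2 and Prop.\ 5.2.6), and your argument — homological smoothness via applying the exact induction $\mathrm{Ind}_{\Delta\pi}^{\pi\times\pi}$ to the length-two cellular resolution of $k$ coming from the contractible universal cover, together with the left 2CY class given by the fundamental class $[\Sigma_g]$ in the identity-conjugacy-class summand of $\HH_2(k[\pi])$, lifted to negative cyclic homology using that loop rotation is homotopically trivial on constant loops — is precisely the proof given there. Your two side remarks are also correct: Proposition \ref{BDthm} is indeed unavailable since $k[\pi_1(\Sigma_g)]$ is not locally proper, and the statement does require $g\geq 1$ (for $g=0$ the algebra is $k$, which is smooth but $0$CY rather than $2$CY).
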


It follows from Proposition~\ref{BDthm} that for any collection $\CF_1,\ldots,\CF_r$ of complexes of $k[\pi_1(\Sigma_g)]$-modules with finite-dimensional total cohomology, the full dg subcategory of $\Ddg(\Mod^{k[\pi_1(\Sigma_g)]})$ containing $\CF_1,\ldots,\CF_r$ carries a right 2CY structure.  From Corollary \ref{form_cor} we deduce the following proposition.
\begin{proposition}
Let $\CF_1,\ldots,\CF_r\in\Ob(\Db(k[\pi_1(\Sigma_g)]))$ be a $\Sigma$-collection of complexes of $k[\pi_1(\Sigma_g)]$-modules.  Then the full subcategory of $\Ddg(\fdmod^{k[\pi_1(\Sigma_g)]})$ containing $\CF_1,\ldots,\CF_r$ is formal.
\end{proposition}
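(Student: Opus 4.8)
The plan is to obtain the statement as a direct application of the formality criterion of Corollary \ref{form_cor}, for which the only input required is a left $2$CY dg category containing $\CF_1,\ldots,\CF_r$ as a $\Sigma$-collection. Indeed, the two preceding propositions have already arranged essentially everything: the content lies in Corollary \ref{form_cor} itself (resting on strictification of cyclic units, Proposition \ref{StrictProp}, and the rigidity Lemma \ref{rigLem}) together with Kontsevich's $2$CY structure.

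First I would fix the ambient category. By the result of Kontsevich recalled immediately above, the algebra $A\coloneqq k[\pi_1(\Sigma_g)]$ is homologically smooth and carries a left $2$CY structure. Via the isomorphism \eqref{AtoPerf} and the discussion of \S \ref{lccy}, a left $n$CY structure on $A$ (viewed as a one-object dg category) is the same datum as a left $n$CY structure on $\Perf_{\dg}(A)$; thus $\Perf_{\dg}(A)$ is a left $2$CY dg category. Next I would check that the $\CF_i$ are genuinely objects of this category: each $\CF_i$ has finite-dimensional total cohomology (as a $\Sigma$-object its Yoneda algebra is finite-dimensional), and since $A$ is homologically smooth every object of $\Db(\fdmod^{A})$ lies in $\Perf_{\dg}(A)$, so the $\CF_i$ are perfect. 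As $\{\CF_1,\ldots,\CF_r\}$ is a $\Sigma$-collection by hypothesis, the full dg subcategory of $\Perf_{\dg}(A)$ they span coincides with the full subcategory of $\Ddg(\fdmod^{A})$ appearing in the statement (the Hom-complexes, computed as $\RHom_A$, agree in either ambient category), and Corollary \ref{form_cor} applies verbatim to conclude formality.

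There is essentially no obstacle remaining at this stage, so this is a one-line deduction once the ingredients are assembled. The only point demanding a word of care is the containment $\Db(\fdmod^{A})\subseteq \Perf_{\dg}(A)$, which is precisely where homological smoothness of $k[\pi_1(\Sigma_g)]$ enters and which guarantees that the hypotheses of Corollary \ref{form_cor} are met; once this is noted, the proof is complete.
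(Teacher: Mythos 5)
Your argument is correct and is essentially the paper's own proof: Kontsevich's left 2CY structure on $k[\pi_1(\Sigma_g)]$ (transferred to the ambient dg category of modules, or equivalently to $\Perf_{\dg}$ via \eqref{AtoPerf}) combined with Corollary \ref{form_cor}, whose proof internally invokes Proposition \ref{BDthm} to induce the right 2CY structure on the locally proper subcategory spanned by the $\Sigma$-collection, exactly as in \S\ref{Betti_sec}. One small caveat: your parenthetical deriving finite-dimensional total cohomology of $\CF_i$ from finite-dimensionality of its Yoneda algebra is a non sequitur (finite-dimensional Ext spaces do not in general force finite-dimensional cohomology), but it is harmless here, since the hypothesis that the $\CF_i$ lie in $\Ddg(\fdmod^{k[\pi_1(\Sigma_g)]})$ already supplies finite-dimensional total cohomology, after which homological smoothness gives perfectness as you say.
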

\sssct
For the rest of \S \ref{Betti_sec} we set $k=\BC$.  From Theorem \ref{loc_pur_thm} and Proposition \ref{int_filt} we deduce the following
\begin{theorem}
\label{pi1_thm}
The complex $\JH_!\ul{\BQ}_{\FM^{\Betti}_{g,r}}$ is pure, and the Borel--Moore homology
\[
\HO^{\BM}(\FM^{\Betti}_{g,r},\BQ\otimes\LLL^{(1-g)r^2})\coloneqq \HO(\FM^{\Betti}_{g,r},\BD(\BQ\otimes\LLL^{(1-g)r^2}))
\]
carries an ascending perverse filtration by mixed Hodge structures, with respect to $\JH$, starting in perverse degree $0$.
\end{theorem}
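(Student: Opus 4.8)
The plan is to recognise $\FM^{\Betti}_{g,r}$ as the classical truncation of an open substack of the derived stack of objects in a left $2$CY category, so that Theorem \ref{loc_pur_thm}, Theorem \ref{pp3_thm} and Proposition \ref{int_filt} apply directly. First I would set $\mathscr{C}=\Ddg(\Mod^{\BC[\pi_1(\Sigma_g)]})$, which carries a left $2$CY structure by Kontsevich's result recalled above, and identify $\FM^{\Betti}_{g,r}$ with $t_0(\bm{\FM})$ for $\bm{\FM}\subset\bm{\FM}_{\mathscr{C}}$ the open substack of honest flat $r$-dimensional modules (concentrated in degree $0$), using the intrinsic modular description of $\FM_r(A)$ given at the start of \S\ref{Betti_sec}. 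Writing $\FM^{\Betti}_{g,r}\cong Z/\Gl_r$ with $Z$ the affine representation scheme, the affinization $\JH\colon Z/\Gl_r\to Z/\!\!/\Gl_r=\CM^{\Betti}_{g,r}$ is a good moduli space, and its closed points correspond to closed $\Gl_r$-orbits, i.e. to semisimple modules. In particular $\FM^{\Betti}_{g,r}$ is a finite type global quotient stack, hence exhausted by global quotient stacks, so $\JH_!\ul{\BQ}_{\FM^{\Betti}_{g,r}}\in\Ob(\Dblf(\MHM(\CM^{\Betti}_{g,r})))$ is defined as in \S\ref{stack_di}.

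Second, because $\mathscr{C}$ is left $2$CY and closed points represent semisimple modules, property (*) of Theorem \ref{loc_pur_thm} holds automatically via the remark following that theorem. Concretely, a closed point decomposes as $\bigoplus_i V_i^{\oplus d_i}$ with the $V_i$ distinct simples, and Schur's lemma together with the pairing $\Ext^n(V_i,V_j)\cong\Ext^{2-n}(V_j,V_i)^{\vee}$ coming from Proposition \ref{BDthm} and the vanishing of $\Ext^{>2}$ (the surface group has cohomological dimension $2$) shows the $V_i$ form a $\Sigma$-collection whose span carries a right $2$CY structure. Theorem \ref{loc_pur_thm} then gives that $\Ho^i\JH_!\ul{\BQ}_{\FM^{\Betti}_{g,r}}$ is pure of weight $i$ for every $i$, which is precisely purity of the complex $\JH_!\ul{\BQ}_{\FM^{\Betti}_{g,r}}$.

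Third, purity of this complex together with the fact that $\FM^{\Betti}_{g,r}$ is exhausted by global quotient stacks lets me invoke Proposition \ref{int_filt} to produce the ascending perverse filtration by mixed Hodge structures on $\HO^{\BM}_n(\FM^{\Betti}_{g,r},\BQ)$ relative to $\JH$; tensoring by the fixed Tate object $\LLL^{(1-g)r^2}$ only shifts weights and transports the filtration to $\HO^{\BM}(\FM^{\Betti}_{g,r},\BQ\otimes\LLL^{(1-g)r^2})$.

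Finally, for the claim that the filtration begins in perverse degree $0$, I would establish $\bm{\tau}^{\geq 1}(\JH_!\ul{\BQ}_{\FM^{\Betti}_{g,r}}\otimes\LLL^{(1-g)r^2})=0$. The key numerical input is that on the fixed-class stack the Euler form is constant: for an $r$-dimensional representation $\CF$ one has $\chi_{\mathscr{C}}(\CF,\CF)=\chi(\Sigma_g)\cdot r^2=2(1-g)r^2$, so by \eqref{euler_compare} the local quiver model at every closed point satisfies $\chi_Q(\dd,\dd)=(1-g)r^2$. By the local neighbourhood theorem (Theorem \ref{loc_str_thm}) the morphism $\JH$ is étale-locally modelled by $\JH_{\dd}\colon\FM_{\dd}(\Pi_Q)\to\CM_{\dd}(\Pi_Q)$, for which Theorem \ref{pp3_thm} gives $\bm{\tau}^{\geq 1}(\JH_{\dd,!}\ul{\BQ}_{\FM_{\dd}(\Pi_Q)}\otimes\LLL^{\chi_Q(\dd,\dd)})=0$; since truncation functors commute with étale pullback, the global vanishing follows, whence the filtration starts in degree $0$. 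The main obstacle is really this last step: matching the uniform twist $\LLL^{(1-g)r^2}$ with the pointwise preprojective twist $\LLL^{\chi_Q(\dd,\dd)}$ and transferring the perverse amplitude bound through the étale-local presentation. Purity itself is a formal consequence of the general machine once the left $2$CY structure, the good moduli space, and the semisimplicity of closed points are checked.
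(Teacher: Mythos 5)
Your proposal is correct and follows the same route as the paper: the paper deduces the theorem directly from Theorem \ref{loc_pur_thm} (with condition (*) automatic since $\BC[\pi_1(\Sigma_g)]$ carries Kontsevich's left 2CY structure and closed points of the finite type quotient stack $Z/\Gl_r$ are semisimple modules) together with Proposition \ref{int_filt}, exactly as you do. Your extra bookkeeping for the perverse-degree-zero claim — the constant Euler form $\chi_{\mathscr{C}}(\CF,\CF)=(2-2g)r^2$, the identity \eqref{euler_compare} matching it to $2\chi_Q(\dd,\dd)$ in the local model, and the transfer of the bound \eqref{perv_bound} through the \'etale-local presentation of Theorem \ref{loc_str_thm} — is precisely the mechanism the paper invokes (see the discussion following Theorem \ref{pp3_thm}), just spelled out in more detail than the paper's one-line deduction.
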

Applying Theorem \ref{ICin} again we obtain
\begin{proposition}
Let $\CN$ be a connected component of $\CM^{\Betti}_{g,r}$ containing a simple representation of $\pi_1(\Sigma_g)$.  Then there is a canonical inclusion
\[
\ICA(\CN)\hookrightarrow \HO^{\BM}(\FM^{\Betti}_{g,r},\BQ\otimes \LLL^{(1-g)r^2})
\]
of mixed Hodge structures.
\end{proposition}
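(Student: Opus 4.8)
The plan is to obtain the inclusion as the $n=0$, single-component case of the canonical inclusion already produced in Theorem~\ref{ICin}, once we check that $p=\JH\colon\FM^{\Betti}_{g,r}\to\CM^{\Betti}_{g,r}$ fits the framework of \S\ref{Tret}. First I would recall from the start of \S\ref{Betti_sec} that $\FM^{\Betti}_{g,r}\cong Z/\Gl_r$ is a finite-type global quotient stack, where $Z\subset(\Mat_{r\times r}(\BC))^{\times 2g}$ is the scheme cut out by the single surface-group relation and $\Gl_r$ acts by simultaneous conjugation. The central $\BC^*\subset\Gl_r$ acts trivially, so the action factors through $\PGl_r$ and $\Det\colon\FM^{\Betti}_{g,r}\to\pt/\BC^*$ is induced by $\det$, exactly as required in \S\ref{Tret}.

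Next I would take $\CM^s\subset\CM^{\Betti}_{g,r}$ to be the open locus of simple representations; by Proposition~\ref{Sclosed} (for the degenerate stability condition, so that all objects are semistable and simple $=$ stable) the restriction $\FM^s\to\CM^s$ is a $\BC^*$-gerbe. I then need $\CM^s$ smooth and even-dimensional: smoothness and a symplectic structure come from the fact that the stable locus of the moduli of objects in a left $2$CY category is a $(0)$-shifted symplectic, hence smooth symplectic, variety (\cite{PTVV,BD18}), while even-dimensionality follows from the computation in the proof of Theorem~\ref{ICin}, namely $\dim\CN=2-\chi_{\mathscr{C}}(\CF,\CF)$. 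Here $\chi_{\mathscr{C}}(\CF,\CF)=(2-2g)r^2$ is the Euler form for $\BC[\pi_1(\Sigma_g)]$-modules, computed from Poincar\'e duality on $\Sigma_g$; this both shows $\dim\CN=2+(2g-2)r^2$ is even and identifies the normalising twist $\LLL^{\frac12\chi_{\mathscr{C}}(\cdot,\cdot)}=\LLL^{(1-g)r^2}$ appearing in the statement.

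Finally, the hypotheses of Theorem~\ref{loc_pur_thm} hold: closed points of $\FM^{\Betti}_{g,r}$ represent semisimple modules by the affinization description of $\JH$, and since $\mathscr{C}=\Ddg(\Mod^{\BC[\pi_1(\Sigma_g)]})$ is left $2$CY, property~(*) is automatic by the remark following Theorem~\ref{loc_pur_thm} (using Proposition~\ref{BDthm} to equip the relevant subcategory with a right $2$CY structure together with the $\Sigma$-collection property). Thus Theorem~\ref{pi1_thm} gives the purity of $\JH_!\ul{\BQ}_{\FM^{\Betti}_{g,r}}$ and the decomposition \eqref{sede}, and Theorem~\ref{ICin} applies verbatim. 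Writing the given component as $\CN=\overline{\CN\cap\CM^s}$ and taking $n=0$ in the canonical inclusion of Theorem~\ref{ICin} yields $\ICA(\CN)\hookrightarrow\HO^{\BM}(\FM^{\Betti}_{g,r},\BQ\otimes\LLL^{(1-g)r^2})$. The only point requiring care---and the main (mild) obstacle---is the word \emph{canonical}: since the decomposition \eqref{fde} is not a priori canonical, one must produce the embedding through the Verdier-dual/$\bm{\tau}^{\leq 0}$ construction and the Lefschetz operator $\tilde L$ precisely as in the proof of Theorem~\ref{ICin}, so that no fresh difficulty arises beyond verifying the \S\ref{Tret} hypotheses above.
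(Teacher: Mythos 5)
Your proposal is correct and follows exactly the paper's route: the paper derives this proposition simply by applying Theorem~\ref{ICin} (with $n=0$) to $\JH\colon\FM^{\Betti}_{g,r}\to\CM^{\Betti}_{g,r}$, with purity supplied by Theorem~\ref{pi1_thm} and the \S\ref{Tret} hypotheses (global quotient presentation $Z/\Gl_r$, trivially acting central $\BC^*$, smooth symplectic simple locus, Euler form $(2-2g)r^2$ giving the twist $\LLL^{(1-g)r^2}$) verified just as you do. Your explicit checks, including the identification $\CN=\overline{\CN\cap\CM^s}$ and the canonicity via the $\bm{\tau}^{\leq 0}$/Verdier-dual construction and the operator $\tilde{L}$, are precisely the details the paper leaves implicit.
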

\sssct
Note that in contrast with some of our other examples, the Borel--Moore homology of $\FM^{\Betti}_{g,r}$ is never pure, unless $rg=0$.  This follows, for example, from the fact that the cohomology sheaves of the direct image $D_!\ul{\BQ}_{\FM^{\Betti}_{g,r}}$ are locally constant, where we define
\[
D:\FM^{\Betti}_{g,r}\rightarrow \BC^* ;\quad\quad
(A_1,\ldots,A_g,B_1,\ldots,B_g)\mapsto \det(A_1).
\]

Nonetheless, Theorem \ref{pi1_thm} enables us to write
\begin{equation}
\JH_!\ul{\BQ}_{\FM_{g,r}^{\Betti}}\cong\bigoplus_{n\in\BZ}\Ho^n\!\left(\JH_!\ul{\BQ}_{\FM_{g,r}^{\Betti}}\right)[-n]
\end{equation}
and 
\begin{equation}
\label{dbet}
\Ho^n\!\left(\JH_!\ul{\BQ}_{\FM_{g,r}^{\Betti}}\right)\cong \bigoplus_{j\in J_{g,r,n}}\IC_{Z_j}(\mathcal{L}_j)
\end{equation}
for (canonically defined) simple variations of Hodge structure $\mathcal{L}_j$ on locally closed subvarieties $Z_j\subset \CM_{g,r}^{\Betti}$. 
\subsubsection{Strict supports and nonabelian Hodge theory}
It is an interesting question to work out what these simple objects $\IC_{Z_j}(\mathcal{L}_j)$ in \eqref{dbet} are, or even just to calculate the supports $Z_j$.  Using results of Simpson \cite{Si92} it should be possible to show that the supports $Z_j$ are so-called (B,B,B)-branes: that is, their images under the nonabelian Hodge diffeomorphism 
\[
\Phi\colon \FM_{g,r}^{\Betti}\rightarrow \CM_{r,0}^{\Dol}(C)
\]
are the underlying topological spaces of the analytification of complex subvarieties of the Higgs moduli space.  We conjecture that something stronger is true:
\begin{conjecture}
\label{PWorder}
For every $n$ there is a natural isomorphism
\begin{equation}
\label{PerFil}
\Psi\colon\Phi_*\!{}^{\mathfrak{p}}\!\Ho^n\!\left(\JH_!\BQ_{\FM_{g,r}^{\Betti}}\right)\rightarrow{}^{\mathfrak{p}}\!\Ho^n\!\left( p_!\BQ_{\FM_{r,0}^{\Dol}(C)}\right)
\end{equation}
of perverse sheaves.
\end{conjecture}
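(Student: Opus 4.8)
The plan is to reduce the global statement \eqref{PerFil} to the étale-local models furnished by Theorem~\ref{loc_str_thm}, exploiting the fact that the nonabelian Hodge homeomorphism preserves exactly the data that determine those models. Here $\Phi$ must be read as the homeomorphism it induces on coarse moduli spaces $\CM^{\Betti}_{g,r}\to\CM^{\Dol}_{r,0}(C)$, since $\JH_!\BQ_{\FM^{\Betti}_{g,r}}$ lives on $\CM^{\Betti}_{g,r}$. Since the conjecture concerns perverse sheaves with $\BQ$-coefficients rather than mixed Hodge modules, the relevant tool is the topological invariance of intersection cohomology rather than any matching of Hodge structures. Both $\JH_!\BQ_{\FM^{\Betti}_{g,r}}$ and $p_!\BQ_{\FM^{\Dol}_{r,0}(C)}$ are pure by Theorem~\ref{loc_pur_thm}, so Theorem~\ref{thm:Saito_theorem} decomposes each ${}^{\mathfrak{p}}\!\Ho^n$ into simple summands $\IC_{Z_j}(\CL_j)$, and constructing $\Psi$ amounts to matching, across $\Phi$, the supports $Z_j$, the local systems $\CL_j$, and the perverse degrees $n$. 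The cleanest possible form of the result would be an isomorphism $\Phi_*\JH_!\BQ_{\FM^{\Betti}_{g,r}}\cong p_!\BQ_{\FM^{\Dol}_{r,0}(C)}$ already at the level of complexes: this would follow from a homeomorphism of topological stacks $\tilde\Phi\colon\FM^{\Betti}_{g,r}\to\FM^{\Dol}_{r,0}(C)$ lying over $\Phi$ and intertwining $\JH$ with $p$, since then $\Phi_*\JH_!\BQ=\Phi_!\JH_!\BQ=p_!\tilde\Phi_!\BQ=p_!\BQ$, and applying the perverse-exact functor $\Phi_*$ recovers \eqref{PerFil}.

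I expect the local matching to go through cleanly. Fix a closed point $x$ of $\CM^{\Betti}_{g,r}$ corresponding to a polystable representation $\bigoplus_i\CF_i^{\oplus\mathbf{d}_i}$, and let $y=\Phi(x)$ be the associated polystable Higgs sheaf. The nonabelian Hodge correspondence preserves the dimensions of all $\Ext$-groups between the stable constituents, so the Ext-quiver of $\{\CF_i\}$ and that of its Dolbeault image coincide; write $Q$ for the associated graph. By Corollary~\ref{form_cor} the relevant full subcategories are formal, so Theorem~\ref{loc_str_thm} models $\JH$ near $x$ and $p$ near $y$ by the \emph{same} morphism $\JH_{\mathbf{d}}(\Pi_Q)\colon\FM_{\mathbf{d}}(\Pi_Q)\to\CM_{\mathbf{d}}(\Pi_Q)$. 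Combined with Theorem~\ref{pp3_thm} this shows that étale-locally $\JH_!\BQ$ and $p_!\BQ$ are isomorphic complexes of perverse sheaves, so that stalks, local monodromy and perverse degrees agree near corresponding points. Equivalently, after stratifying both coarse spaces by polystable type (stabiliser together with Ext-quiver) and checking, via Simpson \cite{Si92} as the surrounding discussion anticipates, that $\Phi$ is a stratified homeomorphism for these stratifications, the BPS sheaves $\DTS$ controlling each side through cohomological integrality \cite{BenSven} agree locally; their top pieces are the constant sheaves on the diffeomorphic smooth stable loci, and the top-stratum summands match by Theorem~\ref{ICin}.

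The essential difficulty, and the reason this remains a conjecture, is to promote these local identifications to a single isomorphism intertwined by $\Phi$. The sharp obstacle is that $\Phi$ is only a real-analytic homeomorphism of coarse spaces, not an algebraic map, so there is no formal mechanism producing the lift $\tilde\Phi$ nor guaranteeing that $\Phi$ carries the algebraic étale charts of $\CM^{\Betti}_{g,r}$ to charts of $\CM^{\Dol}_{r,0}(C)$ compatibly with the local models. The gluing of the local models into the global complex $\JH_!\BQ$ (respectively $p_!\BQ$) is encoded precisely in the monodromy of the simple local systems $\CL_j$ around the deeper strata, and it is this global monodromy data that $\Phi$ does not visibly transport. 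I regard matching the $\CL_j$ globally as the crux.

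The most promising route I see is an induction on the total class via the BPS/cuspidal description. Assuming that all summands supported on strata of smaller class already correspond under $\Phi$, one would show that the newly appearing BPS sheaf — conjecturally the intersection cohomology complex of the moduli space of the relevant stable objects, by the identification of cuspidal cohomology developed in this paper — is transported by $\Phi$, using that intersection cohomology with $\BQ$-coefficients is an invariant of stratified homeomorphisms preserving the dimension function; one would then propagate this through the PBW/symmetric-power formula to all of ${}^{\mathfrak{p}}\!\Ho^n$. Carrying this out rigorously requires two inputs that are not presently available: a perverse (not merely cohomological) enhancement of Simpson's homeomorphism, strong enough to move intersection complexes and their local systems, and the cohomological integrality theorem for general 2CY categories. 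Establishing these two ingredients is, in my view, the heart of any proof of Conjecture~\ref{PWorder}.
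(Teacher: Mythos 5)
The first thing to say is that the paper does not prove this statement: it appears there as Conjecture~\ref{PWorder}, an open conjecture, with the explicit caveat that it asserts something ``beyond the range of classical nonabelian Hodge theory''. The only fragment of it that the paper actually establishes is precisely the fragment you also derive: by two applications of Theorem~\ref{ICin}, the summands on the two sides of \eqref{PerFil} having full support on components of the coarse moduli spaces that meet the stable locus correspond under $\Phi$, since on each side the full-support summand in each even perverse degree is the canonically embedded intersection complex of the component (suitably shifted), and intersection complexes with $\BQ$-coefficients are transported by the homeomorphism of coarse spaces. Your reading of $\Phi_*$ as push-forward along the induced homeomorphism of coarse moduli spaces is indeed the intended one, and your local analysis is sound and consistent with the paper's viewpoint: Euler-characteristic and stability arguments (or Simpson's isosingularity) show that corresponding polystable objects have the same Ext-quiver, so Theorem~\ref{loc_str_thm} furnishes the same preprojective-algebra model $\JH_{\mathbf{d}}\colon\FM_{\mathbf{d}}(\Pi_Q)\to\CM_{\mathbf{d}}(\Pi_Q)$ for $\JH$ near $x$ and for $p$ near $\Phi(x)$, whence the two complexes are \'etale-locally isomorphic.

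As a proof, then, your proposal has a genuine gap, but it is \emph{the} gap, and you name it correctly: there is no mechanism transporting the gluing data --- the monodromy of the local systems $\CL_j$ around deeper strata --- across $\Phi$, which is merely a real-analytic homeomorphism and induces no functoriality on perverse sheaves beyond topological push-forward. Your ``cleanest possible form'' via a lift $\tilde\Phi$ of topological stacks is explicitly unavailable: the paper notes that no isomorphism of topological stacks between $\FM^{\Betti}_{g,r}$ and $\FM^{\Dol}_{r,0}(C)$ is known. One small clarification there: since both $\JH_!\ul{\BQ}$ and $p_!\ul{\BQ}$ are pure (Theorem~\ref{loc_pur_thm}), their underlying constructible complexes split non-canonically into shifts of their perverse cohomologies, so Conjecture~\ref{PWorder} already \emph{implies} a (non-natural) isomorphism of complexes $\Phi_*\JH_!\BQ\cong p_!\BQ$; the genuine extra content of your strongest form is only the existence of the lift $\tilde\Phi$, not the splitting, and the paper deliberately conjectures only the natural degree-by-degree statement for exactly this reason. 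Your two proposed missing inputs --- a perverse-sheaf-level enhancement of Simpson's homeomorphism strong enough to move the $\CL_j$, and the cohomological integrality theorem for general 2CY categories --- match the paper's own outlook (the latter is flagged as open in the discussion following Theorem~\ref{loc_pur_thm}, and the BPS/cuspidal induction you sketch mirrors \S\ref{cu_sec}). In short, you have written an honest and accurate strategy document for an open conjecture, not a proof; and there is no proof in the paper against which to find you wanting.
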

Note that here we are obliged to work within the category of perverse sheaves, not mixed Hodge modules, since $\Phi$ is not an analytic morphism, and so does not induce a direct image functor at the level of mixed Hodge modules.  Note also that there is no known isomorphism of topological stacks between $\FM_{g,r}^{\Betti}$ and $\FM_{r,0}^{\Dol}(C)$, so this conjecture is saying something beyond the range of classical nonabelian Hodge theory.  In particular, while we do not conjecture that there is a natural isomorphism (of underlying graded vector spaces)
\[
\HO^{\BM}(\FM_{g,r}^{\Betti},\BQ)\cong \HO^{\BM}(\FM_{r,0}^{\Dol}(C),\BQ),
\]
the above conjecture implies that there is a natural isomorphism after passing to the associated graded object of the perverse filtration with respect to the morphisms to the respective coarse moduli spaces.  See \cite{PS19} for related work, and \cite[Thm.1.7]{DHSM22} for a recent proof of this conjecture that was completed as this paper was being revised.  See also \cite{Henn23} for a proof that the isomorphism in \cite{DHSM22} respects the Hall algebras on both sides, as well as the analogous result for the de Rham moduli stack. %\smallbreak
%By two applications of Theorem \ref{ICin}, Conjecture \ref{PWorder} is true when restricted to summands on the left and right hand side of \eqref{PerFil} that have full support on components of the coarse moduli spaces that intersect the stable locus of $\CM^{\Betti}_{g,r}$ and $\CM_{r,0}^{\Dol}(C)$ respectively.

\subsection{Cuspidal cohomology}
\label{cu_sec}
Theorems \ref{loc_pur_thm} and \ref{ICin} lead naturally to the definition of \textit{cuspidal cohomology}.  We start by explaining this in the case of semistable coherent sheaves on surfaces.
\subsubsection{Cohomological Hall algebra for surfaces}
Let $S$ be a smooth quasiprojective complex surface satisfying $\CO_S\cong \omega_S$.  Fix a polarisation $H\in\NS(\overline{S})_{\BR}$ and a polynomial $p(t)\in \BQ[t]$.  We define $\FM_{\sim p(t)}^{H\sstab}(S)$ to be the stack of compactly supported $H$-Gieseker semistable coherent sheaves on $S$ with reduced Hilbert polynomial $p(t)$.  So if we write $a_dt^d+\ldots a_0 \sim b_d t^d+\ldots+b_0$ to mean that $t^d+a_{d-1}/a_d t^{d-1}+\ldots+a_0/a_d=t^d+b_{d-1}/b_d t^{d-1}+\ldots+b_0/b_d$ then
\[
\FM_{\sim p(t)}^{H\sstab}(S)=\coprod_{P(t)\sim p(t)}\FM_{P(t)}^{H\sstab}(S).
\]
We define $\CM_{\sim p(t)}^{H\sstab}(S)$ similarly.  We denote by $\Ex_{\sim p(t)}^{H\sstab}(S)$ the stack of short exact sequences of $H$-Gieseker semistable sheaves with reduced Hilbert polynomials $p(t)$.  We denote by $\pi_i$ for $i=1,2,3$ the morphism sending a short exact sequence to its $i$th entry, and we will consider the commutative diagram
\[
\xymatrix{
\ar[d]^{\pi^H \times \pi^H}\FM_{\sim p(t)}^{H\sstab}(S)^{\times 2}&&\ar[ll]_-{\pi_1\times \pi_3} \Ex_{\sim p(t)}^{H\sstab}(S)\ar[r]^{\pi_2}&\FM_{\sim p(t)}^{H\sstab}(S)\ar[d]^{\pi^H}\\
\CM_{\sim p(t)}^{H\sstab}(S)^{\times 2}\ar[rrr]^{\oplus}&&&\CM_{\sim p(t)}^{H\sstab}(S).
}
\]
One may show (as in \cite[Lem.2.1]{MR19}) that the morphism $\oplus$ taking a pair of polystable coherent sheaves to their direct sum is finite.  The category $\Perv(\CM_{\sim p(t)}^{H\sstab}(S)^{\times 2})$ thus carries a natural convolution tensor product structure, defined by
\[
\CF\otimes_{\oplus} \CG\coloneqq \oplus_*(\CF\boxtimes\CG).
\]
The constant sheaf $\BQ_{\CM_0^{H\sstab}(S)}$ with support the zero (coherent) sheaf on $S$ provides a monoidal unit.  We can thus talk about algebras in $\Perv(\CM_{\sim p(t)}^{H\sstab}(S)^{\times 2})$ or $\Dub^+(\Perv(\CM_{\sim p(t)}^{H\sstab}(S)^{\times 2}))$, which are objects $\CF$ equipped with morphisms $\CF\otimes_{\oplus}\CF\rightarrow \CF$ and $\BQ_{\CM_0^{H\sstab}(S)}\rightarrow \CF$ satisfying the usual axioms for a monoid. 
\smallbreak
The morphism $\pi_2$ is proper, and we define
\[
\alpha\colon \BQ_{\FM_{\sim p(t)}^{H\sstab}(S)}\rightarrow \pi_{2,!}\BQ_{\Ex_{\sim p(t)}^{H\sstab}(S)}
\]
via adjunction.  We would like to be able to define a morphism
\[
\beta\colon (\pi_1\times \pi_3)_!\BQ_{\Ex_{\sim p(t)}^{H\sstab}(S)}\rightarrow \BQ_{\FM_{\sim p(t)}^{H\sstab}(S)^{\times 2}}[?]
\]
by taking the Verdier dual of the natural morphism 
\[
\BQ_{\FM_{\sim p(t)}^{H\sstab}(S)^{\times 2}}\rightarrow (\pi_1\times \pi_3)_*\BQ_{\Ex_{\sim p(t)}^{H\sstab}(S)},
\]
but this is not so simple, as the stacks in question are generally not smooth.  Nonetheless using virtual smoothness of $(\pi_1\times \pi_3)$, the morphism $\beta$ is defined as in \cite{KV19,DHSM22}, with the result that the symbol $?$ is replaced by a function of the Hilbert polynomials of the coherent sheaves that we are considering extensions of.  Now taking $\HO_c(\beta\circ\alpha)^{\vee}$, we obtain an associative multiplication $\mathbf{m}$ on the graded algebra
\[
\Coha=\bigoplus_{P(t)\sim p(t)}\HO^{\BM}(\FM_{P(t)}^{H\sstab}(S),\BQ[-\chi_S(\cdot,\cdot)])
\]
for which we refer to \cite{KV19,DHSM22} for further details.  The grading here is by the semigroup $L$ of polynomials $P(t)$ satisfying $P(t)\sim p(t)$.  We can consider $L$ as a reduced zero-dimensional scheme, with one $\BC$-point for each element of $L$.
\sssct

Consider the complex $\RCoha=\BD \pi^H_! \BQ_{\FM_{\sim p(t)}^{H\sstab}(S)}[-\chi_S(\cdot,\cdot)]$.  By Proposition \ref{rel_pur_S}, this is a semisimple complex of perverse sheaves situated in non-negative cohomological degrees.  This complex acquires an algebra structure in the category $\Dulf(\Perv(\CM_{\sim p(t)}^{H\sstab}(S)))$, by composing $\BD\pi^H_*\alpha$ with $\oplus_*\BD(\pi^H\times \pi^H)_! \beta$.  Moreover by definition there is an isomorphism of $L$-graded algebras $\Coha\cong r_*\RCoha$, where $r\colon \CM_{\sim p(t)}^{H\sstab}(S)\rightarrow L$ is the morphism taking $\CM_{P(t)}^{H\sstab}(S)$ to the point representing $P(t)$.
\smallbreak
In particular, the CoHA multiplication $\mathbf{m}= r_*(\RCoha\otimes_{\oplus}\RCoha\rightarrow \RCoha)$ respects the perverse filtration on $\Coha\otimes\Coha$ and $\Coha$, since it is obtained by taking derived global sections of a morphism of perverse complexes on $\CM_{\sim p(t)}^{H\sstab}(S)$ (see \S \ref{FPR}).  By Proposition \ref{rel_pur_S}, this perverse filtration begins in degree zero, and we define the \textit{BPS algebra} to be the subalgebra
\[
\FU_{\BPS}\coloneqq \FL_{\leq 0}\Coha\coloneqq\HO\left(\CM^{H\sstab}_{\sim p(t)},{}^{\mathfrak{p}}\!\bm{\tau}^{\leq 0}\BD\pi_!^H\BQ_{\FM_{\sim p(t)}^{H\sstab}(S)}[-\chi_S(\cdot,\cdot)]\right)\subset \Coha
\]
given by the zeroth piece of the perverse filtration\footnote{This is an analogue of the ``less'' perverse filtration from \cite{preproj3}, which is denoted in [ibid] by $\FL_{\leq \bullet}$ (hence our notation here).  This is not to be confused with the perverse filtration on critical cohomology (that $\Coha$ also carries, using Kinjo's dimensional reduction theorem \cite{Kin21}), which is quite different, and is denoted by $\FP_{\leq \bullet}$ in the case of preprojective algebras considered in \cite{preproj3}.}.  
\subsubsection{The BPS algebra}
More generally, assume that the category $\mathscr{C}$ and the lattice $L$ are obtained by one of the following constructions:
\begin{enumerate}
\item
Fix $S$ a quasiprojective smooth complex surface with $\mathcal{O}_S\cong \omega_S$, fix a polarisation $H\in\NS(\overline{S})_{\BR}$, and a reduced Hilbert polynomial $p(t)$.  Then take the category of $H$-Gieseker semistable coherent sheaves on $S$ with Hilbert polynomial $P(t)$ such that the normalisation of $P(t)$ is $p(t)$ (as above).  Set $L$ to be the semigroup of such polynomials.
\item
Fix $C$ a smooth projective complex curve, fix a slope $\theta$ (possibly infinite), and consider the category of slope-semistable Higgs sheaves on $C$ with slope $\theta$.  Let $L\in\BZ_{\geq 0}\oplus\BZ$ be the semigroup of pairs $(r,d)$ such that $r/d=\theta$.
\item
Fix the ground field $k=\BC$.  Fix $Q$ a finite quiver, $\zeta\in \BQ^{Q_0}$ a King stability condition, and fix a slope $\theta$.  Then take the category of $\zeta$-semistable $\Pi_Q$-modules with slope $\theta$ (with respect to $\zeta$).  Let $L$ be the semigroup of dimension vectors with slope $L$.
\item
Fix a genus $g\geq 0$.  Then consider the category of finite-dimensional $\BC[\pi_1(\Sigma_g)]$-modules.  We denote by $L=\BZ_{\geq 0}$ the semigroup of possible dimensions of such a module.
\end{enumerate}
In each case we denote by $\JH\colon\FM_{\mathscr{C}}\rightarrow \CM_{\mathscr{C}}$ the morphism from the stack to the good moduli space.  In case (1), we have just seen that $\HO^{\BM}(\FM_{\mathscr{C}},\BQ\otimes\LLL^{\chi_{\mathscr{C}}(\cdot,\cdot)/2})$ carries a $L$-graded Hall algebra structure, which is moreover induced by an algebra structure on the MHM complex $\BD(\JH_!\underline{\BQ}_{\FM_{\mathscr{C}}} \otimes\LLL^{\chi_{\mathscr{C}}(\cdot,\cdot)/2})$.  Since case (2) is a special case of (1) via the BNR correspondence, the same remains true there; this Hall algebra structure was first introduced and studied by Schiffmann and Sala \cite{SaSc20}, following Minets \cite{Mi20} in the infinite slope case.  In case (3) we may instead use the Hall algebra construction of Schiffmann and Vasserot \cite{ScVa13}, and in this case it is well-known (e.g. by dimensional reduction and the work of Kontsevich and Soibelman \cite{KSCoha}) how to lift everything to the category of mixed Hodge modules.  Similarly, in case (4), there is an algebra structure on $\JH_!\underline{\BQ}_{\FM_{\mathscr{C}}}\otimes\LLL^{\chi_{\mathscr{C}}(\cdot,\cdot)}$, constructed via dimensional reduction, inducing the Hall algebra structure on the graded mixed Hodge structure 
\[
\bigoplus_{r\geq 0}\HO^{\BM}(\FM^{\Betti}_{g,r},\BQ\otimes\LLL^{(1-g)r^2}).
\]
See \cite{Dav16} for details, or \cite{DHSM22} for a unifying treatment.
\smallbreak

Summarising, in each of the cases (1)--(4), there is a $L$-graded Hall algebra structure on 
\[
\RCoha_{\mathscr{C}}\coloneqq \BD(p_!\BQ_{\FM_{\mathscr{C}}}[-\chi_{\mathscr{C}}(\cdot,\cdot)])
\]
inducing the Hall algebra structure on the Borel--Moore homology
\[
\Coha_{\mathscr{C}}\coloneqq \HO^{\BM}\!(\FM_{\mathscr{C}},\BQ[-\chi_{\mathscr{C}}(\cdot,\cdot)])
\]
by taking derived global sections.  In each case we may upgrade these statements to the level of mixed Hodge modules on the coarse moduli space: the algebra structure is induced from one on
\[
\underline{\RCoha}_{\mathscr{C}}\coloneqq \BD(p_!\underline{\BQ}_{\FM_{\mathscr{C}}}\otimes\LLL^{-\chi_{\mathscr{C}}(\cdot,\cdot)/2}).
\]
In each case, we define the \textit{BPS algebra}
\[
\FU_{\BPS}(\mathscr{C})\coloneqq \bigoplus_{\gamma\in L}\HO\!\left(\CM_{\mathscr{C},\gamma},{}^{\mathfrak{p}}\!\bm{\tau}^{\leq 0}\!\RCoha_{\mathscr{C},\gamma}\right)\subset \Coha_{\mathscr{C}},
\]
which inherits the algebra multiplication $\mathbf{m}$ of $\Coha_{\mathscr{C}}$, since $\mathbf{m}$ preserves the perverse filtration.
\subsubsection{Cuspidal cohomology}
Following the most well-understood example (3) (worked out in \cite{preproj3}), the expectation is that $\FU_{\BPS}(\mathscr{C})$ is the universal enveloping algebra of the BPS Lie algebra $\Fg$ associated to the 3CY category of pairs $(\rho,f)$, where $\rho\in\Ob(\mathscr{C})$ and $f$ is an endomorphism of $\rho$.  We conjecture that $\Fg$ is moreover a generalised Kac--Moody Lie algebra (this would in particular imply the conjectures of \cite{BoSc19} in the case of preprojective algebras).
\smallbreak
Fleshing out these expectations and conjectures would take us a long way from the main thread of this paper; see \cite{DHSM23} for a fuller exposition.  If instead we assume that they hold, it becomes crucial to identify the imaginary simple roots of $\Fg$.  In any case, we wish to identify generators of $\FU_{\BPS}(\mathscr{C})$.  Theorem \ref{ICin} leads the way to the following result in this direction.
\begin{theorem}
\label{cu_thm}
Let $\mathscr{C}$ be one of the categories occurring in examples (1)--(4) above.  For $\gamma\in L$ we define
\begin{equation}
\label{cu_eq}
\Fcu_{\gamma}(\mathscr{C})\coloneqq \bigoplus_{\FN\in\pi_0(\CM^{\stab}_{\mathscr{C},\gamma})}\ICA(\overline{\FN}).
\end{equation}
Then there is a canonical decomposition of vector spaces
\[
\FU_{\BPS,\gamma}(\mathscr{C})\cong \Fcu_{\gamma}(\mathscr{C})\oplus \Fl_{\gamma}
\]
such that for all pairs of nonzero $\gamma',\gamma''\in L$ with $\gamma'+\gamma''=\gamma$, the CoHA multiplication map
\begin{equation}
\label{torros}
\mathbf{m}\colon \FU_{\BPS,\gamma'}(\mathscr{C})\otimes \FU_{\BPS,\gamma''}(\mathscr{C})\rightarrow \FU_{\BPS,\gamma}(\mathscr{C}).
\end{equation}
factors through the inclusion of $\Fl_{\gamma}$.
\end{theorem}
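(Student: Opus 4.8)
The plan is to leverage Theorem \ref{ICin} to realise $\Fcu_{\gamma}(\mathscr{C})$ as a canonical collection of summands of $\FU_{\BPS,\gamma}(\mathscr{C})$, and then to show that the cohomological Hall multiplication is, already at the level of perverse sheaves on $\CM_{\mathscr{C},\gamma}$, supported away from the stable locus, so that it cannot hit these summands. First I would unwind the definition of the BPS algebra: by Theorem \ref{loc_pur_thm} the complex $\RCoha_{\mathscr{C},\gamma}$ is pure and lives in non-negative perverse degrees, so $\bm{\tau}^{\leq 0}\RCoha_{\mathscr{C},\gamma}={}^{\mathfrak{p}}\!\Ho^0(\RCoha_{\mathscr{C},\gamma})$ is a semisimple perverse sheaf, and by Theorem \ref{thm:Saito_theorem} it decomposes as $\bigoplus_{j}\IC_{Z_j}(\CL_j)$. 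Applying Theorem \ref{ICin} with $n=0$, for each $\CN\in\pi_0(\CM^{\stab}_{\mathscr{C},\gamma})$ the simple object $\ICSn_{\overline{\CN}}$ occurs exactly once among the $\IC_{Z_j}(\CL_j)$, and taking global sections of the associated canonical summand yields the canonical embedding $\ICA(\overline{\CN})\hookrightarrow \FU_{\BPS,\gamma}(\mathscr{C})$. I then define $\Fcu_{\gamma}(\mathscr{C})$ to be the sum of these canonical summands and $\Fl_{\gamma}$ to be the global sections of the complementary summands, which furnishes the asserted canonical decomposition.

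Next I would analyse the multiplication. Since $\oplus\colon \CM_{\mathscr{C},\gamma'}\times\CM_{\mathscr{C},\gamma''}\to\CM_{\mathscr{C},\gamma}$ is finite (as in \cite[Lem.~2.1]{MR19}), the functor $\oplus_{*}$ is perverse-exact, and $\mathbf{m}$ is induced by a morphism of perverse sheaves on $\CM_{\mathscr{C},\gamma}$ whose source $\oplus_{*}\bigl({}^{\mathfrak{p}}\!\Ho^0(\RCoha_{\mathscr{C},\gamma'})\boxtimes{}^{\mathfrak{p}}\!\Ho^0(\RCoha_{\mathscr{C},\gamma''})\bigr)$ is set-theoretically supported on the closed image $W\coloneqq\overline{\mathrm{im}(\oplus)}$. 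The essential geometric point is that $W$ avoids the stable locus: every $\BC$-point in the image of $\oplus$ is a polystable object $\CG'\oplus\CG''$ with $\CG',\CG''$ both nonzero (as $\gamma',\gamma''\neq 0$), hence is decomposable and therefore not simple, so it cannot be stable. Thus for each $\CN\in\pi_0(\CM^{\stab}_{\mathscr{C},\gamma})$ the dense open subset $\CN\subset\overline{\CN}$ is disjoint from $W$, giving $\overline{\CN}\not\subseteq W$.

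Finally I would conclude using simplicity. A morphism of perverse sheaves from a complex supported on $W$ to the simple object $\ICSn_{\overline{\CN}}$, whose support $\overline{\CN}$ is not contained in $W$, must vanish: its image is a subobject of the simple $\ICSn_{\overline{\CN}}$, hence either zero or everything, and the latter is impossible since a quotient of a sheaf supported on $W$ is again supported on $W$. Composing $\mathbf{m}$ with the projection ${}^{\mathfrak{p}}\!\Ho^0(\RCoha_{\mathscr{C},\gamma})\twoheadrightarrow\ICSn_{\overline{\CN}}$ and taking global sections therefore vanishes for every cuspidal summand, so $\mathbf{m}$ factors through $\Fl_{\gamma}$. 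In examples (3) and (4) the entire argument takes place in the derived category of mixed Hodge modules, where the decomposition theorem, Theorem \ref{ICin}, and the perverse-exactness of $\oplus_{*}$ all hold verbatim, so the statement upgrades to mixed Hodge structures.

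The hard part will be the precise identification of the source of $\mathbf{m}$ as a perverse sheaf supported on $W$: this means tracking the shifts and twists in the $\beta\circ\alpha$ construction of \cite{KV19} (and its analogues in cases (2)--(4)) and verifying that the multiplication genuinely arises by applying global sections to a morphism in $\Dub^+(\Perv(\CM_{\mathscr{C},\gamma}))$, rather than only at the level of Borel--Moore homology. Once that compatibility is established, the support-plus-simplicity argument is formal.
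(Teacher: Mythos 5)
Your proposal is correct and follows essentially the same route as the paper: identify $\Fcu_{\gamma}(\mathscr{C})$ via the canonical $\ICSn_{\overline{\FN}}$-summands of ${}^{\mathfrak{p}}\!\Ho^0\!\RCoha_{\mathscr{C},\gamma}$ supplied by Theorem \ref{ICin}, observe that the multiplication is induced by a morphism of semisimple perverse sheaves whose source is supported on the image of the finite map $\oplus$ (hence on the strictly semistable locus when $\gamma',\gamma''\neq 0$), and kill the composition with the projection to the cuspidal summands by the support-versus-simplicity argument, which you spell out slightly more explicitly than the paper does. The compatibility you flag as the ``hard part'' is not re-proved in the paper's argument either: it is taken as input from the preceding construction of the Hall algebra structure on $\RCoha_{\mathscr{C}}$ in cases (1)--(4) (via \cite{KV19}, \cite{SaSc20}, \cite{Mi20}, \cite{ScVa13}, \cite{KSCoha}, \cite{Dav16}), so citing that discussion suffices.
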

Using the lift of the algebra object $\RCoha_{\mathscr{C}}$ to an algebra object in the category of complexes of MHMs, the above result holds at the level of mixed Hodge structures.
\begin{proof}
The morphism \eqref{torros} is obtained by applying the derived global sections functor to the morphism of MHMs
\begin{equation}
\label{La}
\Ho^0\!\left(\oplus_*(\underline{\RCoha}_{\mathscr{C},\gamma'}\boxtimes\underline{\RCoha}_{\mathscr{C},\gamma''})\right)\rightarrow \Ho^0\!\underline{\RCoha}_{\mathscr{C},\gamma}.
\end{equation}
Moreover, by Theorem \ref{loc_pur_thm}, this is a morphism between semisimple perverse sheaves, after applying $\rat$.  By Theorem \ref{ICin}, there is a canonical decomposition of the target of \eqref{La}
\begin{equation}
\label{Lb}
\Ho^0\!\underline{\RCoha}_{\mathscr{C},\gamma} \cong \CG\oplus \left( \bigoplus_{\FN\in \pi_0\left(\CM^{\stab}_{\mathscr{C},\gamma}\right)}\ICSn_{\overline{\FN}}\right)
\end{equation}
where $\CG$ is a semisimple mixed Hodge module with support entirely contained in the strictly semistable locus of $\CM_{\mathscr{C}}$.  Since the morphism $\oplus\colon \CM_{\mathscr{C}}^{\times 2}\rightarrow \CM_{\mathscr{C}}$ factors through this locus, we deduce that the morphism 
\[
\Ho^0\!\left(\oplus_*(\underline{\RCoha}_{\mathscr{C},\gamma'}\boxtimes\underline{\RCoha}_{\mathscr{C},\gamma''})\right)\rightarrow \left( \bigoplus_{\FN\in \pi_0\left(\CM^{\stab}_{\mathscr{C},\gamma}\right)}\ICSn_{\overline{\FN}}\right)
\]
obtained by composing \eqref{La} with the projection onto the second factor of \eqref{Lb} is zero.  The theorem then follows, after taking derived global sections.
\end{proof}
\begin{definition}
For $\mathscr{C}$ any one of the four types of categories considered in Theorem \ref{cu_thm}, the $L$-graded object
\[
\mathfrak{cu}(\mathscr{C})\coloneqq \bigoplus_{\gamma\in L}\Fcu_{\gamma}(\mathscr{C})
\]
is the \textit{cuspidal cohomology} of $\mathscr{C}$, where the summands in the right hand side are defined in \eqref{cu_eq}.
\end{definition}
Theorem \ref{cu_thm} states that $\Fcu(\mathscr{C})$ can be extended to a minimal set of generators of $\FU_{\BPS}(\mathscr{C})$.  As in \cite{preproj3}, we conjecture that there is a minimal set of generators $\Fcu(\mathscr{C})\oplus \Fiso(\mathscr{C})$, where $\Fiso(\mathscr{C})$ is $L$ graded, and $\Fiso_{\gamma}(\mathscr{C})=0$ if $\chi_{\mathscr{C}}(\gamma,\gamma)\neq  0$.  I.e. in the language of generalised Kac--Moody algebras, we conjecture that $\Fcu(\mathscr{C})$ is the direct sum of the hyperbolic imaginary simple roots and the real simple roots.  See \cite[Sec.7.2,Sec.7.3]{preproj3} for an idea of how to define $\Fiso(\mathscr{C})$, and the rest of \cite[Sec.7]{preproj3} for evidence towards this conjecture, and see \cite{DHSM22} for a recent proof of this conjecture in the absence of real or isotropic simple roots, and \cite{DHSM23} for an even more recent proof of the conjecture in full generality.

\subsection{Multiplicative preprojective algebras}
\label{sec:mult_pp}
Let $k$ be a field satisfying $\mathrm{char}(k)=0$.  Let $Q$ be a quiver and let $q\in(k^*)^{Q_0}$.
We briefly recall the construction of the multiplicative preprojective algebra, denoted by $\Lambda^q(Q)$, which was originally constructed by Crawley-Boevey and Shaw in~\cite{CBS06}.  Let $\ol{Q}$ be the doubled quiver of $Q$ as in \S\ref{preproj_sec}.
We extend the operation $a \mapsto a^*$ to an involution on $\ol{Q}_1$ by setting $(a^*)^* = a$, and for $a\in \overline{Q}_1$ we define $\epsilon(a) = 1$ if $a \in Q_1$ and $\epsilon(a) = -1$ if $a^* \in Q_1$.
\smallbreak
Fix a total ordering $<$ on the set of arrows in $\ol{Q}$.  As in \cite{BCS21} we denote by $k\overline{Q}_{\loc}$ the algebra obtained by formally inverting each of the elements $1+aa^*$ for $a\in\overline{Q}_1$.  Then 
\[
\Lambda^q(Q)\coloneqq k\overline{Q}_{\loc}/\langle \prod_{a \in \ol{Q}_1} (1+aa^{*})^{\epsilon(a)} -\sum_{i \in Q_0} q_i e_i \rangle,
\]
where the product on the right hand side is ordered via the ordering on the arrows of $\overline{Q}_1$.  Up to isomorphism, the resulting algebra does not depend on this ordering \cite[Thm.~1.4]{CBS06}.
\smallbreak
In~\cite{SchKa19} it is proved that the multiplicative preprojective algebra is a left 2CY algebra, as long as the underlying graph of $Q$ contains a cycle.  In particular, the category $\Perf(\Lambda^q(Q))$ is smooth under this condition.
\sssct
As in the case of ordinary preprojective algebras, from the point of view of derived moduli stacks (pursued in \cite{BCS21}) it is more natural to consider the derived multiplicative preprojective algebra, defined in the natural way, which we briefly recall from \cite{SchKa19,BCS21}.  
\smallbreak
As in \S \ref{dppa} we start with the quiver $Q'$ obtained by adding a degree $-1$ loop $u_i$ to the doubled quiver $\overline{Q}$ at every vertex $i\in Q_0$.  We place the original arrows $a\in Q_1$ in degree zero in $Q'$.  For $a$ an arrow in this graded quiver, we denote by $\lvert a\lvert$ the degree of $a$.  We define $kQ'_{\loc}$ as before by formally inverting the elements $(1+aa^*)\in kQ'$ for each $a\in\overline{Q}_1$.  Then we define
\[
\Upsilon^q(Q)\coloneqq (kQ'_{\loc},d)
\]
where 
\[
du_i=\prod_{a \in \overline{Q}_1\cap s^{-1}(t)} (1+a^{*}a)^{\epsilon(a)} - q_i e_i
\]
and the ordering of the product is induced by the fixed ordering on $\overline{Q}_1$.  As with the construction of the derived preprojective algebra, the algebra $\Upsilon^q(Q)$ is concentrated in nonpositive degrees, and satisfies
\begin{equation}
\label{mult_down}
\HO^0(\Upsilon^q(Q))\cong\Lambda^q(Q).
\end{equation}
It follows that the natural heart of the category $\Dub(\Mod^{\Upsilon^q(Q)})$ may be identified with the Abelian category of $\Lambda^q(Q)$-modules, which we denote $\mathscr{A}$.  We recall the following standard lemma.
\begin{lemma}
Let $M,N\in\Ob(\mathscr{A})$.  Then for $l=0,1$ there are natural isomorphisms
\[
\Ext^l_{\mathscr{A}}(M,N)\cong \Ext^l_{\Mod^{\Upsilon^q(Q)}}(M,N).
\]
\end{lemma}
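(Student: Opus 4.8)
The plan is to read off both isomorphisms from the general comparison between the low-degree $\Ext$ groups of the heart of a t-structure and the $\Hom$ groups of the ambient triangulated category. The essential input is already in place just before the statement: since $\Upsilon^q(Q)$ is concentrated in nonpositive cohomological degrees (its only negatively graded generators are the loops $u_i$ in degree $-1$), the truncation morphism $\Upsilon^q(Q)\rightarrow\HO^0(\Upsilon^q(Q))\cong\Lambda^q(Q)$ of \eqref{mult_down} identifies $\mathscr{A}$ with the heart of the natural t-structure on $\Dub(\Mod^{\Upsilon^q(Q)})$, exactly as for the derived preprojective algebra in \S\ref{dppa}. Thus I would take for granted that $\mathscr{A}\hookrightarrow\Dub(\Mod^{\Upsilon^q(Q)})$ is the inclusion of the heart $\CH$ of a t-structure on $\CT=\Dub(\Mod^{\Upsilon^q(Q)})$, and that $\Ext^l_{\Mod^{\Upsilon^q(Q)}}(M,N)$ means $\Hom_{\CT}(M,N[l])$ by definition.

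The remaining step is to invoke the standard fact that for the heart $\CH$ of any t-structure and any $M,N\in\Ob(\CH)$ the natural maps
\[
\Hom_{\CH}(M,N)\xrightarrow{\ \sim\ } \Hom_{\CT}(M,N)
\quad\text{and}\quad
\Ext^1_{\CH}(M,N)\xrightarrow{\ \sim\ } \Hom_{\CT}(M,N[1])
\]
are isomorphisms, where $\Ext^1_{\CH}$ denotes Yoneda extensions. The case $l=0$ is immediate, as the heart is a full subcategory. For $l=1$, I would recall the explicit bijection: a short exact sequence $0\to N\to E\to M\to 0$ in $\mathscr{A}$ yields a distinguished triangle in $\CT$ whose connecting morphism $M\to N[1]$ gives the image class, while conversely any $\gamma\colon M\to N[1]$ in $\CT$ is completed to a triangle whose third vertex is seen to lie in $\CH$ by inspecting the long exact cohomology sequence of the t-structure (using $\Hom_{\CT}(M,N[m])=0$ for $m<0$), recovering the extension. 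Applying this with $\CT=\Dub(\Mod^{\Upsilon^q(Q)})$ and $\CH=\mathscr{A}$, and noting that Yoneda $\Ext^1$ in $\mathscr{A}=\Lambda^q(Q)\Lmod$ is precisely what the lemma denotes by $\Ext^1_{\mathscr{A}}$, delivers both isomorphisms.

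The restriction to $l\in\{0,1\}$ is not an artefact of the method but intrinsic: the derived thickening $\Upsilon^q(Q)$ is built exactly so that its higher self-extensions disagree with those over $\Lambda^q(Q)$, the $2$-Calabi--Yau duality forcing $\Ext^2_{\Mod^{\Upsilon^q(Q)}}(M,N)\cong\Hom_{\mathscr{A}}(N,M)^{\vee}$, which is not $\Ext^2_{\mathscr{A}}(M,N)$ in general. Accordingly the only point demanding genuine care — and the one I would treat as the (mild) main obstacle — is the $l=1$ correspondence above, specifically the verification via the cohomology long exact sequence that the cone of a morphism $M\to N[1]$ between heart objects again lies in the heart; this is where the t-structure axioms are actually used, the rest being formal once $\mathscr{A}$ is recognised as the heart.
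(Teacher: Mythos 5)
Your proof is correct, but it takes a genuinely different route from the paper's. You deduce both isomorphisms formally from the identification of $\mathscr{A}$ with the heart of the standard t-structure on $\Dub(\Mod^{\Upsilon^q(Q)})$ (which the paper asserts immediately before the lemma, using \eqref{mult_down} and connectivity of $\Upsilon^q(Q)$), combined with the standard fact, going back to Beilinson--Bernstein--Deligne, that for the heart $\CH$ of a t-structure on $\CT$ and $M,N\in\Ob(\CH)$ the natural maps $\Hom_{\CH}(M,N)\to\Hom_{\CT}(M,N)$ and $\Ext^1_{\CH}(M,N)\to\Hom_{\CT}(M,N[1])$ (Yoneda extensions on the left) are bijective; your sketch of the $l=1$ step, completing $\gamma\colon M\to N[1]$ to a triangle and using the cohomology long exact sequence of the t-structure to see the cone lies in $\CH$, is the standard and complete argument, and Yoneda $\Ext^1$ agrees with derived-functor $\Ext^1$ here since $\mathscr{A}$ is a module category. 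The paper instead computes both sides explicitly: it uses the Crawley--Boevey--Shaw exact sequence of projective $\Lambda^q(Q)$-bimodules to produce a three-term complex computing $\Ext^{\leq 1}_{\mathscr{A}}(M,N)$, uses the semifree presentation $\cone(\psi)\cong\Upsilon^q(Q)$ of the diagonal bimodule to produce a double complex computing $\Ext^{\bullet}_{\Mod^{\Upsilon^q(Q)}}(M,N)$, and verifies that the two differentials coincide ($\alpha=\alpha'$, $\beta=\beta'$). Your route buys brevity and generality: it works verbatim for any connective dg algebra and explains structurally why the statement stops at $l=1$. The paper's computation buys explicit quiver-theoretic cochain models for these $\Ext$ groups, which make the comparison concrete and exhibit the vertex-and-arrow description of $\Ext^0$ and $\Ext^1$. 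One cosmetic caveat: your closing remark that the 2CY duality forces $\Ext^2_{\Mod^{\Upsilon^q(Q)}}(M,N)\cong\Hom_{\mathscr{A}}(N,M)^{\vee}$ requires $M,N$ to be perfect (e.g.\ of finite-dimensional total cohomology), but since this is only motivation for the restriction to $l\in\{0,1\}$ and not part of the argument, it does not affect the validity of your proof.
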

\begin{proof}
The claim is obvious for $l=0$, and so we concentrate on the $l=1$ case.  Set $\Lambda=\Lambda^q(Q)$, and for $i\in Q_0$ set $\Lambda_i\coloneqq \Lambda\cdot e_i$ and ${}_i\Lambda\coloneqq e_i\cdot \Lambda$.  By \cite[Lem.3.1]{CBS06} there is an exact sequence of $\Lambda$-bimodules
\[
\underbrace{\bigoplus_{i\in Q_0} \Lambda_i\otimes_k{}_i\Lambda\rightarrow \bigoplus_{a\in\overline{Q}_1}\lambda_{t(a)}\otimes_k {}_{s(a)}\Lambda\rightarrow \bigoplus_{i\in Q_0} \Lambda_i \otimes_k {}_i \Lambda}_{P^{\bullet}\coloneqq}\rightarrow \lambda
\]
which we may use to calculate $\Ext^{1}(M,N)$ for $l=1$, by computing $\Hom(P^{\bullet}\otimes_{\Lambda} M,N)$.  Then we obtain the complex
\[
\bigoplus_{i\in Q_0}\Hom_k(e_i\cdot M,e_i\cdot N)\xrightarrow{\alpha} \bigoplus_{a\in \overline{Q}_1}\Hom_k(e_{t(a)}\cdot M,e_{s(a)}\cdot N)\xrightarrow{\beta} \bigoplus_{i\in Q_0}\Hom_k(e_i\cdot M,e_i\cdot N)
\]
where $\alpha$ and $\beta$ are determined by the morphisms in \cite{CBS06}, and the cohomology in the middle position calculates $\Ext^1_{\mathscr{A}}(M,N)$.
\smallbreak
Set $\Upsilon=\Upsilon^q(Q)$, and define $\Upsilon_i$ and ${}_i\Upsilon$ as above.  The morphism of $\Upsilon$-bimodules
\[
\bigoplus_{a\in Q'_1}(\Upsilon_{t(a)})\otimes_k ({}_{s(a)}\!\Upsilon)[\lvert -a\lvert]\xrightarrow{\psi} \bigoplus_{i\in Q_0}\Upsilon_i\otimes_k {}_i\Upsilon
\]
satisfies $\cone(\psi)\simeq\Upsilon$, so that we may use it to calculate $\Ext^l_{\Mod^{\Upsilon^q(Q)}}(M,N)$, by replacing $M$ with $\cone(\psi)\otimes_{\Upsilon}M$.  The result is the double complex
\[
\xymatrix{
\bigoplus_{a\in \overline{Q}_1}\Hom_k(e_{t(a)}\cdot M,e_{s(a)}\cdot N)\ar[r]^-{\beta'}& \bigoplus_{i\in Q_0}\Hom_k(e_i\cdot M,e_i\cdot N)\\
\bigoplus_{i\in Q_0}\Hom_k(e_{t(u_i)}\cdot M,e_{s(u_i)}\cdot N)\ar[u]^{\alpha'}\ar[r]&0\ar[u]
}
\]
and it is easy to check the equalities $\alpha=\alpha'$ and $\beta=\beta'$.
\end{proof}
We also will need the following theorem.
\begin{theorem}\cite[Thm.~4.11]{BCS21}
    For every $q \in (k^*)^{Q_0}$ there is a left 2CY structure on $\Upsilon^{q}(Q)$, and so the category $\Perf_{\dg}(\Upsilon^{q}(Q))$ is smooth and carries a left 2CY structure.
\end{theorem}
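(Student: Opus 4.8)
The plan is to establish the left $2$CY structure on the derived multiplicative preprojective algebra directly from its presentation, rather than deducing it from the classical statement of \cite{SchKa19} (which in any case requires the underlying graph to contain a cycle, a hypothesis that is absent here). Write $\Upsilon=\Upsilon^{q}(Q)$. First I would record smoothness: since $\Upsilon=(kQ'_{\loc},d)$ is obtained from the tensor dg algebra on the finite graded quiver $Q'$ by inverting the multiplicative set generated by the elements $1+aa^{*}$, and localization of a homologically smooth algebra at such a set preserves smoothness, the diagonal bimodule is perfect over $\Upsilon^{e}$. This is exactly what is needed for $\Perf_{\dg}(\Upsilon)$ to be defined and, together with the $2$CY property below, to itself be smooth and left $2$CY, by Morita-invariance of the class along \eqref{AtoPerf}.

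Next I would produce the candidate nondegenerate class. The generators of $Q'$ are organised into three layers---the vertices $e_{i}$, the degree-zero arrows $a,a^{*}\in\overline{Q}_{1}$, and the degree $-1$ loops $u_{i}$---and these assemble into a length-two semifree resolution of the diagonal bimodule of Ginzburg type, of the shape $\bigoplus_{i}\Upsilon e_{i}\otimes_{k} e_{i}\Upsilon \to \bigoplus_{a\in Q'_{1}}\Upsilon e_{t(a)}\otimes_{k} e_{s(a)}\Upsilon \to \bigoplus_{i}\Upsilon e_{i}\otimes_{k} e_{i}\Upsilon$, whose two outer terms are indexed by the vertices and whose middle term is indexed by all arrows of $Q'$. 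The structural input is that this complex is carried to a shift of itself, with the two outer terms exchanged, under the bimodule-dualization functor $(-)^{!}$ of \S\ref{LeftRight}; this exhibits a quasi-isomorphism $\Upsilon^{!}\simeq\Upsilon[2]$, i.e. Van den Bergh $2$CY duality at the level of bimodules. The differentials of the resolution are read off from the noncommutative derivatives of the multiplicative moment relation $\prod_{a}(1+a^{*}a)^{\epsilon(a)}-q_{i}e_{i}$, and the required duality is the self-adjointness of these derivative matrices with respect to the pairing $a\leftrightarrow a^{*}$.

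The delicate step, and the one I expect to be the main obstacle, is upgrading this bimodule duality to a genuine left $2$CY structure in the sense of \eqref{nui_def}: the class must lift along $\bullet^{\natural}$ to negative cyclic homology, equivalently the induced pairing must be cyclically symmetric, i.e. $S^{1}$-equivariant. In the additive case of $\mathscr{G}_{2}(kQ)$ from \S\ref{dppa} this is automatic, because $\sum_{a}[a,a^{*}]$ is the noncommutative gradient of a genuine cubic potential, so the class arises from a cyclic word in $\mathcal{O}_{\cyc}(X)$ in the sense of \S\ref{ncdc_sec}; but the multiplicative expression $\prod_{a}(1+a^{*}a)^{\epsilon(a)}$ is visibly \emph{not} the cyclic derivative of any potential, so the Kontsevich--Soibelman potential formalism cannot be invoked directly. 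To overcome this I would build the class inductively by \emph{fusion}, following the elementary-piece decomposition underlying \cite{CBS06} and the double quasi-Poisson geometry of Van den Bergh: verify the left $2$CY structure by hand on the one-arrow and one-loop dg building blocks, and then prove that the fusion operation preserves the nondegenerate negative-cyclic class. Concretely this amounts to checking that the quasi-bisymplectic datum assembled by fusion remains closed and nondegenerate, which is precisely the cyclic invariance required; this is the content of \cite[Thm.~4.11]{BCS21}. Finally, nondegeneracy of the resulting map $\Upsilon^{!}\to\Upsilon[2]$ can be verified after passing to $\HO^{0}$ via \eqref{mult_down} and base change to a vertex, reducing to the already-understood homological behaviour of $\Lambda^{q}(Q)$-modules.
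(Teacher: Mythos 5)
You should first note that the paper contains no proof of this statement at all: it is imported verbatim, with citation, from \cite{BCS21}, so the only meaningful comparison is with the proof in that reference. Measured against it, your final strategy is essentially the right one: Bozec--Calaque--Scherotzke do prove the theorem by decomposing $\Upsilon^{q}(Q)$ into elementary one-arrow/one-loop pieces and gluing, exactly as in your last paragraph. The difference is in implementation: rather than verifying by hand that a quasi-bisymplectic/double quasi-Poisson datum stays closed and nondegenerate under fusion, they use the formalism of \emph{relative} Calabi--Yau structures and compositions of CY cospans in the sense of \cite{BD19}. Each elementary piece carries a relative left 2CY structure with boundary the $1$CY algebra $k[x^{\pm 1}]$, and fusion is realised as a (homotopy) pushout of CY cospans; the negative-cyclic lift and the nondegeneracy of the glued class then come for free from the general gluing theorem, which is precisely the machinery that lets one avoid the potential formalism you correctly observe is unavailable here. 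Your smoothness argument is also sound in outline, since inverting the elements $1+aa^{*}$ is a homotopy pushout along the smooth morphism $k[x]\to k[x^{\pm 1}]$ (one must check that universal localization agrees with derived localization at these elements, which \cite{BCS21} do).

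Two steps in your middle and final paragraphs would fail as written. First, the claimed self-dual Ginzburg-type resolution: the noncommutative derivatives of $\prod_{a}(1+a^{*}a)^{\epsilon(a)}-q_{i}e_{i}$ are \emph{not} self-adjoint under $a\leftrightarrow a^{*}$; they differ by left and right multiplication by the units $(1+aa^{*})^{\pm 1}$ and by the partial products occurring in the ordered relation, so the length-two complex is dual to a \emph{twisted} version of itself. Untwisting this identification, and then lifting it cyclically, is the entire content of the theorem --- it is not something one can read off from self-adjointness of derivative matrices, and this is exactly why the fusion route is needed rather than being an optional refinement. Second, your closing reduction of nondegeneracy to $\HO^{0}$ via \eqref{mult_down} and base change to a vertex is unfounded: whether $\Upsilon^{!}\to\Upsilon[2]$ is a quasi-isomorphism of bimodules is not detected by $\HO^{0}(\Upsilon^{q}(Q))\cong\Lambda^{q}(Q)$ --- indeed $\Upsilon^{q}(Q)\to\Lambda^{q}(Q)$ need not be a quasi-isomorphism, and $\Lambda^{q}(Q)$ itself is smooth only when the underlying graph contains a cycle (\cite{SchKa19}), so no statement about $\Lambda^{q}(Q)$-modules can certify nondegeneracy for $\Upsilon^{q}(Q)$ in general. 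In the cospan formalism this check disappears: nondegeneracy is inherited from the elementary pieces under composition.
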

Since the diagonal $\Upsilon^q(Q)$-bimodule is perfect, it follows that if $M$ is a perfect $\Upsilon^q(Q)$-module and $N$ is a $\Upsilon^q(Q)$-module with finite-dimensional total cohomology, the total cohomology of $\RHom(M,N)$ is finite-dimensional.  It follows in addition that all $\Upsilon^q(Q)$-modules with finite-dimensional total cohomology are perfect.  In particular, by the results of Brav and Dyckerhoff (i.e. applying Proposition~\ref{BDthm}) we deduce
\begin{corollary}
    Let $Q$ be a finite quiver.
    The left 2CY structure on $\Perf_{\dg}(\Upsilon^{q}(Q)^{\opp})$ induces a right 2CY structure on any full subcategory of $\Ddg(\fdmod^{\Upsilon^{q}(Q)})$ containing finitely many objects.
\end{corollary}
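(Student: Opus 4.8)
The plan is to deduce the statement directly from the Brav--Dyckerhoff transfer result, Proposition~\ref{BDthm}, once the ambient smooth category and the relevant full subcategory have been correctly set up. Concretely, I would take the ambient category to be $\Perf_{\dg}(\Upsilon^q(Q)^{\opp})$ and the full locally proper subcategory $\mathscr{P}$ to be a finite full subcategory of $\Ddg(\fdmod^{\Upsilon^q(Q)})$, and then simply invoke Proposition~\ref{BDthm}.

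First I would produce the left 2CY structure on $\Perf_{\dg}(\Upsilon^q(Q)^{\opp})$ presupposed by the statement. The theorem \cite[Thm.~4.11]{BCS21} quoted above provides a left 2CY structure $\eta$ on $\Upsilon^q(Q)$, hence on $\Perf_{\dg}(\Upsilon^q(Q))$ via the bijection realised by the quasi-isomorphism \eqref{AtoPerf}. To pass to the opposite algebra I would use the canonical identification $\CHC_{\bullet}(\Upsilon^q(Q)^{\opp})\cong\CHC_{\bullet}(\Upsilon^q(Q))$ already exploited in the proof of Lemma~\ref{lem:left_right_CY}, together with the fact that the nondegeneracy condition \eqref{nui_def} is insensitive to passing to the opposite (since $(\mathscr{A}^{\opp})^!\cong(\mathscr{A}^!)^{\opp}$ and smoothness is self-opposite). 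This transports $\eta$ to a left 2CY structure on $\Upsilon^q(Q)^{\opp}$, and hence, via \eqref{AtoPerf} again, on $\Perf_{\dg}(\Upsilon^q(Q)^{\opp})$.

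Next I would check the two inputs needed to regard a finite full subcategory $\mathscr{P}\subset\Ddg(\fdmod^{\Upsilon^q(Q)})$ as a full locally proper subcategory of $\Perf_{\dg}(\Upsilon^q(Q)^{\opp})$. In the conventions of the paper $\Perf_{\dg}(\Upsilon^q(Q)^{\opp})$ is built from right $\Upsilon^q(Q)^{\opp}$-modules, i.e. left $\Upsilon^q(Q)$-modules, so $\Ddg(\fdmod^{\Upsilon^q(Q)})$ is indeed a full subcategory of it once every such object is perfect --- which is precisely the remark preceding the corollary that all $\Upsilon^q(Q)$-modules with finite-dimensional total cohomology are perfect. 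Local properness of $\mathscr{P}$ is the other remark: for objects $M,N$ of finite-dimensional total cohomology, $M$ is perfect and so $\RHom(M,N)$ has finite-dimensional total cohomology, i.e. every morphism complex of $\mathscr{P}$ is perfect over $k$. The restriction to finitely many objects is only there to stay within the finiteness conventions of \S\ref{cycAsec}--\S\ref{new_form}; local properness itself holds objectwise.

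With these in hand, Proposition~\ref{BDthm} applied to $\mathscr{A}=\Perf_{\dg}(\Upsilon^q(Q)^{\opp})$ and $\mathscr{P}$ yields the canonical right 2CY structure on $\mathscr{P}$, as desired. I expect the only real subtlety --- the hard part --- to be the left/right bookkeeping: correctly tracking that the finite-dimensional left modules live in the \emph{opposite} Perf category, and that the left 2CY class survives the passage to the opposite algebra. Everything else is a direct citation of the two finiteness remarks and of Proposition~\ref{BDthm}.
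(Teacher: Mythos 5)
Your proposal is correct and follows essentially the same route as the paper: the corollary is deduced by noting that modules with finite-dimensional total cohomology are perfect, that $\RHom$ complexes between such objects have finite-dimensional total cohomology (local properness), and then citing Proposition~\ref{BDthm}. Your extra care with the opposite-algebra bookkeeping (left $\Upsilon^q(Q)$-modules as right $\Upsilon^q(Q)^{\opp}$-modules, and transporting the left 2CY class of \cite[Thm.~4.11]{BCS21} through $\CHC_{\bullet}(\Upsilon^q(Q)^{\opp})\cong\CHC_{\bullet}(\Upsilon^q(Q))$) is a detail the paper leaves implicit, but it is the same argument.
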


We deduce the following proposition from Corollary \ref{form_cor}:
\begin{proposition}
\label{mult_pp_formality}
Let $\CF_1,\ldots,\CF_r$ be a $\Sigma$-collection of complexes of $\Upsilon^{q}(Q)$-modules with finite-dimensional total cohomology.  Then the full dg subcategory of $\Ddg(\fdmod^{\Upsilon^{q}(Q)})$ containing $\CF_1,\ldots,\CF_r$ is formal.
\end{proposition}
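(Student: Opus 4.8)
The plan is to obtain this as a direct instance of the formality result Corollary~\ref{form_cor}, whose only inputs are a left 2CY dg category together with a $\Sigma$-collection of objects in it. The work therefore reduces to exhibiting the full dg subcategory of $\Ddg(\fdmod^{\Upsilon^{q}(Q)})$ spanned by $\CF_1,\ldots,\CF_r$ as a full subcategory of a left 2CY category, and then checking that the $\Sigma$-collection hypothesis survives this identification.

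First I would fix the ambient category. By \cite[Thm.~4.11]{BCS21} the dg algebra $\Upsilon^{q}(Q)$ carries a left 2CY structure, and consequently so does $\Perf_{\dg}(\Upsilon^{q}(Q)^{\opp})$, the category of perfect \emph{left} $\Upsilon^{q}(Q)$-modules; this is precisely the left 2CY category already invoked in the corollary preceding the present statement. The passage to the opposite is harmless, since a left nCY structure transports along the identification $\CHC_{\bullet}(\mathscr{A}^{\opp})\cong\CHC_{\bullet}(\mathscr{A})$, exactly as used in the proof of Lemma~\ref{lem:left_right_CY}.

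Next I would check that the objects $\CF_1,\ldots,\CF_r$ genuinely live in this category. Each $\CF_i$ has finite-dimensional total cohomology, and by the remark following \cite[Thm.~4.11]{BCS21} every such $\Upsilon^{q}(Q)$-module is perfect; hence $\CF_i\in\Ob(\Perf_{\dg}(\Upsilon^{q}(Q)^{\opp}))$ for all $i$. Since the morphism complexes $\RHom(\CF_i,\CF_j)$ are computed in the common ambient derived category, the full dg subcategory on $\CF_1,\ldots,\CF_r$ taken inside $\Ddg(\fdmod^{\Upsilon^{q}(Q)})$ coincides with the one taken inside $\Perf_{\dg}(\Upsilon^{q}(Q)^{\opp})$. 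In particular the $\Sigma$-collection hypothesis, being a condition on the graded dimensions of the $\Ext$ groups between the $\CF_i$, is insensitive to which of the two ambient categories we use. Corollary~\ref{form_cor}, applied with $\mathscr{A}=\Perf_{\dg}(\Upsilon^{q}(Q)^{\opp})$ and the given $\Sigma$-collection, then yields formality of this subcategory.

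I do not expect a serious obstacle here: all the substantive content — strictification of cyclic units (Proposition~\ref{StrictProp}) and the rigidity Lemma~\ref{rigLem} that feed Corollary~\ref{form_cor}, together with the left 2CY structure of \cite[Thm.~4.11]{BCS21} — has already been established. The only point demanding genuine care is the bookkeeping just described: confirming that finite-dimensionality of total cohomology forces perfectness, so that the objects sit in a left 2CY category at all, and tracking the left/right and opposite-category conventions so that the hypotheses of Corollary~\ref{form_cor} are literally met.
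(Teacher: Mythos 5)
Your proposal is correct and follows essentially the same route as the paper: the left 2CY structure on $\Perf_{\dg}(\Upsilon^{q}(Q))$ from \cite[Thm.~4.11]{BCS21}, smoothness of $\Upsilon^{q}(Q)$ forcing modules with finite-dimensional total cohomology to be perfect (so the $\CF_i$ sit in the left 2CY category and the subcategory they span is locally proper), and then Corollary~\ref{form_cor}. The paper merely makes explicit, as an intermediate corollary via Proposition~\ref{BDthm}, the induced right 2CY structure on the finite full subcategory, which is exactly what the proof of Corollary~\ref{form_cor} does internally, so there is no substantive difference.
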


\sssct
For the rest of \S \ref{sec:mult_pp} we set $k=\BC$.  Fix a stability condition $\zeta\in\BQ^{Q_0}$, and denote by
\[
\JH^{\zeta}_{\dd}\colon\FM_{\dd}^{\zeta\sstab}(\Lambda^q(Q)) \to \CM^{\zeta\sstab}_{\dd}(\Lambda^q(Q)),
\]
the morphism to the GIT quotient.  From Theorem \ref{loc_pur_thm} and Proposition \ref{int_filt} we deduce the following
\begin{theorem}
The complex $\JH^{\zeta}_{\dd,!} \ul{\BQ}_{\FM_{\dd}^{\zeta\sstab}(\Lambda^{q}(Q))}$ is pure, and the Borel-Moore homology
\[
\HO^{\BM}(\FM_{\dd}^{\zeta\sstab}(\Lambda^{q}(Q)),\BQ\otimes\LLL^{\chi_Q(\mathbf{d},\mathbf{d})})
\]
carries an ascending perverse filtration by mixed Hodge structures, relative to $\JH^{\zeta}_{\dd}$, beginning in degree zero.
\end{theorem}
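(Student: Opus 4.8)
The plan is to deduce the statement from the local purity theorem (Theorem \ref{loc_pur_thm}) and the construction of perverse filtrations in Proposition \ref{int_filt}, following verbatim the template used for ordinary preprojective algebras in \S \ref{proproj_redux} and for Higgs sheaves in Proposition \ref{Higgs_loc_pur}. First I would take $\mathscr{C} = \Ddg(\Mod^{\Upsilon^q(Q)})$, which carries a left 2CY structure by \cite[Thm.~4.11]{BCS21} (note that we work with the derived multiplicative preprojective algebra $\Upsilon^q(Q)$, left 2CY for every $q$, rather than with $\Lambda^q(Q)$, whose left 2CY property requires $Q$ to contain a cycle), and identify $\FM_{\dd}^{\zeta\sstab}(\Lambda^q(Q))$ as the classical truncation $t_0(\bm{\FM})$ of an open $1$-Artin substack $\bm{\FM}\subset\bm{\FM}_{\mathscr{C}}$ of the derived stack of objects. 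Here one uses the identification \eqref{mult_down} of the natural heart of $\Dub(\Mod^{\Upsilon^q(Q)})$ with the abelian category $\mathscr{A}$ of $\Lambda^q(Q)$-modules, together with the fact that $\zeta$-semistability of modules of fixed dimension vector $\dd$ is an open condition. The good moduli space is $\JH^{\zeta}_{\dd}$, obtained as a GIT quotient by restricting $\FM^{\zeta\sstab}_{\dd}(\BC\overline{Q})\to\CM^{\zeta\sstab}_{\dd}(\BC\overline{Q})$; being the truncation of a substack of the stack of objects in a dg category, it has separated diagonal and affine stabilisers by Lemma \ref{techy}, hence affine diagonal by \cite[Cor.~13.11]{alper2021etale}, so that $\Ho^i\!\JH^{\zeta}_{\dd,!}\ul{\BQ}_{\FM}$ is defined via Definition/Proposition \ref{dfp}.

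The crux is verifying hypothesis (*) of Theorem \ref{loc_pur_thm}. Since the subcategory of $\mathscr{A}$ of $\zeta$-semistable modules of a fixed slope is a finite length abelian category, the closed $\BC$-points of $\FM_{\dd}^{\zeta\sstab}(\Lambda^q(Q))$ correspond to polystable modules $\bigoplus_{i\in I}\CF_i^{\oplus \dd_i}$, i.e. to direct sums of pairwise non-isomorphic $\zeta$-stable modules of the common slope. Arguing as in the last part of the proof of Proposition \ref{Sclosed}, the summands $\CF_i$ have one-dimensional endomorphism algebras, satisfy $\Hom(\CF_m,\CF_n)=0$ for $m\neq n$, and admit no negative self-extensions as they lie in the heart; together with the symmetric pairing $\Ext^l(\CF_i,\CF_j)\cong\Ext^{2-l}(\CF_j,\CF_i)^{\vee}$ furnished by the right 2CY structure on any finite full subcategory of $\Ddg(\fdmod^{\Upsilon^q(Q)})$ (the Corollary to \cite[Thm.~4.11]{BCS21}, via Proposition \ref{BDthm}), this shows that $\{\CF_i\}_{i\in I}$ is a $\Sigma$-collection whose full subcategory carries a right 2CY structure. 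Hence (*) holds, and Theorem \ref{loc_pur_thm} gives that $\JH^{\zeta}_{\dd,!}\ul{\BQ}_{\FM_{\dd}^{\zeta\sstab}(\Lambda^q(Q))}$ is pure.

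For the perverse filtration, I note that $\FM_{\dd}^{\zeta\sstab}(\Lambda^q(Q))$ is a finite type global quotient stack, an open substack of $Z/\Gl_{\dd}$ in the notation of \S \ref{preproj_sec}, hence is exhausted by global quotient stacks, and Proposition \ref{int_filt} supplies the ascending perverse filtration by mixed Hodge structures relative to $\JH^{\zeta}_{\dd}$. That it begins in perverse degree zero after the twist by $\LLL^{\chi_Q(\dd,\dd)}$ amounts, exactly as in \eqref{perv_bound} and Proposition \ref{Higgs_loc_pur}, to the vanishing $\bm{\tau}^{\geq 1}(\JH^{\zeta}_{\dd,!}\ul{\BQ}_{\FM}\otimes\LLL^{\chi_Q(\dd,\dd)})=0$, which is the vanishing clause of Theorem \ref{loc_pur_thm} once the threshold is identified. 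For this I would record the numerical identity $\chi_{\mathscr{C}}(\CF,\CF)=2\chi_Q(\dd,\dd)$ for any $\dd$-dimensional module $\CF$: the $\Ext$-comparison lemma above shows that $\Ext^0$ and $\Ext^1$ over $\Lambda^q(Q)$ are computed from a resolution of the same shape as for $\Pi_Q$, while 2CY duality determines $\Ext^2\cong(\Ext^0)^{\vee}$ and kills $\Ext^{\geq 3}$, so the Euler form of $\mathscr{A}$ coincides with $2\chi_Q(-,-)$ precisely as in the note following Theorem \ref{pp3_thm}.

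The main obstacle is the first step: pinning down the dictionary between the GIT-theoretic object $\FM_{\dd}^{\zeta\sstab}(\Lambda^q(Q))$ and the derived-categorical framework of Theorem \ref{loc_pur_thm}, namely showing it is the classical truncation of an open substack of $\bm{\FM}_{\mathscr{C}}$ with $\JH^{\zeta}_{\dd}$ as associated good moduli space. Once this identification and the Euler form bookkeeping are in place, purity and the perverse filtration follow formally from the general machinery, with no geometric input beyond that already assembled for the multiplicative preprojective algebra in this section.
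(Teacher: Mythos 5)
Your proposal is correct and follows essentially the same route as the paper, which deduces the theorem directly from Theorem \ref{loc_pur_thm} and Proposition \ref{int_filt} after the section's preparation (the left 2CY structure on $\Upsilon^{q}(Q)$ from \cite[Thm.~4.11]{BCS21}, the induced right 2CY structures on finite full subcategories of $\Ddg(\fdmod^{\Upsilon^{q}(Q)})$, the identification $\HO^0(\Upsilon^q(Q))\cong\Lambda^q(Q)$, and the GIT construction of $\JH^{\zeta}_{\dd}$ by restriction from $\FM^{\zeta\sstab}_{\mathbf{d}}(\BC\overline{Q})$). Your explicit verification of hypothesis (*) via polystability of closed points in the finite length category of semistable modules of fixed slope, and the Euler form bookkeeping $\chi_{\mathscr{C}}(\CF,\CF)=2\chi_Q(\dd,\dd)$ yielding the degree-zero start of the filtration, are precisely the details the paper leaves implicit.
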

\sssct
Note that since the coarse moduli space $\CM^{\zeta\sstab}_{\dd}(\Lambda^q(Q))$ is not projective, we cannot deduce purity of $\HO^{\BM}(\FM_{\dd}^{\zeta\sstab}(\Lambda^{q}(Q),\BQ)$ from Theorem \ref{glob_pur_thm}.  Moreover, in contrast with the ordinary preprojective algebra, one can see in simple examples that the Borel--Moore homology is indeed impure.  For instance consider the Jordan quiver $Q$ with one vertex and one loop, and pick any stability condition $\zeta$.  Set $q=1$.  Then $\FM^{\zeta\sstab}_1(\Lambda^q(Q))\cong U/\BC^*$, where $U=\BA^2\setminus \BC^*$, and $\BC^*$ is embedded via $t\mapsto (t,-t^{-1})$.  From the long exact sequence in compactly supported cohomology
\[
\HO_c^i(U,\BQ)\rightarrow \HO_c^i(\BA^2,\BQ)\rightarrow \HO_c^i(\BC^*,\BQ)\rightarrow 
\]
one finds that $\HO^3_c(U,\BQ)$ is one-dimensional and has weight two, and impurity for $\HO_c\!\left(\FM^{\zeta\sstab}_1(\Lambda^q(Q)),\BQ\right)$ follows, along with impurity of its dual $\HO^{\BM}\!\left(\FM^{\zeta\sstab}_1(\Lambda^q(Q)),\BQ\right)$.

\subsection{Kuznetsov components}
\label{sec:Kuznetsov_components}
The formality result (Corollary \ref{form_cor}) and Theorem \ref{thm:purity_relative} are sufficiently general to apply to 2CY categories outside of the motivating examples introduced in \S \ref{glob_intro}.  In particular we can apply them to ``noncommutative 2CY varieties'', or Kuznetsov components, i.e. admissible linear subcategories $\mathscr{C}$ of the derived category $\Db_{\dg}(\Coh(X))$ of a smooth proper scheme $X$ which carry a left 2CY structure.
\smallbreak
We recall the definition of a 2CY category over a field $k$ from~\cite{Per20}.
\begin{definition}\cite[Def.6.1]{Per20}\label{def:2CY_category}
    A \emph{(geometric)\footnote{The definition in \cite{Per20} omits the word ``geometric''.  We add it here, since we have a number of different definitions of 2CY categories already in play in this paper.  See \cite[Rem.6.21]{Per20} for a remark that justifies this terminology.} 2CY category (over $k$)} is a $k$-linear dg category $\mathscr{C}$ such that:
    \begin{enumerate}
        \item There exists an admissible $k$-linear embedding $\mathscr{C} \into \Perf_{\dg}(X)$ into the category of perfect complexes on a smooth proper variety $X$.
        \item The shift functor $[2]$ is a Serre functor for $\mathscr{C}$.
        \item The Hochschild cohomology of $\mathscr{C}$ satisfies $\HH^0(\mathscr{C}) = k$.
    \end{enumerate}
\end{definition}

\begin{remark}
\label{SmP}
If $X$ is a smooth and proper variety then $\Perf_{\dg}(X)$ is smooth and proper.  If $\mathscr{C}$ is an admissible $k$-linear subcategory of $\Perf_{\dg}(X)$, then $\mathscr{C}$ is automatically smooth and proper.
\end{remark}

The first condition of Definition \ref{def:2CY_category} says that $\mathscr{C}$ is of geometric origin.  The second condition states that $\mathscr{C}$ behaves like the derived category of a smooth proper surface with trivial canonical bundle, i.e. like the dg category of coherent sheaves over a smooth projective surface with $\CO_S\cong\omega_S$.  The third condition is a noncommutative version of connectivity, since for a smooth proper variety $X$ there is a natural isomorphism $\HH^0(\Perf_{\dg}(X))\cong \HO^0(X,\mathcal{O}_X)$.
\smallbreak
To relate back to left/right 2CY structures, condition (2) states that there is a bimodule isomorphism $\mathscr{C}[2]\rightarrow \mathscr{C}^{\vee}$, i.e. a nondegenerate cocycle $\CHC_{\bullet}(\mathscr{C})\rightarrow k[-2]$.  Arguing as in Lemma \ref{lem:left_right_CY} (but ignoring cyclicity), this provides a nondegenerate cycle 
\begin{equation}
    \label{etaK_def}
\eta\colon k[2]\rightarrow \CHC_{\bullet}(\mathscr{C})
\end{equation}
in light of Remark \ref{SmP}.  Condition (3) then guarantees that this nondegenerate cycle is cohomologically unique, up to scalar multiplication (see below).

\sssct
Let $\mathscr{C}$ be a geometric 2CY category.  The Mukai pairing \cite[Lem.~3.6]{Per20} provides a nondegenerate pairing
\[
\HH_{i}(\mathscr{C}) \otimes \HH_{-i}(\mathscr{C}) \to k.
\]
By property (2) and \cite[Lem.~3.7]{Per20}, there are canonical duality isomorphisms $\HH^i(\mathscr{C}) \cong \HH_{2-i}(\mathscr{C})$,
and so by property (3) we find $\HH_2(\mathscr{C}) \cong k$. 

\begin{proposition}\label{prop:2CYcat_is_2CY}
    Let $\mathscr{C}$ be a geometric 2CY category and let $\mathscr{C} \into \Perf_{\dg}(X)$ be an admissible $k$-linear embedding for a smooth proper variety $X$.
    Assume that $\HH_{i}(X) = 0$ for $i > 2$.
    There is an induced cycle $\eta \colon k[2] \to \HC^{-}(\mathscr{C})$, unique up to scaling, which defines a left 2CY structure on $\mathscr{C}$.
\end{proposition}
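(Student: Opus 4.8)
The plan is to separate the statement into its non-cyclic content---producing a nondegenerate \emph{Hochschild} cycle---and its genuinely cyclic content---lifting that cycle to \emph{negative cyclic} homology. For the first part I would argue exactly as in the discussion preceding the proposition: condition~(2) of Definition~\ref{def:2CY_category} provides a bimodule quasi-isomorphism $\mathscr{C}[2]\xrightarrow{\sim}\mathscr{C}^{\vee}$, equivalently a nondegenerate Hochschild cocycle $\CHC_{\bullet}(\mathscr{C})\to k[-2]$. Since $\mathscr{C}$ is smooth and proper (Remark~\ref{SmP}), the adjunction used in the proof of Lemma~\ref{lem:left_right_CY}, with the cyclic-invariance bookkeeping dropped, converts this into a Hochschild cycle $\eta_{\HH}\colon k[2]\to\CHC_{\bullet}(\mathscr{C})$ whose associated bimodule map $(\eta_{\HH})^{\flat}\colon\mathscr{C}^{!}\to\mathscr{C}[2]$ is a quasi-isomorphism; this is precisely the nondegeneracy condition \eqref{nui_def} phrased on the Hochschild level. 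By the duality isomorphisms $\HH^{0}(\mathscr{C})\cong\HH_{2}(\mathscr{C})$ together with condition~(3) one has $\HH_{2}(\mathscr{C})\cong k$, so $\eta_{\HH}$ is unique up to a scalar.

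The heart of the proof is to lift $\eta_{\HH}$ along the forgetful map $\bullet^{\natural}\colon\HC^{-}(\mathscr{C})\to\CHC_{\bullet}(\mathscr{C})$ to a negative cyclic class. First I would transfer the hypothesis to $\mathscr{C}$: the embedding $\mathscr{C}\hookrightarrow\Perf_{\dg}(X)$ is admissible, so $\mathscr{C}$ is a semiorthogonal component and, by additivity of Hochschild homology under semiorthogonal decompositions (Kuznetsov), $\HH_{\bullet}(\mathscr{C})$ is a direct summand of $\HH_{\bullet}(X)$. The assumption $\HH_{i}(X)=0$ for $i>2$ thus yields $\HH_{i}(\mathscr{C})=0$ for $i>2$. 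Next I would run the Hochschild-to-cyclic (homotopy fixed point) spectral sequence computing $\HC^{-}_{\bullet}(\mathscr{C})$, with $E_{1}$-page $\HH_{\bullet}(\mathscr{C})[[u]]$, where $u$ has homological degree $-2$, differential induced by the Connes operator $\ConB$, and with $\bullet^{\natural}$ realised as the edge map ``$u=0$''. The contributions to total degree $2$ are the groups $u^{j}\HH_{2+2j}(\mathscr{C})$; all of these with $j\geq 1$ vanish, every differential emanating from the $u^{0}$ summand $\HH_{2}$ lands in a group built from $\HH_{>2}(\mathscr{C})=0$, and no differential can target the lowest-filtration summand $\HH_{2}$. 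Hence $\bullet^{\natural}\colon\HC^{-}_{2}(\mathscr{C})\xrightarrow{\cong}\HH_{2}(\mathscr{C})\cong k$ is an isomorphism, so $\eta_{\HH}$ admits a lift $\eta\colon k[2]\to\HC^{-}(\mathscr{C})$ that is automatically unique, hence unique up to scaling.

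Finally, $\eta$ is a left $2$CY structure by the definition in \S\ref{lccy}: the nondegeneracy requirement \eqref{nui_def} involves only the Hochschild image $\eta(1)^{\natural}=\eta_{\HH}$, which is nondegenerate by the first paragraph. I expect the lifting step to be the main obstacle, and more precisely the need to combine additivity of Hochschild homology under admissible embeddings with the degree bookkeeping of the spectral sequence; once $\HH_{>2}(\mathscr{C})=0$ is secured, both the obstructions to lifting and the ambiguity of the lift disappear for purely grading reasons. One could alternatively obtain $\HC^{-}_{2}(\mathscr{C})\cong\HH_{2}(\mathscr{C})$ from degeneration of the noncommutative Hodge-to-de Rham spectral sequence for the smooth proper category $\mathscr{C}$, but the vanishing hypothesis renders this unnecessary here.
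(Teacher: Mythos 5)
Your proposal is correct and follows essentially the same route as the paper: additivity of Hochschild homology under semiorthogonal decompositions gives $\HH_{i}(\mathscr{C})=0$ for $i>2$, and a degree argument in the Hochschild-to-negative-cyclic spectral sequence shows $\bullet^{\natural}\colon \HC^{-}_{2}(\mathscr{C})\to\HH_{2}(\mathscr{C})\cong k$ is an isomorphism, so the nondegenerate Hochschild class lifts uniquely. The only difference is cosmetic: where the paper cites the spectral sequence argument from the literature (Prop.~5.7 of the reference \cite{MR3896226}, cf.\ Lem.~5.10(1) of \cite{BD19}), you spell it out directly, and your bookkeeping (vanishing of $u^{j}\HH_{2+2j}$ for $j\geq 1$, differentials out of $\HH_{2}$ landing in groups built from $\HH_{>2}=0$) is accurate.
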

\begin{proof}
Since Hochschild homology is additive on semi-orthogonal decompositions~\cite[Thm.~7.3]{Kuz09}, the natural morphism $\HH_{i}(\mathscr{C}) \rightarrow \HH_{i}(X)$ is injective, and so $\HH(\mathscr{C})=0$ for $i>2$.
Via the spectral sequence argument of~\cite[Prop.~5.7]{MR3896226} (see also \cite[Lem.~5.10(1)]{BD19}), we deduce that $\HC^{-}_{i}(\mathscr{C}) = 0$ for $i > 2$ and that the morphism $\CHC_{\bullet}(\mathscr{C})^{S^1} \to \CHC_{\bullet}(\mathscr{C})$ induces an isomorphism
\begin{equation}
    \HC^{-}_{2}(\mathscr{C}) \xrightarrow{\cong} \HH_{2}(\mathscr{C}).
\end{equation}
We deduce that the nondegenerate cycle $\eta$ from \eqref{etaK_def} has a cyclic lift.
\end{proof}

\sssct
The following is now a special case of Corollary \ref{form_cor}.
\begin{proposition}[Noncommutative Kaledin--Lehn conjecture]
\label{Kuz_formality}
Let $\mathscr{C}$ be a geometric 2CY category and let $\mathscr{C} \into \Perf_{\dg}(X)$ be an admissible $k$-linear embedding for a smooth proper variety $X$.
    Assume that $\HH_{i}(X) = 0$ for $i > 2$.  Let $\BS$ be a $\Sigma$-collection of objects in $\mathscr{C}$.  Then the full subcategory of $\mathscr{C}$ containing $\CF_1,\ldots,\CF_r$ is formal.  In particular, the dg Yoneda algebra of $\bigoplus_{i}\mathcal{F}_i^{\mathbf{d}_i}$ is formal for any $\mathbf{d}\in\BN^r$.
\end{proposition}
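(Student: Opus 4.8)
The plan is to read the statement as an immediate instance of the abstract formality Corollary \ref{form_cor}, so that the only genuine work is to verify its single hypothesis — a bona fide \emph{left} 2CY structure on the ambient dg category. That is precisely what Proposition \ref{prop:2CYcat_is_2CY} supplies under the standing assumption $\HH_i(X) = 0$ for $i > 2$.

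Concretely, I would first apply Proposition \ref{prop:2CYcat_is_2CY} to obtain the cyclic class $\eta \colon k[2] \to \HC^{-}(\mathscr{C})$, which by definition equips the dg category $\mathscr{C}$ with a left 2CY structure in the sense of \S \ref{LeftRight}. By Remark \ref{SmP} the category $\mathscr{C}$ is smooth and proper, so this is exactly the setting in which Corollary \ref{form_cor} operates. Since $\BS = \{\CF_1,\ldots,\CF_r\}$ is assumed to be a $\Sigma$-collection in $\mathscr{C}$, and we work over a field of characteristic zero, Corollary \ref{form_cor} applies with $\mathscr{A} = \mathscr{C}$ and yields at once that the full sub dg category of $\mathscr{C}$ spanned by $\CF_1,\ldots,\CF_r$ is formal.

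For the final ``in particular'' assertion I would transfer formality from the subcategory to the endomorphism algebra of a direct sum. Writing $\mathscr{A}'$ for the full subcategory spanned by the $\CF_i$ and choosing a minimal model in which all higher products $\Am_n$ vanish for $n \geq 3$, the dg Yoneda algebra $\REnd\!\big(\bigoplus_i \CF_i^{\mathbf{d}_i}\big)$ is $A_{\infty}$-quasi-isomorphic to the matrix $A_{\infty}$-algebra assembled from the morphism complexes $\mathscr{A}'(\CF_i,\CF_j)$ with multiplicities $\mathbf{d}$. The higher products of this matrix algebra are built solely from the $\Am_n$ of $\mathscr{A}'$ together with matrix-index bookkeeping, which introduces no new operations, so they too vanish for $n \geq 3$; hence $\REnd\!\big(\bigoplus_i \CF_i^{\mathbf{d}_i}\big)$ is formal for every $\mathbf{d} \in \BN^r$.

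I do not expect any genuine obstacle here, since the difficult ingredients — strictification of cyclic units (Proposition \ref{StrictProp}) and the rigidity Lemma \ref{rigLem} — are already internal to Corollary \ref{form_cor}. The one point deserving attention is conceptual rather than technical: the Serre-duality isomorphism of Definition \ref{def:2CY_category}(2) only furnishes a nondegenerate \emph{Hochschild} class a priori, and it is the vanishing $\HH_i(X) = 0$ for $i > 2$, propagated to $\mathscr{C}$ via additivity of Hochschild homology over semiorthogonal decompositions, that forces this class to lift to the negative cyclic complex, thereby producing the left 2CY structure that the formality argument consumes.
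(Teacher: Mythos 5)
Your proposal matches the paper's own proof: the paper derives Proposition \ref{Kuz_formality} exactly as a special case of Corollary \ref{form_cor}, with the left 2CY structure supplied by Proposition \ref{prop:2CYcat_is_2CY} under the hypothesis $\HH_i(X)=0$ for $i>2$. Your additional spelling-out of the matrix $A_\infty$-algebra bookkeeping for the ``in particular'' clause is correct and standard, merely making explicit what the paper leaves implicit.
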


As in the rest of the paper, we can now apply Theorem \ref{loc_str_thm}.  I.e. if $\FM$ is (the classical underlying Artin stack of) an open substack of the moduli stack of objects in a geometric 2CY category $\mathscr{C}$, and $p\colon\FM\rightarrow \CM$ is a good moduli space, then \'etale locally the morphism $p$ is modelled by the semisimplification morphism from the moduli stack of representations of a preprojective algebra to the coarse moduli space, as in \eqref{eq:neighbourhood_diagram}.  
\sssct
Many interesting examples are known of moduli stacks of objects in geometric 2CY categories with good moduli spaces.  For example consider as in \cite{MR2605171} a Kuznetsov component $\Ku(X)$, which forms part of a semi-orthogonal decomposition
\[
\langle \Ku(X),\CO_X,\CO_X\otimes\iota^*\CO_{\BP^5}(1),\CO_X\otimes\iota^*\CO(2) \rangle=\Db_{\dg}(X)
\]
where $\iota\colon X\rightarrow \BP^5$ is an embedding of a smooth cubic fourfold.  The vanishing of $\HH_{\geq 3}(X)$ follows from inspection of the Hodge diamond of $X$, and thus we deduce formality for $\Sigma$-collections in $\Ku(X)$ from Proposition \ref{Kuz_formality}.  Bridgeland stability conditions are constructed for $\Ku(X)$ in \cite{Betal17}.  Moreover by \cite[Sec.21]{Betal19}, which in turn relies on fundamental results of \cite{AHLH18}, the moduli stack of semistable objects $\FM^{\zeta\sstab}(\Ku(X))$ is locally finite type and admits a good moduli space $p\colon \FM^{\zeta\sstab}(\Ku(X))\rightarrow \CM^{\zeta\sstab}(\Ku(X))$.  We write $\FM=\FM^{\zeta\sstab}(\Ku(X))$ and $\CM=\CM^{\zeta\sstab}(\Ku(X))$.
\smallbreak
By Lemma \ref{techy} and Proposition \ref{Sclosed} the morphism $p$ satisfies the conditions of Definition/Proposition \ref{dfp} and Theorem \ref{loc_pur_thm}, and so we deduce that for $i\in\BZ$ there is a canonically defined mixed Hodge module $\Ho^i\!p_!\underline{\BQ}_{\FM}$, pure of weight $i$, which vanishes at points of $\CM$ representing polystable objects $\CF$ satisfying $\chi_{\Ku(X)}(\CF,\CF)>i$.
\smallbreak
At this point we would like to be able to deduce purity of $p_!\underline{\BQ}_{\FM}$, purity of the mixed Hodge structure on Borel--Moore homology, and existence of a perverse filtration on $\HO^{\BM}(\FM,\BQ)$.  However we are impeded by the fact that we have only defined the direct image $p_!\underline{\BQ}_{\FM'}$ in the derived category of mixed Hodge modules on $\CM'$ for morphisms $p'\colon \FM'\rightarrow \CM'$, where $\FM'$ is a stack that at least admits an exhaustion by global quotient stacks.
\smallbreak
The same situation is repeated for Kuznetsov components $\Ku(X)$ appearing in semi-orthogonal decompositions (as introduced in \cite{MR3734109}) of Gushel--Mukai varieties \cite{MR682488,MR995400}, and the associated stacks $\FM$ of Bridgeland-semistable objects in $\Ku(X)$: formality of endomorphism algebras of semisimple objects is a special case of Proposition \ref{Kuz_formality}, and (weight $i$) purity of $\Ho^i\!p_!\underline{\BQ}_{\FM}$ follows as above, but we are prevented, for now, from using this result to say anything about the Hodge theory of $\HO^{\BM}(\FM,\BQ)$ by the fact that these stacks are not constructed as nested unions of global quotient stacks.

\appendix
\section{Coherent completions around semisimple objects}
\label{Appendix}
The goal of this section is to prove Proposition \ref{app_prop}, via some recollections of derived deformation theory.  This material is all well-known to the experts, although I couldn't find Proposition \ref{minversal} in exactly the form given here in the literature.
\smallbreak
In this section we fix a pre-triangulated dg category $\mathscr{C}$.  We work with derived stacks, as presented in \cite{HAGII}, since this is the natural setting for constructing moduli of objects in dg categories as in \cite{To07}.  We are only using a tiny slither of the theory: that part which covers formal neighbourhoods of points in the derived stack of objects, which really predates the general machinery of derived algebraic geometry.  For this reason we will refrain from giving a great number of definitions and details regarding derived algebraic geometry.  
\smallbreak
We denote by $\Calg_k$ the category of commutative $k$-linear dg algebras.  A derived prestack is a functor 
\[
\bm{\FM}\colon \Calg_k^{\leq 0}\rightarrow \Sset.
\]
where $\Calg_k^{\leq 0}$ is the category of commutative $k$-linear dg algebras with cohomology concentrated in nonpositive degrees, and $\Sset$ is the category of simplicial sets.  The prestack $\bm{\FM}$ is a stack if it satisfies descent with respect to hypercovers (see \cite{HAGII}).  
\smallbreak
Equipping $\Calg_k$ with the class of weak equivalences given by quasi-isomorphisms, we obtain a simplicial enrichment $\mathscr{D}$.  I.e. we define $\Map_{\Calg_k}$ to be the right derived functor of $\Hom_{\Calg_k}\colon \Calg_k\times (\Calg_k)^{\op}\rightarrow \Sset$ given by considering Hom sets as constant simplicial sets.  Somewhat more explicitly, for cofibrant $A$ we can define
\[
\Map_{\Calg_k}(A,B)_n=\Hom_{\Calg_k}(A,B\otimes \Omega^{\bullet}(\Delta^n))
\]
where $\Omega^{\bullet}(\Delta^n)=k[x_0,\ldots,x_n,dx_0,\ldots,dx_n] /(\sum_i x_i-1,\sum dx_i)$
\smallbreak
Given $A\in\Ob(\Calg_k)$ we define the derived stack
\[
\Spec(A)(B)=\Map_{\Calg_k}(A,B).
\]
We call a derived stack an affine derived stack if it is equivalent to $\Spec(A)$ for $A\in\Ob(\Calg_k)$, and a affine derived scheme if it is equivalent to $\Spec(A)$ for $A\in\Ob(\Calg_k^{\leq 0})$.  The theory of affine derived stacks is developed in \cite{To06}.  The paper \cite{BZN12} is a beautiful demonstration of affine derived stacks in action.
\smallbreak
The derived moduli stack $\bm{\FM}_{\mathscr{C}}$ is defined in \cite{To07} by
\[
\bm{\FM}_{\mathscr{C}}(A)=\Map_{\dgCat}(\mathscr{C},\Perf_{\dg}(A)).
\]
\begin{remark}
To obtain a well-behaved derived stack (i.e. a geometric stack in the terminology of \cite{HAGII}) we should demand that $\mathscr{C}$ is finite type.  Ensuring the good behaviour of $\bm{\FM}_{\mathscr{C}}$ will not be a concern in this paper, since we will restrict attention to a substack $\bm{\FM}$, or rather its classical truncation, which is defined as an Artin stack.  In any case we point out that the (derived) preprojective algebra is manifestly finite type, as it is given by a semifree presentation.  The dg category of coherent sheaves on a finite type scheme is also finite type by \cite{Ef20}.  The derived multiplicative algebra and the fundamental group algebra almost admit semifree presentations as dg algebras, the first by definition, the second by \cite{Superpotentials}, except in both cases we take a localisation.  That they are both finite type is then a consequence of \cite[Prop.2.8]{Ef20}.
\end{remark}

A formal prestack is a functor $\Cart^{\leq 0}_k\rightarrow \Sset$, where $\Cart^{\leq 0}_k$ is the full dg subcategory of $\Calg_k^{\leq 0}$ containing commutative dg algebras $A$ for which $\HO(A)$ is finite-dimensional and local.  Such algebras come equipped with a canonical augmentation $\iota_A\colon A\rightarrow k$.  A formal prestack is a stack if it satisfies hyperdescent with respect to hypercovers in $\Cart_k^{\leq 0}$.

\sssct

Let $A$ be an $A_{\infty}$-algebra with operations $\Ab_n\colon (A[1])^{\otimes n}\rightarrow A[2]$, and assume that $\HO(A)$ is finite-dimensional.  We define the Maurer--Cartan stack
\[
\dMC(A)=\Spf\left((\widehat{\Sym}(A[1]^{\vee}),d_A)\right)\coloneqq \colim_{n\mapsto \infty}\left(\Spec(\Sym(A[1]^{\vee}/\Sym^{\geq n}(A[1]^{\vee}),d_A)\right)
\]
where $d_A$ is the unique continuous differential on $\widehat{\Sym}(A[1]^{\vee})$ satisfying the Leibniz rule, and such that $d_A$ restricted to $A[1]^{\vee}$ is the sum $\sum_{n\geq 1}\Ab_n^{\vee}$.  We refer the reader to \cite{Ge17} for a fuller account of this and related derived stacks.  We consider $\dMC(A)$ as a formal stack by restricting to $\Cart^{\leq 0}_k$.
\smallbreak
If $B\rightarrow A$ is a morphism of $A_{\infty}$-algebras, defined by operators $f_n\colon A[1]^{\otimes n}\rightarrow B[1]$ we denote by 
\[
\tilde{f}\colon (\widehat{\Sym}(B[1]^{\vee}),d_B)\rightarrow (\widehat{\Sym}(A[1]^{\vee}),d_A)
\]
the continuous morphism of algebras restricting to $\sum f_n^{\vee}$ on $B[1]^{\vee}$.  The algebra morphism $\tilde{f}$ commutes with the differential since the operators $f_n$ define an $A_{\infty}$-morphism.  If $f$ is a quasi-isomorphism, so is the induced morphism $\tilde{f}$.
\smallbreak
Given an object $\CF$ of a dg category $\mathscr{C}$, let $E$ be the dg endomorphism algebra of $\CF$.  We denote by 
\[
\dDef_{\mathscr{C}}(\CF)\colon \Cart^{\leq 0}_k\rightarrow \Sset
\]
the functor sending $B$ to the homotopy fibre $X$ in the diagram
\[
\xymatrix{
X\ar[d]\ar[r]&\Spec(k)\ar[d]^{\CF}\\
\bm{\FM}_{\mathscr{C}}(B)\ar[r]^{\iota^*}&\bm{\FM}_{\mathscr{C}}(k).
}
\]
Then the fundamental theorem, which in the form in which we are giving it here is a special case of the result due to Lurie \cite{LuDAGX} and Pridham \cite{Prid10}, is that there is an equivalence of derived stacks
\[
\Psi\colon \dMC(E)\simeq \dDef_{\mathscr{C}}(\CF).
\]
\begin{remark}
We can make this equivalence a little more concrete by providing an explicit family of objects of $\mathscr{C}$ over $\dMC(E)$.  By our finiteness assumption on $\HO(E)$, there is a sequence of natural morphisms
\begin{align*}
\Hom_{\Calg_k}((\widehat{\Sym}(E[1]^{\vee}),d),(C,d_C))\simeq&\Hom_{\Cocoal_k}((\mathfrak{m}_C^{\vee},d_C^{\vee}),(F(E[1]^{\vee\vee}),d'))\\
\simeq &\Hom_{\Cocoal_k}((\mathfrak{m}_C^{\vee},d_C^{\vee}),(F(E[1]),d''))\\
\rightarrow & E[1]\otimes \mathfrak{m}_C
\end{align*}
where $F(V)$ is the cofree non-counital cocommutative coalgbra generated by $V$.  The differential $d'$ is induced by the differential $d$.
If $C$ is an ordinary (ungraded) Artinian algebra, the final morphism is the inclusion of the locus satisfying the classical Maurer--Cartan equation.  We denote by $\phi$ the composition of these morphisms.  Write $d_{\CF}$ for the differential on the dg $\mathscr{C}$-module $\CF$.  For $C$ an Artinian dg algebra equipped with a morphism 
\[
g\colon (\widehat{\Sym}(E[1]^{\vee}),d)\rightarrow C
\]
we consider the $\mathscr{C}\otimes C$-module $(\CF\otimes C,d_{\CF}\otimes \id_{C}+\phi(g))$.
\end{remark}
\sssct
\label{BSsetup}
Now let $\BS=\{\CF_1,\ldots,\CF_r\}$ be a simple collection in $\mathscr{C}$.  This means that there are no negative Exts between the $\CF_i$, they are pairwise non-isomorphic, and the only $\Ext^0$s are given by scalar multiples of identity morphisms.  We fix a unital minimal model for the subcategory of $\mathscr{C}$ containing $\BS$, inducing a $\Gl_{\mathbf{d}}$-equivariant minimal model $E$ for the endomorphism algebra of $\CF=\bigoplus_{i\leq r}\CF_i^{\oplus\bf{d}_i}$.  Let $\BS'=\{\CF'_1,\ldots,\CF'_r\}$ be a simple collection in a dg category $\mathscr{D}$, and assume that there is an $A_{\infty}$-isomorphism $G$ from the full subcategory of $\mathscr{C}$ containing $\BS$ to the full subcategory of $\mathscr{D}$ containing $\BS'$, sending $\CF_i$ to $\CF'_i$.  Let $E'$ be the minimal model for $\End_{\mathscr{D}}(\bigoplus_{i\leq r}\CF_i'^{\oplus\mathbf{d}_i})$ induced by a minimal model for $\BS'$.  Then the induced morphism $E\rightarrow E'$ of $A_{\infty}$-algebras is $\Gl_{\mathbf{d}}$-equivariant.
\smallbreak
The natural morphism $E^{\geq 1}\rightarrow E$ of (non-unital!) $A_{\infty}$-algebras induces a morphism of derived stacks
\[
\pi\colon \dMC(E^{\geq 1})\rightarrow \dMC(E)
\]
which is moreover formally smooth, since it is a $\widehat{(\mathfrak{gl}_{\mathbf{d}})}_0$ torsor.  We refer to \cite{ELO09} for details.  By construction, $\dMC(E^{\geq 1})$ is a formal derived scheme.  The classical scheme $\MC(E^{\geq 1})\coloneqq t_0(\dMC(E^{\geq 1}))$ is known as the Maurer--Cartan locus, while the classical stack $\MC(E)\coloneqq t_0(\dMC(E))$ is known as the Deligne groupoid.  
\begin{remark}
Although we do not use the fact, it may help to note that the derived scheme $\dMC(E^{\geq 1})$ represents the functor $\overline{\dDef}_{\mathscr{C}}(\CF)$, which sends $B\in\Cart^{\leq 0}_k$ to the homotopy pullback $X$ in the diagram
\[
\xymatrix{
X\ar[d]\ar[r]&\Spec(B)\ar[d]^{\tau^*\CF}\\
\bm{\FM}_{\mathscr{C}}(B)\ar[r]^{\tau^*\iota^*}&\bm{\FM}_{\mathscr{C}}(B)
}
\]
where $\tau\colon\Spec(B)\rightarrow \Spec(k)$ is the structure morphism.
\end{remark}
\sssct
We recall from \cite{MR4088350} that if $x\in\FX$ is a point of a classical stack, a \textit{formal miniversal deformation space} of $x$ is a formal affine scheme $\widehat{\Def}(x)$, with one closed point, along with a formally smooth morphism $\widehat{\Def}(x)\rightarrow X$ inducing an isomorphism of tangent spaces at $x$.  The morphism
\[
e=t_0(\Psi\circ \pi)\colon \MC(E^{\geq 1})\rightarrow t_0(\bm{\FM}_{\mathscr{C}})
\]
is formally smooth since $\Psi$ and $\pi$ are.  The morphism $e$ induces the identity on tangent spaces, after they are both identified with $\Ext^1(\CF,\CF)$.  So in particular $e\colon \MC(E^{\geq 1})\rightarrow t_0(\bm{\FM}_{\mathscr{C}})$ is a formal miniversal deformation space of $\CF$.  Moreover by construction, the given family of $\mathscr{C}$-modules on $\MC(E^{\geq 1})$ is $\Gl_{\mathbf{d}}$-equivariant, so that we obtain a morphism
\[
\Gamma\colon\MC(E^{\geq 1})/\Gl_{\mathbf{d}}\rightarrow t_0(\bm{\FM}_{\mathscr{C}}).
\]
Note that $\Gamma$ induces an isomorphism of stabiliser groups at $0\in\MC(E^{\geq 1})$.  Since the construction of $\MC(E^{\geq 1})/\Gl_{\mathbf{d}}$ only depends, up to equivalence, on the $A_{\infty}$-isomorphism class of the minimal model for the full subcategory of $\mathscr{C}$ containing $\BS$, we deduce that the following:
\begin{proposition}
\label{minversal}
Let $\BS$, $\BS'$, $\CF$, $\CF'$, $E$ and $E'$ be as introduced in \S \ref{BSsetup}.  Then there is a $\Gl_{\dd}$-equivariant isomorphism of miniversal deformation spaces $\MC(E^{\geq 1})\cong \MC(E'^{\geq 1})$, where $\MC(E^{\geq 1})$ is a miniversal deformation space to the point in $t_0(\bm{\FM}_{\mathscr{C}})$ representing $\CF$ and $\MC(E'^{\geq 1})$ is a miniversal deformation space to the point in $t_0(\bm{\FM}_{\mathscr{D}})$ representing $\CF'$.
\end{proposition}

Proposition \ref{app_prop} then follows from the equivalence of (1) and (2) in \cite[Thm.4.19]{MR4088350}.

\bibliographystyle{alpha}
\bibliography{biblio}

\end{document}